\documentclass[11pt]{article}

\usepackage{catchfilebetweentags}
\usepackage{geometry}
\input{macros}

\begin{document}

\begin{center}
    {{\LARGE{ \mbox{High-dimensional estimation with missing data: }\\
    \mbox{Statistical and computational limits}}}}

    	\vspace*{.2in}
	
	{\large{
			\begin{tabular}{ccc}
                Kabir Aladin Verchand$^{\dagger}$, Ankit Pensia$^{\circ}$,
				Saminul Haque$^{\star}$, and Rohith Kuditipudi$^{\star}$
			\end{tabular}
	}}
    
	\vspace*{.2in}
    
    	\begin{tabular}{c}
		$^{\dagger}$ Department of Data Sciences and Operations, University of Southern California \\
		$^{\circ}$ Department of Statistics and Data Science, Carnegie Mellon University\\
        $^{\star}$ Department of Computer Science, Stanford University
	\end{tabular}

    \vspace*{.2in}
	\today
    
	\vspace*{.2in}
    \begin{abstract}
    We consider computationally-efficient estimation of population parameters when observations are subject to missing data.  In particular, we consider estimation under the realizable contamination model of missing data in which an $\epsilon$ fraction of the observations are subject to an arbitrary (and unknown) missing not at random (MNAR) mechanism.  When the true data is Gaussian, we provide evidence towards statistical-computational gaps in several problems.  For mean estimation in $\ell_2$ norm, we show that in order to obtain error at most $\rho$, for any constant contamination $\epsilon \in (0, 1)$, (roughly) $n \gtrsim d e^{1/\rho^2}$ samples are necessary and that there is a computationally-inefficient algorithm which achieves this error.  On the other hand, we show that any computationally-efficient method within certain popular families of algorithms
    requires a much larger sample complexity of (roughly) $n \gtrsim d^{1/\rho^2}$ and that there exists a polynomial time algorithm based on sum-of-squares which (nearly) achieves this lower bound.  For covariance estimation in relative operator norm, we show that a parallel development holds.  Finally, we turn to linear regression with missing observations and show that such a gap does not persist.  Indeed, in this setting we show that minimizing a simple, strongly convex empirical risk nearly achieves the information-theoretic lower bound in polynomial time.   \renewcommand\thefootnote{}\footnote{Authors are listed in random order.}
                \addtocounter{footnote}{-1}
    \end{abstract}
\end{center}

\tableofcontents
\newpage
\section{Introduction} \label{sec:intro}
Statistical procedures are typically defined under the idealized assumption that each observation $\{X_i\}_{i \in [n]}$ in a sample of size $n$ is an independent outcome from a population distribution $P$.  In practice, it is often the case that observations depart in some way from this idealized scenario.  One common departure is that of missing data, in which each observation may only be partially revealed.  Of key importance when handling missing data is the nature of the mechanism by which data is missing.  Typically, these ``missingness mechanisms'' are classified into one of three categories in increasing flexibility: Missing completely at random (MCAR), Missing at random (MAR), and Missing not at random (MNAR).  In brief, the MCAR assumption ensures that the missingness mechanism is independent of the underlying observations; the MAR assumption ensures that the missingness mechanism depends only on the observed data\footnote{See~\cite{seaman2013what} for a precise definition.}; and the MNAR assumption places no restriction on the missingness mechanism.  For a detailed exposition of each of these three categories, we refer the reader to the book~\cite{little2014statistical}.  

While the former two assumptions often enable identification of population parameters, they are (i.) often too strong to hold in practice and (ii.) impossible to test for~\cite{gill1997coarsening}.  On the other hand, the MNAR assumption is typically too flexible to enable identification of the population parameters~\cite{manski2003partial}.  In particular, when data are subject to missing not at random mechanisms, the resulting observations may be significantly biased.  Motivated by this discrepancy,~\cite{ma2024estimation} recently introduced the realizable contamination framework which considers Huber-style contamination tailored to the setting of missing data.  In this paper, we adopt this model, which we describe next.  

Throughout the main text we will consider a simplified \emph{all-or-nothing} setting in which observations have either no missing data or are fully missing.  Later, in Appendix~\ref{sec:multiple-patterns}, we show how our results naturally extend to the setting of multiple missingness patterns.  
\begin{figure}[h!]
    \centering
    \centering

    \begin{subfigure}[t]{0.48\textwidth}
        \centering
        \begin{tikzpicture}[x=0.8cm,y=0.8cm]

\foreach \r in {1,...,7} {
  \foreach \c in {1,...,6} {
    \pgfmathtruncatemacro{\miss}{
      (\r==2) || (\r==7)
    }

    \pgfmathsetmacro{\x}{\c-1}
    \pgfmathsetmacro{\y}{7-\r}

    \ifnum\miss=1
      \draw (\x,\y) rectangle ++(1,1);
      \node at (\x+0.5,\y+0.5) {$\star$};
    \else
      \fill[gray!35] (\x,\y) rectangle ++(1,1);
      \draw (\x,\y) rectangle ++(1,1);
    \fi
  }
}

\end{tikzpicture}
        \caption{All-or-nothing missingness.}
        \label{fig:all-or-nothing}
    \end{subfigure}
    \hfill
    \begin{subfigure}[t]{0.48\textwidth}
        \centering
        \begin{tikzpicture}[x=0.8cm,y=0.8cm]

\foreach \r in {1,...,7} {
  \foreach \c in {1,...,6} {
    \pgfmathtruncatemacro{\miss}{
      ((\r==1 || \r==3 || \r==7) && (\c==1 || \c==2)) ||
      ((\r==2 || \r==6) && (\c==5 || \c==6)) ||
      ((\r==4 || \r==5) && (\c==3 || \c==4))
    }

    \pgfmathsetmacro{\x}{\c-1}
    \pgfmathsetmacro{\y}{7-\r}

    \ifnum\miss=1
      \draw (\x,\y) rectangle ++(1,1);
      \node at (\x+0.5,\y+0.5) {$\star$};
    \else
      \fill[gray!35] (\x,\y) rectangle ++(1,1);
      \draw (\x,\y) rectangle ++(1,1);
    \fi
  }
}

\end{tikzpicture}
        \caption{Multiple missingness patterns.}
        \label{fig:multiple-pattern}
    \end{subfigure}
    \caption{Types of missing data patterns.  Each row indicates a single sample, where the
    entries in gray indicate an observed value and the $\star$ entries indicate missingness.  In order to simplify the results in the main text, we focus on the all-or-nothing patterns described in panel (a), deferring the extension to the more general setting to Appendix~\ref{sec:multiple-patterns}.}
    \label{fig:both}
\end{figure}

To concretely define the model, let $P$ denote a probability distribution on $\mathbf{R}^d$ and suppose that we are interested in estimating some quantity $\theta(P)$ from incomplete observations.  Following~\cite{ma2024estimation}, we define the following sets of distributions, specialized to the all-or-nothing setting:  
\begin{subequations}
\begin{align}
\mathsf{MCAR}_{(P, q)} &:= \Bigl\{\law\bigl(X \ostar \Omega\bigr):\, X \sim P \text{ and } \Omega \in \{(0)^d, (1)^d\}, \; \Omega \indep X, \; \mathbb{P}(\Omega = (1)^{d}) = q \Bigr\} \label{def:MCAR-aon}\\
\mnar_P &:= \Bigl\{\law\bigl(X \ostar \Omega\bigr):\, X \sim P \text{ and } \law(\Omega) \in \mathcal{P}\bigl(\{(0)^d, (1)^d\}\bigr)\Bigr\}. \label{def:MNAR-aon}
\end{align}
\end{subequations}
We point the reader to the notation section at the end of the introduction for a precise definition of the modified Hadamard product $\ostar$.  We emphasize that these distributions are on the extended space $\R^d \cup \{\star^d\}$.  In words, $\mathsf{MCAR}_{(P, q)}$ describes the missing completely at random set of distributions in which each observation is seen with probability $q$.  On the other hand, $\mathsf{MNAR}_P$ describes the set of missing not at random distributions which can be obtained by applying any missingness mechanism to the observations.

Since consistent estimation under the assumption-lean setting of $\mathsf{MNAR}_P$ is in general impossible, we consider the \emph{realizable contamination model} introduced by~\cite{ma2024estimation}.  In particular, given a contamination parameter $\epsilon \in [0, 1]$, the realizable set of distributions is given by 
\begin{align} \label{def:realizable-model}
\mathcal{R}(P, \epsilon, q) := (1- \epsilon) \mcar_{(P, q)} + \epsilon \mnar_P.
\end{align}
When $q=1$, we omit it, i.e., $\mathcal{R}(P, \epsilon) := \mathcal{R}(P, \epsilon, 1)$.
The following straightforward lemma (which is a corollary of~\cite[Proposition 2]{ma2024estimation}) characterizes the set of realizable distributions by upper and lower bounds on likelihood ratios.  
\begin{lemma} \label{lem:all-or-nothing-characterization}
The distribution $Q \in \mathcal{R}(P, \epsilon, q)$ if and only if, for all $z \in \mathbf{R}^d$,
\[
q (1-\epsilon) \leq \frac{\mathrm{d} Q}{\mathrm{d} P}(z) \leq q (1 - \epsilon) + \epsilon.
\]
\end{lemma}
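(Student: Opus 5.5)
We prove both directions directly from the definitions \eqref{def:MCAR-aon}, \eqref{def:MNAR-aon} and \eqref{def:realizable-model}. Throughout, we read $\frac{\mathrm{d} Q}{\mathrm{d} P}$ as the Radon--Nikodym derivative of the restriction of $Q$ to $\mathbf{R}^d$ with respect to $P$. The atom $\star^d$ carries whatever mass remains.

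The structural fact we use is a description of the restrictions to $\mathbf{R}^d$. Let $\Omega$ be any missingness mechanism with $\pi(x) := \mathbb{P}(\Omega = (1)^d \mid X = x)$. Then $\law(X \ostar \Omega)$ restricted to $\mathbf{R}^d$ has density $\pi$ with respect to $P$, and its remaining mass sits at $\star^d$. Two consequences follow.
\begin{itemize}
\item Since $\law(\Omega)$ is arbitrary, every measurable $\pi : \mathbf{R}^d \to [0,1]$ is attained by some element of $\mnar_P$. To realize a given $\pi$, draw $\Omega$ given $X$ as a Bernoulli$(\pi(X))$ choice between $(1)^d$ and $(0)^d$.
\item Elements of $\mcar_{(P,q)}$ have constant density $q$ on $\mathbf{R}^d$.
\end{itemize}
Hence $Q \in \mathcal{R}(P,\epsilon,q)$ if and only if $Q = (1-\epsilon)M + \epsilon N$ with $M \in \mcar_{(P,q)}$ and $N \in \mnar_P$.

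For the forward direction, the density of such a $Q$ on $\mathbf{R}^d$ is $q(1-\epsilon) + \epsilon \pi(z)$ for some $\pi(z) \in [0,1]$. This immediately gives the two-sided bound.

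For the converse, suppose $Q$ is a probability measure on $\mathbf{R}^d \cup \{\star^d\}$ and $g := \frac{\mathrm{d} Q}{\mathrm{d} P}$ satisfies the stated bounds. (Implicitly $Q|_{\mathbf{R}^d} \ll P$, which is part of the statement's meaning.) The steps are as follows.
\begin{enumerate}
\item If $\epsilon = 0$, the bounds force $g \equiv q$, so $Q$ is exactly the MCAR law.
\item Otherwise define $\pi := (g - q(1-\epsilon))/\epsilon$. The hypothesis gives $\pi \in [0,1]$ pointwise.
\item Let $N \in \mnar_P$ be the law with acceptance probability $\pi$, and set $M$ to be the MCAR law with parameter $q$.
\item Then $(1-\epsilon)M + \epsilon N$ agrees with $Q$ on $\mathbf{R}^d$. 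Both are probability measures on $\mathbf{R}^d \cup \{\star^d\}$, so they also agree on the atom $\star^d$, and therefore coincide.
\end{enumerate}

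The main (mild) obstacle is the bookkeeping in the converse. One needs care with the absolute continuity convention and with the mass at $\star^d$. The key observation is that only the density on $\mathbf{R}^d$ needs matching, since total mass fixes the atom. The degenerate case $\epsilon = 0$ must also be handled separately. Alternatively, one can simply invoke \cite[Proposition 2]{ma2024estimation} specialized to the two patterns $\{(0)^d, (1)^d\}$, and the argument above is the direct verification of that specialization.
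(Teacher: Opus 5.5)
Your argument is correct. The paper gives no proof of its own: it records the lemma as a corollary of \cite[Proposition 2]{ma2024estimation}, the general characterization of realizable contamination over arbitrary collections of missingness patterns.

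You instead verify the two-pattern case directly. The MCAR component contributes the constant density $q$ on $\mathbf{R}^d$. An MNAR component contributes the density $\pi(z)=\mathbb{P}(\Omega=(1)^d\mid X=z)\in[0,1]$, and every measurable $\pi$ is attainable. The converse then reduces to setting $\pi=(g-q(1-\epsilon))/\epsilon$ and letting total mass fix the atom at $\star^d$.

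What your route buys is a self-contained, elementary argument that shows why the pointwise two-sided density bound is exactly the right condition in the all-or-nothing setting. What the citation buys is generality: the cited proposition covers arbitrary pattern collections, of which this is the simplest instance.

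One point is worth stating explicitly. ``For all $z$'' should be read $P$-a.e., or equivalently for a suitable version of $\mathrm{d}Q/\mathrm{d}P$. Your choice of a $[0,1]$-valued version of the conditional acceptance probability already handles this in the forward direction. In the converse, a version of $g$ can be modified on a $P$-null set so that the bounds hold everywhere.
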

We emphasize that, restricted to $\mathbf{R}^d$, this lemma implies that $Q$ is a sub-probability measure whose density is upper and lower bounded by that of $P$.  From this relation, we can read off upper and lower bounds on the conditional distribution $Q_{\bR}$ of the observations, given that they were observed.  In particular, note that 
\[
q (1-\epsilon) \leq Q(\mathbf{R}^d) \leq q (1 - \epsilon) + \epsilon.
\]
This in turn implies that if $Q \in \mathcal{R}(P, \epsilon, q)$, then
\begin{align} \label{ineq:sandwich-realizability}
1 - \frac{\epsilon}{q (1 - \epsilon) + \epsilon} \leq \frac{\mathrm{d} Q_{\bR}}{\mathrm{d} P}(z) \leq 1 + \frac{\epsilon}{q (1 - \epsilon)} \quad \text{ for all } z \in \mathbf{R}^d.
\end{align}
Most of our algorithms in the sequel will be based on this characterization.  We will occasionally use the notation $\mathcal{R}_{\R}(P, \epsilon, q)$ to denote the corresponding set of conditional distributions (e.g. on $\R^d$).  We note in passing that in this special case, the condition is similar to familiar sensitivity conditions considered in the causal inference literature, e.g.,~\cite{rosenbaum87sensitivity,zhao2019sensitivity} as well as models of sampling bias~\cite{sahoo2022learning}.  In particular, if we let $\Gamma = 1 + \frac{\eps}{q(1-\eps)}
\geq 1$, this is equivalent to a biased sampling model in which \emph{all} of the observations are biased with likelihood ratios bounded above and below by $1/\Gamma$ and $\Gamma$, respectively.  Indeed, we note that under an appropriate change of variables, all of our algorithms (and lower bounds) apply in this setting. 

We will assume throughout the remainder of this text that $q=1$.
This is without loss of generality by an appropriate rescaling of $\epsilon$ and $n$.  In particular, if both $q$ and $\epsilon$ are known, one can remove a $1-q'$-fraction of the completely missing observations and apply an algorithm designed for observations from the set $\mathcal{R}(P, \epsilon, 1)$.  The following lemma makes this precise.

\begin{lemma}\label{lem:gen-q-conversion}
  Let $\epsilon \in [0,1)$ and $q \in [0, 1]$ and using these define $\epsilon' = \frac{\epsilon}{\epsilon + q(1-\epsilon)}$ and $q' = \eps + q(1-\epsilon)$. Then, 
  \begin{align*}
    \cR(P, \epsilon, q) = \Bigl\{q' \cdot Q' + (1-q')\delta_{\{\star^d\}}:\; Q' \in \cR(P, \epsilon', 1)\Bigr\}.
  \end{align*}
\end{lemma}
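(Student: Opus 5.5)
We prove the set equality by showing that both sides have the same likelihood-ratio description, using Lemma~\ref{lem:all-or-nothing-characterization} on each side. Everything lives on the extended space $\R^d \cup \{\star^d\}$. A distribution there is determined by two things: its restriction to $\R^d$, which is a sub-probability measure, and the leftover mass it places on $\star^d$.

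\textbf{Step 1: characterize the left side.} By Lemma~\ref{lem:all-or-nothing-characterization}, $Q \in \cR(P,\epsilon,q)$ if and only if $Q$ restricted to $\R^d$ is absolutely continuous with respect to $P$ and
\[
q(1-\epsilon) \le \frac{\mathrm{d} Q}{\mathrm{d} P} \le q(1-\epsilon)+\epsilon = q',
\]
with the remaining mass on $\star^d$.

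\textbf{Step 2: characterize the right side.} Fix $Q' \in \cR(P,\epsilon',1)$ and set $Q = q'Q' + (1-q')\delta_{\{\star^d\}}$. The point mass does not charge $\R^d$, so on $\R^d$ we have $\mathrm{d} Q/\mathrm{d} P = q'\,\mathrm{d} Q'/\mathrm{d} P$. Lemma~\ref{lem:all-or-nothing-characterization} with $q=1$ gives $1-\epsilon' \le \mathrm{d} Q'/\mathrm{d} P \le 1$. Multiplying through by $q'$, this becomes
\[
q'(1-\epsilon') \le \frac{\mathrm{d} Q}{\mathrm{d} P} \le q'.
\]
A direct computation gives $q'(1-\epsilon') = q' - \epsilon = q(1-\epsilon)$. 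These are exactly the bounds from Step 1, so the right side is contained in the left side.

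\textbf{Step 3: the reverse inclusion.} This is the only step needing care. Take $Q$ in the left side.
\begin{itemize}
\item \emph{Case $q' > 0$.} Define $Q'$ on $\R^d$ by $\mathrm{d} Q'/\mathrm{d} P := q'^{-1}\,\mathrm{d} Q/\mathrm{d} P$. By Step 1 this density lies in $[1-\epsilon', 1]$, so its total mass on $\R^d$ is at most $1$. Put the remaining mass $1 - Q'(\R^d)$ on $\star^d$. Then $Q' \in \cR(P,\epsilon',1)$ by Lemma~\ref{lem:all-or-nothing-characterization}.

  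To check that $Q = q'Q' + (1-q')\delta_{\{\star^d\}}$, the restrictions to $\R^d$ agree by construction. The masses on $\star^d$ also agree, because both measures have total mass one:
\[
q'\bigl(1 - Q'(\R^d)\bigr) + 1 - q' = 1 - Q(\R^d).
\]
\item \emph{Case $q' = 0$.} Since $\epsilon < 1$, this forces $q = 0$ and $\epsilon = 0$. Then $\epsilon'$ is formally $0/0$, but both sets equal $\{\delta_{\{\star^d\}}\}$. We handle this case separately, or exclude it by convention.
\end{itemize}
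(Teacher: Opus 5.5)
Your proof is correct and follows the paper's argument. Both directions go through Lemma~\ref{lem:all-or-nothing-characterization}: you rescale the density on $\R^d$ by $q'$, use the identity $q'(1-\epsilon') = q(1-\epsilon)$, and fix the mass on $\star^d$ by normalization. Your separate treatment of the degenerate case $q'=0$, where $\epsilon'$ is undefined, is a small point the paper's proof does not address.
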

We provide the proof of Lemma~\ref{lem:gen-q-conversion} to Section~\ref{sec:proof-lem-gen-q-conversion}.

\subsection{Contributions} 
We consider three particular estimation problems and overview our main results in each.  

\paragraph{Mean estimation.} We consider the family of base distributions $\mathcal{P}_{\R^d}$, where for each $\theta \in \R^d$, $P_{\theta} = \mathsf{N}(\theta, \sigma^2 I_d)$ (with known $\sigma > 0$) for $\theta \in \R^d$.
We show that in the realizable contamination model~\eqref{def:realizable-model}, any estimator $\widehat{\theta}$ of $\theta$ from $n$ observations which succeeds with probability at least $1 - \delta$ must satisfy
\[
\bigl \| \widehat{\theta} - \theta \bigr \|_2 \gtrsim \frac{\sigma \log(\frac{1}{1-\epsilon})}{\sqrt{\log \Bigl(1 + \frac{\epsilon^2}{1-\epsilon} \cdot \frac{n}{d+\log(\frac{1}{\delta})}\Bigr)}},
\]
and we provide an estimator which achieves this error.  See Theorem~\ref{thm:mean-est-it-ub-lb} for a precise statement.  We note that in terms of sample complexity, if $\delta, \sigma, q, \epsilon$ are fixed constants, then this implies that to reach error $\| \widehat{\theta} - \theta \|_{2} \leq \rho$, we require (roughly) $n \gtrsim d e^{1/\rho^2}$ samples.  

Turning to computationally-efficient rates, we show that, for $\epsilon$ a small enough constant, a polynomial time algorithm based on the sum-of-squares method achieves error $\rho$ as long as (roughly) $n \gtrsim d^{1/\rho^2}$ observations are available, where we hide multiplicative factors depending on $\rho$.  See Theorem~\ref{thm:mean-est-sos-upper-bound} for a precise statement.  We complement this, in Theorem~\ref{thm:sq-lower-bound-mean} with a nearly-matching computational lower bound against statistical query (SQ) algorithms, low-degree polynomial tests, sum-of-squares hierarchy, and polynomial threshold functions.
For the remainder of this section, we focus on statistical query algorithms for simplicity.
Together, these results provide evidence towards a large statistical--computational gap.  We summarize this state of affairs in Figure~\ref{fig:mean-gap}.

\begin{figure}[h!]
    \centering
    \begin{tikzpicture}[
  font=\large,
  tick/.style={black, line width=0.9pt},
]

\definecolor{sqhard}{RGB}{252,237,202}      %
\definecolor{easy}{RGB}{209,243,203}        %
\definecolor{impossible}{RGB}{242,211,206}  %

\def\xL{0}
\def\xR{12}
\def\yB{0}
\def\yT{0.7}

\def\xA{2.6}   %
\def\xB{7.0}   %

\fill[impossible] (\xL,\yB) rectangle (\xA,\yT);
\fill[sqhard]     (\xA,\yB) rectangle (\xB,\yT);
\fill[easy]       (\xB,\yB) rectangle (\xR,\yT);

\node at ({(\xL+\xA)/2},{(\yB+\yT)/2}) {Impossible};
\node at ({(\xA+\xB)/2},{(\yB+\yT)/2}) {SQ hard};
\node at ({(\xB+\xR)/2},{(\yB+\yT)/2}) {Easy};

\draw[black, line width=1.0pt] (\xL,\yB) -- (\xR,\yB);

\draw[tick] (\xA,\yB) -- (\xA,-0.35);
\draw[tick] (\xB,\yB) -- (\xB,-0.35);

\node[below, align=center] at (\xA,-0.35) {$d e^{1/\rho^2}$};
\node[below, align=center] at (\xB,-0.35) {$d^{1/\rho^2}$};

\node[left] at (\xL-0.15,\yB+0.18) {$n$};

\end{tikzpicture}
    \caption{Sample complexity phase diagram for mean estimation with  $\epsilon$ a fixed constant.  In order to achieve $\ell_2$ norm error $\rho$, it is information-theoretically necessary and sufficient to take $n \asymp de^{1/\rho^2}$ many samples.  On the other hand, any statistical query algorithm must take (roughly) $d^{1/\rho^2}$ many samples and a polynomial time algorithm (nearly) saturates this lower bound.
    }
    \label{fig:mean-gap}
\end{figure}
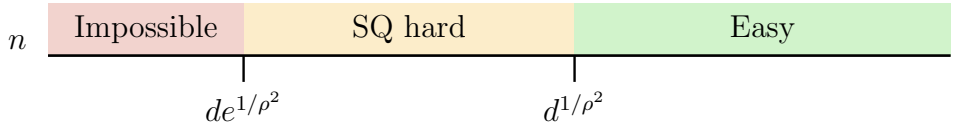

\paragraph{Covariance estimation.} We show that a parallel set of claims holds in the covariance estimation setting.
In particular, we consider the family $\mathcal{P}_{\rm cov}$, where for each $\Sigma \in \mathcal{C}^d_{++}$, $P_{\Sigma} = \mathsf{N}(0, \Sigma)$, we show that any estimator $\widehat{\Sigma}$ that succeeds with probability at least $1-\delta$ must satisfy 
\[
\bigl \| \Sigma^{-1/2}\widehat{\Sigma}\Sigma^{-1/2} - I_d \bigr \|_{\mathrm{op}} \gtrsim   \sqrt{\frac{d + \log(\frac{1}{\delta})}{n(1-\eps)}} + \frac{\log(\frac{1}{1-\epsilon})}{\log\left(1 + \frac{\epsilon n}{d+\log(\frac{1}{\delta})}\right)},
\]
and that a computationally-inefficient algorithm achieves nearly the same rate.  See Theorem~\ref{thm:cov-est-it-ub-lb} for an exact statement.  On the other hand, we show that for any fixed $\tau$ a small enough constant, in order to achieve error $\rho$ in relative operator norm, roughly $n \gtrsim d^{1/\rho}$ samples are sufficient for a polynomial time sum-of-squares based estimator to succeed (Theorem~\ref{thm:cov-est-sos-upper-bound}) and that nearly the same sample complexity is required of any statistical query algorithm (Theorem~\ref{thm:sq-lower-bound-cov}).

\paragraph{Linear regression.} In the setting of linear regression with missing observations in which the missingness may depend arbitrarily on both the response $Y$ as well as the covariate $X$, we show that a large statistical--computational gap does not persist.  In particular, we consider the class $\mathcal{P}_{\mathrm{LR}}(\sigma^2)$, which consists of covariate response pairs $(X, Y)$ distributed as $X \sim \mathsf{N}(0, I_d)$ and $Y \mid X \sim \mathsf{N}\bigl(X^{\top} \theta, \sigma^2\bigr)$.
In this setting, we show that any estimator $\widehat{\theta}$ of the coefficients $\theta$ which succeeds with probability at least $1 - \delta$ must incur error at least
\[
\bigl \| \widehat{\theta} - \theta \bigr \|_2 \gtrsim   \frac{\sigma \cdot \log(1/(1-\epsilon))}{\sqrt{\log\left(1 + \frac{\epsilon^2}{1-\epsilon} \cdot \frac{n}{d + \log(\frac{1}{\delta})}\right)}},
\]
and that there exists a simple polynomial time estimator based on minimizing a strongly convex empirical risk which achieves nearly matching error
\[
\bigl \| \widehat{\theta} - \theta \bigr \|_2 \lesssim \frac{\sigma \cdot \frac{\epsilon}{1 - \epsilon} \cdot \log\log\Bigl(\frac{n(1-\epsilon)}{d + \log(\frac{1}{\delta})}\Bigr)}{\sqrt{\log\left(1 + \frac{\epsilon^2}{1-\epsilon} \cdot \frac{n}{d + \log(\frac{1}{\delta})}\right)}}.
\]
Observe that for $\epsilon$ less than a sufficiently small constant, the upper bounds and lower bounds differ only by a mild $\log\log$ term.  See Theorem~\ref{thm:lin-reg-ub-lb} for details.

\subsection{Further related work} \label{sec:related}
The model and methods considered in this paper lie at the intersection of the study of missing data and robust estimation, and here we describe the relation to the most related results and models in each of these fields.

\paragraph{Missing data.} Missing data is a mature field and we refer the interested reader to the book~\cite{little2014statistical} for many of the classical models and results.  The realizable contamination model which we study in this manuscript was recently introduced by~\cite{ma2024estimation}, although we note that similar models have been proposed in the literature in restricted settings by~\cite{horowitz1995identification,bonvini2022sensitivity}.  As mentioned in the discussion following the sandwich relation in~\eqref{ineq:sandwich-realizability}, in the all-or-nothing setting, there is a direct connection between realizable contamination and biased sampling~\cite{sahoo2022learning,aronow2013interval} and sensitivity analysis in causal inference~\cite{rosenbaum87sensitivity}.

An advantage of the realizable contamination model is that it does not preclude consistent estimation even when a constant fraction of the observations are contaminated.  We note that considering constrained forms of MNAR which similarly allow identification of the population parameters have been considered in the literature, we direct the interested reader to, e.g.,~\cite{malinsky2022semiparametric,mohan2013graphical,rotnitzky1998semiparametric}, and the references therein for a small subset of this literature.  We emphasize that the realizable contamination model encapsulates all of these modeling choices when $\epsilon = 1$ as it places no assumptions on the contamination component other than that it masks the original data.  On the other hand, we note that stronger forms of contamination (which can mask and alter the base distribution) have been considered in the missing data literature.  For instance,~\cite{hu2021robust} consider an analogy of the strong contamination model~\cite{DiaKan22-book} and derive nearly optimal mean estimators in this setting.  In the setting of linear regression,~\cite{diakonikolas2025linear} consider adversarial contamination and provide optimal and computationally-efficient algorithms for estimating the regression coefficients.  We emphasize that a key distinction between these stronger forms of contamination and the realizable setting considered here is that our restricted model often enables identification in settings where the strong contamination model does not.  

Our focus is on computationally-efficient methods.  This has been a focus of the literature on truncated statistics over the last several years (as we discuss in the next paragraph).  We remark here that even in the more benign MCAR setting, missingness can induce computational difficulties.  For instance,~\cite{loh2012high} consider sparse linear regression with MCAR covariates and provide a computationally-efficient algorithm for estimation despite nonconvexity of a natural plug-in estimator.  Moving beyond MCAR, there has been a flurry of work on the computational aspects of estimation from truncated statistics which we detail next. 

\paragraph{Truncated statistics.} Estimation from truncated samples is a classical problem of estimating a distribution, or its parameters, when given i.i.d.\ samples conditioned on the samples falling in an observation set $S$. 
Recent literature~\cite{DasTZ18} re-visited this classical problem---under the assumption of a Gaussian base distribution---with an eye towards computationally-efficient algorithms.  These authors show that in many learning settings, when given oracle access to the truncation set $S$, computationally-efficient estimation of population parameters is possible; by contrast, the same authors show that without such oracle access, it is information-theoretically impossible to perform estimation without further assumptions on the set $S$.  Motivated by this, subsequent work~\cite{kontonis2019efficient} further explored the unknown truncation set $S$, under the additional assumption that $S$ is known to have additional regularity such as belonging to a class of bounded VC dimension or Gaussian surface area.  For instance, these authors show that if the truncation set $S$ belongs to a collection $\mathcal{C}$, then it is possible to estimate the underlying mean with a sample complexity scaling as $\widetilde{O}(\mathrm{VC}(\mathcal{C})/\rho + d^2/\rho^2)$, albeit with an inefficient algorithm.  
Following this,~\cite{diakonikolas24statistical} show that there exist truncation sets with small VC dimension and Gaussian surface area such that a superpolynomial sample complexity is required for efficient algorithms in the statistical query model. 

The setting with unknown truncation is more closely related to the all-or-nothing setting considered in our paper, where we can understand the contamination set as the set of all possible truncated distributions, that is, without any structural assumptions on $S$.  Given the similarity between the two settings, it is worthwhile to open a small parenthesis to discuss the differences, which are crucial and lead to different types of algorithms.
In the truncated statistics problem, the assumptions imposed on $S$ permit truncating the entirety of the tails, so the tail behaviour within the model cannot be used to estimate the distribution.
By contrast, in our MNAR setting, we allow the likelihood ratio $\frac{\mathrm{d}Q}{\mathrm{d}P}$ to vary arbitrarily within $[1-\epsilon, 1]$, independently across points.
When $\epsilon$ is close to 1, this renders the bulk of the distribution unhelpful for estimation, as $Q$ can hide all the potential variation of $\frac{\mathrm{d}P'}{\mathrm{d}P}(z)$ for $P'$ in a neighborhood of $P$.  However, when $P$ belongs to a known class of distributions with rapidly decaying tails (e.g. Gaussian), a small perturbation can lead to a large likelihood ratio in the tails.   It is this feature of the tails that we exploit in our problems of interest.
Thus, while estimation from truncated samples setting is achieved by focusing on the bulk of the distribution, estimation from MNAR samples is achieved by focusing on the tails of the distribution.
Despite this contrast, the computationally-efficient sample complexities of mean and covariance estimation are similar in the two settings.

One major difference is that in linear regression, we obtain a computationally-efficient algorithm that requires only linear in $d$ many samples, while in the truncated sample setting, existing algorithms incur either $d^{O(1/\rho^2)}$ samples~\cite{LeeMeZa24} or require truncation which can only depend on the response~\cite{KouridakisMeKaCa26}.  

\paragraph{Robust estimation.} For a thorough background on robust estimation, we refer the reader to the books \cite{HubRon09} (for its historical treatment) and \cite{DiaKan22-book} (for its algorithmic aspects).

The prototypical contamination model considered in robust statistics is the Huber contamination model,
where the statistician observes
$Q = (1- \epsilon_{\mathrm{Huber}})P + \eps_{\mathrm{Huber}} R$, where $R$ is an arbitrary distribution and $\eps_{\mathrm{Huber}} \in [0,1/2)$ is the contamination rate.
Even when $P$ belongs to a nice family of distributions such as Gaussians,
the Huber contamination model is strong enough to preclude consistent estimation of parameters such as mean, covariance,
and linear regression.
Furthermore, realizable contamination can be seen as a special case of Huber contamination: the first inequality in
\Cref{ineq:sandwich-realizability} implies that the conditional distribution $Q_\R$ of $Q \in \cR(P,\eps,1)$
is a valid Huber contamination as long as $\eps_{\mathrm{Huber}} \geq \epsilon$.
However, realizable contamination contains additional structural information (the second inequality in \Cref{ineq:sandwich-realizability}) that leads to consistency.

Our work could also be seen as part of a broader research agenda that studies practical restrictions of Huber contamination model, 
which lead to improved statistical rates.
We discuss some related works below.

In the context of mean estimation,
multiple recent works have studied the mean-shift contamination model~\cite{Li23-oblivious,KotGao25,DiaIKP25,KalKLZ26-sparse-fft,DiaIKL26}, which places a different kind of constraint on the contamination distribution. In the basic setting, one observes samples from $(1-\eps) \mathsf N(\mu,I_d) \, + \,\eps R \,\ast\, \mathsf N(0,I_d)$, where $R$ is an arbitrary distribution and $\ast$ denotes the convolution operator.
These works have shown that consistent mean estimation is possible in this model: for any constant $\eps \in (0,1)$, the sample complexity to get error $\rho$  is roughly ${d}/{\rho^2} + e^{\Theta(1/\rho^2)}$, and furthermore there is no substantial statistical--computational gap.
While this rate is similar to that of realizable contamination for $d=1$, 
the statistical sample complexity for realizable contamination is much higher for $d>1$, scaling as $d e^{\Theta(1/\rho^2)}$(and the computational sample complexity is significantly higher, scaling as $d^{\Theta(1/\rho^2)}$).

In the context of linear regression, the mean-shift contamination model has the following analog:
\begin{align}
    X &\sim \mathsf N(0,I_d),\qquad{\rm and}\qquad  y \mid X \sim (1-\eps) \cdot \mathsf{N}(X^\top \theta_\star,\sigma^2) + \eps \cdot R_X \ast D_{X^\top \theta_\star}\,,
    \label{eq:related-work-adaptive}
\end{align}
where $R_X$ is a univariate distribution that may depend on the covariate $X$.
When the contaminating distribution, $R_X$, is oblivious of $X$, i.e., $R_X = R$, then it is termed an \emph{oblivious} contamination model.
For the oblivious contamination model, multiple works have shown that consistent estimation is possible~\cite{Tsakonas14, JaiTK14,BhatiaJK15,BhatiaJKK17, 
SugBRJ19,Steurer21Outliers}, and there exist (at most quadratic) statistical--computational gaps~\cite{DiaGKLP25-oblivious}. 
When the distribution $R_X$ is allowed to depend on $X$, it is termed \emph{adaptive} contamination.
The forthcoming work \cite{DiaGKPX26-adaptive} studies this adaptive contamination model, both from computational and statistical perspective.
While \Cref{eq:related-work-adaptive} can be seen as a more general contamination model than realizable contamination with missing responses (see Eqs.~\eqref{eq:model-lr}), 
the statistical rates surprisingly coincide; this follow by 
combining our lower bounds (which builds on their work) and their upper bounds.
By contrast, the computational rates differ widely, pointing to the computational benefits of realizable contamination over adaptive contamination.
To elaborate, our work gives a computationally-efficient algorithm with nearly-matching rate for realizable contamination, and \cite{DiaGKPX26-adaptive} establishes a statistical query lower bound for adaptive contamination of $d^{1/\rho^2}$ for any constant $\epsilon$. 
While their SQ lower bound of $d^{1/\rho^2}$ for \Cref{eq:related-work-adaptive} looks similar to our SQ lower bound for realizable mean estimation, it is unclear if there is a deeper connection in terms of a formal reduction.

\subsubsection*{Notation}
We let $\R, \R_{+}, \R_{++}$ denote the set of real numbers, non-negative real numbers, and positive real numbers, respectively.  We let $\mathbf{N}$ denote the set of natural numbers.  We let $\mathbf{S}^{d-1} = \{u \in \R^d: \; \| u \|_2 = 1\}$ denote the unit sphere.
We let $\mathcal{C}^d_+ = \{X \in \bR^{d \times d}:\, X = X^{\top} \text{ and } X \succeq 0\}$
and $\mathcal{C}^d_{++} = \{X \in \bR^{d \times d}:\, X = X^{\top} \text{ and } X \succ 0\}$ denote the cones of positive semidefinite and positive definite matrices, respectively.  We use $\1_{A}$ and $\1\{A\}$ interchangeably to denote the indicator function on the set $A$.

We let $\mathbf{R}_{\star} = \mathbf{R} \cup \{\star\}$ denote an extended space where the value $\star$ denotes a missing entry, and let $\mathbf{R}^k_{\star} = \underbrace{\mathbf{R}_{\star} \times \ldots \times \mathbf{R}_{\star}}_{k \text{ times}}$. We say that the support of a binary vector $\omega \in \{0, 1\}^k$, denoted by $\mathrm{supp}(\omega)$, is the set of coordinates on which $\omega$ takes the value $1$. We define the operation $\ostar: \R^k \times \{0, 1\}^k \rightarrow \mathbf{R}^k_{\star}$, where the $j$th component of $x\ostar\omega$ is defined by
\begin{align*}
    (x\ostar\omega)_j \coloneqq \begin{cases}
        x_j \quad &\text{if } \omega_j = 1\\
        \star &\text{if } \omega_j = 0,
    \end{cases}
\end{align*}
for $j \in [k]$.  We use $S \pm T$ between multisets $S$ and $T$ to, respectively, denote multiset union and difference.  For a multiset $S$, we use $\E_{x \sim S}[f(x)]$ to denote the expectation of $f$ with respect to the empirical distribution of $S$.

For $n \in \mathbf{N}$, we use the notation $f(n) \lesssim g(n)$ (or $f(n) = O(g(n))$) to mean $\lvert f(n) \rvert \leq C \lvert g(n) \rvert$ for some universal, positive constant $C$.  Analogously, we use the notation $f(n) \gtrsim g(n)$ (or $f(n) = \Omega(g(n))$) to mean $\lvert f(n) \rvert \geq c \lvert g(n) \rvert$ for some universal, positive constant $c$.  If $f(n) \lesssim g(n)$ and $f(n) \gtrsim g(n)$, we denote this relationship by $f(n) \asymp g(n)$ (or $f(n) = \Theta(g(n))$).
We use $\widetilde{O}, \widetilde{\Omega}$, and $\widetilde{\Theta}$ to hide terms that are poly-logarithmic in the parameters.

We let $\mathsf{N}(\mu, \Sigma)$ denote the Gaussian distribution with mean $\mu$ and covariance $\Sigma$.
We let $\phi_d(\cdot; \mu, \Sigma)$ to denote its density.
We omit the subscript from the $\phi_d$ when $d=1$.
We often omit the parameter $\Sigma$ from $\phi_d$ when it is the identity matrix and omit $\mu$ when it is the zero vector,
so that $\phi_d(\cdot)$ denotes the density of the standard $d$-dimensional Gaussian.
We let $\chi^2_d$ denote the chi-squared distribution with $d$ degrees of freedom.

\section{Background} \label{sec:background}
In this section, we provide background on some of the tools we use throughout the paper.

\subsection{Sum-of-squares algorithms}
We briefly review sum-of-squares proofs and pseudo-expectations, referring to \cite{BarSte16-sos-notes,FleKP19-sos} for a more complete overview.
Throughout, we will refer to a polynomial as a sum of squares if it is equal to a sum of squared polynomials. We use $\R[X]$ to denote the set of polynomials over the indeterminates/variable $X = (X_1,\dots,X_N)$ and $\R[X]_{\leq t}$ to denote those polynomials with degree at most $t$. For $p \in \R[X]$, we use $\norm{p}_2$ to denote the Euclidean norm of the associated coefficient vector (in the monomial basis).

Consider a system of polynomial inequalities $\mathcal{A} = \{q_i(X) \geq 0\}_{i=1}^m$ over the $N$-dimensional indeterminates $X = (X_1,\dots,X_N)$.

\begin{definition}
    (Sum-of-squares proofs)  Let $p,p' \in \R[X]$. The inequality $p(X) \geq p'(X)$ has a \emph{sum-of-squares proof} given $\mathcal{A}$ if there exist sum of squares polynomials $p_S \in \R[X]$ for $S \subset [m]$ satisfying
\begin{align*}
    p(X) = p'(X) + \sum_{S \subset [m]} p_S(X) \Pi_{i \in S} q_i(X),
\end{align*}
and we say the proof has degree $t$ if each summand has degree at most $t$; 
we denote this by $\mathcal{A} \sos{X}{t} p \geq p'$ (we may omit the variable when it is clear from the context).
\end{definition}

\begin{definition}
    (Pseudo-expectations) A degree-$t$ pseudo-expectation $\pE : \R[X]_{\leq t} \to \R$ is a linear map satisfying $\pE[1] = 1$ and $\pE[p(X)^2] \geq 0$ for any $p \in \R[X]_{\leq t/2}$. 
    Given a set of polynomial inequalities $\mc A$, 
    we say a degree-$t$ pseudo-expectation $\pE$ satisfies $\mc{A}$ if $\pE[p(X)] \geq 0$ for all $p \in \R[X]_{\leq t}$ such that $p(X) = s(X)^2 \Pi_{i \in S} q_i(X)$ for some $S \subset [m]$ and $s \in \R[X]$. 
    Under the same conditions, we say $\pE$ approximately satisfies $\mc{A}$ if
$\pE[p(X)] \geq -2^{-N^{\Omega(t)}} \norm{s}_2 \Pi_{i \in S}\norm{q_i}_2$.

\end{definition}

Under mild conditions on the bit complexity of the constraint set and boundedness of the feasible region, we can efficiently find an approximately satisfying pseudoexpectation. Moreover, this pseudoexpectation will approximately satisfy all polynomial inequalities that have a sum of squares proof (of sufficiently small degree), again under mild conditions on the bit complexity of the proof. These conditions will hold for all the problems we consider in this paper with negligible approximation error, so for simplicity we take the following assumptions as facts throughout.

\begin{assumption}
    We can compute a degree-$d$ pseudo-expectation satisfying $\mc{A}$ (or refute its satisfiability) in time $(m+N)^{O(d)}$.
\end{assumption}

\begin{assumption}
    Let $\mc{A} \sos{}{t'} p \geq 0$ for $p \in \R[X]$ and let $\pE$ be a degree-$t$ pseudo-expectation that satisfies $\mc{A}$. Let $h \in \R[X]$ be a sum of squares with $\mathrm{deg}(h) + t' \leq t$. Then $\pE[h \cdot p] \geq 0$.
\end{assumption}

Finally, we will use $x \in (1 \pm y) z$ as shorthand to denote the intersection of the constraints $x \leq (1+y)z$ and $x \geq (1-y) z$. We will sometimes consider sum-of-squares proofs involving matrix-valued variables. For a matrix-valued polynomial $A$, we use the constraint $A \preceq B$ to denote $A = B - CC^T$, where $C$ is an implicit dummy matrix-valued indeterminate  (and likewise for $A \succeq B$).

\paragraph{Quantifier elimination.} 
Let $Z = (Z_1,\dots,Z_{d})$ be indeterminates.
Let $\mc A'$ be a set of $m'$ polynomial inequalities over $Z$.
We will frequently work with a set of constraints $\mc A$ over indeterminates $X$ that include the following constraint on the variable $X$:
\begin{align}
\label{eq:axiom-quantifier}
  \mc{A}'  \sos{Z}{t} p(X,Z) \geq 0,
\end{align}
 which may be read as: there exists a sum-of-squares proof under the axioms $\mc A'$ in the indeterminates $Z$ that $p(X,Z) \geq 0$.
 For example, we will routinely use the constraint of the following form: $\{\sum_i Z_i^2=1\} \sos{Z}{t} \sum_{j=1}^n \langle Z, y-X\rangle^{2t} \leq B$.
 Even though \Cref{eq:axiom-quantifier} does not seem to be representable as $\mc A = \{q_i(X) \geq 0\}_{i=0}^m$ for a small $m$, it turns out that it can be compactly represented.
 In particular, if $p$ is a polynomial of degree at most $t$ and $Z$ has dimension $d$, then we can encode it as 
$ \cA$ over indeterminates $X$ and $X'= (X'_1,\dots,X'_{N'})$,
where $|N'| \lesssim |O(N  + d + m')|^{O(t)}$
and $|\cA| = (O(N' + m+d))^{O(t)}$.
We refer the reader to \cite{KotSte17} for further details (see also \cite[Appendix A.2]{DiaKKPP22-colt}).

\subsection{SQ lower bounds}
\label{sec:prelim-sq-lower-bound}
In this section, we give a brief overview of the statistical query (SQ) framework~\cite{Kearns98,FelGRVX17}. In the sequel, we shall show that our algorithms for mean and covariance estimation is qualitatively optimal in the SQ framework.
While our focus is on SQ algorithms, our structural results also rule out polynomial-time algorithms from other popular families of algorithms: sum-of-squares hierarchies~\cite{DiaKPP24-sos}, low-degree polynomial tests~\cite{BreBHLS21}, and polynomial threshold functions~\cite{DiaKLP25-ptf}.

An SQ algorithm does not take in samples from a distribution $Q$, instead, it interacts with the underlying distribution $Q$ through an oracle of the following form:
\begin{definition}[STAT Oracle] 
	Let $P$ be a distribution on the domain $\cX$.
    A statistical query is a bounded (measurable)
	function $f : \cX \to [-1, 1]$.
	For a tolerance $\SQtolerance \in (0,1)$, the $\STAT_{P,\SQtolerance}$ oracle 
  responds to the query $f$ with a value $v = \STAT_{P,\SQtolerance}(f)$
	such that $|v - \E_{X\sim P} [f(X)]| \leq \SQtolerance$.
  We call $\SQtolerance$ the tolerance of the SQ oracle.
\end{definition}
Many popular algorithms for statistical estimation use samples only to approximate $\E_P[f_1(X)],$ $\dots$, $\E_P[f_m(X)]$ for a sequence of (adaptively chosen) functions $f_i$'s. 
An SQ algorithm could directly obtain these approximations using the $\STAT_{P,\SQtolerance}$ oracle above, up to error $\SQtolerance$.
Observe that $\SQtolerance$ here corresponds to the sampling error, and thus $\poly(1/\SQtolerance)$ is interpreted as the ``effective sample complexity'' of the SQ algorithm.

Our focus will be on showing the hardness of testing problems; Standard arguments imply that the corresponding estimation task is no easier (either statistically or computationally).
We will consider a testing problem of the following kind:
\begin{definition}[Generic Testing Problem]
\label{def:generic-testing-problem}
    Let $P_{\mathrm{null}}$ be a distribution and let $\cA$ be a set of distributions.
    Suppose the tester has access (either through i.i.d.\ samples $(X_1,\dots,X_n)\sim {Q}^{\otimes n}$ or an SQ oracle $\mathrm{STAT}_{Q,\SQtolerance}$) to the distribution $Q$. 
    Consider the following testing problem:
    \[
    H_0: Q = P_{\mathrm{null}} \quad \text{ vs. } \quad H_1: Q \in \mathcal{A}\,.
    \]
    \end{definition}
    We say an SQ algorithm solves the testing problem above with $q$ queries and tolerance $\SQtolerance$ if there exists an SQ algorithm that interacts with an $\mathrm{STAT}_{Q,\SQtolerance}$ oracle by making at most $q$ adaptive queries and outputs a decision $\widehat{\phi}$ such that, with probability at least $2/3$, $\widehat{\phi} = \mathrm{1}_{Q \in \cA}$.

An SQ hardness result is parameterized by two parameters, a query parameter $q_0$ and a tolerance parameter $\SQtolerance_0$, and is of the following flavor: any SQ algorithm that makes less than $q_0$ queries and successfully solves the inference task of interest must use at least a single query with tolerance less than $\SQtolerance_0$.
This is interpreted as an information-computation gap stating that every successful SQ algorithm must use either $\Omega(q_0)$ ``runtime'' or $\poly(1/\SQtolerance_0)$ ``samples''.
We refer the reader to~\cite[Chapter 8]{DiaKan22-book} for further details.

The influential work of \cite{DiaKS17}
proved SQ hardness results for 
a generic hypothesis testing problem, known as Non-Gaussian Component Analysis (NGCA).
NGCA asks us to distinguish whether $Q = \mathsf{N}(0,I)$ or whether $Q$ has a non-Gaussian component distribution $A$ in a hidden direction, defined below.

\begin{definition}[High-Dimensional Hidden Direction Distribution]\label{def:hidden-direction-dist}
    Let $A$  be a univariate distribution on $\R$; we use $a(x)$ for the density of $A$ at $x$.
    For a unit vector $v$, we denote by $P_{A,v}$ the distribution with the density $P_{A,v}(x) := a(v^\top x) \phi_{v^\perp}(x)$, where $\phi_{v^\perp}(x) := \exp\left(-\|x - (v^\top x)v\|_2^2/2\right)/(2\pi)^{(d-1)/2}$, i.e., the distribution that coincides with $A$ on the direction $v$ and is an independent standard Gaussian in every orthogonal direction.  
\end{definition}

\begin{definition}[NGCA]
    \label{def:ngca}
    Let $A$ be a given univariate distribution.
    Consider the distribution testing problem (see \Cref{def:generic-testing-problem}) given access to a distribution $Q$ over $\R^d$:
\[
    H_0: Q = \mathsf N(0,I_d) \quad \text{ vs. } \quad H_1: Q \in  \{P_{A,v} \mid v \in \bS^{d-1}\}\,.
    \]
\end{definition}
\cite{DiaKS17} established the following SQ lower bounds, showing that if $A$ matches $m$ moments with $\mathsf{N}(0,1)$, then any SQ algorithm either needs at least exponentially many queries (``runtime'') or inverse tolerance (``samples'')  at least $d^{\Omega(m)}$. 
\begin{lemma}[SQ Lower Bounds for NGCA~\cite{DiaKS17}]
\label{lem:ngca-lower-bound}
Suppose $A$ matches $m$ moments with $\mathsf{N}(0,1)$ and $\chi^2(A,\mathsf{N}(0,1)) < \infty$.
Then any SQ algorithm that solves the testing problem in \Cref{def:ngca} must use either
\begin{itemize}
    \item number of queries at least $\frac{2^{\Omega(d^{\Omega(1)})}}{d^{O(m)}}$, or
    \item tolerance less than $\SQtolerance \lesssim \frac{\sqrt{\chi^2(A,\mathsf{N}(0,1))}}{\dimension^{\Omega(m)}}$.
\end{itemize}
In particular, if $m \gtrsim 1$,  $d \gtrsim (m \log d)^{\Omega(1)}$, $\chi^2(A,\mathsf{N}(0,1)) \lesssim m$, then
any successful SQ algorithm either uses $2^{d^{\Omega(1)}}$ many queries or tolerance at most $d^{-\Omega(m)}$.
\end{lemma}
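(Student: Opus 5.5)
The plan is the standard pairwise-correlation (statistical dimension) argument, following \cite{DiaKS17,FelGRVX17}. For distributions $D_1,D_2$ absolutely continuous with respect to $D_0=\mathsf N(0,I_d)$, define the pairwise correlation
\[
\chi_{D_0}(D_1,D_2)=\int \frac{\mathrm{d}D_1}{\mathrm{d}D_0}\frac{\mathrm{d}D_2}{\mathrm{d}D_0}\,\mathrm{d}D_0-1 .
\]
The generic SQ lemma of \cite{FelGRVX17} (see \cite[Chapter 8]{DiaKan22-book}) states the following. Suppose a family $\{D_1,\dots,D_s\}\subset\cA$ satisfies $|\chi_{D_0}(D_i,D_j)|\le\gamma$ for all $i\neq j$ and $\chi_{D_0}(D_i,D_i)\le\beta$. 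Then any SQ algorithm distinguishing $D_0$ from $\cA$ must either use tolerance at most $\sqrt{2\gamma}$ or make at least $s\gamma/\beta$ queries. I take this lemma as a black box; its proof is a Cauchy--Schwarz argument on how many $D_i$ a single query can separate from $D_0$. The work is then to instantiate it with $D_i=P_{A,v_i}$.

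\textbf{Step 1: a packing.} A random-coding (or Gilbert--Varshamov) argument gives $s=2^{\Omega(d^{c})}$ unit vectors $v_1,\dots,v_s\in\bS^{d-1}$ with $|\langle v_i,v_j\rangle|\le d^{-c'}$ for all $i\neq j$, for suitable constants $0<c,c'<1/2$. This follows from concentration of $\langle u,u'\rangle$ for independent uniform $u,u'$ together with a union bound over pairs.

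\textbf{Step 2: the correlation identity.} This is the key and most delicate step. Expand $a(x)/\phi(x)=\sum_{k\ge0}\alpha_k h_k(x)$ in normalized Hermite polynomials, where $\alpha_k=\E_{A}[h_k]$. Then $\frac{\mathrm dP_{A,v}}{\mathrm dD_0}(x)=\sum_k\alpha_k h_k(v^\top x)$. For unit vectors $v,u$, the pair $(v^\top X,u^\top X)$ under $X\sim\mathsf N(0,I_d)$ is a standard bivariate Gaussian with correlation $\langle v,u\rangle$, so Mehler's formula gives $\E[h_k(v^\top X)h_\ell(u^\top X)]=\mathbf 1\{k=\ell\}\langle v,u\rangle^k$. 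Hence
\[
\chi_{D_0}(P_{A,v},P_{A,u})=\sum_{k\ge1}\alpha_k^2\langle v,u\rangle^k .
\]
Moment matching up to degree $m$ forces $\alpha_k=0$ for $1\le k\le m$. Since $\sum_k\alpha_k^2=1+\chi^2(A,\mathsf N(0,1))$, this yields
\[
|\chi_{D_0}(P_{A,v},P_{A,u})|\le |\langle v,u\rangle|^{m+1}\chi^2(A,\mathsf N(0,1)).
\]
Setting $u=v$ gives $\beta=\chi^2(A,\mathsf N(0,1))$. For $i\neq j$, Step 1 gives $\gamma=d^{-\Omega(m)}\chi^2(A,\mathsf N(0,1))$. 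The main technical care is justifying the Hermite expansion and its $L^2$ convergence, which uses the hypothesis $\chi^2<\infty$ so that $a/\phi\in L^2(\phi)$.

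\textbf{Step 3: conclusion.} Plugging into the SQ lemma, the tolerance threshold is $\sqrt{2\gamma}\lesssim\sqrt{\chi^2(A,\mathsf N(0,1))}\,d^{-\Omega(m)}$. The query threshold is $s\gamma/\beta = 2^{\Omega(d^{c})}d^{-O(m)}$. These are exactly the two alternatives in the lemma.

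\textbf{The ``in particular'' clause.} When $d\gtrsim(m\log d)^{\Omega(1)}$, the term $2^{d^{c}}$ dominates $d^{O(m)}=2^{O(m\log d)}$, so the query bound is $2^{d^{\Omega(1)}}$. When $\chi^2\lesssim m$, the factor $\sqrt{m}\le d^{o(m)}$ can be absorbed, so the tolerance bound becomes $d^{-\Omega(m)}$.
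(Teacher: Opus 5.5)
Your proposal is correct. It is essentially the argument of \cite{DiaKS17}, which the paper invokes by citation rather than reproving: the Hermite/Mehler correlation bound $|\chi_{\mathsf N(0,I_d)}(P_{A,v},P_{A,u})|\le|\langle v,u\rangle|^{m+1}\chi^2(A,\mathsf N(0,1))$, a family of $2^{\Omega(d^{c})}$ nearly orthogonal directions, and the pairwise-correlation statistical-dimension lemma of \cite{FelGRVX17}, with the parameter bookkeeping ($\beta=\chi^2$, $\gamma=d^{-\Omega(m)}\chi^2$, query bound $s\gamma/\beta$, tolerance $\sqrt{2\gamma}$, and absorbing $\sqrt{m}$ in the ``in particular'' clause) all checking out.
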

Computational lower bounds for the NGCA problem under the same moment-matching condition on $A$ have also been established for sum-of-squares hierarchies~\cite{DiaKPP24-sos}, low-degree polynomial tests~\cite{BreBHLS21}, and polynomial threshold functions~\cite{DiaKLP25-ptf}.
Since our computational lower bounds for mean and covariance estimation under realizable contamination are proved using the NGCA framework, we immediately obtain lower bounds for these families of algorithms as well.

\subsection{Minimax lower bounds} 
Consider a parameter space $\Theta$ and an observation space $\mathcal{X}$.  Let $L: \Theta \times \Theta \rightarrow \mathbb{R}_{+}$ denote a metric on the parameter space.  For each $\theta \in \Theta$, let $P_\theta \in \mathcal{P}(\mathcal{X})$ denote a probability distribution on the space of observations $\mathcal{X}$.  We define the $1-\delta$ quantile as
\[
\mathrm{Quantile}\bigl(1-\delta, P_{\theta}, L(\widehat{\theta}, \theta)\bigr) := \inf\big\{r \geq 0\ \big|\ P_{\theta}\bigl(L(\widehat{\theta}, \theta) \leq r\bigr) \geq 1-\delta \big\}.
\]
Our information-theoretic lower bounds will be stated in terms of the minimax $(1-\delta)$ quantile, defined as
\begin{align}\label{def:minimax-quantile}
\mathcal{M}(\delta, \mathcal{P}_{\Theta}, L) := \inf_{\widehat{\theta} \in \widehat{\Theta}} \sup_{\theta \in \Theta} \sup_{P_{\theta} \in \mathcal{P}_{\theta}}\; \mathrm{Quantile}\bigl(1-\delta, P_{\theta}, L(\widehat{\theta}, \theta)\bigr).
\end{align}
We will commonly consider the setting in which $n$ samples are drawn i.i.d. from an underlying distribution and use the shorthand $\cM_n(\delta, \cP_{\Theta}, L)$  accordingly.

Our lower bounds will use the following variant of Fano's inequality, the main statement and proof of which can be found in several standard statistics textbooks~\cite{wainwright2019high,duchi2025info,tsybakov2009nonparametric}.  
\begin{lemma}[Fano's inequality]\label{lem:fano}
Suppose that $H \in \mathcal{P}(\mathcal{X})$ and $\theta_1, \ldots, \theta_M \subseteq \Theta$ satisfy
\begin{enumerate}
    \item (Loss separation) For all $\ell \neq k \in [M]$ and all $\theta \in \Theta$, $\max\{L(\theta, \theta_k), L(\theta, \theta_\ell)\} \geq \gamma$;
    \item (Bounded KL) $\frac{1}{M} \sum_{\ell \in [M]}\, \mathrm{KL}(P_{\theta_{\ell}}, H) + \log(2 - M^{-1}) \leq \frac{1}{2} \log(M)$.
\end{enumerate}
Then, for all $\delta \leq 1/2$, it holds that $\mathcal{M}(\delta, \mathcal{P}_{\Theta}, L) \geq \gamma$.
\end{lemma}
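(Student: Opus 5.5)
We argue by contradiction, converting a good estimator into a good multiple test and then applying the classical Fano inequality; we assume $M \geq 2$ throughout. Fix any estimator $\widehat{\theta}$ and suppose that $\mathrm{Quantile}(1-\delta, P_{\theta_k}, L(\widehat{\theta}, \theta_k)) < \gamma$ for every $k \in [M]$. If the minimax quantile were below $\gamma$, such an estimator would exist. Since $r \mapsto P_{\theta_k}(L(\widehat{\theta},\theta_k) \le r)$ is right-continuous, the infimum in the definition of the quantile is attained. Hence $P_{\theta_k}\bigl(L(\widehat{\theta}, \theta_k) < \gamma\bigr) \geq 1-\delta$ for each $k$.

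\textbf{Step 1 (reduction to testing).} Define the test $\psi := \arg\min_{k \in [M]} L(\widehat{\theta}, \theta_k)$, breaking ties arbitrarily. By the loss-separation hypothesis applied with $\theta = \widehat{\theta}$, the event $L(\widehat{\theta}, \theta_k) < \gamma$ forces $L(\widehat{\theta}, \theta_\ell) \geq \gamma$ for all $\ell \neq k$. On this event, therefore, $\psi = k$. Consequently $P_{\theta_k}(\psi \neq k) \leq \delta \leq 1/2$ for every $k$. Letting $V$ be uniform on $[M]$ and $X \mid V = k \sim P_{\theta_k}$, the average error satisfies $p_e := \mathbb{P}(\psi \neq V) \leq 1/2$.

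\textbf{Step 2 (Fano and the mutual-information bound).} The classical Fano inequality, together with data processing for the Markov chain $V \to X \to \psi$, gives
\[
h(p_e) + p_e \log(M-1) \;\geq\; H(V \mid X) \;=\; \log M - I(V;X).
\]
Here $h$ denotes the binary entropy. We also use the variational identity
\[
I(V;X) = \min_{Q} \frac{1}{M}\sum_{k} \mathrm{KL}(P_{\theta_k}, Q) \;\leq\; \frac{1}{M}\sum_k \mathrm{KL}(P_{\theta_k}, H).
\]
Combining this with the bounded-KL hypothesis yields
\[
\log M - I(V;X) \;\geq\; \tfrac12 \log M + \log(2 - M^{-1}) \;=\; \log\frac{2M-1}{\sqrt{M}}.
\]

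\textbf{Step 3 (numerical contradiction).} The map $p \mapsto h(p) + p\log(M-1)$ is nondecreasing on $[0, (M-1)/M]$, and $1/2 \leq (M-1)/M$ when $M \ge 2$. Therefore the left-hand side of the Fano inequality is at most
\[
\log 2 + \tfrac12 \log(M-1) \;=\; \log\bigl(2\sqrt{M-1}\bigr).
\]
To compare with Step 2, square both arguments: $(2M-1)^2/M = 4M - 4 + 1/M > 4(M-1)$. Hence $2\sqrt{M-1} < (2M-1)/\sqrt{M}$, which contradicts the chain of inequalities. So for every estimator some $k$ has quantile at least $\gamma$, and taking the infimum over estimators gives $\mathcal{M}(\delta, \mathcal{P}_\Theta, L) \geq \gamma$.

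The only delicate points are the constant bookkeeping in Step 3, which is exactly why the odd-looking term $\log(2 - M^{-1})$ appears in the hypothesis, and the attainment of the quantile, which is what lets us use the strict inequality $<\gamma$ in Step 1. Everything else is standard.
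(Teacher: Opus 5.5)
Your argument is correct and follows the standard route that the paper relies on: the paper gives no proof of its own and only cites textbooks. You reduce to an $M$-ary test via loss separation, apply Fano's inequality, and bound $I(V;X)$ by the average KL divergence to the reference $H$. A nice feature of your version is that you evaluate $h(p)+p\log(M-1)$ at $p=1/2$ instead of maximizing it over $p$ (maximizing gives the usual form $p_e \ge 1-\bigl(\frac{1}{M}\sum_\ell \mathrm{KL}(P_{\theta_\ell},H)+\log(2-M^{-1})\bigr)/\log M \ge 1/2$), and this yields the strict inequality needed to cover the endpoint $\delta=1/2$; your restriction to $M\ge 2$ is also genuinely needed, since for $M=1$ both hypotheses hold trivially with $H=P_{\theta_1}$ and any $\gamma$.
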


\section{Information-theoretic limits of mean and covariance estimation} \label{sec:it}

\subsection{Algorithms via variational principles} \label{sec:alg-variational}
Our information-theoretically optimal algorithms use a familiar technique from the robust statistics literature in which optimal univariate estimators are combined using a variational principle to obtain an optimal multivariate estimator.  This idea dates back at least to the Tukey median and the notion of Tukey depth~\cite{tukey1975mathematics} and has been deployed in problems beyond mean estimation~\cite{gao2020robust} and using a variety of variational principles, such as the PAC-Bayes method~\cite{minasyan2025statistically}.  We next briefly describe the variational principle we use for mean estimation and for covariance estimation in turn.

\paragraph{Mean estimation.} Let $\theta, \theta_{\star} \in \R^d$ and note that the $\ell_2$ norm error admits the variational characterization
\begin{align} \label{eq:variational-mean}
\| \theta - \theta_{\star} \|_2 = \sup_{v \in \mathbf{S}^{d-1}}\; \langle v, \theta - \theta_{\star} \rangle. 
\end{align}
Suppose that for each direction $v \in \mathbf{S}^{d-1}$,
we had an estimator $\widehat{\theta}_v$ of the mean $\langle v, \theta_{\star}\rangle$.
In this case, we could define an estimator $\widehat{\theta}$ to be any element
\[
\widehat{\theta} \in \argmin_{\theta \in \mathbf{R}^d}
  \sup_{v \in \mathbf{S}^{d-1}}\;
    \bigl \lvert \langle v, \theta \rangle - \widehat{\theta}_v \bigr \rvert.
\]
We then have the oracle inequality
\[
\sup_{v \in \mathbf{S}^{d-1}}\; \bigl \lvert \langle v, \widehat{\theta} \rangle - \widehat{\theta}_v \bigr \rvert \leq \sup_{v \in \mathbf{S}^{d-1}}\; \bigl \lvert \langle v, \theta_{\star} \rangle - \widehat{\theta}_v \bigr \rvert,
\]
from which we combine the characterization~\eqref{eq:variational-mean} with the triangle inequality to obtain
\begin{align*}
\bigl\| \widehat{\theta} - \theta_{\star} \bigr\|_2
= \sup_{v \in \mathbf{S}^{d-1}}\; \bigl\langle v, \widehat{\theta} - \theta_{\star} \bigr\rangle
&\leq \sup_{v \in \mathbf{S}^{d-1}}\; \bigl\lvert \bigl\langle v, \widehat{\theta}\bigr\rangle  - \widehat{\theta}_v \bigr\rvert + \sup_{v \in \mathbf{S}^{d-1}}\; \bigl \lvert \langle v, \theta_{\star}\rangle  - \widehat{\theta}_v \bigr \rvert \\
&\leq 2\sup_{v \in \mathbf{S}^{d-1}}\; \bigl \lvert \langle v, \theta_{\star}\rangle  - \widehat{\theta}_v \bigr \rvert.
\end{align*}
It thus follows that if each of the univariate estimators $\widehat{\theta}_v$ are information-theoretically optimal, we may be able to combine these estimators into a multivariate estimator with similar statistical guarantees.  In the setting of mean estimation, we note that the optimal univariate estimator is due to forthcoming work~\cite{ma2026adaptive}; for the reader's convenience, we provide an alternative method which achieves the same rate. \hfill $\clubsuit$

\paragraph{Covariance estimation.}
For simplicity, we consider operator norm in this paragraph, although our main guarantees follow a slightly different path as we are interested in relative error.  

Following similar steps, let $\Sigma, \Sigma_{\star} \in \mathcal{C}^d_{++}$ and note that the operator norm admits the variational representation
\begin{align*}%
\| \Sigma - \Sigma_{\star} \|_{\mathrm{op}} = \sup_{v \in \mathbf{S}^{d-1}} \bigl \langle v, \bigl(\Sigma - \Sigma_{\star} \bigr) v \bigr\rangle.
\end{align*}
As before, suppose we had an estimator $\widehat{\sigma}_v^2$ of the variance $v^{\top} \Sigma_{\star} v$ in each direction $v \in \mathbf{S}^{d-1}$. We then define an estimator $\widehat{\Sigma}$ as any element
\[
\widehat{\Sigma} \in \argmin_{\Sigma \in \mathcal{C}_{++}^{d}} \sup_{v \in \mathbf{S}^{d-1}} \bigl \lvert \langle v, \Sigma v \rangle - \widehat{\sigma}_v^2 \bigr \rvert.
\]
Combining the preceding two displays with the triangle inequality yields
\begin{align*}
    \bigl \| \widehat{\Sigma} - \Sigma_{\star} \bigr \|_{\mathrm{op}} = \sup_{v \in \mathbf{S}^{d-1}} \bigl \langle v, \bigl(\widehat{\Sigma} - \Sigma_{\star} \bigr) v \bigr\rangle &\leq \sup_{v \in \mathbf{S}^{d-1}} \bigl \lvert \bigl \langle v, \widehat{\Sigma}v\bigr \rangle - \widehat{\sigma}_v^2 \bigr \rvert+ \sup_{v \in \mathbf{S}^{d-1}} \bigl \lvert \bigl \langle v, \Sigma_{\star} v \bigr\rangle - \widehat{\sigma}_v^2 \bigr\rvert\\
    &\leq 2 \cdot \sup_{v \in \mathbf{S}^{d-1}} \bigl \lvert \bigl \langle v, \Sigma_{\star} v \bigr\rangle - \widehat{\sigma}_v^2 \bigr\rvert.
\end{align*}
Hence, as in the mean estimation setting, this suggests that it suffices to find optimal univariate variance estimators in each direction and that these can be converted into covariance estimators.  In the sequel, we show that a variant of the minimum Kolmogorov distance estimator introduced by~\cite{ma2024estimation} yields a suitable univariate estimator in each direction.  \hfill $\clubsuit$

We emphasize that as written, these estimators cannot be computed in finite time.  A straightforward solution is to approximate the unit sphere by a suitable $\epsilon$-net of the unit sphere and compute univariate estimators on each element of the $\epsilon$-net.  While this strategy yields information-theoretically optimal estimators, we note that the size of these $\epsilon$-nets scales exponentially in the dimension and hence, in general, yields computationally-inefficient estimators.

\subsection{Mean estimation}

We first consider information-theoretic rates for Gaussian mean estimation.
We take the parameter space and loss to be
\begin{align*}
  \cP_{\rm mean}(\sigma^2, \epsilon) = \bigl\{\cR(\mathsf{N}(\theta, \sigma^2 I_d), \epsilon) \mid \theta \in \bR^d\bigr\}
  \qquad\textrm{and}\qquad
  L(\theta, \theta_\star) = \| \theta - \theta_\star \|_2.
\end{align*}
In the theorem below, we give tight minimax rates on this problem.
We provide the proof of the upper bound in \cref{pf:mean-est-it-ub} and of the lower bound in \cref{pf:mean-est-it-lb}.  As mentioned in the previous subsection, the upper bound is a corollary of the optimal univariate estimator of~\cite{ma2026adaptive}; our main contribution is a matching lower bound in the high-dimensional and high-confidence setting.

\begin{theorem}\label{thm:mean-est-it-ub-lb}
Let $\sigma^2 > 0$, $\epsilon < 1$, $\delta \leq 0.5$, $n\in\mathbf{N}$ such that
\[
  n \gtrsim \frac{d + \log(\frac{1}{\delta})}{1-\epsilon}.
\]
The minimax $1-\delta$ quantile $\cM_n(\delta, \cP_{\rm mean}(\sigma^2, \epsilon),L)$~\eqref{def:minimax-quantile} satisfies
\[
  \cM_n(\delta, \cP_{\rm mean}(\sigma^2, \epsilon),L)
  \asymp \frac{\sigma \log(\frac{1}{1-\epsilon})}{\sqrt{\log \Bigl(1 + \frac{\epsilon^2}{1-\epsilon} \cdot \frac{n}{d+\log(\frac{1}{\delta})}\Bigr)}}.
\]
\end{theorem}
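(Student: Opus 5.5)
The proof has two halves, an upper bound and a lower bound, and by scaling we may take $\sigma=1$.

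\textbf{Upper bound.} We use the variational principle of Section~\ref{sec:alg-variational}, applied over a $1/2$-net $\mathcal{N}$ of $\mathbf{S}^{d-1}$ with $|\mathcal{N}|\le 5^d$. For each $v\in\mathcal{N}$, the projected observations $\langle v, X_i\rangle$ of the observed samples follow a law in $\mathcal{R}(\mathsf{N}(\langle v,\theta_\star\rangle,1),\epsilon)$. This is because the likelihood-ratio sandwich of Lemma~\ref{lem:all-or-nothing-characterization} survives marginalization.

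We then run the univariate estimator (from \cite{ma2026adaptive}, or our alternative) at confidence level $\delta' = \delta 5^{-d}$. Its univariate guarantee gives error
\[
r \asymp \frac{\log(1/(1-\epsilon))}{\sqrt{\log(1+\frac{\epsilon^2}{1-\epsilon}\cdot\frac{n}{\log(1/\delta')})}},
\]
and $\log(1/\delta') \asymp d+\log(1/\delta)$. A union bound makes all the directional estimates accurate simultaneously. To pass from the net to the sphere, take $\widehat\theta$ minimizing $\max_{v\in\mathcal{N}}|\langle v,\theta\rangle-\widehat\theta_v|$. Using $\|u\|_2\le 2\max_{v\in\mathcal{N}}\langle v,u\rangle$, this gives $\|\widehat\theta-\theta_\star\|_2\le 4r$.

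For the univariate estimator, the intuition is this. Under $P_\theta$, a shift by $\gamma$ changes the likelihood ratio by a factor $e^{\gamma t}$ at $t$. This exceeds the sandwich width $\frac{1}{1-\epsilon}$ once $t\gtrsim \log(\frac{1}{1-\epsilon})/\gamma$. So one compares tail counts at thresholds $t\approx\sqrt{2\log(\cdot)}$ whose tail mass is of order $\log(1/\delta')/(\epsilon^2 n/(1-\epsilon))$, using Bernstein concentration. Solving $\gamma t\asymp \log(1/(1-\epsilon))$ gives the rate. The hypothesis $n\gtrsim (d+\log(1/\delta))/(1-\epsilon)$ ensures the logarithm in the rate is bounded below, so the bulk-regime error is absorbed.

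\textbf{Lower bound.} We apply Fano (Lemma~\ref{lem:fano}). Take a packing $\{v_\ell\}_{\ell\le M}$ of the sphere with $M=e^{cd}$ and pairwise distances at least $1/2$, and set $\theta_\ell=2\gamma v_\ell$. For a confidence-dependent bound, we additionally use a two-point or Le Cam version to capture $\log(1/\delta)$.

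The key construction is a single reference law $H$ in $\mathbf{R}^d$ together with realizable $Q_\ell\in\mathcal{R}(\mathsf{N}(\theta_\ell,I),\epsilon)$ such that each $Q_\ell$ is very close to $H$. We take $H$ to be the reference $\mathsf{N}(0,I)$ reweighted, and choose $Q_\ell$ with density $\phi(x-\theta_\ell)\,w_\ell(x)$. Here $w_\ell\in[1-\epsilon,1]$ is chosen to clip the ratio $\phi(x-\theta_\ell)/\phi(x)=e^{2\gamma\langle v_\ell,x\rangle-2\gamma^2}$ into the band. The resulting $Q_\ell$ agrees, up to normalization, with a common measure wherever $|\langle v_\ell,x\rangle|\le T:=\log(\frac{1}{1-\epsilon})/(4\gamma)$, and differs only in the tail slab $|\langle v_\ell,x\rangle|>T$. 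That slab has Gaussian mass about $e^{-T^2/2}$.

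It then remains to bound $\mathrm{KL}(Q_\ell^{\otimes n}, H^{\otimes n})$, which is at most $n\,\chi^2(Q_\ell,H)$. The discrepancy lives on mass $e^{-T^2/2}$, and the missing-mass atom contributes a factor $\epsilon^2/(1-\epsilon)$. This gives a bound of roughly $n\frac{\epsilon^2}{1-\epsilon}e^{-T^2/2}$. Requiring this to be at most $c(d+\log(1/\delta))$ forces $T^2\asymp\log(1+\frac{\epsilon^2 n}{(1-\epsilon)(d+\log(1/\delta))})$, which yields the claimed $\gamma$.

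\textbf{Main obstacle.} The hard step is making the clipping construction exact. We need a common $H$ that is compatible with all directions simultaneously, with matching total observed mass and the $\star$ atom included. We also need the $\chi^2$ to carry the precise $\epsilon^2/(1-\epsilon)$ factor rather than a constant. My first attempt would be to clip symmetrically around the midpoint density $\phi_d(x-\theta_\ell/2)$-type reference, adjusting $w_\ell$ in the bulk to equalize masses.
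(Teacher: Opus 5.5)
Your upper bound is the paper's argument: a $\tfrac12$-net of size $5^d$, the univariate estimator at level $\delta 5^{-d}$, and the min--max combination over the net giving error $4r$. Your lower-bound architecture is also the paper's: Fano over hidden-direction alternatives against a rotation-invariant Gaussian reference, truncation at $|\langle v,x\rangle|\asymp \log(\frac{1}{1-\epsilon})/\gamma$, and Le Cam for the $\log(1/\delta)$ term.

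\textbf{Where the gap is.} The step you flag as the ``main obstacle'' is the entire content of the lower bound, and it is not carried out. The fix you suggest would also fail. The reference must be $\phi_d$ itself, the common center of all the $\theta_\ell$. Clipping around $\phi_d(\cdot-\theta_\ell/2)$ depends on $\ell$, and it leaves a bulk discrepancy of order $\gamma^2$ in $\chi^2$ against a common $H$, with no $e^{-T^2/2}$ factor. The Le Cam step additionally needs $H$ itself to be a valid contaminated law.

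Two further pieces are missing.
\begin{itemize}
\item The parametric term $\alpha:=\sqrt{(d+\log(1/\delta))/(n(1-\epsilon))}$. When $\tau=\epsilon/(1-\epsilon)\lesssim\alpha$ the target rate is $\asymp\alpha$. Your tail-slab estimate $\chi^2\approx\frac{\epsilon^2}{1-\epsilon}e^{-T^2/2}$ is only valid for $T\gtrsim1$; for $T\lesssim 1$ (i.e.\ $\gamma\gtrsim\tau'$) the discrepancy is of order $\gamma^2$. The paper gets $\alpha$ from the contamination that censors everything, which reduces to MCAR with about $n(1-\epsilon)$ clean samples.
\item The cap $\gamma\lesssim\sqrt{\tau'}$, where $\tau':=\log(1+\tau)$, which your construction (like the paper's) needs so that $T\gtrsim\gamma$. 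One must check it is harmless. It is, because it is only active when $\tau\gtrsim\alpha^{-2}$, where the target is itself $\asymp\sqrt{\tau'}$.
\end{itemize}

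\textbf{The idea that closes it: decouple the $\star$ atom from the density.} Give every hypothesis and the reference the same observed mass $b=\sqrt{LU}=\sqrt{1-\epsilon}$ (here $L=1-\epsilon$, $U=1$). Set $H=(1-b)\delta_{\star^d}+b\,\phi_d$, which lies in $\mathcal{R}(\mathsf{N}(0,I_d),\epsilon)$. By Corollary~\ref{cor:conditional-distribution-realizable}, a hypothesis with this atom is valid as soon as its \emph{normalized} observed law has log-likelihood ratio to $\mathsf{N}(\gamma v,I_d)$ in the symmetric band $[-\tfrac12\tau',\tfrac12\tau']$.

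Lemma~\ref{lem:univariate-it-hard-instance-mean} takes the observed law to be $P_{A,v}$, with $A=\beta\phi$ on $|t|\le R$ and $A=\phi(\cdot-\gamma)$ on $|t|>R$. Here $\beta\le 1$ normalizes, $R=\tau'/(10\gamma)$, and $\gamma\le\sqrt{\tau'}/10$. Nothing needs tuning in the tails and no masses need matching, so $\mathrm{KL}(P_v,H)=b\,\mathrm{KL}(A,\mathsf{N}(0,1))$. Since $\log(A/\phi)\le\gamma t\,\mathbf{1}\{|t|>R\}$, symmetry of $G\sim\mathsf{N}(0,1)$ gives
$\mathrm{KL}(A,\mathsf{N}(0,1))\le\gamma^2\,\mathbb{P}(|G+\gamma|>R)+\gamma\,\mathbb{E}\bigl[G\bigl(\mathbf{1}\{|G+\gamma|>R\}-\mathbf{1}\{|G|>R\}\bigr)\bigr]\lesssim\gamma^2e^{-\Omega(R^2)}$.

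So the prefactor is $b\gamma^2\le b\tau'^2$ at the relevant scale, not $\epsilon^2/(1-\epsilon)$. With matched atoms the $\star$ atom contributes nothing, contrary to your heuristic. This prefactor suffices: a constant prefactor would fail for small $\tau$, but anything $\lesssim\tau'^2$ works. Inverting $nb\gamma^2e^{-c\tau'^2/\gamma^2}\lesssim d$ through the Lambert $W$ function yields $\gamma\asymp\sqrt{\tau'}\wedge\tau'/\sqrt{\log(3+Cnb\tau'^2/d)}$. A four-case comparison of $\tau$ with $\alpha$ then converts this, together with the parametric and confidence terms, into the stated rate.

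\textbf{Your own clipping can also be completed.} Take $w_\ell\equiv 1-\epsilon$ on $\{\langle v_\ell,x\rangle>T\}$ and $w_\ell\equiv 1$ on $\{\langle v_\ell,x\rangle<-T\}$, so the ratio to the bulk value $c\,\phi_d$ is continuous at $\pm T$. The common observed mass $c'$ then lies in $[1-\epsilon,1]$, so $H:=(1-c')\delta_{\star^d}+c'\phi_d$ is valid, and $\chi^2(Q_\ell,H)\lesssim\gamma^2e^{-T^2/2}$ once $T\ge 4\gamma$.
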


A few remarks are in order.
First, let us simplify the expression of the minimax rate in the regime of interest where $\delta$ is constant and $\epsilon \leq 1-c$ for some small constant $c$.
In this setting,
 the rate simplifies to  
\begin{align*}
  \cM_n(\delta, \cP_{\rm mean}(\sigma^2, \epsilon),L)
  \asymp \frac{\sigma \epsilon}{\sqrt{\log\left(1 + \epsilon^2 n/d\right)}}.
\end{align*}
Note that if $\epsilon \lesssim \sqrt{d/n}$, this recovers the parametric rate of estimation $\sigma \sqrt{d/n}$.  
On the other hand, if the contamination fraction $\epsilon$ is at least a constant, the bound further simplifies to 
    \begin{align*}
    \cM_n(\delta, \cP_{\rm mean}(\sigma^2, \epsilon),L) \asymp  \frac{\sigma}{\sqrt{\log(n/d)}}\,.
\end{align*}
In turn, this implies that in terms of sample complexity, in order to reach error $\sigma\rho > 0$, it is both necessary and sufficient to take $n \asymp d e^{\Theta(1/\rho^2)}$ many samples.  

Next, let us compare this rate to similar results in the literature, starting with~\cite[Theorem 8]{ma2024estimation}, which considers the same setting.  In particular, re-arranging our sample size condition, we require that 
\[
\delta \geq \exp\bigl\{-c \cdot \bigl(n(1 -\epsilon) - d\bigr)\bigr\}.
\]
By contrast,~\cite{ma2024estimation} impose the condition that
\[
\delta \geq \exp\Bigl\{- c \Bigl(\frac{\{n(1-\epsilon)\}^{31/36}}{\log(n(1-\epsilon))} - d\Bigr)\Bigr\},
\]
which additionally imposes the sub-optimal sample complexity requirement $n \gtrsim d^{36/31}/(1-\epsilon)$.  As we show, the correct sample complexity scales linearly in the dimension $d$.  Let us next briefly isolate the dependence on the failure probability $\delta$.  In particular, taking $d = 1$ and $\sigma, \epsilon$ to be constants, we see that  
    \begin{align*}
    \cM_n(\delta, \cP_{\rm mean}(\sigma^2, \epsilon),L) \asymp  \frac{1}{\sqrt{\log(n) - \log\log(1/\delta)}}\,,
\end{align*}
so that the dependence on the failure probability $\delta$ is doubly logarithmic in the denominator.  

    Finally, we comment briefly on the proof technique.  As previously mentioned, our main contribution in \Cref{thm:mean-est-it-ub-lb} is the lower bound and we restrict our commentary to this result.  We note that it is typical in robust estimation problems to obtain an error which decomposes as the sum of two terms: One consisting of the ``clean" term corresponding to $\epsilon = 0$ and one capturing the dependence on $\epsilon$.  Typically, the clean term interacts with the dimension $d$, the sample size $n$, and the confidence $\delta$, whereas the second term is a function solely of the contamination fraction $\epsilon$~\cite{DiaKKLMS16-focs}.  From the perspective of lower bounds, these two terms can typically by captured by (i.) an application of Fano's inequality or Assouad's method for the ``clean" term and (ii.) an application of Le Cam's two point method for the contamination term.  By contrast, the realizable contamination model considered here has a contamination term which depends nontrivially on the dimension $d$.  In order to capture this dependence,
    we build on the proof technique of the forthcoming work of \cite{DiaGKPX26-adaptive} (who applied it in the context of linear regression):
    we construct a family of exponentially many hard distributions (using the high-dimensional hidden direction distributions of Definition~\ref{def:hidden-direction-dist}) and conclude via Fano's inequality (Lemma~\ref{lem:fano}).

\subsection{Covariance estimation}
We now consider information-theoretic rates for Gaussian covariance estimation in relative operator norm error.
Specifically, we take the parameter space and loss to be
\begin{align*}
  \cP_{\rm cov}(\epsilon) = \bigl\{\cR(\mathsf{N}(0, \Sigma), \epsilon) \mid \Sigma \in \mathcal{C}_{++}^d\bigr\}
  \qquad\textrm{and}\qquad
  L(\Sigma, \Sigma_\star)
  = \bigl \| \Sigma_\star^{-\frac{1}{2}}\Sigma\Sigma_\star^{-\frac{1}{2}} - I_d \bigr \|_{\rm op}.
\end{align*}
In the theorem below, we give upper and lower bounds on the minimax rates.
We provide the proof of the upper bound in \cref{sec:proof-thm-cov-est-it-ub} and the lower bound in \cref{pf:cov-est-it-lb}, which follow similar techniques as the proof of \cref{thm:mean-est-it-ub-lb}.

\begin{theorem}\label{thm:cov-est-it-ub-lb}
Let $\epsilon < 1$, $\delta \leq 0.5$, and $n\in\mathbf{N}$ such that
\[
  n \gtrsim \frac{d (1 + \log(\frac{1}{1-\epsilon})) + \log(\frac{1}{\delta})}{1-\epsilon}.
\]
The minimax $1-\delta$ quantile~\eqref{def:minimax-quantile} $\cM_n\bigl(\delta, \cP_{\rm cov}(\eps), L\bigr)$ satisfies
\[
  \sqrt{\frac{d + \log(\frac{1}{\delta})}{n(1-\eps)}} + \frac{\log(\frac{1}{1-\epsilon})}{\log\left(1 + \frac{\epsilon n}{d+\log(\frac{1}{\delta})}\right)}
  \lesssim \cM_n\bigl(\delta, \cP_{\rm cov}(\eps), L\bigr)
  \lesssim \frac{\log(\frac{1}{1-\epsilon})}{\log\Big(1 + \sqrt{\frac{\epsilon^2}{1-\epsilon} \cdot\frac{n}{d (1+\log(\frac{1}{1-\epsilon})) + \log(\frac{1}{\delta})}}\Big)}.
\]
\end{theorem}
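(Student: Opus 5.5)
We prove the upper and lower bounds separately. The upper bound follows the variational principle of Section~\ref{sec:alg-variational}, and the lower bound combines a standard clean-rate argument with a Fano construction that uses hidden-direction distributions.

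\textbf{Upper bound.} By affine equivariance of the model we may take $\Sigma_\star = I_d$; it suffices to handle the relative loss, because $\cR(\mathsf N(0,\Sigma),\epsilon)$ is mapped to $\cR(\mathsf N(0,A\Sigma A^\top),\epsilon)$ under $x\mapsto Ax$. The plan has three steps.
\begin{enumerate}
\item For each $v$ in a $1/4$-net $\mathcal N$ of $\mathbf S^{d-1}$, with $|\mathcal N|\le 9^d$, build a univariate estimator $\widehat\sigma_v^2$ of $v^\top\Sigma_\star v$ from the projected observed samples. The estimator is a minimum-Kolmogorov-distance estimator in the style of \cite{ma2024estimation}. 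Projections of realizable distributions are realizable: the conditional law of $\langle v,X\rangle$ satisfies the sandwich~\eqref{ineq:sandwich-realizability} with respect to $\mathsf N(0,v^\top\Sigma_\star v)$. So we choose $\widehat\sigma_v$ to minimize, over scales $s$, the violation of the constraints
\[
(1-\epsilon)\,\Phi\text{-mass}_s(I)\;\le\;\widehat Q(I)\;\le\;\Phi\text{-mass}_s(I)
\]
over half-lines $I=[t,\infty)$ and $I=(-\infty,-t]$. Here $\widehat Q$ is the unconditional empirical measure, and the lower and upper bounds come from Lemma~\ref{lem:all-or-nothing-characterization}.
\item Identify the key identifiability computation. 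For $s=(1+\rho)$, the tail ratio $\bar\Phi(t/s)/\bar\Phi(t)\approx e^{\rho t^2}$ exceeds $1/(1-\epsilon)$ once $t^2\gtrsim \log(1/(1-\epsilon))/\rho$. For this violation to be visible, the tail mass $e^{-t^2/2}$ must dominate the empirical fluctuation $\sqrt{(d+\log(1/\delta))/n}$ (uniformly over the net, via the relative DKW / Bernstein inequality with a union bound over the $9^d$ directions), together with the $\epsilon$-gap. Solving gives
\[
\rho\asymp \frac{\log(\frac1{1-\epsilon})}{\log\bigl(\sqrt{\epsilon^2 n/((1-\epsilon)(d(1+\log\frac1{1-\epsilon})+\log\frac1\delta))}\bigr)}.
\]
The extra $\log(1/(1-\epsilon))$ factor in the dimension term comes from controlling tails at level $t^2\sim\log(1/(1-\epsilon))/\rho$, where the net approximation of directions must be refined.
\item Combine the directional estimates through the variational argmin over $\Sigma$ restricted to the net. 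Using the standard net argument $\|M\|_{\rm op}\le 2\max_{v\in\mathcal N}|v^\top Mv|$ for symmetric $M$, the relative error bound transfers to $\widehat\Sigma$.
\end{enumerate}

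\textbf{Lower bound.} The first term, $\sqrt{(d+\log(1/\delta))/(n(1-\epsilon))}$, comes from the uncontaminated-style argument. Let the contamination simply delete all but a $(1-\epsilon)$ fraction of samples at random; this is MCAR, so it is realizable. Then apply the standard Fano lower bound for covariance over a packing $\{I+\gamma vv^\top\}$ with KL of order $n(1-\epsilon)\gamma^2$, together with a two-point bound for the $\log(1/\delta)$ part.

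For the second term we follow the mean-estimation lower bound (Theorem~\ref{thm:mean-est-it-ub-lb}), with four steps.
\begin{enumerate}
\item Take a $1/2$-packing $\{v_\ell\}_{\ell\le M}$ of the sphere, with $M=e^{cd}$.
\item Set $\Sigma_\ell=I+\gamma v_\ell v_\ell^\top$. Pairwise relative operator distance is $\gtrsim\gamma$, and no single $\Sigma$ is within $\gamma/4$ of two of them.
\item Construct realizable $Q_\ell\in\cR(\mathsf N(0,\Sigma_\ell),\epsilon)$ that are as close as possible to a common reference. Specifically, $Q_\ell$ has the hidden-direction form $P_{A,v_\ell}$ of Definition~\ref{def:hidden-direction-dist}, where $A$ is the univariate distribution with density $\min\{\phi(x/\sqrt{1+\gamma})/\sqrt{1+\gamma},\,\phi(x)\}$ suitably mixed. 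Concretely, the observed density is the pointwise minimum of $\phi$ and the $\Sigma_\ell$-marginal, clipped to stay above $(1-\epsilon)$ times the latter. This is admissible as long as the likelihood ratio $e^{\gamma x^2/2}$-type deviation stays within $[1-\epsilon,1]$, which holds for $|x|\le t_\gamma$ with $t_\gamma^2\asymp\log(1/(1-\epsilon))/\gamma$.
\item Beyond $t_\gamma$ the densities differ. Compute $\mathrm{KL}(Q_\ell^{\otimes n},H^{\otimes n})\lesssim n\,\chi^2\lesssim n e^{-t_\gamma^2}/\epsilon$, where $H$ is $\mathsf N(0,I)$ with realizable deletion. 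Fano's inequality (Lemma~\ref{lem:fano}) requires this to be $\lesssim d+\log(1/\delta)$, giving $\gamma\asymp \log(1/(1-\epsilon))/\log(1+\epsilon n/(d+\log(1/\delta)))$.
\end{enumerate}
The $\log(1/\delta)$ dependence follows by a two-point Le Cam version of the same construction.

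\textbf{Main obstacle.} The main obstacle is the tail chi-square computation in the lower bound: the construction must keep $Q_\ell$ exactly realizable while the residual $\chi^2$ against a single reference is as small as $e^{-\Theta(\log(1/(1-\epsilon))/\gamma)}$ with the right $\epsilon$ prefactor. I would first try the explicit clipped-minimum construction and bound the tail integral of $(q_\ell-h)^2/h$ directly by Gaussian tail estimates. The matching difficulty on the upper side is the uniform concentration of tail probabilities at level $e^{-t^2/2}$ over exponentially many directions, which I would handle with a multiplicative Chernoff bound and a union bound.
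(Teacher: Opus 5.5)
Both halves of your proposal contain a genuine gap.

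\textbf{Upper bound.} You cannot reduce to $\Sigma_\star=I_d$. That step needs an affine-equivariant estimator, and yours (univariate fits along a fixed net of $\mathbf{S}^{d-1}$, then a variational fit) is not equivariant. The relative directional guarantees $|\widehat\sigma_v^2-v^\top\Sigma_\star v|\le\rho\,v^\top\Sigma_\star v$ for $v\in\mathcal N$ only constrain $E=\Sigma_\star^{-1/2}\widehat\Sigma\Sigma_\star^{-1/2}-I_d$ at the points $\Sigma_\star^{1/2}v/\|\Sigma_\star^{1/2}v\|_2$. This map distorts the net by a factor as large as $\sqrt{\kappa(\Sigma_\star)}$. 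For example, take $\Sigma_\star=\mathrm{diag}(K,1)$ with $K$ large: every fixed net point other than $\pm e_2$ is sent close to $\pm e_1$. Then $E_{12}$ is essentially unconstrained, and a feasible $\widehat\Sigma$ can have relative error of order one.

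The paper fixes this with a sample split. The unnormalized observed second moment $\widetilde\Sigma$ of the first half satisfies $\tfrac{1-\epsilon}{2}\Sigma_\star\preceq\widetilde\Sigma\preceq 2\Sigma_\star$. So after whitening the second half by $\widetilde\Sigma^{-1/2}$, the residual condition number is $O(1+\tau)$. A net of mesh $\asymp(1+\tau)^{-1/2}$ then transfers the directional bounds to the relative loss. The union bound over this net costs $d\log(1+\tau)+O(d)\asymp d(1+\log\frac{1}{1-\epsilon})$, which is exactly where that factor in the theorem comes from. It does not come from refining directions at tail level $t$.

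\textbf{Lower bound.} The concrete instance you describe cannot give an exponentially small divergence. That instance is the pointwise minimum of $\phi_d$ and $p_\ell$, clipped below at $(1-\epsilon)p_\ell$. With $q=1$, admissible observed densities lie between $(1-\epsilon)p_\ell$ and $p_\ell$. Moreover, $p_\ell/\phi_d=(1+\gamma)^{-1/2}\exp\bigl(\gamma(v_\ell^\top x)^2/(2(1+\gamma))\bigr)$ is below $1$ for $|v_\ell^\top x|\lesssim 1$. Hence $\min\{\phi_d,p_\ell\}=p_\ell$ on a slab of constant probability whose orientation depends on $\ell$. On that slab $Q_\ell$ keeps the true $\mathsf N(0,\Sigma_\ell)$ density, whose ratio to $\phi_d$ varies by $\Theta(\gamma)$.

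Consequently, hypotheses with well-separated directions are at Hellinger distance of order $\gamma$. Hellinger distance is a metric and is dominated by $\sqrt{\mathrm{KL}}$, so no common reference $H$ can be within $o(\gamma^2)$ in $\mathrm{KL}$ of two such hypotheses. Your bound $\mathrm{KL}\lesssim e^{-t_\gamma^2}/\epsilon$ therefore fails, and Fano only returns $\gamma\asymp\sqrt{d/n}$.

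The missing idea is to make the bulk of every hypothesis a \emph{fixed multiple} of $\phi_d$ and to keep the truth only in the tails. Unscaled, $\phi_d$ exceeds $p_\ell$ near the origin, which is what forced you to take the minimum. The paper scales it down by taking the observation probability to be the geometric mean of the likelihood-ratio bounds, $b=q(1-\epsilon)\sqrt{1+\tau}$ (so $b=\sqrt{1-\epsilon}$ when $q=1$). It then sets the observed density to $b\beta\phi_d$ on $|v^\top x|\le R$ and to $b\,p_v$ beyond. Here $R^2=\tfrac{1+\gamma}{\gamma}\log(1+\tau)$, $\beta\in[(1+\gamma)^{-1/2},1]$ is the normalizer, and $\gamma\le\tau$.

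Compare against $H$ with observed density $b\phi_d$ and the same missing mass $1-b$. The log-likelihood ratio is $\log\beta\le 0$ in the bulk and at most $\gamma(v^\top x)^2/(2(1+\gamma))$ in the tails. This gives $\mathrm{KL}\lesssim b\gamma e^{-\Omega(\log(1+\tau)/\gamma)}$.
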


We analyze the upper and lower bounds of Theorem~\ref{thm:cov-est-it-ub-lb} in a few different regimes.
In order to ease the notation, we let $\alpha = \sqrt{\frac{d+\log(\frac{1}{\delta})}{n(1-\epsilon)}}$
and $\beta = \log(\frac{1}{1-\epsilon})$.
In the low-to-constant contamination regime of $\eps \leq \frac{1}{2}$,
we have $\beta \asymp \epsilon$ and so
the bounds of Theorem~\ref{thm:cov-est-it-ub-lb} simplify to
\begin{align*}
  \alpha + \frac{\epsilon}{\log\left(1 + \epsilon/\alpha^2\right)}
  \lesssim \cM_n\bigl(\delta, \cP_{\rm cov}(\eps), L\bigr)
  \lesssim \frac{\epsilon}{\log(1 + \epsilon/\alpha)}.
\end{align*}
When $\epsilon \lesssim \alpha$, both the upper and lower bounds yield $\cM_n\bigl(\delta, \cP_{\rm cov}(\eps), L\bigr) \asymp \alpha$, which we recognize as the parametric rate.
When $\epsilon \gtrsim \alpha^{1-c}$ for some small constant $c$ (which contains the regime when $\epsilon$ is at least a small constant), both bounds agree and yield the rate
$\cM_n\bigl(\delta, \cP_{\rm cov}(\eps), L\bigr) \asymp \frac{\epsilon}{\log(1/\alpha)}$.
In the intermediate range between these two settings, where $\epsilon = \alpha^{1-\gamma}$ for $\gamma = o(1)$, our bounds read as
\begin{align*}
  \frac{\epsilon}{\log(1/\alpha)}
  \lesssim \cM_n\bigl(\delta, \cP_{\rm cov}(\eps), L\bigr)
  \lesssim \frac{\epsilon}{\gamma \log(1/\alpha)},
\end{align*}
representing a gap on the order of $\frac{1}{\gamma}$.
For example, when $\epsilon = \alpha\log(1/\alpha)$ and $\delta \gtrsim e^{-d}$, we observe a gap in our bounds on the order of $\frac{\log(1/\alpha)}{\log\log(1/\alpha)} \asymp \frac{\log(n/d)}{\log\log(n/d)}$.

Now we consider the large contamination regime $\eps \geq \frac{1}{2}$.
The upper and lower bounds of Theorem~\ref{thm:cov-est-it-ub-lb} simplify to
\begin{align*}
  \alpha + \frac{\beta}{\beta + \log(1/\alpha)}
  \lesssim \cM_n\bigl(\delta, \cP_{\rm cov}(\eps), L\bigr)
  \lesssim \frac{\beta}{\beta -\log(1+\beta) + \log(1/\alpha)}.
\end{align*}
Because $\beta$ is bounded away from zero in this regime, the upper and lower bounds simplify to
$\cM_n\bigl(\delta, \cP_{\rm cov}(\eps), L\bigr) \asymp \frac{\beta}{\beta + \log(1/\alpha)}$.

Finally, note that when the mechanism is nearly missing completely at random setting (i.e. when $\epsilon \lesssim \alpha$), the rates of estimation for mean estimation in $\ell_2$ norm and covariance estimation in operator norm coincide.  On the other hand, when $\epsilon > \alpha$, the rate of estimation for covariance estimation is quadratically faster.  In particular, if we fix the parameters $\epsilon$ and $\delta$ as constants, we see that
\[
\cM_n\bigl(\delta, \cP_{\rm cov}(\eps), L\bigr) \asymp \frac{1}{\log(n/d)}, \quad \text{ whereas } \quad \cM_n(\delta, \cP_{\rm mean}\bigl(1, \epsilon),L\bigr) \asymp \frac{1}{\sqrt{\log(n/d)}}.
\]
In terms of sample complexity, this implies that in order to achieve covariance norm error $\Theta(\rho)$, we require $n \gtrsim de^{1/\rho}$ many samples as opposed to $de^{1/\rho^2}$ samples for mean estimation.

\section{Computationally-efficient mean and covariance estimation} \label{sec:comp-mean-cov}
In the previous section, we established (nearly) tight statistical rates for mean and covariance estimation, but the algorithms we employed required exponential time computationally.  In this section, we study polynomial time algorithms and rates of estimation for polynomial time algorithms.  In Section~\ref{sec:efficient testing}, we preview the key idea underlying our approach in the simpler setting of hypothesis testing.  Then, in Section~\ref{sec:sos-mean-est} we provide an efficient algorithm for mean estimation and demonstrate that its sample complexity is optimal among various well-studied algorithmic classes.  Finally, in Section~\ref{sec:sos-cov-est}, we provide a parallel story in covariance estimation.

\subsection{Warm-up: Efficient testing via polynomials} \label{sec:efficient testing}
In order to build our intuition, let us begin with the univariate testing problem.  In particular, we will try to test 
\[
H_0:\; Q \in \mathcal{R}\bigl(\mathsf{N}(0, 1), \epsilon\bigr) \quad \text{ vs. } \quad H_1: \; Q \in \mathcal{R}\bigl(\mathsf{N}(\rho, 1), \epsilon\bigr).
\]
for $\rho \in \R$.
Note that the hardest distributions to test put the same amount of mass on $\{\star\}$, so it suffices to test between the conditional distributions $Q_{\R} := \mathsf{Law}(X \mid X \in \R)$.  The minimum distance estimators studied in Section~\ref{sec:it} use fine-grained information from the distribution functions under each hypothesis.  Here, we note that, at the population level, thresholding a simple polynomial yields a consistent test.  We will use the notation
\[
\tau := \frac{\epsilon}{1 - \epsilon},
\]
throughout the derivation.  Indeed, from the discussion following Lemma~\ref{lem:all-or-nothing-characterization}, we see that 
\begin{align} \label{ineq:sandwich-Q_R}
\frac{1}{1 + \tau} = 1 - \epsilon \leq \frac{\mathrm{d}Q_{\R}}{\mathrm{d}P}(z) \leq 1 + \frac{\epsilon}{1-\epsilon} = 1 + \tau \quad \text{ for all } z \in \R.
\end{align}
At the population level, consider the family of testing functions, indexed by $k \in \mathbf{N}$, $\psi^{(k)}: \R \rightarrow \R$ defined as 
\[
\psi^{(k)}(X) = X^{2k}.  
\]
It is a straightforward consequence of the sandwich relation~\eqref{ineq:sandwich-Q_R} that
\[
\Bigl(\frac{1}{1 + \tau}\Bigr) \cdot \mathbb{E}_P\bigl[\psi^{(k)}(X)\bigr] \leq \mathbb{E}_{Q_{\R}}\bigl[\psi^{(k)}(X)\bigr] \leq (1 + \tau) \cdot  \mathbb{E}_P\bigl[\psi^{(k)}(X)\bigr].
\]
Consequently, under $H_0$, we see that 
\[
\mathbb{E}_{Q_{\R}}\bigl[\psi^{(k)}(X)\bigr] \leq (1 + \tau) \mathbb{E}_{X \sim \mathsf{N}(0, 1)}\bigl[\psi^{(k)}(X)\bigr] = (1 + \tau) \mathbb{E}\bigl[G^{2k}\bigr] = (1 + \tau) (2k-1)!!,
\]
where we have used the notation $G \sim \mathsf{N}(0, 1)$.  On the other hand, under $H_1$,
\begin{align*}
\mathbb{E}_{Q_{\R}}\bigl[\psi^{(k)}(X)\bigr] &\geq \Bigl(\frac{1}{1 + \tau}\Bigr) \mathbb{E}_{X \sim \mathsf{N}(\gamma, 1)}\bigl[\psi^{(k)}(X)\bigr] = \Bigl(\frac{1}{1 + \tau}\Bigr) \mathbb{E}\bigl[(G + \rho)^{2k}\bigr] \\
&= \Bigl(\frac{1}{1 + \tau}\Bigr) \sum_{\ell=0}^{2k}\binom{2k}{\ell} \rho^\ell \mathbb{E}\bigl[G^{2k-\ell}\bigr] \\
&= \Bigl(\frac{1}{1 + \tau}\Bigr) \sum_{\ell=0}^{k}\binom{2k}{2\ell} \rho^{2\ell} \mathbb{E}\bigl[G^{2k-2\ell}\bigr] \\
&\geq \Bigl(\frac{1}{1 + \tau}\Bigr) \cdot \Bigl\{\mathbb{E}[G^{2k}] + \binom{2k}{2} \rho^2 \mathbb{E}\bigl[G^{2k-2}\bigr]\Bigr\} \\
&= \Bigl(\frac{1}{1 + \tau}\Bigr) \cdot \bigl\{\mathbb{E}\bigl[G^{2k}\bigr]+ k \rho^2 (2k - 1)!!\bigr\},
\end{align*}
where in the final display we used the fact that $\mathbb{E}[G^{2k-2}] = (2k-3)!!$.  Re-arranging, we deduce that any separation
\begin{align*}
\rho \geq \sqrt{\frac{\{(1 + \tau)^2 - 1\} \cdot \mathbb{E}[G^{2k}]}{k (2k-1)!!}} = \sqrt{\frac{\tau^2 + 2\tau}{k}},
\end{align*}
suffices to test between $H_0$ and $H_1$.  In particular, the larger we take $k$ (the degree of the polynomial), the better our test performs.  

Our main goal in this section is to develop computationally-efficient algorithms in higher dimension and towards this goal, we can use a similar technique as in Section~\ref{sec:alg-variational} to obtain a natural analog in higher dimension.  In particular, we consider the testing function
\[
\psi^{(k)}_d := \sup_{v \in \mathbf{S}^{d-1}}\;\Bigl\{ \mathbb{E}_{Q_{\R}}\bigl[\langle v, X \rangle^{2k}] \Bigr\},
\]
and its empirical counterpart
\[
\psi^{(k)}_{d,n}(Z_1, \ldots, Z_n) := \sup_{v \in \mathbf{S}^{d-1}}\;\Bigl\{ \frac{1}{n} \sum_{i=1}^{n} \langle v, Z_i \rangle^{2k} \Bigr\}.
\]
We recognize the above quantity as the injective tensor norm of the tensor $\frac{1}{n} \sum_{i=1}^{n} Z_i^{\otimes 2k}$.  The key insight which we build on is that while the computation of the polynomial optimization problem $\psi^{(k)}_{d, n}$ is believed to be intractable in the worst case~\cite{BarBHKSZ12}, under certain average-case assumptions on the data, 
a good (additive) approximation of it can be computed in polynomial time using the sum-of-squares technique~\cite{KotSS18,HopLi18}.  
Indeed, our algorithms for mean and covariance estimation to follow build directly on this intuition.

Before describing our computationally-efficient algorithms, we begin with some preliminaries.  

\subsection{Preliminaries}
Equipped with the intuition from the previous section, we turn now to our main algorithmic development.  We focus in this section on the all-or-nothing setting.
The mean and covariance estimators we propose in this section will therefore discard all missing data and only consider the remaining observations.

We will mainly consider the following data generation process (Definition~\ref{def:data-generating-process}), which is strong enough to simulate $n$ i.i.d.\ samples for any $Q \in \cR(P,\epsilon)$.  
\begin{definition}\label{def:data-generating-process}
    (Data generating process)
    Let $n \in \bN$, $\epsilon \in [0,1]$. Let $P \in \mathcal{P}(\bR^d)$. 
    We generate $n$ samples in $\R^d \cup \{\star^d\}$ as follows:
    \begin{enumerate}
      \item Let $\Tstar \subseteq \R^d$ be a multiset of $m$ i.i.d.\ data from $P$ for $m \sim \Bin(n, 1- \epsilon)$ (the MCAR observations). 
      \item Let $\Tprime \subseteq \R^d$ be a multiset of $n-m$ i.i.d.\ data from $P$ and $\Tdprime \subset \Tprime$ be an arbitrary subset (the non-missing MNAR observations).
      \item Let $T:= \Tstar + \Tdprime \subseteq \R^d$. The estimator observes $T$ and $n - |T|$ copies of $\star^d$. 
    \end{enumerate}
\end{definition}

We will take $P = \mathsf{N}(\mu,\Sigma)$ for some $\mu \in \R^d$ and $\Sigma \succ 0$. In particular, for the mean estimation setting we will assume $\Sigma = I_d$ and $\mu$ is unknown, while for the covariance estimation setting we will assume $\Sigma$ is unknown and $\mu = 0$.
\begin{definition}[Good event]
\label{def:good-event}
Consider the sets $\Tstar$ and $\Tprime$ from \Cref{def:data-generating-process}.
Let $\mc{E}$ be the event that:
\begin{enumerate}
    \item $\frac{n}{m} \leq \frac{1+\epsilon}{1-\epsilon}$;
    \item For all $\ell \in [2k]$ and $S \in \{\Tstar, \Tstar + \Tprime\}$:
    \begin{align*}
     \bigl\|\E_{x \sim S}\bigl[\bigl(\Sigma^{-1/2}(x-\mu)\bigr)^{\otimes \ell}\bigr] - \E_{z \sim \mathsf{N}(0,I_d)}\bigl[z^{\otimes \ell}\bigr]\bigr\|_\infty \leq \frac{\epsilon}{(4d)^{k} 2k}   \,.
    \end{align*}
\end{enumerate}    
\end{definition}

We will prove our mean and covariance estimators are accurate assuming $\mc{E}$ holds. The following lemma thus bounds the sample complexity of both estimators.

\begin{lemma}[Sample complexity of good event $\mc{E}$]
\label{lem:lower-bound-E-prob}
    Consider \Cref{def:data-generating-process,def:good-event} and suppose that $P = \mathsf{N}(\mu,\Sigma)$.
    There exists a tuple of universal, positive constants $c_1, c_2$ such that 
        \begin{align*} %
        n \geq \frac{\left(c_1 k d \log(1/\delta)\right)^{c_2 k}}{(1-\eps)\eps^2}\,,
        \end{align*}
    implies the event $\mc{E}$ occurs with probability at least $1-3\delta$.
\end{lemma}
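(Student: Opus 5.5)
We prove the statement with a union bound over the three components of $\mc{E}$, allotting failure probability $\delta$ to each. We work with the whitened samples $z = \Sigma^{-1/2}(x-\mu)$, which are i.i.d.\ $\mathsf{N}(0,I_d)$. The key observation is that both $\Tstar$ and $\Tstar + \Tprime$ are multisets of i.i.d.\ draws from $P$, since $\Tprime$ is drawn before any adversarial selection. Their sizes are $m \sim \Bin(n,1-\eps)$ and exactly $n$, respectively. So we never need to handle the arbitrary subset $\Tdprime$.

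\textbf{Step 1 (counting condition).} We need $n/m \leq (1+\eps)/(1-\eps)$, i.e.\ $m \geq n(1-\eps)/(1+\eps)$. This lower bound is at most $\E[m] = n(1-\eps)$ minus $\tfrac{\eps}{1+\eps}\, n(1-\eps) \geq \tfrac{\eps}{2}\, n(1-\eps)$. A multiplicative Chernoff bound then gives
\[
\Pr\bigl(m < (1-\eps/2)\,n(1-\eps)\bigr) \leq \exp\bigl(-\eps^2 n(1-\eps)/8\bigr).
\]
This is at most $\delta$ once $n \gtrsim \log(1/\delta)/\bigl((1-\eps)\eps^2\bigr)$, which the hypothesis implies. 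On this event we also have $m \geq n(1-\eps)/2$, which Step 3 uses.

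\textbf{Step 2 (moment concentration for a fixed entry).} Fix $\ell \in [2k]$ and a multi-index $(i_1,\dots,i_\ell)$. The entry $\prod_j z_{i_j}$ is a degree-$\ell$ polynomial of a standard Gaussian. We bound its deviation in one of two ways:
\begin{itemize}
\item Gaussian hypercontractivity: the $p$-th moment of the centered polynomial is at most $(p-1)^{\ell/2}$ times its $L_2$ norm, and the $L_2$ norm is at most $(\ell!)^{1/2} \leq (2k)^{k}$.
\item Alternatively, Chebyshev applied to the empirical average plus a median-of-means style argument. Plain averaging is what is required here, so we use the moment route.
\end{itemize}
Applying the Rosenthal/Marcinkiewicz–Zygmund inequality with $p \asymp \log(1/\delta')$, the empirical average over $N$ samples deviates by at most
\[
\frac{(C k \log(1/\delta'))^{C k}}{\sqrt{N}}
\]
with probability $1-\delta'$. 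The Rosenthal tail term $p\cdot\|\cdot\|_p/N$ is dominated by this once $N \geq \log(1/\delta')$.

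\textbf{Step 3 (union bound and arithmetic).} We union bound over all entries: at most $\sum_{\ell\le 2k} d^\ell \leq 2d^{2k}$ entries per set, over two sets. Setting $\delta' = \delta/(4d^{2k})$ gives $\log(1/\delta') \lesssim k\log d + \log(1/\delta) \lesssim k\log d\,\log(1/\delta)$ for $\delta \le 1/2$. For $\Tstar$ we condition on $m$ (independence is preserved given $m$) and use $N = m \geq n(1-\eps)/2$ from Step 1. Requiring the deviation to be at most $\eps/\bigl((4d)^k 2k\bigr)$ gives
\[
N \gtrsim \frac{(4d)^{2k}(2k)^2\,(Ck\log(d/\delta))^{2Ck}}{\eps^2},
\]
which is of the form $(c_1 k d\log(1/\delta))^{c_2 k}/\eps^2$. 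Dividing by $1-\eps$ for the $\Tstar$ case gives the stated bound. Together with Step 1 the total failure probability is at most $3\delta$.

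The main obstacle is Step 2: obtaining a tail bound for heavy-tailed degree-$\ell$ Gaussian chaos averages with only polylogarithmic dependence on $1/\delta$, since Bernstein-type bounds do not apply directly. The route is to bound high moments of the sum via hypercontractivity and Rosenthal, and to track the constants only up to the loose $(\cdot)^{c_2k}$ form.
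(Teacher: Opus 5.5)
Your argument is correct. It shares the paper's skeleton: a lower bound on $m$, entrywise concentration of the empirical moment tensors for the two i.i.d.\ multisets, and a union bound over the $O(d^{2k})$ entries and $\ell\in[2k]$. Both concentration steps, however, use different tools.

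\textbf{Tensor entries.} The paper shows each monomial $X=(z^{\otimes\ell})_\alpha$ is sub-Weibull, $\mathbb{E}\exp\bigl(|X-\mathbb{E}X|^{2/\ell}/(8\ell)\bigr)\le 2$, via AM--GM and the $\chi^2$ moment generating function. It then invokes an off-the-shelf sub-Weibull sum inequality (Kuchibhotla--Chakrabortty) to get $\|\xi\|_\infty\le(100\ell)^{\ell/2}\bigl(\sqrt{t/n}+t^{\ell/2}/n\bigr)$ with $t=\ell\log d+\log(1/\delta)$. You instead use hypercontractivity plus Rosenthal, then Markov at $p\asymp\log(1/\delta')$.
\begin{itemize}
\item Your route uses only standard moment tools and applies verbatim to any low-degree polynomial of a hypercontractive vector.
\item As written it gives the cruder $(Ckp)^{Ck}/\sqrt{N}$. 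This loses powers of $\log(1/\delta')$ in the leading term, which the paper's bound keeps at $\sqrt{t}$. That loss is irrelevant for the loose $(\cdot)^{c_2k}$ form.
\end{itemize}

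\textbf{Counting condition.} The paper uses Hoeffding, which needs $n\gtrsim\log(1/\delta)/(\epsilon^2(1-\epsilon)^2)$. That is implied by the stated hypothesis only when $1-\epsilon$ is bounded below, which is harmless for the paper's uses. Your multiplicative Chernoff bound needs only $n\gtrsim\log(1/\delta)/((1-\epsilon)\epsilon^2)$, which matches the lemma's condition exactly.

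\textbf{Cosmetic slips.}
\begin{itemize}
\item $\|X\|_{L_2}=\sqrt{\prod_i(2m_i-1)!!}\le\sqrt{(2\ell-1)!!}$, not $\le\sqrt{\ell!}$; for example, $\mathbb{E}z_1^4=3>2!$. By AM--GM it is still at most $(2k)^k$ for $\ell\le 2k$, so nothing changes.
\item The Rosenthal tail term for the average is $p\,N^{1/p-1}\|Y\|_{L_p}$, not $p\|Y\|_{L_p}/N$. Since $N^{1/p-1}\le N^{-1/2}$ for $p\ge 2$, it is dominated for every $N$, and no condition like $N\ge\log(1/\delta')$ is needed.
\item The count $\sum_{\ell\le 2k}d^\ell\le 2d^{2k}$ fails at $d=1$; taking $\delta'=\delta/(4k\,d^{2k})$ fixes it.
\end{itemize}
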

We defer the proof of this lemma to Section~\ref{sec:proof-lem-sample-complexity}.  Equipped with these preliminaries, we turn next to studying efficient algorithms and computational lower bounds for mean estimation.

\subsection{Mean estimation}\label{sec:sos-mean-est}
We first describe our sum-of-squares based estimator and provide an upper bound on its sample complexity in Theorem~\ref{thm:mean-est-sos-upper-bound}.
We defer the proof of this theorem to \Cref{app:pf-mean-est-sos-ub}.

\begin{definition}[SoS program for mean estimation]
    \label{def:sos-mean-axioms}
    Consider the following\footnote{Refer to preliminaries for notation.} constraint on the $d$-dimensional (indeterminate) variable $\theta$.
        \begin{align}
        \left\{\norm{v}_2^2 = 1\right\} \sos{v}{4k} \E_{x \sim T} \langle v, x - \theta \rangle^{2k} \leq \left(\frac{(1+\eps)^2}{1-\epsilon} \right)\E_{G \sim \mathsf{N}(0,1)} [G^{2k}]\,.
        \label{eqn:sos-constraint-mean-est}
    \end{align}
\end{definition}
\begin{algorithm}[htb]{}
	\begin{algorithmic}[1]
		\Function{Mean-estimator}{$T,k,\epsilon$}
		\State Find a pseudo-expectation $\pE$ of degree $4k$ which satisfies the system of \Cref{def:sos-mean-axioms}. If no such $\pE$ exists, \Return the all-zeros vector.
		\State \Return  $\hat{\theta} := \pE[\theta]$.
		\EndFunction
	\end{algorithmic}
	\caption{Mean Estimator}
	\label{alg:mean-estimator}
\end{algorithm}

\begin{theorem}[SoS algorithm for mean estimation]\label{thm:mean-est-sos-upper-bound}
    Let $P = \mathsf{N}(\theta_\star,I_d)$
    and generate $T \subset \R^d$ by Definition~\ref{def:data-generating-process}. Let $k \in \bN$ satisfy $k \leq \sqrt{d}$ and let $\widehat{\theta}$ be the output of \Cref{alg:mean-estimator}.
    Suppose $n$ satisfies the conditions of Lemma~\ref{lem:lower-bound-E-prob}; i.e., $n \gtrsim \left(\frac{\left(\Theta(k d \log(1/\delta))\right)^{\Theta(k)}}{(1-\eps)\eps^2}\right)$.
    Then, with probability at least $1-3\delta$, the following two conditions both hold:
    \begin{enumerate}
        \item (Satisfiability) The constraint~\eqref{eqn:sos-constraint-mean-est} is satisfiable.
        \item (Accuracy) Let $\gamma = \left(\frac{(1+\epsilon)^2 - (1-\epsilon)^3}{(1-\epsilon)^2}\right)$.
        Any degree-$4k$ pseudoexpectation $\pE$ satisfying this constraint also satisfies 
        \begin{align*}
            \bigl \|\widehat{\theta} - \theta_\star \bigr\|_2 \leq 8 \left(\sqrt{\frac{\gamma}{k}} + \frac{\gamma}{k}\right).
        \end{align*}
    \end{enumerate}
\end{theorem}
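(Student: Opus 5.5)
We work throughout on the good event $\mathcal{E}$ of Definition~\ref{def:good-event}, which by Lemma~\ref{lem:lower-bound-E-prob} holds with probability at least $1-3\delta$ under the stated sample size. Both claims are then deterministic consequences of $\mathcal{E}$.

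\emph{Satisfiability.} We plug in $\theta = \theta_\star$ and exhibit a degree-$4k$ SoS proof in $v$. Since $T = \Tstar + \Tdprime \subseteq \Tstar + \Tprime$ and $\langle v, x-\theta_\star\rangle^{2k}$ is a square, we have the coefficientwise SoS bound
\[
\sum_{x\in T}\langle v,x-\theta_\star\rangle^{2k} \le \sum_{x\in \Tstar+\Tprime}\langle v,x-\theta_\star\rangle^{2k}.
\]
Under $\mathcal{E}$ we have $|\Tstar + \Tprime| = n$ and $|T|\ge m \ge n(1-\epsilon)/(1+\epsilon)$. Hence
\[
\E_{x\sim T}\langle v, x-\theta_\star\rangle^{2k} \le \frac{1+\epsilon}{1-\epsilon}\,\E_{x\sim \Tstar+\Tprime}\langle v,x-\theta_\star\rangle^{2k}.
\]
Next we write the empirical $2k$-th moment tensor as the Gaussian moment tensor plus an error $E$ with $\|E\|_\infty \le \epsilon/((4d)^k 2k)$. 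The Gaussian part equals $\E[G^{2k}]\|v\|_2^{2k}$, which is $\E[G^{2k}]$ modulo the axiom $\|v\|^2=1$. For the error we use the standard SoS bound $\langle E, v^{\otimes 2k}\rangle \le d^{k}\|E\|_\infty \|v\|^{2k}$. This follows by writing $v^{\otimes 2k}$ entrywise and applying $\pm 2ab \le a^2+b^2$ to the pairs of degree-$k$ monomials, together with $\sum_{|\alpha|=k}(v^\alpha)^2 \le \|v\|^{2k}$. The error is therefore at most $\epsilon\,\E[G^{2k}]$, and we obtain the constant $(1+\epsilon)^2/(1-\epsilon)$.

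\emph{Accuracy.} Let $\Delta = \theta - \theta_\star$ (an indeterminate) and $u$ a fixed unit vector; at the end we take $u$ proportional to $\pE[\Delta]$. The plan has three steps.

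\begin{enumerate}
\item Substitute $v=u$ into the quantified constraint, which is allowed because the SoS proof in $v$ specializes. This gives $\E_{x\sim T}\langle u, x-\theta_\star-\Delta\rangle^{2k} \le C\,\E[G^{2k}]$ with $C = (1+\epsilon)^2/(1-\epsilon)$.

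\item Lower bound the left side using $T \supseteq \Tstar$ and $|T|\le n \le |\Tstar|(1+\epsilon)/(1-\epsilon)$:
\[
\E_T[\cdot] \ge \frac{1-\epsilon}{1+\epsilon}\,\E_{\Tstar}\langle u, y - \Delta\rangle^{2k}, \qquad y = x-\theta_\star.
\]
Expand binomially in $s=\langle u,\Delta\rangle$. The empirical moments $\E_{\Tstar}\langle u,y\rangle^{\ell}$ are within $\epsilon/((4d)^k2k)$-scaled error of the Gaussian moments, using $\|u\|_1 \le \sqrt d$.

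\item Show, as an SoS inequality in the single variable $s$, that
\[
\E(G-s)^{2k} \ge \E[G^{2k}] + k(2k-1)!!\,s^2 \cdot(\text{const}),
\]
which mirrors the warm-up calculation. The odd moments vanish and the remaining terms are even powers of $s$ with nonnegative coefficients, so this is a genuine SoS fact. The univariate polynomial nonnegativity is SoS by Hilbert's theorem in one variable.
\end{enumerate}

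Combining the three steps yields $\pE[s^2]\,k \lesssim \gamma + \text{(error terms)}$, where $\gamma$ comes from $C(1+\epsilon)/(1-\epsilon) - 1 = \bigl((1+\epsilon)^2-(1-\epsilon)^3\bigr)/(1-\epsilon)^2$ after multiplying through. Then $\langle u,\pE\Delta\rangle^2 \le \pE[s^2]$ by Cauchy--Schwarz for pseudo-expectations, which gives the $\sqrt{\gamma/k}$ term.

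\emph{Main obstacle.} The hard part is controlling the moment-error terms, which now multiply powers $s^{\ell}$ that are not a priori bounded. These cross terms are why the bound carries the additional $\gamma/k$. To handle them, I would first derive a crude SoS bound $s^{2k} \lesssim \E[G^{2k}]\cdot\text{poly}$ from the constraint, using $\|u\|_1\le\sqrt d$ and $k\le\sqrt d$ to make the $(4d)^{-k}$ factor absorb the combinatorial coefficients. I would then bound each error term by $\epsilon\,(\E[G^{2k}] + s^{2k}\text{-type})$ terms via AM--GM inside SoS. If the terms do not absorb cleanly, the fallback is to case-split on whether $\pE[s^2]$ exceeds $\gamma/k$ and use the top-degree term $s^{2k}$ to dominate the errors in the large case. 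That yields the stated $8(\sqrt{\gamma/k}+\gamma/k)$.
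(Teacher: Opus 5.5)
Your satisfiability argument is essentially the paper's. For accuracy you take a genuinely different route. The paper pushes $\pE$ inside at the outset. By pseudo-H\"older and Jensen, $\pE[\langle v,x-\theta\rangle^{2k}]\ge\langle v,x-\pE[\theta]\rangle^{2k}$ for each data point. This gives a deterministic inequality $\E_{x\sim\Tstar}\langle v,x-\widehat\theta\rangle^{2k}\le\frac{(1+\epsilon)^2}{(1-\epsilon)^2}\E[G^{2k}]$ for the actual vector $\widehat\theta=\pE[\theta]$. The paper then takes $v$ along $\widehat\theta-\theta_\star$ and does a real-variable case analysis in $\Delta=\|\widehat\theta-\theta_\star\|_2$. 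Above a threshold $\Delta_0$, every odd empirical-moment term is dominated by its even neighbour, and keeping only the $\Delta^2$ term yields $8\sqrt{\gamma/k}$. Bounding $\Delta_0\le 8\epsilon/k$ is where $k\le\sqrt d$ enters, and it is the only source of the additive $\gamma/k$. You instead keep $s=\langle u,\theta-\theta_\star\rangle$ as an indeterminate and use Jensen only at degree two. The paper's route turns everything after one pseudo-Jensen step into an elementary scalar inequality. Yours requires writing SoS certificates, but gives a cleaner bound.

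The obstacle you flag resolves without any a priori bound or case split. Under $\mathcal{E}$, each coefficient error is at most $\binom{2k}{j}\epsilon\, d^{(2k-j)/2}/((4d)^k2k)\le\epsilon/(2k)$. Also, $\pm s^j\le 1+s^{2k}$ is a nonnegative univariate polynomial inequality, hence SoS. So the errors total at most $\epsilon(1+s^{2k})$, and the leading coefficient $1$ absorbs the $s^{2k}$ part since $\epsilon<1$. This gives the SoS bound $\E_{\Tstar}(\langle u,y\rangle-s)^{2k}\ge\E[G^{2k}]-\epsilon+\tfrac12 k(2k-1)!!\,s^2$, where the $\tfrac12$ only matters when $k=1$. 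Hence $\pE[s^2]\le 2(C'-1+\epsilon)/k$ with $C'=(1+\epsilon)^3/(1-\epsilon)^2$. So the cross terms are not what produces the $\gamma/k$ term: your route gives a pure $O(\sqrt{\gamma/k})$ bound and does not need $k\le\sqrt d$.

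One correction: your identity for $\gamma$ is false, since $C'-1=\frac{(1+\epsilon)^3-(1-\epsilon)^2}{(1-\epsilon)^2}$. The paper's exact $\gamma$ comes from using $|\Tstar|/|T|\ge 1-\epsilon$, whereas $\mathcal{E}$ only yields the factor $(1-\epsilon)/(1+\epsilon)$ that you use. This is harmless, because $C'-1+\epsilon=\frac{6\epsilon+2\epsilon^3}{(1-\epsilon)^2}\le 2\gamma$. You therefore obtain $\|\widehat\theta-\theta_\star\|_2\le 2\sqrt{\gamma/k}$, which is within the stated bound.
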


Let us compare this result with the information-theoretic results established in \Cref{thm:mean-est-it-ub-lb}. 
First, the algorithm above runs in time $\poly(n^k, d^k,1/\eps)$ as opposed to the algorithm in \Cref{thm:mean-est-it-ub-lb}, which needs  $\poly(n,e^d)$ time.
To facilitate the comparison on sample complexity, consider the case when $\epsilon \in (0,1)$ is a constant.
Then $\gamma = O(1)$ and \Cref{thm:mean-est-sos-upper-bound} implies that to obtain error $\rho \in (0,1)$,
one can take $k \asymp 1/\rho^2$ and the resulting sample complexity would be $ d^{O(1/\rho^2)} \cdot f(\rho)$ for some function $f$.
Contrast this with the information-theoretic rate (\Cref{thm:mean-est-it-ub-lb}), where one needs only $d\cdot\widetilde{f}(\rho)$ samples for some function $\widetilde{f}$.  Given this gap, it is natural to wonder whether there are computationally-efficient algorithms which improve upon the sample complexity required by Algorithm~\ref{alg:mean-estimator}.  The next result shows that such an improvement is impossible for any computationally-efficient statistical query algorithm.

\begin{definition}[Testing Problem for Mean Estimation]
\label{def:testing-mean-lb}
    Let $P$ be the underlying (unknown) distribution equal to $\mathsf{N}(\mu,I_d)$ for $\mu \in \R^d$.
    Let the contamination rate be $\eps$
    and the signal strength be $\rho$.  Let $P'$ be a contaminated conditional distribution $P' \in \cR_\R(P,\eps)$.
    Suppose the tester has access (either through i.i.d.\ samples $(X_1,\dots,X_n)\sim {P'}^{\otimes n}$ or an SQ oracle $\mathrm{STAT}_{P'}(\kappa)$ with tolerance $\kappa$) to the distribution $P'$. 
    The testing problem is the following:
    \[
    H_0:\; \mu = 0 \quad \text{ vs. } \quad H_1: \; \|\mu\|_2 \geq \rho.
    \]
\end{definition}
\begin{theorem}[SQ lower bound for mean estimation]
\label{thm:sq-lower-bound-mean}
There exists a large constant $C > 0$ and small constants $c,c'>0$ such that the following holds.
Consider \Cref{def:testing-mean-lb} with (i) $\rho \leq \epsilon/C$ and (ii) $d \geq  \left( (\eps/\rho)\log (2\dimension) \right)^{C}$.
Suppose that $\eps \leq c$ and $1 - q \leq c$.  Then any SQ algorithm that solves \Cref{def:testing-mean-lb} with fewer than $2^{\dimension^{c'}}$ many queries must use tolerance $\SQtolerance \leq d^{-m}$ for $m\geq c' \frac{\eps^2/\rho^2}{\log(\eps^2/\rho^2)}$.
\end{theorem}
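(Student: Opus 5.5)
Our plan is to reduce to NGCA (\Cref{def:ngca}) and invoke \Cref{lem:ngca-lower-bound}. We build a univariate distribution $A$ on $\R$ with four properties: (a) $A \in \cR_\R(\mathsf N(\rho,1),\eps)$, i.e.\ its density $a$ satisfies $(1-\eps)\phi(x-\rho)\le a(x)\le \frac{1}{1-\eps}\phi(x-\rho)$; (b) $A$ matches the first $m$ moments of $\mathsf N(0,1)$; (c) $\chi^2(A,\mathsf N(0,1))\lesssim 1$; (d) $m\gtrsim \frac{\eps^2/\rho^2}{\log(\eps^2/\rho^2)}$. Then $P_{A,v}$ has density $a(v^\top x)\phi_{v^\perp}(x)$. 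Its likelihood ratio with respect to $\mathsf N(\rho v,I_d)$ equals $a(v^\top x)/\phi(v^\top x-\rho)$, which lies in $[1-\eps,1/(1-\eps)]$, so by \eqref{ineq:sandwich-realizability} we have $P_{A,v}\in\cR_\R(\mathsf N(\rho v,I_d),\eps)$, an admissible $H_1$ instance. The null $\mathsf N(0,I_d)\in\cR_\R(\mathsf N(0,I_d),\eps)$ is admissible for $H_0$. Any SQ tester for \Cref{def:testing-mean-lb} therefore solves NGCA, and \Cref{lem:ngca-lower-bound} (using the hypothesis $d\ge((\eps/\rho)\log 2d)^C$ to meet its dimension requirement) yields the dichotomy: either $2^{d^{c'}}$ queries or tolerance $d^{-\Omega(m)}$. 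The treatment of the missing-mass parameter $q$ is bookkeeping, since both hypotheses place the same mass on $\star$, or one reduces via \Cref{lem:gen-q-conversion}.

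To construct $A$, we write $a(x)=\phi(x-\rho)\bigl(1+h(x)\bigr)$ with $\|h\|_\infty\le \eps/2$, which gives (a) and (c) immediately. Condition (b) becomes a set of linear constraints on $h$: for each $j\le m$,
\[
\E_{G\sim\mathsf N(\rho,1)}\bigl[G^j(1+h(G))\bigr]=\E[G_0^j], \qquad G_0 \sim \mathsf N(0,1).
\]
By LP duality (a Hahn--Banach / bounded-moment-problem argument), an $h$ with $\|h\|_\infty\le\eps/2$ satisfying these constraints exists if and only if, for every polynomial $p$ of degree at most $m$,
\[
\bigl|\E_{\mathsf N(\rho,1)}[p]-\E_{\mathsf N(0,1)}[p]\bigr|\le \tfrac{\eps}{2}\,\E_{\mathsf N(\rho,1)}|p|.
\]
So (b) reduces to a polynomial inequality.

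The main obstacle is this inequality. The left side is $\E[p(G+\rho)-p(G)]$. Expanding $p$ in Hermite polynomials of $\mathsf N(\rho,1)$, the shift acts on the $j$-th coefficient with magnitude roughly $\rho^\ell\sqrt{\binom{j}{\ell}}$. We would bound the left side by
\[
\|p\|_{L^2(\mathsf N(\rho,1))}\Bigl(\sum_{j\le m}\frac{\rho^{2j}}{j!}\Bigr)^{1/2}-\text{type terms}.
\]
This naive bound is not sufficient, because it compares against $L^2$ rather than $L^1$. We would then compare $L^2$ and $L^1$ norms of degree-$m$ polynomials under the Gaussian via hypercontractivity, which costs a factor $e^{O(m)}$. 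That overhead is too lossy, so we first try a cleaner route. The inequality is equivalent to saying that the signed measure $\mathsf N(0,1)-\mathsf N(\rho,1)$, restricted to degree-$m$ test functions, has "$L^\infty$-dual norm" at most $\eps/2$. We would exhibit a bounded $h$ directly. Take $h=c\,\sum_{j\le m}\alpha_j He_j(x-\rho)\cdot\mathbf 1_{|x-\rho|\le R}$ with $R\asymp\sqrt m$; this is a truncated Hermite series solving the moment equations. The needed coefficients scale like $\rho^j/j!$ times $\sqrt{j!}$. On $|x|\le R$ we have $|He_j|\lesssim (R^2)^{j/2}$, so $\|h\|_\infty\lesssim \sum_j \rho^j m^{j/2}/\sqrt{j!}\lesssim e^{O(\rho^2 m)}\rho\sqrt m$. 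Requiring $\rho\sqrt{m}\log(\cdot)\lesssim\eps$ gives $m\approx \eps^2/(\rho^2\log(\eps^2/\rho^2))$, matching (d). The truncation error is controlled by Gaussian tails at $R$, and we correct the moments with a small finite-dimensional linear solve, which is where the $\log$ loss arises.
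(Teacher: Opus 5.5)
Your reduction is the paper's. You go through NGCA via \Cref{lem:ngca-lower-bound}, using a univariate $A$ whose likelihood ratio to $\mathsf{N}(\rho,1)$ is uniformly close to $1$, which matches $m$ moments of $\mathsf{N}(0,1)$ exactly, and which has $\chi^2(A,\mathsf{N}(0,1))\lesssim 1$. Once repaired, your $A$ is also essentially the paper's. Write $y=x-\rho$. Your degree-$m$ Hermite projection $1+h$ differs from the exact ratio $\phi(x)/\phi(x-\rho)=e^{-\rho y-\rho^2/2}$ only by the generating-function tail $\sum_{j>m}\frac{(-\rho)^j}{j!}\mathrm{He}_j(y)$, which is negligible on the window. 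So $a\approx\phi$ inside the window and $a=\phi(\cdot-\rho)$ outside, which is the instance of \Cref{lem:univariate-it-hard-instance-mean}. The paper uses the exact ratio (times the normalizer $\beta$) in the bulk, which avoids bounding Hermite polynomials, and then applies \Cref{lem:it-approximate-moments} and \Cref{lem:fix-the-moments}. Your LP-duality reformulation is correct but ends up unused.

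The step that fails as written is the window radius $R\asymp\sqrt m$. The discarded part of the series shifts the $j$-th moment by about $\rho\int_{|y|>R}|y|^{j+1}\phi(y)\,dy$. For odd $j\approx m$ and $R=C\sqrt m$, this is at least of order $\rho\exp\{\frac m2(\log(C^2 m)-C^2)\}$. That quantity grows with $m$ for every fixed constant $C$, because degree-$m$ Gaussian moments live at $|y|\approx\sqrt m$, exactly where you cut. A correction supported near the origin with relative size at most $\epsilon/2$, as in \Cref{lem:fix-the-moments}, moves each moment by only $O(\epsilon)$ and cannot absorb this error.

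You need $R^{j}e^{-R^2/2}$ to be exponentially small for all $j\le m$, even after the loss in the moment-fixing step. That requires $R^2\gtrsim m\log m$, which is the paper's relation $m\lesssim R^2/\log R$. With that choice, $\sup_{|y|\le R}|h|\lesssim\rho R\asymp\rho\sqrt{m\log m}$, and requiring $\rho\sqrt{m\log m}\lesssim\epsilon$ gives $m\asymp(\epsilon^2/\rho^2)/\log(\epsilon^2/\rho^2)$. So the logarithm comes from the window radius, not from the linear solve. Your condition ``$\rho\sqrt m\log(\cdot)\lesssim\epsilon$'' would give a $\log^2$ loss and should read $\rho\sqrt{m\log(\cdot)}\lesssim\epsilon$. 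The moment correction then stays below $\epsilon$ because the truncation error carries the factor $\rho\le\epsilon/C$.

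Finally, \eqref{ineq:sandwich-realizability} is only a necessary condition, so it cannot certify realizability. Use \Cref{cor:conditional-distribution-realizable} instead; your bound $\|h\|_\infty\le\epsilon/2$ satisfies it, since $(1+\epsilon/2)/(1-\epsilon/2)\le 1/(1-\epsilon)\le 1+\tau$.
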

The proof of this theorem uses the NGCA framework (\Cref{def:ngca}) and is deferred to \Cref{app:sq-lower-bound}.
Taken together, \Cref{thm:mean-est-sos-upper-bound} and \Cref{thm:sq-lower-bound-mean}, essentially settle the computational sample complexity for constant $\eps$.
For vanishing $\eps \to 0$, there is a gap: the lower bound on the effective sample complexity scales as $d^{\eps^2/\rho^2}$, 
whereas the upper bound requires roughly $d^{\eps/\rho^2}$ samples.

In the next section, we turn our attention to developing a parallel story for covariance estimation.  

\subsection{Covariance estimation}\label{sec:sos-cov-est}
Our sum-of-squares based covariance estimator solves for a pseudoexpectation over variables $M,B \in \R^{d \times d}$ satisfying the following constraints. 

\begin{definition}[SoS program for covariance estimation]
\label{defn:cov-sos-constraints}
    Let $\alpha > 0$. Let $\mc{A}_{\mathrm{covariance}}$ to be the union of the following sets of constraints\footnote{Refer to preliminaries for notation.} over $M,B \in \R^{d \times d}$:
\begin{itemize}
    \item[1.] Let $\mc{A}_{\mathrm{moments}}$ be the set
    \begin{align}
     \left\{\left\{\norm{v}_2^2 = 1\right\} \sos{v}{8k} \E_{x \sim T} \langle v, M (\E_{x \sim T} [x x^\top]^{-\frac{1}{2}}) x \rangle^{2\ell} \in \left(1\pm 10\epsilon \right)\E_{G \sim \mathsf{N}(0,1)} [G^{2\ell}] \right\}_{\ell \in [k]}.
    \label{eqn:sos-constraint-cov-est}
\end{align}
    \item[2.] Let $\mc{A}_{\mathrm{deviation}} := \{M \succeq 0\} \cup \{B = M - I_d\} \cup \{\alpha I \succeq B \succeq -\alpha I_d\} \cup \{ B^\top B \preceq \alpha^2 I_d\}$.
\end{itemize}

\end{definition}

Analogous to the mean estimation setting, it is relatively straightforward to show that any pseudoexpectation $\pE$ that satisfies the constraint~\eqref{eqn:sos-constraint-cov-est} must also satisfy the relation $\pE\bigl[\|\Sigma^{1/2} M v\|_2^k\bigr] \approx 1$, from which we may conclude that $\| \pE[M \Sigma M] - I\|_{\rm op}$ is small. However, this in and of itself does not guarantee that $\|\pE[M] - \Sigma\|_{\rm op}$ is small (i.e., $\pE[M]$ may not necessarily be a good estimator). Thus, we take the additional step of solving for an explicit matrix $A$ that satisfies  $\|\pE[M A M] - I\|_{\rm op}$. This step motivates the additional constraints in $\mc{A}_{\mathrm{deviation}}$: if $\pE$ satisfies all of these constraints, then we can guarantee $A$ is a good estimator (modulo the whitening step implicit in constraint~\eqref{eqn:sos-constraint-cov-est}). We make this key fact precise in Lemma~\ref{lem:cov-est-sos-upper-bound-helper}, whose proof we provide in Section~\ref{sec:proof-lem-cov-est-sos-upper-bound-helper}. We then use it to prove an upper bound on the sample complexity of our estimator in Theorem~\ref{thm:cov-est-sos-upper-bound}.

\begin{algorithm}[htb]{}
	\begin{algorithmic}[1]
		\Function{Covariance-estimator}{$T,k,\epsilon,\alpha$}
		\State Find a pseudo-expectation $\pE$ of degree $8k$ which satisfies the system of \Cref{defn:cov-sos-constraints}. If no such $\pE$ exists, \Return $ \E_{x \sim T} [x x^\top]$.
        \State Let $A_*$ be a matrix that minimizes 
        $$\min_{A: A \succeq 0}\|\pE[M A M] - I_d\|_{\rm op}\,.$$ 
		\State \Return $\widehat{\Sigma} = (\E_{x \sim T} [x x^\top]^{1/2})A_*(\E_{x \sim T} [x x^\top]^{1/2})$.
		\EndFunction
	\end{algorithmic}
	\caption{Covariance Estimator}
	\label{alg:cov-estimator}
\end{algorithm}
\begin{lemma}\label{lem:cov-est-sos-upper-bound-helper}
    Let $\pE$ be a degree-$4$ pseudoexpectation satisfying $\mc{A}_{\mathrm{deviation}}(\alpha)$ , and for $\beta > 0$ let $H \in \R^{d \times d}$ be an arbitrary symmetric matrix satisfying $\|\pE[M H M]\|_{\rm op} \leq \beta$. Then if $1-2\alpha-\alpha^2 > 0$ we have
    \begin{align*}
        \norm{H}_{\rm op} \leq \frac{\beta}{1-2\alpha-\alpha^2}.
    \end{align*}
\end{lemma}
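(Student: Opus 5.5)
Write $M = I_d + B$ and expand
\[
MHM = H + BH + HB + BHB .
\]
Taking pseudo-expectations gives $\pE[MHM] = H + \pE[B]H + H\pE[B] + \pE[BHB]$. The plan is to show that the three correction terms have operator norm at most $(2\alpha+\alpha^2)\|H\|_{\rm op}$. The triangle inequality then gives
\[
\beta \;\geq\; \|\pE[MHM]\|_{\rm op} \;\geq\; \|H\|_{\rm op}\bigl(1-2\alpha-\alpha^2\bigr),
\]
and rearranging yields the claim. Since $1-2\alpha-\alpha^2>0$, the division is legitimate.

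\textbf{Linear terms.} The constraint $\alpha I \succeq B \succeq -\alpha I$ means $\alpha I - B = CC^\top$ for some matrix-valued indeterminate $C$. For a fixed vector $u$, the polynomial $u^\top(\alpha I - B)u = \|C^\top u\|^2$ is a sum of squares of degree $2$, so $\pE[u^\top B u] \le \alpha\|u\|^2$. The same argument gives the lower bound. Hence the symmetric matrix $\pE[B]$ satisfies $\|\pE[B]\|_{\rm op} \leq \alpha$. It follows that $\|\pE[B]H + H\pE[B]\|_{\rm op} \leq 2\alpha\|H\|_{\rm op}$.

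\textbf{Quadratic term.} Fix unit vectors $u,w$ and bound the bilinear form $\pE[u^\top BHBw] = \pE[\langle Bu, H Bw\rangle]$. Decompose $H = H_+ - H_-$ into PSD parts, each with norm at most $\|H\|_{\rm op}$. Then use the polarization / pseudo-Cauchy--Schwarz inequality
\[
\pE\bigl[\langle Bu, H_\pm Bw\rangle\bigr] \;\leq\; \tfrac12\,\pE\bigl[u^\top B H_\pm B u + w^\top B H_\pm B w\bigr].
\]
This holds because $(Bu - Bw)^\top H_\pm (Bu - Bw)$ is a sum of squares, as $H_\pm = R_\pm R_\pm^\top$.

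For the diagonal terms, note that $\|H_\pm\| I - H_\pm \succeq 0$ is a constant PSD matrix. Therefore $u^\top B(\|H_\pm\| I - H_\pm)Bu$ is a sum of squares, and
\[
\pE[u^\top B H_\pm B u] \;\le\; \|H\|\,\pE[u^\top B^\top B u] \;\le\; \alpha^2\|H\|.
\]
The last inequality uses the axiom $B^\top B \preceq \alpha^2 I$ in the same way as in the linear step; all polynomials involved have degree at most $4$.

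\textbf{Main obstacle.} The delicate point is sign handling. Splitting $H$ into two parts naively would give $2\alpha^2\|H\|$ rather than $\alpha^2\|H\|$. To avoid this, I would instead bound the symmetric quadratic form directly: for unit $u$,
\[
-\|H\|\,B^\top B \;\preceq\; BHB \;\preceq\; \|H\|\,B^\top B \quad\text{(SoS, degree 4)},
\]
since $B(\|H\| I \mp H)B$ is a sum of squares. Taking pseudo-expectations gives
\[
|u^\top \pE[BHB]u| \;\le\; \|H\|\,\pE[u^\top B^\top B u] \;\le\; \alpha^2\|H\|.
\]
Because $\pE[BHB]$ is symmetric (by $B = B^\top$, which follows from $M$ being symmetric PSD in the formulation), this gives $\|\pE[BHB]\|_{\rm op} \le \alpha^2\|H\|_{\rm op}$. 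This avoids the decomposition entirely and completes the argument.
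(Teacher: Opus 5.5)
Your proof is correct and follows essentially the same route as the paper: write $M = I_d + B$, bound the cross terms $\pE[B]H + H\pE[B]$ by $2\alpha\|H\|_{\rm op}$ and the quadratic term $\pE[BHB]$ by $\alpha^2\|H\|_{\rm op}$, then rearrange $\|H\|_{\rm op} \le \beta + (2\alpha+\alpha^2)\|H\|_{\rm op}$. The only difference is how the individual terms are bounded. The paper uses pseudo-expectation Cauchy--Schwarz together with $\pE[\|Bv\|_2^2] \le \alpha^2$ (from $B^\top B \preceq \alpha^2 I_d$) for both terms. You instead use $\|\pE[B]\|_{\rm op} \le \alpha$ from $-\alpha I_d \preceq B \preceq \alpha I_d$, and the degree-$2$ certificate $-\|H\|_{\rm op} B^\top B \preceq BHB \preceq \|H\|_{\rm op} B^\top B$; this has the small advantage of controlling the sign of the $BHB$ term explicitly.
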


We describe our estimator and upper bound its sample complexity in Theorem~\ref{thm:cov-est-sos-upper-bound}.

\begin{theorem}\label{thm:cov-est-sos-upper-bound}
    Fix $\eps < 1/100$. Let $P = \mathsf{N}(0,\Sigma)$ for $\Sigma \succ 0$. Generate $T \subset \R^d$ by Definition~\ref{def:data-generating-process}. 
    Let $k = 2^p$ for some $p \in \bN$ and let $\alpha=10\eps$ (Definition~\ref{defn:cov-sos-constraints}).
    Suppose $n$ satisfies the conditions of Lemma~\ref{lem:lower-bound-E-prob}, i.e., $n \gtrsim \frac{\left(\Theta(k d \log(1/\delta))\right)^{\Theta(k)}}{\eps^2}$.
    Then with probability at least $1-3\delta$, the following conditions both hold:
    \begin{itemize}
        \item[1.] (Satisfiability) The constraints $\mc{A}_{\mathrm{covariance}}$ are satisfiable.
        \item[2.] (Accuracy) Let $\widehat{\Sigma}$ be the output of \Cref{alg:cov-estimator}. 
        Then 
        \[
        \bigl \| \Sigma^{-1/2} \widehat{\Sigma} \Sigma^{-1/2} - I_d \bigr\|_{\mathrm{op}} \lesssim \frac{\eps}{k}.
        \]
    \end{itemize}
\end{theorem}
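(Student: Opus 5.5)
The plan is to work throughout on the good event $\mc{E}$ of \Cref{def:good-event}, which holds with probability at least $1-3\delta$ by Lemma~\ref{lem:lower-bound-E-prob}. Write $S := \E_{x\sim T}[xx^\top]$, $W := S^{-1/2}$ and $\Sigma' := W\Sigma W$, so that the program is effectively run on the whitened points $Wx$, whose true covariance is $\Sigma'$. Everything then reduces to one sandwich estimate. For a statistic $f\ge 0$, the sum of $f$ over $T=\Tstar+\Tdprime$ lies between its sums over $\Tstar$ and over $\Tstar+\Tprime$. Moreover $|T|\in[m,n]$ and $n/m\le\frac{1+\eps}{1-\eps}$. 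So each empirical moment over $T$ is within a factor $1\pm O(\eps)$ of the corresponding Gaussian moment, up to the tensor errors controlled by $\mc{E}$.

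\textbf{Step 1: sandwich bounds as SoS inequalities.} Let $z=\Sigma^{-1/2}x$. For every $\ell\le k$ and every vector polynomial $w$, I would establish an SoS inequality
\[
\E_{x\sim T}\langle w,z\rangle^{2\ell}\in(1\pm c\eps)\,(2\ell-1)!!\,\|w\|_2^{2\ell}.
\]
The entrywise tensor bound $\eps/((4d)^k2k)$ converts to an SoS bound of the form (small)$\cdot\|w\|_2^{2\ell}$, by bounding each coefficient and using $\sum_i w_i^2=\|w\|_2^2$ together with a $d^\ell$ counting factor. The factor $(4d)^k$ in $\mc{E}$ is exactly what absorbs that counting factor.
\begin{itemize}
\item Taking $\ell=1$ gives $(1-3\eps)\Sigma\preceq S\preceq(1+3\eps)\Sigma$, hence $\|\Sigma'-I\|_{\rm op}\lesssim\eps$.
\item \emph{Satisfiability.} Set $M_0=\Sigma'^{-1/2}$. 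Then $\|M_0-I\|_{\rm op}\le 2\eps\le\alpha$, so $\mc{A}_{\mathrm{deviation}}$ holds, and $\langle v,M_0Wx\rangle=\langle w,z\rangle$ with $\|w\|_2=\|v\|_2$. Hence $\mc{A}_{\mathrm{moments}}$ holds with $10\eps$ slack.
\end{itemize}

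\textbf{Step 2: from moment constraints to $\pE[M\Sigma' M]\approx I$.} For a unit vector $v$, set $w=\Sigma^{1/2}WMv$ and $s(M):=v^\top M\Sigma'Mv=\|w\|_2^2$. Combining the constraint~\eqref{eqn:sos-constraint-cov-est} with Step 1 gives SoS-provable bounds $s^\ell\in 1\pm O(\eps)$ for every $\ell\le k$. For the upper bound, $k=2^p$ lets me iterate pseudo-Cauchy--Schwarz: $\pE[s]^{2^p}\le\pE[s^{2^p}]\le 1+O(\eps)$, so $\pE[s]\le 1+O(\eps/k)$.

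The lower bound $\pE[s]\ge 1-O(\eps/k)$ is the step I expect to be the main obstacle. A $k$-th root does not pass through $\pE$ in this direction. My first attempt would write $s=1+b$, with $|b|\lesssim\alpha$ available from $\mc{A}_{\mathrm{deviation}}$, and expand
\[
s^k-1=kb+\sum_{j\ge2}\binom{k}{j}b^j .
\]
I would then control $\pE[b^2]$ by a bootstrap, using the constraints at all degrees $\ell\le k$ (in particular $\ell=k/2$ and $\ell=k$, through $\pE[(s^{k/2}-1)^2]\lesssim\eps$) to show $\pE[b^2]\lesssim(\eps/k)^2$. With that, the higher-order terms are dominated and $k\,\pE[b]\ge -O(\eps)$. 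Applying this to every unit $v$ yields $\|\pE[M\Sigma'M]-I\|_{\rm op}\lesssim\eps/k$.

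\textbf{Step 3: conclude.} Since $A=\Sigma'$ is feasible in the minimization of Algorithm~\ref{alg:cov-estimator}, the minimizer satisfies $\|\pE[MA_*M]-I\|_{\rm op}\lesssim\eps/k$. With $H=A_*-\Sigma'$, the triangle inequality gives $\|\pE[MHM]\|_{\rm op}\lesssim\eps/k$. Lemma~\ref{lem:cov-est-sos-upper-bound-helper} with $\alpha=10\eps<1/10$, so that $1-2\alpha-\alpha^2>1/2$, then gives $\|A_*-\Sigma'\|_{\rm op}\lesssim\eps/k$. Finally,
\[
\Sigma^{-1/2}\widehat{\Sigma}\Sigma^{-1/2}-I=\Sigma^{-1/2}S^{1/2}\,(A_*-\Sigma')\,S^{1/2}\Sigma^{-1/2}.
\]
By Step 1, $\|\Sigma^{-1/2}S^{1/2}\|_{\rm op}\le 1+O(\eps)$, so the relative error is $\lesssim\eps/k$.
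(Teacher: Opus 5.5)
Your satisfiability witness $M_0=\Sigma'^{-1/2}$, the sandwich estimates of Step~1, the pseudo-H\"older upper bound, and the final step via Lemma~\ref{lem:cov-est-sos-upper-bound-helper} all match the paper. The gap is exactly where you suspected: the lower bound $\pE[s]\ge 1-O(\eps/k)$ is never established.

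Your expansion does not give it. Writing $s^k-1=kb+\sum_{j\ge2}\binom{k}{j}b^j$ and using the a priori bound $|b|\lesssim\alpha$ from $\mc{A}_{\mathrm{deviation}}$ only gives $\binom{k}{j}|b|^j\le(Ck\eps)^j/j!$. That is not small once $k\eps\gtrsim1$, and this is the main regime of the theorem, since error $\rho\ll\eps$ requires $k\asymp\eps/\rho$.

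A bootstrap that only manipulates numbers such as $\pE[s^\ell]$ or $\pE[(s^{k/2}-1)^2]$ cannot succeed either. Take the genuine distribution with $s=1-C\eps$ with probability $c\eps$ and $s=1$ otherwise. It has $\E[s^\ell]\in[1-c\eps,1]$ for every $\ell$ and $|s-1|\le C\eps$ pointwise, yet $\E[s]=1-cC\eps^2$. This is far below $1-O(\eps/k)$ when $k\eps\gg1$. (Also, if you did have $\pE[b^2]\lesssim(\eps/k)^2$, the expansion would be unnecessary, since $\pE[b]^2\le\pE[b^2]$.)

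The missing idea is to multiply the constraints as polynomial inequalities in $M$, rather than only using their pseudo-expectations. For each fixed unit $v$ you already have SoS-derivable bounds $1-c\eps\le s^\ell\le 1+c\eps$ for $\ell\in[k]$. Write $s^k-1=(s-1)Q$ with $Q=\sum_{i=0}^{k-1}s^i$, and set $K=k(1-c\eps)$. Then:
\begin{itemize}
\item $Q-K=c\eps+\sum_{i=1}^{k-1}\bigl(s^i-(1-c\eps)\bigr)$ is a sum of constraints.
\item Hence $(s-1)^2(Q^2-K^2)=\bigl[(s-1)(Q-K)\bigr]^2+2K(s-1)^2(Q-K)$ is SoS-derivable.
\item The product of the two level-$k$ constraints is $c^2\eps^2-(s^k-1)^2\ge 0$.
\end{itemize}
Combining these, in degree $O(k)$,
\[
K^2(s-1)^2\;\le\;(s-1)^2Q^2\;=\;(s^k-1)^2\;\le\;c^2\eps^2 .
\]
Therefore $|\pE[s]-1|\le\sqrt{\pE[(s-1)^2]}\le c\eps/K\lesssim\eps/k$. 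This gives both directions at once, and uses neither $\mc{A}_{\mathrm{deviation}}$ nor $k=2^p$ at this step.

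The paper's own route here (Corollary~\ref{lem:pe-1-m-alpha}, via Lemma~\ref{lem:pe-1-m-alpha-helper}) has the defect you anticipated. Its hypotheses are only pseudo-expectation values. The distribution $x^2\in\{0,1\}$ with $\Pr[x^2=1]=1-\alpha$ satisfies all of them, yet $\E[x^2]=1-\alpha$, contradicting the claimed bound $1-O(\alpha/r)$ for large $r$. Since $\prod_{j<p}(1+s^{2^j})=\sum_{i<2^p}s^i$, the argument above is the SoS-level repair of that dyadic recursion.
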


Let us compare this result with the information-theoretic rate established in \Cref{thm:cov-est-it-ub-lb}.
First, the range of permissible $\eps$ in \Cref{thm:cov-est-sos-upper-bound} is restricted to be bounded away from $1$; by contrast, \Cref{thm:cov-est-it-ub-lb} demonstrates that Algorithm~\ref{alg:mean-estimator} works for all $\eps < 1$. 
Next, let us discuss the sample complexity to obtain error $\rho$ when $\eps$ is a constant.
In \Cref{thm:cov-est-sos-upper-bound}, we can set $k \asymp 1/\rho$,
and the sample complexity of the algorithm above is $d^{\Theta(1/\rho)} \cdot f(\rho)$, as opposed to the information-theoretic sample complexity of $d\cdot \widetilde{f}(\rho)$ in \Cref{thm:cov-est-it-ub-lb}; here $f$ and $\widetilde{f}$ are dimension-independent functions.
We next demonstrate such a gap is inherent for all computationally-efficient SQ algorithms.

\begin{definition}[Testing Problem for Covariance Estimation]
\label{def:testing-cov-lb}
    Let $P$ be the underlying (unknown) distribution equal to $\mathsf{N}(0,I_d + ww^\top)$ for an unknown vector $w \in \R^d$.
    Let the contamination rate be $\eps \in (0,1/2)$
    and the signal strength be $\rho \in (0,1/2)$.
    Let $P'$ be a contaminated conditional distribution $P' \in \cR_\R(P,\eps)$.
    Suppose the tester has access (either through i.i.d.\ samples $(X_1,\dots,X_n)\sim P^{\otimes n}$ or an SQ oracle $\mathrm{STAT}_{P'}$) to the distribution $P'$. 
    The testing problem is the following:
    \[
    H_0:\; \|ww^\top\|_{\rm op}= 0 \quad \text{ vs. } \quad
    H_1: \; \|ww^\top\|_{\rm op} \geq \rho.
    \]
\end{definition}

The following theorem provides an SQ lower bound for the above testing problem, again relying on the NGCA framework (\Cref{def:ngca}).

\begin{theorem}[Computational Lower Bound for Covariance Estimation]
\label{thm:sq-lower-bound-cov}
There exist a large constant $C > 0$ and a small constant $c>0$ such that the following holds.
Consider \Cref{def:testing-cov-lb} with (i) $\eps/\rho \geq C$ and (ii) $d \geq  \left( (\eps/\rho)\log (2\dimension) \right)^{C}$.
Then any SQ algorithm that solves \Cref{def:testing-cov-lb} with fewer than $2^{\dimension^{c}}$ many queries must use tolerance $\SQtolerance \leq d^{-m}$ for $m\geq c \frac{\eps/\rho}{\log(\eps/\rho)}$.
\end{theorem}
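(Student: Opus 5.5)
We reduce to NGCA (\Cref{lem:ngca-lower-bound}). Under $H_0$ the tester sees $\mathsf{N}(0,I_d)$, which is trivially in $\cR_\R(\mathsf{N}(0,I_d),\eps)$. Under $H_1$ we must exhibit, for every unit $v$, a distribution $P_{A,v}$ lying in $\cR_\R(\mathsf{N}(0,I_d+\rho' vv^\top),\eps)$ for some $\rho' \geq \rho$. Any SQ algorithm for \Cref{def:testing-cov-lb} then solves NGCA with the same queries and tolerance. Since the hidden-direction structure factorizes, this reduces to a purely univariate problem. We need a density $a$ on $\R$ and $s^2 = 1+\rho'$ such that three things hold: (i) $a$ matches the first $m$ moments of $\mathsf{N}(0,1)$; (ii) $\chi^2(A,\mathsf{N}(0,1)) \lesssim m$ (in fact $O(1)$); and (iii) the sandwich condition~\eqref{ineq:sandwich-Q_R} holds, namely $\frac{1}{1+\tau}\,\phi(x;0,s^2) \le a(x) \le (1+\tau)\,\phi(x;0,s^2)$ for all $x$, with $\tau = \eps/(1-\eps)$. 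Condition (iii) with the product structure gives realizability in $d$ dimensions, because the likelihood ratio $a(v^\top x)/\phi(v^\top x;0,s^2)$ depends only on $v^\top x$.

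To build $a$, take $a = \phi_s \cdot (1 + h)$ with $\phi_s := \phi(\cdot;0,s^2)$, where $h$ is chosen so that $\|h\|_\infty \le \tau/2$, $\int \phi_s h = 0$, and $\int x^j \phi_s(1+h) = \E[G^j]$ for $j \le m$. Equivalently, $\int x^j \phi_s h = \E[G^j] - s^j\E[G^j]$, and these right-hand sides are of size about $\rho'\, j\,(j-1)!!$ for even $j$ (zero for odd $j$ by symmetry). The plan is to solve this by LP duality, the standard moment-correction argument (e.g.\ as in the DKS/robust-statistics lower bounds). A bounded $h$ with $\|h\|_\infty\le \tau/2$ exists iff for every polynomial $p$ of degree $\le m$ we have $|\sum_j c_j \Delta_j| \le (\tau/2)\,\E_{\phi_s}|p|$, where $p=\sum_j c_j x^j$ and $\Delta_j$ are the target corrections. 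The left side equals $|\E_{\mathsf{N}(0,1)} p - \E_{\mathsf{N}(0,s^2)} p|$. So the key inequality is that for all even-degree-$\le m$ polynomials $p$,
\[
\bigl|\E_{\mathsf{N}(0,1)}p - \E_{\mathsf{N}(0,1+\rho')}p\bigr| \le \tfrac{\tau}{2}\,\E_{\mathsf{N}(0,1+\rho')}|p|.
\]
We would prove this by expanding in Hermite polynomials. The variance change acts as an Ornstein--Uhlenbeck/heat semigroup, which multiplies the degree-$2j$ Hermite coefficient by roughly $(\rho')^j$. The difference is then bounded by about $m\rho' \cdot \|p\|_{L^2}$ up to factors like $(1+\rho')^{m}$. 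Finally, $L^2$ must be compared with $L^1$ for degree-$m$ polynomials under a Gaussian, via hypercontractivity, which costs $e^{O(m)}$. That is too lossy. So instead we would restrict the deviation of the correction to the bulk $|x|\lesssim \sqrt{m}$, with the moment mismatch controlled there. This is the main obstacle: the naive hypercontractive loss would give only $m \approx \log(\eps/\rho)$. To get the claimed $m \gtrsim (\eps/\rho)/\log(\eps/\rho)$, the bound must be essentially $|\cdot| \lesssim m\rho'\log m \cdot \E|p|$. We would obtain this via a Markov/Remez-type inequality showing that degree-$m$ polynomials have $L^\infty[-\sqrt{Cm},\sqrt{Cm}]$ norm controlled by Gaussian $L^1$ norm up to $\mathrm{poly}(m)$. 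Then we would use the fact that the difference of the two Gaussian measures has total variation about $\rho'$ localized on scale $\sqrt m$, tested against such polynomials. This yields the condition $m\log m \cdot \rho \lesssim \eps$, i.e.\ the stated $m$.

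Given $h$, conditions (i) and (iii) hold by construction (the sandwich holds since $1\pm\tau/2$ lies within $[1/(1+\tau),1+\tau]$ for small $\eps$, using $\eps/\rho\ge C$). For (ii), $\chi^2(A,\mathsf{N}(0,1)) = \int \phi_s^2(1+h)^2/\phi - 1 \lesssim \int\phi_s^2/\phi$, which is finite and $O(1)$ when $\rho'<1/2$. Applying \Cref{lem:ngca-lower-bound} with condition (ii) on $d$ gives that any SQ algorithm needs $2^{d^{c}}$ queries or tolerance $d^{-\Omega(m)}$, completing the proof.
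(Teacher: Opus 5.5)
Your reduction is sound: the NGCA embedding, the fact that the likelihood ratio factors through $v^\top x$, the $\chi^2$ bound, and the LP-duality reformulation are all right. In particular, a bounded $h$ with $\|h\|_\infty\le\eta$ exists iff $|\mathbb{E}_{\mathsf{N}(0,1)}p-\mathbb{E}_{\mathsf{N}(0,s^2)}p|\le\eta\,\mathbb{E}_{\mathsf{N}(0,s^2)}|p|$ for all $p$ of degree at most $m$. The gap is that this dual inequality, which is the whole content of the lower bound, is never established, and the tool you propose for it is false. There is no bound $\sup_{|x|\le\sqrt{Cm}}|p(x)|\le\mathrm{poly}(m)\,\mathbb{E}|p(G)|$ for degree-$m$ polynomials, for any fixed $C>0$. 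For even $m$, take $p(x)=(1+x^2/m)^{m/2}$. Then $p(x)e^{-x^2/2}\le e^{-x^4/(12m)}$ for $x^2\le m$ and $\le e^{-0.15x^2}$ otherwise, so $\mathbb{E}\,p(G)=O(m^{1/4})$, while $p(\sqrt{Cm})=(1+C)^{m/2}$. Only Gaussian-weighted versions of such inequalities hold. Even those, paired with a total-variation bound on $\phi-\phi_s$ as you describe, lose polynomial factors in $m$. The stated condition $m\log m\cdot\rho\lesssim\eps$ does not follow from anything in your sketch.

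The framework can be repaired. Pointwise, $|\phi(x)/\phi_s(x)-1|=|\sqrt{1+\rho'}\,e^{-\rho'x^2/(2(1+\rho'))}-1|\le\frac{\rho'}{2}(1+x^2)$. So with $q(y)=p(sy)$, you get $|\mathbb{E}_{\mathsf{N}(0,1)}p-\mathbb{E}_{\mathsf{N}(0,s^2)}p|\le\frac{\rho'}{2}\,\mathbb{E}\bigl[|q(G)|(1+s^2G^2)\bigr]$. It then suffices to show $\mathbb{E}[|q(G)|G^2]\lesssim(m+1)\,\mathbb{E}|q(G)|$ for $\deg q\le m$. Split at $|G|=3\sqrt{m+1}$. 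On the tail, use Cauchy--Schwarz with hypercontractivity, $\|q\|_{L^2}\le 3^m\|q\|_{L^1}$; this factor is beaten by $(\mathbb{E}[G^4\mathbf{1}\{|G|>3\sqrt{m+1}\}])^{1/2}=O(e^{-9(m+1)/8})$. So hypercontractivity is harmless when you apply it only to the tail. This gives the dual inequality with $\eta=\tau/4$ whenever $m\le c\,\eps/\rho$, which even removes the logarithmic loss.

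Also, \eqref{ineq:sandwich-Q_R} is only a necessary condition. By \Cref{cor:conditional-distribution-realizable} you need $\sup(a/\phi_s)/\inf(a/\phi_s)\le 1+\tau$. Taking $\|h\|_\infty\le\tau/2$ slightly violates this, while $\tau/4$ suffices.

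For comparison, the paper avoids any polynomial inequality. It starts from the explicit truncated instance of \Cref{lem:univariate-it-hard-instance-cov}: $\beta\phi$ on $|x|\le R$, $\mathsf{N}(0,1+\gamma)$ outside, with $R^2\asymp\tau/\gamma$. This matches $\widetilde\Theta(R^2)$ moments up to error $\gamma e^{-\Omega(R^2)}$ (\Cref{lem:it-approximate-moments}), and the paper then repairs the moments on $[-1,1]$ via \Cref{lem:fix-the-moments}. The truncation is what costs the $\log(\eps/\rho)$.
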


It is worthwhile to contrast this guarantee with that of Theorems~\ref{thm:mean-est-sos-upper-bound} and~\ref{thm:sq-lower-bound-mean}: Even for vanishing $\eps \to 0$, we see that both the upper bound in \Cref{thm:cov-est-sos-upper-bound} and the SQ lower bound in \Cref{thm:sq-lower-bound-cov} scale roughly as $d^{\Theta(\eps/\rho)}$.

\section{Nearly optimal, computationally-efficient linear regression}

\label{sec:lin-reg}
In contrast with Gaussian mean and covariance estimation, we show that in Gaussian linear regression with missing observations, there is (nearly) no statistical-computational gap in many parameter regimes of interest.
Before presenting the efficient estimator in Section~\ref{sec:main-result-lr}, we first specify the model in Section~\ref{sec:lr-model} and then provide a glimpse at the technique in Section~\ref{sec:warmup-lr}.

\subsection{Model preliminaries} \label{sec:lr-model}
In this section, we consider linear regression in the setting with isotropic Gaussian covariates and Gaussian noise.
That is, for $\theta_\star \in \R^d$, we take the base distribution $P_{\theta_\star}$ to be the law of $(X, Y)$ generated as
\begin{subequations} \label{eq:model-lr}
\begin{align}
  X \sim \mathsf{N}(0, I_d), \quad \text{ and } \quad  Y \mid X \sim \mathsf{N}\bigl(X^{\top} \theta_\star, \sigma^2\bigr).
\end{align}
We take the parameter space and loss to be
\begin{align}
  \cP_{\mathrm{LR}}(\sigma^2, \epsilon) = \{\cR(P_\theta, \epsilon) \mid \theta \in \bR^d\}
  \qquad\textrm{and}\qquad
  L(\theta, \theta') = \| \theta - \theta' \|_2.
\end{align}
\end{subequations}
Let us emphasize that in this setting, when an observation is masked, the entire observation is masked, rather than just the response.
We also emphasize that in the MNAR component, the missingness can depend arbitrarily on both the response $Y$ and the covariate $X$.

In order to further clarify the setting, it will be helpful to introduce auxiliary random variables to represent contaminated distributions.
To this end, for a distribution $R \in \cR(P_\theta, \epsilon)$, we know that there exist random variables 
$\Omega \in \{0, 1\}, B \sim \Bern(\epsilon), (X, Y) \sim P$
such that
\begin{align*}
  (X, Y) \ostar \bigl\{(1-B) + B \Omega\bigr\} \sim R,
\end{align*}
where $B$ is independent of the triple $(X,Y, \Omega)$.
Moreover, the response $Y$ can be generated as $X \cdot \theta_\star + \sigma G$ for $G \sim \mathsf{N}(0, 1)$ independent of $X$.
Observe that $(X,Y) \neq \star$ if and only if $(1-B) + B\Omega = 1$.  

Equipped with these preliminaries, we next provide the key intuition behind our estimator.

\newcommand{\thetapop}{\theta_{\mathrm{pop}}}
\newcommand{\ofk}{^{(k)}}

\subsection{Warm-up: The population setting} \label{sec:warmup-lr}
The natural estimator to consider is that of ordinary least squares (OLS), which minimizes the sum of squared residuals.  Unfortunately, in the realizable contamination setting, the OLS estimator can suffer from  bias.  In order to mitigate this, we consider an analogue of the polynomial estimators developed in Section~\ref{sec:comp-mean-cov}.  In particular, rather than minimizing the sum of squared residuals, we attempt to minimize a loss which raises the residuals to a larger power $2k>2$.
This magnifies the error resulting from deviation away from $\theta_\star$, regardless of the contamination.
By taking $k \uparrow \infty$, the effects of the contamination vanish and the population estimator converges to $\theta_\star$.  This intuition can be made precise via a short calculation in the population setting, which we provide presently.  

For $k \in \bN$, we define the loss $F^{(k)}: \R^d \rightarrow \R$ and its minimizer $\thetapop^{(k)}$ as 
\[
F^{(k)}(\theta) := \mathbb{E}\bigl[(Y - X^{\top} \theta)^{2k} \cdot \1\{(X, Y) \neq \star\}\bigr] \quad \text{ and } \quad \thetapop^{(k)} := \argmin_{\theta \in \R^d}\, F^{(k)}(\theta).
\]
Our strategy will be to control the error $\| \thetapop^{(k)} - \theta_{\star} \|_2$ by applying the inequality
\[
\bigl \| \thetapop^{(k)} - \theta_{\star} \bigr \|_2 \leq \frac{\| \nabla F^{(k)}(\theta_{\star})\|_2}{\mu},
\]
where $\mu$ denotes the modulus of strong convexity of the population loss.  Let us begin by computing $\mu$.
We compute the strong convexity parameter by lower bounding the Hessian at every $\theta$ by
\begin{align*}
\nabla^2 F^{(k)}(\theta) &= 2k (2k-1) \mathbb{E}\bigl[(Y - X^{\top} \theta_{\star})^{2k-2} XX^{\top} \cdot \1\{(X, Y) \neq \star\}\bigr] \\
&\succeq 2k (2k-1) \mathbb{E}\bigl[(1-B)\cdot \bigl\{X^{\top}(\theta_{\star} - \theta) + \sigma G\bigr\}^{2k-2} XX^{\top}\bigr]\\
&\stackrel{(a)}{=} (1-\epsilon) 2k (2k-1) \sum_{\ell=0}^{2k-2} \sigma^{2k-2-\ell} \E[G^{2k-2-\ell}] \cdot \mathbb{E}\bigl[\bigl\{X^{\top} (\theta_{\star} - \theta)\bigr\}^{\ell} XX^{\top}\bigr]\\
&\stackrel{(b)}{\succeq} (1-\epsilon) 2k (2k-1) \sigma^{2k-2} \mathbb{E}\bigl[G^{2k-2}\bigr] \cdot I_d\\
& = (1-\epsilon) 2k (2k-1) \sigma^{2k-2} (2k - 3)!! \cdot I_d.
\end{align*}
Above, step $(a)$ follows by mutual independence of $X$, $G$, and $B$; and step $(b)$ follows from the fact that odd moments of the standard Gaussian distribution are zero.

On the other hand, we upper bound the norm of its gradient at the true solution $\theta_{\star}$ as
\begin{align*}
    \| \nabla F^{(k)}(\theta_{\star}) \|_2 &= 2k \cdot \bigl \| \mathbb{E}\bigl[(Y - X^{\top} \theta_{\star})^{2k-1} X \cdot  \1\{(X, Y) \neq \star\}\bigr]\bigr\|_2 \\
    &= 2k \sigma^{2k-1}\cdot \bigl \| \mathbb{E}\bigl[(1-B) \cdot G^{2k-1} X\bigr] + \mathbb{E}\bigl[B\Omega \cdot G^{2k-1} X\bigr]\bigr\|_2 \\
    &= 2k \sigma^{2k-1} \epsilon \bigl \| \mathbb{E}\bigl[G^{2k-1} X \cdot  \Omega\bigr]\bigr\|_2 \\
    &= 2k \sigma^{2k-1} \epsilon \cdot \sup_{v \in \mathbf{S}^{d-1}}\; \bigl\{ \mathbb{E}\bigl[G^{2k-1} (v^{\top}X) \Omega\bigr]\bigr\} \\
    &\leq 2k \sigma^{2k-1} \epsilon \cdot \sup_{v \in \mathbf{S}^{d-1}}\; \bigl\{ \mathbb{E}\bigl[|G|^{2k-1} |v^{\top}X|\bigr]\bigr\} \\
    &= 2\sqrt{2/\pi} \cdot k \sigma^{2k-1} \epsilon \cdot (2k-2)!!,
\end{align*}
where again we make use of the mutual independence of $X$, $G$, and $B$.
Putting the pieces together, we deduce that
\begin{align*}
\bigl \| \thetapop^{(k)} - \theta_{\star} \bigr \|_2 &\leq \frac{\| \nabla F^{(k)}(\theta_{\star}) \|_2}{(1-\epsilon) 2k (2k-1) \sigma^{2k-2} (2k - 3)!!}
\lesssim \sigma \cdot \frac{\epsilon}{1- \epsilon} \cdot \frac{(2k-2)!!}{(2k-1)!!} \lesssim \frac{\sigma \epsilon}{(1-\epsilon)\sqrt{k}},
\end{align*}
where the final inequality follows from Stirling's inequality (see \cref{fact:stirling}).
Therefore, while the quadratic loss (corresponding to $k=1$) yields an error guarantee of $O(\frac{\sigma \epsilon}{1-\epsilon})$, taking $k$ to be large gives a population quantity with arbitrarily small error guarantees.
In the next section, we derive finite sample guarantees for the empirical variant of this estimator by carefully choosing the value of $k$.

\subsection{Main result} \label{sec:main-result-lr}
In the population setting, we established that raising residuals to arbitrarily large polynomial powers has a monotone effect on the estimation error.  In the finite sample setting, this is no longer the case as raising the residuals to higher powers render the empirical risk heavier tailed.  Our main result demonstrates that selecting a value $k$ which balances the population error and the heavy-tailed nature of the risk yields an estimator that is nearly optimal. %
Formally, given samples $\{(x_1, y_1), (x_2, y_2), \dots, (x_n, y_n)\} \subseteq \bR_\star^{d+1}$ we define the empirical loss 
\begin{subequations}
\begin{align} \label{eq:empirical-loss-k-power}
F\ofk_n(\theta) := \frac{1}{n}\sum_{i=1}^n \mathbf{1}\{(x_i, y_i) \neq \star\} \cdot (y_i - x_i \cdot \theta)^{2k}
\end{align}
as well as its minimizer
\begin{align} \label{eq:empirical-minimizer-k-pwer}
    \widehat{\theta}\ofk_n = \arg\min_\theta F\ofk_n(\theta).
\end{align}
\end{subequations}

The following theorem provides information-theoretic upper bounds for the linear regression problem as well as the upper bound obtained by a computationally-efficient estimator that approximates $\hat{\theta}\ofk_n$.
We provide the proof of the upper bound in Section~\ref{pf:lin-reg-ub} and the lower bound, which builds on the construction in \cite{DiaGKPX26-adaptive}, in Section~\ref{sec:proof-linreg-est-it-lb}.

\begin{theorem}\label{thm:lin-reg-ub-lb}
There exists constants $c, C > 0$ such that the following is true.
Let $\sigma^2 > 0$, $\epsilon < 1$, $n\in\bN$, $\delta \in [\{(1-\epsilon)n\}^{-c}, 0.5]$ and let $\alpha = \sqrt{\frac{d+\log(\frac{1}{\delta})}{(1-\epsilon)n}}$.
If
\[
  n \geq C \cdot \frac{d+\log(\frac{1}{\delta})}{1-\epsilon},
\]
then the minimax $1-\delta$ quantile~\eqref{def:minimax-quantile} satisfies
\[
  \frac{\sigma \cdot \log(\frac{1}{1-\epsilon})}{\sqrt{\log\Big(1 + \big\{\frac{\epsilon}{(1-\epsilon)\alpha}\big\}^2\Big)}}
  \lesssim
  \cM(\delta, \cP_{\rm LR}(\sigma^2, \epsilon),L)
  \lesssim \frac{\sigma \cdot \big(\frac{\epsilon}{1-\epsilon}\big) \cdot \big(1 \vee \log\log\big(\frac{\epsilon}{\alpha(1-\epsilon)}\big)\big)}{\sqrt{\log\left(1 + \big\{\frac{\epsilon}{(1-\epsilon)\alpha}\big\}^2\right)}}.
\]
Moreover, the upper bound is achieved by the estimator $\widehat{\theta}\ofk_n$ for some $k \lesssim \log(\frac{\epsilon}{\alpha(1-\epsilon)})$ which can be approximated to accuracy of the same order in ${\rm poly}(k, d, n)$ time.
\end{theorem}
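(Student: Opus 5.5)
We prove the two inequalities separately. The upper bound comes from a finite-sample analysis of the empirical $2k$-th power risk minimizer $\widehat{\theta}^{(k)}_n$ in~\eqref{eq:empirical-minimizer-k-pwer}. The lower bound comes from a Fano argument over hidden-direction distributions, in the spirit of the proof of \Cref{thm:mean-est-it-ub-lb}.

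\emph{Upper bound.} We follow the population template of Section~\ref{sec:warmup-lr} and control
\[
\|\widehat{\theta}^{(k)}_n-\theta_\star\|_2 \le \|\nabla F^{(k)}_n(\theta_\star)\|_2/\mu_n,
\]
where $\mu_n$ is a lower bound on the empirical Hessian of $F^{(k)}_n$ on a ball around $\theta_\star$. Strong convexity of $F^{(k)}_n$ then localizes the minimizer inside that ball. The proof has three steps.

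\begin{enumerate}
\item \emph{Curvature.} We lower bound the empirical Hessian using only the MCAR samples (the terms with $B=0$). This gives, roughly,
\[
\nabla^2 F^{(k)}_n(\theta) \succeq \frac{2k(2k-1)}{n}\sum_{i:\,B_i=0} (\sigma G_i + x_i^\top(\theta_\star-\theta))^{2k-2}\, x_i x_i^\top .
\]
We lower bound this by a truncation/small-ball argument. We restrict to samples with $|G_i|\ge c\sqrt{k}$, which carry a $e^{-\Theta(k)}$ fraction of the mass but have $(\sigma G_i)^{2k-2}$ comparable to $\sigma^{2k-2}(2k-3)!!$. For these samples the lower tail of $x_ix_i^\top$ is well behaved, so a standard lower-tail matrix concentration bound (nonnegative summands) gives the population bound, up to constants, once $n(1-\epsilon)e^{-\Theta(k)}\gtrsim d+\log(1/\delta)$.

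\item \emph{Gradient.} We bound
\[
\|\nabla F^{(k)}_n(\theta_\star)\|_2=\frac{2k\sigma^{2k-1}}{n}\Bigl\|\sum_i \mathbf 1\{\text{obs}_i\}\, G_i^{2k-1}x_i\Bigr\|_2 .
\]
We split this into the population mean, bounded as in Section~\ref{sec:warmup-lr} by $\epsilon\sigma^{2k-1}k(2k-2)!!$, plus a fluctuation term. The fluctuation is controlled over a net of $v\in\mathbf S^{d-1}$. The selection indicator is arbitrary, so we dominate it by $|G_i|^{2k-1}|v^\top x_i|$, whose centered deviation is bounded with a heavy-tailed (moment/median-of-means-free) bound. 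Here $\mathrm{Var}\approx (4k)!!$ gives a deviation of order $\sigma^{2k-1}k\,(C k)^{k}\alpha$ relative scale. Because $\delta\ge\{(1-\epsilon)n\}^{-c}$, Chebyshev-type moment bounds with polynomial failure probability suffice.

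\item \emph{Choice of $k$.} The two steps combine to give error
\[
\sigma\Bigl(\frac{\epsilon}{(1-\epsilon)\sqrt k}+ e^{O(k\log k)}\alpha/(1-\epsilon)\Bigr).
\]
We choose $k\asymp \log(\epsilon/(\alpha(1-\epsilon)))/\log\log(\cdot)$ to balance the terms. The $\log\log$ loss in the statement comes exactly from the $k^k$ growth of Gaussian moments. Computational efficiency follows because $F^{(k)}_n$ is convex, so projected gradient descent on a bounded domain (starting from OLS, which is $O(\sigma\epsilon/(1-\epsilon))$-accurate) reaches accuracy of the same order in $\mathrm{poly}(k,d,n)$ time.
\end{enumerate}

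\emph{Lower bound.} We build on \cite{DiaGKPX26-adaptive}. We pick a packing $\{v_j\}_{j\le M}\subset\mathbf S^{d-1}$ with $\log M\asymp d+\log(1/\delta)$ and set $\theta_j=\rho v_j$. The realizable contamination is chosen so that the conditional law of $(X,Y)$ under $\theta_j$ is a hidden-direction distribution in the sense of Definition~\ref{def:hidden-direction-dist}. We reweight the joint density so that it matches, up to the likelihood-ratio budget $[1-\epsilon,1]$ of Lemma~\ref{lem:all-or-nothing-characterization}, a common reference law $H$ (the $\theta=0$ model with a suitable common reweighting) everywhere except on a tail region in the two-dimensional span of $v_j$ and $Y$. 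The worst-case reweighting makes the likelihood ratio to $H$ equal to $1$ outside a region where $\phi(y-\rho x)/\phi(y)$ exceeds $1/(1-\epsilon)$. This region has Gaussian mass about $\exp(-\Theta(\log^2(1/(1-\epsilon))\sigma^2/\rho^2))$, so the KL divergence per sample is at most $\frac{\epsilon^2}{1-\epsilon}\exp(-c\,\sigma^2\log^2(\frac{1}{1-\epsilon})/\rho^2)$. Fano's inequality (Lemma~\ref{lem:fano}) then requires $n$ times this to be at most $\log M$. Solving for $\rho$ gives the stated bound. The parametric part is absorbed because $\log(1+x)\le x$.

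\emph{Main obstacle.} The hardest step is the gradient fluctuation bound with the sharp $k^{O(k)}$ dependence, uniformly over the sphere and over arbitrary MNAR selection. I would first try bounding the sup over $v$ via a net together with truncation of $|G_i|$ at $\sqrt{\log n}$.
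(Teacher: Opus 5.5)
Your upper-bound outline has the same architecture as the paper's: gradient at $\theta_\star$ over an MCAR-only curvature bound, with the MNAR selection dominated by $|G_i|^{2k-1}|v^\top x_i|$. The lower-bound step, however, fails in a way that cannot be repaired.

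Take $\sigma=1$, $z=v^\top x$ and $\tau'=\log\frac{1}{1-\epsilon}$. The set where $\phi(y-\rho z)/\phi(y)=e^{\rho zy-\rho^2z^2/2}$ leaves the allowed likelihood-ratio band is $\{|\rho zy-\rho^2z^2/2|\gtrsim\tau'\}$. This set is governed by the product $zy$ of two Gaussians, which has exponential rather than Gaussian tails. Its mass is therefore $e^{-\Theta(\tau'/\rho)}$, not $e^{-\Theta(\tau'^2/\rho^2)}$, and Fano returns at best $\rho\asymp\tau'/\log(n/d)$.

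No construction can give more. $(X,Y)$ is jointly Gaussian with covariance $\Sigma_\theta$ (blocks $I_d$, $\theta$, $\|\theta\|_2^2+\sigma^2$), so $\mathcal{P}_{\rm LR}(\sigma^2,\epsilon)$ sits inside the $(d+1)$-dimensional covariance class. Write $\widehat\Sigma=L(I+E)L^\top$ with $L$ the Cholesky factor of $\Sigma_\theta$, so that $\|E\|_{\rm op}$ is exactly the relative operator-norm error. Then $\widehat\Sigma_{XX}^{-1}\widehat\Sigma_{XY}-\theta=\sigma(I+E_{11})^{-1}E_{12}$, which is at most $2\sigma\|E\|_{\rm op}$ once $\|E\|_{\rm op}\le 1/2$. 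The upper bound of Theorem~\ref{thm:cov-est-it-ub-lb} therefore gives, for fixed $\epsilon$ and $\delta$, regression error $\lesssim\sigma/\log(n/d)$. The statement asserts $\mathcal{M}\gtrsim\sigma/\sqrt{\log(n/d)}$, so the lower bound as written is false; the true rate is at most of covariance type.

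The paper's own proof hits the same wall from another side. Lemma~\ref{lem:univariate-it-hard-instance-lr} modifies $y\mid x$ only on $\{|v^\top x|\le 1,\ |y|\le R\}$, so that only Gaussian tails in $y$ enter. On $\{|v^\top x|>1\}$, however, the conditional law is exactly $\mathsf{N}(\gamma v^\top x,1)$ against $\mathsf{N}(0,1)$ under $H$. Since both $x$-marginals are standard Gaussian, this alone forces $\mathrm{KL}(P_v',H')\ge\frac{\gamma^2}{2}\mathbb{E}[Z^2\mathbf{1}\{|Z|>1\}]\approx 0.4\gamma^2$. The displayed computation misses this because the off-box log-likelihood ratio is written as $\gamma yz-\gamma^2z^2$ instead of $\gamma yz-\gamma^2z^2/2$. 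After substituting $y=\gamma z+G$, the $Z^2$ term then cancels spuriously. With the correct constant, that construction yields only the parametric term.

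The upper bound in the statement is unaffected, but two steps of your sketch need repair.

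\emph{Gradient.} Polynomial-in-$n$ failure probabilities from Chebyshev-type bounds cannot absorb a union bound over a $5^d$-net. The paper conditions on $\{g_i\}$ and uses that $\sum_i a_i|x_i^\top v|$, with $a_i=|g_i|^{2k-1}$, has sub-Gaussian fluctuations of size $(\sum_i a_i^2)^{1/2}$ in the $x_i$. The net then costs only a factor $\sqrt{d+\log(1/\delta)}$, and the condition $\delta\ge m^{-c}$ is spent only on the scalar sums $\sum_i|g_i|^{2k-1}$ and $\sum_i g_i^{4k-2}$ (Lemma~\ref{lem:gaussian-power-concentration-truncation}).

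\emph{Curvature.} Your step drops the shift $s_i=x_i^\top(\theta_\star-\theta)$, which can cancel $\sigma G_i$. With $|G_i|\asymp\sqrt{k}$, the factor $(\sigma G_i+s_i)^{2k-2}$ is stable only for $|s_i|\lesssim\sigma/\sqrt{k}$. The error radius $\sigma\tau/\sqrt{k}$ is larger than that once $\tau\gtrsim 1$, so you cannot localize to a ball where the shift is negligible. Lemma~\ref{lem:mu_n-lb} instead keeps only indices with $g_i>0$, $s_i\ge 0$ and $|x_i^\top v|\ge\sqrt{\pi/8}$, so the residual is at least $\sigma g_i$. It then bins $g_i$ and controls the counts uniformly in $(\theta,v)$ by a VC bound, which gives global strong convexity. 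That lemma's condition $\alpha^{-2}\gtrsim e^{10k\log^2 k}$ is the source of the paper's $\log\log$ factor. The gradient is not: its fluctuation relative to the curvature is only about $2^k\alpha$.
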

We next provide several remarks about this theorem. 
First, let us emphasize that the estimator $\widehat{\theta}_n^{(k)}$ can be approximated to sufficient accuracy in polynomial time.  In particular, in Section~\ref{sec:lr-efficient-alg} we show that (for instance) the ellipsoid method can compute an approximate minimizer in polynomial time.  We contrast this with the estimator provided in~\cite[Section 5]{ma2024estimation}, which is based on projections in the Kolmogorov distance and requires exponential time to compute.  

Second, given the results of the previous two sections and the fact that the estimator of~\cite{ma2024estimation} is based on Kolmogorov projections, it is tempting to guess that there may a statistical--computational gap in the regression setting as well.  Indeed, in our well-specified setting, the linear regression problem can be reduced to that of covariance estimation.  Indeed, we note that the covariate-response pair $(X, Y)$ is jointly Gaussian:
\[
(X, Y) \sim \mathsf{N}\biggl(0, \begin{bmatrix}
I_d & \theta_{\star} \\
\theta_{\star}^{\top} & \| \theta_{\star} \|_2^2 + \sigma^2
\end{bmatrix}
\biggr).
\]
We can thus apply the efficient estimator developed in Section~\ref{sec:comp-mean-cov} to obtain an estimate of the coefficients $\theta_{\star}$.  In order to obtain an estimate of $\theta_{\star}$ with error at most $\rho$, this would require at least $n \gtrsim d^{\Omega(1/\rho^2)}$ many samples.  A similar dimension dependence was obtained by~\cite[Corollary 3.20]{LeeMeZa24} in the related setting of efficient linear regression with truncated statistics, where the sample complexity is also $n \gtrsim d^{\mathrm{poly}(1/\rho)}$.
On the other hand, our empirical risk minimization procedure is computable in polynomial time and requires a significantly smaller sample complexity, which we discuss next.  

Next, in order to simplify the upper and lower bounds, let us consider the parameters $\epsilon, \delta, \sigma$ to be constants.  The upper and lower bounds then simplify to
\[
\frac{1}{\sqrt{\log(1/\alpha^2)}} \lesssim \mathcal{M}_n(\delta, \mathcal{P}_{\mathrm{LR}}\bigl(\sigma^2, \epsilon), L\bigr) \lesssim \frac{\log\log(1/\alpha)}{\sqrt{\log(1/\alpha^2)}}.
\]
This reveals that in this setting, our method is optimal up to a multiplicative factor which is a doubly iterated logarithm in $\alpha$.  Moreover, for constant $\epsilon$ and $\delta$, we find that $\alpha \asymp \sqrt{d/n}$ so that the upper and lower bounds further simplify to
\[
\frac{1}{\sqrt{\log(n/d)}} \lesssim \mathcal{M}_n(\delta, \mathcal{P}_{\mathrm{LR}}\bigl(\sigma^2, \epsilon), L\bigr) \lesssim \frac{\log\log(n/d)}{\sqrt{\log(n/d)}}.
\]
In particular, this implies that our estimator requires a sample complexity of $n \gtrsim d e^{\Omega(\log^2(1/\rho)/\rho^2)}$ to obtain error $\rho$, which is linear in dimension.  We note that this sample complexity is polynomially smaller in dimension than that required by the inefficient algorithm of~\cite{ma2024estimation}, which requires $n \gtrsim d^{36/31}$.  

Next, we note that, while we proved the upper bound for covariates distributed as $X \sim \mathsf{N}(0, I_d)$, only mild assumptions are needed for this estimator to work.  Indeed, even if $X$ is heavy-tailed (with at least a constant number of bounded moments), our algorithm---with a modified analysis---continues to succeed with similar error rates in the constant failure probability ($\delta$) regime.

Let us mention two limitations of our result.  First, we note that, in contrast with Theorems~\ref{thm:mean-est-it-ub-lb} and~\ref{thm:cov-est-it-ub-lb}, we require the lower bound $\delta \geq \{(1-\epsilon)n\}^{-c}$ for a small enough constant $c > 0$.
This is not a huge limitation, as one could take the guarantees of \Cref{thm:lin-reg-ub-lb} for constant failure probability $1/4$ with sample complexity $n_0$ and use a simple boosting procedure to turn it into a (computationally-efficient) estimator that works for arbitrary $\delta \in (0,1/4)$ with a sample complexity of $\log(1/\delta) \cdot n_0$.\footnote{The boosting procedure is as follows: run the geometric median-of-means~\cite{Min15} procedure on $k\asymp \log(1/\delta)$ independent estimates (obtained by running \Cref{thm:lin-reg-ub-lb} on disjoint subsets), each of which is correct with probability $3/4$ (guaranteed by \Cref{thm:lin-reg-ub-lb} if each subset is of size $n_0$).}
Still, the resulting sample complexity has a multiplicative dependence on $\log(1/\delta)$ as opposed to an additive dependence in the lower bound.
We leave further investigation in the extremely high-confidence setting as an interesting open direction.  Second, our algorithm yields an error which scales linearly in $\tau = \epsilon/(1-\epsilon)$.  By contrast, the lower bound (as well as the rates for mean and covariance estimation) scale logarithmically in $\tau$.  While these match for $\epsilon < c$ for $c > 0$ a small enough universal constant, this exponentially large gap becomes more striking as $\epsilon \rightarrow 1$.   

\looseness=-1Finally, we remark that the recent work~\cite{KouridakisMeKaCa26} also considers linear regression with truncated statistics.  The authors focus on the regime where the truncation  depends only on the response variable and demonstrate a polynomial time estimator that achieves error $\rho$ with a sample
complexity of ${\rm poly}(d, 1/\rho, k)$, where $k$ is the number of intervals that form the truncation set.  This sample complexity admits a dimension-dependence which is polynomially larger than our estimator, but a dependence on $1/\rho$ which is exponentially smaller than ours.  We re-emphasize that, in general, these models are incomparable (see Section~\ref{sec:related}) and that our information-theoretic lower bound precludes such improvements to our dependence on $1/\rho$.

\section{Discussion} \label{sec:discussion}
In this paper, we studied several estimation problems with missing data in the realizable contamination problem.  We focused on the so-called all-or-nothing setting in which each observation is either completely observed, or not at all.  Under this setting we showed:
\begin{itemize}
    \item In \emph{mean estimation}, to achieve error in $\ell_2$ norm at most $\rho$, there appears to be a sample complexity gap: Ignoring all other problem parameters, $n \gtrsim d e^{1/\rho^2}$ samples are necessary and sufficient information-theoretically, but computationally-efficient statistical query algorithms (along with other restricted families of algorithms mentioned in \Cref{sec:prelim-sq-lower-bound}) require $n \gtrsim d^{\Omega(1/\rho^2)}$ many observations.  This latter lower bound is achieved by a sum-of-squares estimator.
    \item In \emph{covariance estimation}, to achieve error $\rho$ in relative operator norm, analogous claims hold: $n \gtrsim d e^{1/\rho}$ samples are necessary and sufficient information-theoretically, but statistical query algorithms  require $n \gtrsim d^{1/\rho}$ samples.
    \item In \emph{linear regression}, we show that for most parameters, these statistical--computational gaps do not exist. In particular, we show that the sample complexity $n \gtrsim d e^{1/\rho^2}$ is necessary information-theoretically and can be matched by a computationally-efficient algorithm.
\end{itemize}
There remain several open questions, several of which we discuss here.  First, our computationally-efficient algorithm for linear regression does not match the information-theoretic rate for extremely low failure probabilities $\delta$ or when $\epsilon \to 1$.  
Going further, it would be interesting to understand how the rates of convergence change---both computationally and statistically---for more complicated regression models than linear regression.

Next, we note that the all-or-nothing setting of missingness is less common than the multiple pattern setting (see Figure~\ref{fig:both}).  In Appendix~\ref{sec:multiple-patterns}, we show how to extend our algorithms and lower bounds to the setting with multiple patterns and show that for both mean and covariance estimation, our all-or-nothing algorithms can be adapted to achieve error which is a multiplicative factor of $C_{\lvert \mathbb{S} \rvert}$ larger than the optimal error, where $C_{\lvert \mathbb{S} \rvert}$ is a positive constant depending only on the number of patterns.  While this error inflation may be reasonable for a small number of patterns, it may be suboptimal for a large number of missingness patterns.  An important area for future work is to determine the fundamental limits of estimation, even for simple tasks like mean and covariance estimation, in this multiple pattern setting.

\printbibliography
\newpage
\appendix
\section{Proofs of information-theoretic upper bounds for mean and covariance estimation}
\subsection{Mean estimation: Proof of Theorem~\ref{thm:mean-est-it-ub-lb} upper bound}\label{pf:mean-est-it-ub}
We prove the claim for general $q \in (0, 1]$.
To establish the upper bound, we rely on the following minimax optimal univariate estimator, due to~\cite[Theorem 1]{ma2026adaptive}.  For completeness, we provide a self-contained proof (using a different estimator) in Section~\ref{sec:univariate-mean-it-proof}.
\begin{corollary}[\cite{ma2026adaptive} Theorem 1]\label{cor:univariate-mean}
    Let $\epsilon \in [0, 1]$, $q \in (0, 1]$, $n \in \bN$,
    $\delta \in (0, 1)$, and define $\alpha = \sqrt{\frac{\log(\frac{1}{\delta})}{nq(1-\eps)}}$.
    Further, let $\theta_{\star} \in \bR$ and suppose that the observations
    $Z_1, \ldots, Z_n \overset{\mathrm{iid}}{\sim} R \in \mathcal{R}(P_{\theta_{\star}}, \epsilon, q)$.
    There exists universal, positive constants $C_1, C_2$ such that if
    \[n \geq \frac{C_1\log(\frac{1}{\delta})}{q(1-\epsilon)},\]
    then
    there exists an estimator $\widehat{\theta}: \bR_{\star}^{n} \rightarrow \bR$ which satisfies, with probability at least $1 - \delta$,  
    \begin{align*}
      \bigl \lvert \widehat{\theta} - \theta_{\star} \bigr \rvert
      \leq \frac{C_2 \log(1+\tau)}{\sqrt{\log(1 + \tau^2/\alpha^2)}}.
    \end{align*}
\end{corollary}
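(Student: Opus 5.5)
We construct an explicit univariate estimator and control it with a uniform deviation bound. First we reduce to $q=1$ via Lemma~\ref{lem:gen-q-conversion}. Let $m$ be the number of non-missing observations. Since $m \gtrsim nq(1-\epsilon)$ with probability $1-\delta/2$ once $n \ge C_1\log(1/\delta)/(q(1-\epsilon))$, we may work conditionally on the observed sample $Z_1,\dots,Z_m \overset{\mathrm{iid}}{\sim} Q_{\R}$. Here $Q_\R$ satisfies the sandwich relation~\eqref{ineq:sandwich-Q_R} with respect to $P=\mathsf{N}(\theta_\star,\sigma^2)$, with $\tau=\epsilon'/(1-\epsilon')$. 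For the population step we use tails rather than the bulk. For a threshold $t>0$, let $\bar F(t) = Q_\R(Z > t)$ and $F(-t)=Q_\R(Z<-t)$. By~\eqref{ineq:sandwich-Q_R},
\[
\frac{\Phi^c((t-\theta_\star)/\sigma)}{1+\tau} \le \bar F(t) \le (1+\tau)\,\Phi^c\bigl((t-\theta_\star)/\sigma\bigr),
\]
and the analogous bound holds for the left tail.

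The estimator is a minimum-distance (Kolmogorov-type) fit restricted to the tails. We output any $\widehat\theta$ such that, for every $t$ in a grid of thresholds up to level $t_{\max}$, the empirical tail probabilities lie within the multiplicative band $(1+\tau)^{\pm1}(1\pm 1/2)$ of the Gaussian tails $\Phi^c((\pm t-\theta)/\sigma)$. This is feasible for $\theta=\theta_\star$ with high probability. For accuracy, a shift $\Delta=\widehat\theta-\theta_\star$ changes the ratio $\Phi^c((t-\theta)/\sigma)/\Phi^c((t-\theta_\star)/\sigma)$ by roughly $e^{t\Delta/\sigma^2}$ at level $t$. Consistency with both bands therefore forces $e^{|\Delta| t/\sigma^2}\lesssim (1+\tau)^2$, which gives $|\Delta| \lesssim \sigma^2\log(1+\tau)/t$. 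We use $t$ on the right or left side according to the sign of $\Delta$.

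For the deviation step, choose $t_{\max}$ so that the Gaussian tail mass $p=\Phi^c(t_{\max}/\sigma)$ still allows multiplicative concentration of the empirical tail. By Bernstein's inequality this requires $p \gtrsim \log(1/\delta)/m = \alpha^2$. Taking $p \asymp \alpha^2/\tau^2 \wedge 1$ gives $t_{\max}/\sigma \asymp \sqrt{\log(1+\tau^2/\alpha^2)}$, and hence $|\Delta|\lesssim \sigma\log(1+\tau)/\sqrt{\log(1+\tau^2/\alpha^2)}$. This matches the claim after absorbing $\sigma$.

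The main obstacle is choosing $p$ so that both regimes are covered. When $\tau\lesssim\alpha$, the tails are useless and the bound must reduce to the parametric rate $\sigma\tau/\alpha\cdot\alpha\approx\sigma\alpha$. In that case we use the bulk instead: the sample median or a central quantile is biased by $O(\sigma\tau)$ and has fluctuation $O(\sigma\alpha)$, which suffices because $\log(1+\tau)/\sqrt{\log(1+\tau^2/\alpha^2)} \asymp \alpha$ when $\tau\le\alpha$. In the regime $\tau\gg\alpha$, the Bernstein error at tail mass $p$ is relative of size $\sqrt{\alpha^2/p}$, which must be $\lesssim \tau$ so that it does not dominate the $(1+\tau)$ band. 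This is exactly the choice $p\asymp\alpha^2/\tau^2$. A union bound over the grid of $O(\log)$ thresholds costs only constant factors in $\delta$.
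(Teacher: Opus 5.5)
Your tail-comparison idea is workable for $\tau \lesssim 1$. It is a multiplicative-concentration analogue of the paper's argument in Section~\ref{sec:univariate-mean-it-proof}, which instead uses the additive DKW bound of Lemma~\ref{lem:DKW-Kolmogorov}, uniform over all half-lines, at a threshold $r \asymp \sqrt{\log(\tau/\alpha)}$. Even in this regime the band must be $(1\pm c\tau)$ rather than $(1\pm 1/2)$, since a constant-width band only yields $|\Delta| \lesssim \sigma/t$.

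The genuine gap is the regime $\tau \gtrsim 1$, which the statement covers. Your choice $p \asymp \alpha^2/\tau^2$ is invalid there. The expected number of observed points beyond the threshold is about $\log(1/\delta)/\tau^2$, so the relative Bernstein error $\sqrt{\alpha^2/p} \asymp \tau$ exceeds one. With a two-sided band, even $\theta=\theta_\star$ can then be infeasible with probability much larger than $\delta$, because an empty empirical tail lies below the positive lower edge of the band. For $\tau \ge 1$ you only need constant relative accuracy. Since the contaminated tail mass can be as small as $p/(1+\tau)$, this forces $p \gtrsim (1+\tau)\alpha^2$. With that corrected choice your argument gives the rate, but only while $\tau \lesssim \alpha^{-c}$. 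This roughly corresponds to the paper's Cases~1--3.

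Beyond that range, your same-side comparison cannot work at all. The ratio $\bar{\Phi}((t-\theta)/\sigma)/\bar{\Phi}((t-\theta_\star)/\sigma)$ is at most $1/p$, so once $(1+\tau)^2 \gtrsim 1/p$ no shift $\Delta$ is excluded. Moreover, your linearization $e^{t\Delta/\sigma^2}$ is only valid for $|\Delta| \lesssim t$, i.e.\ when $\log(1+\tau) \lesssim \log(1/p)$.

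For $\tau \gtrsim \alpha^{-c}$ the target rate is $\asymp \sigma\sqrt{\log(1+\tau)}$. Proving it requires a different test, at thresholds where the \emph{contaminated} masses are of constant order. For instance, take $u$ with $(1+\tau)\bar{\Phi}(u) = 1/4$. Complementing the upper sandwich bound gives $Q_{\R}(Z \le \theta_\star + \sigma u) \ge 3/4$. By contrast, every distribution realizable for some $\theta \ge \theta_\star + 2\sigma u$ assigns this half-line mass at most $(1+\tau)\bar{\Phi}(u) = 1/4$. DKW detects this constant gap once $m \gtrsim \log(1/\delta)$.

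This is essentially the paper's Case~4. There the paper switches to a conditional Kolmogorov distance and tests at the midpoint $r=\theta/2$. It thereby shows that the two realizable classes are at distance bounded below by a constant once $|\theta-\theta_\star| \ge \sqrt{8\log(4(1+\tau))}$. Without such a separate argument your proposal proves the corollary only for $\tau \lesssim \alpha^{-c}$.

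Separately, your grid of thresholds $\pm t$ sits in absolute coordinates, but which thresholds have tail mass at least $p$ depends on the unknown $\theta_\star$. Placing them relative to the candidate $\theta$ instead requires a uniform relative-deviation bound over all half-lines. A union bound over $O(\log)$ fixed thresholds does not provide this, whereas the paper's additive DKW bound gives it for free.
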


Equipped with this univariate estimator, we turn to the proof of our upper bound, which follows a standard approximation argument on an $\epsilon$-net.

To this end, let $\mathcal{V}$ denote a $\frac{1}{2}$--packing of the unit sphere $\bS^{d-1}$, which by, e.g.,~\cite[Corollary 4.2.11]{vershynin2025high} exists and satisfies the cardinality bound $\lvert \mathcal{V} \rvert \leq 5^{d}$.  Now, for each unit vector $v \in \mathcal{V}$, let $\{Z_i^v\}_{i \in [n]}$ denote the projection of the $i$th observation $Z_i$ onto $v$, noting that if $Z_i = \star^{d}$, then $Z_i^v = \star$.  Importantly, if $\law(Z_i) \in \mathcal{R}(P_{\theta_{\star}}, \epsilon, q)$, it follows that $\law(Z_i^v) \in \mathcal{R}(P_{\theta_{\star}^{\top}v}, \epsilon, q)$.  

Next, for each $v \in \cN$, let $\widehat{\theta}_v$ denote the univariate estimator implicit in Corollary~\ref{cor:univariate-mean}.  Armed with each of these univariate estimators, we define our multivariate mean estimator $\widehat{\theta} \in \bR^d$ to be any element
\begin{align} \label{def:multivariate-mean-eps-net-est}
\widehat{\theta} \in \argmin_{\theta \in \bR^d} \max_{v \in \cN} \; \bigl \lvert \widehat{\theta}^{\top} v - \widehat{\theta}_v \bigr \rvert,  
\end{align}
where for each $v \in \cN$, $\widehat{\theta}_v$ denotes the estimator in Corollary~\ref{cor:univariate-mean}.  For parameters $n, q, \epsilon, \delta$, let $r(n, q, \epsilon, \delta)$ denote the corresponding error rate in the corollary.  Next, take $\delta' = \delta/(5^d)$ and let $\Omega_{v}$ denote the event that
\begin{align}
    \label{ineq:univariate-bound-mean-eps-net}
\bigl \lvert \widehat{\theta}_v - \theta_{\star}^{\top} v \bigr \rvert \leq r\bigl(n, q, \epsilon, \delta'\bigr).
\end{align}
Note that by Corollary~\ref{cor:univariate-mean}, $\mathbb{P}(\Omega_v^c) \leq \delta/(5^d)$, whence with $\Omega := \bigcup_{v \in \cN} \Omega_v$, an application of the union bound implies that $\mathbb{P}(\Omega) \geq 1 - \delta$.

Now, working on the event $\Omega$, we have
\begin{align*}
    \bigl \| \widehat{\theta} - \theta_{\star} \bigr \|_2 &\overset{(a)}{\leq} 2 \cdot \max_{v \in \cN}\; \bigl(\widehat{\theta} - \theta_{\star} \bigr)^{\top}v \leq 2 \cdot \max_{v \in \cN}\;\bigl \lvert \widehat{\theta}^{\top}v - \widehat{\mu}_v \bigr \rvert + 2 \cdot \max_{v \in \cN}\; \bigl \lvert \widehat{\mu}_v - \theta_{\star}^{\top}v \bigr \rvert \\
    &\overset{(b)}{\leq} 4 \cdot \max_{v \in \cN}\; \bigl \lvert \widehat{\mu}_v - \theta_{\star}^{\top}v \bigr \rvert \leq 4r(n,q,\epsilon, \delta'),
\end{align*}
where step $(a)$ follows from an identical argument to the approximation in~\cite[Lemma 4.4.1]{vershynin2025high}, step $(b)$ follows from the definition of $\widehat{\theta}$ in Eq.~\eqref{def:multivariate-mean-eps-net-est} and the final inequality follows from Eq.~\eqref{ineq:univariate-bound-mean-eps-net}.

Finally, we see that
\[
r(n, q, \epsilon, \delta') \leq \frac{C_2 \log(1+\tau)}{\sqrt{\log\left(1 + \frac{\tau^2 nq(1-\epsilon)}{d+\log(\frac{1}{\delta})}\right)}},
\]
which completes the proof. \hfill $\qed$

\subsection{Covariance estimation: Proof of Theorem~\ref{thm:cov-est-it-ub-lb} upper bound}\label{sec:proof-thm-cov-est-it-ub}
We prove the claim for general $q \in (0, 1]$.
We devise a two-step estimator:
with one half of the data, we compute the unnormalized empirical covariance $\wt{\Sigma}$, scale the remaining data by $M$,
then finally combine 1-dimensional variance estimates of the scaled data over many directions, as in the mean estimation algorithm.

We define
\begin{align*}
  \wt{\Sigma} = \frac{2}{n} \sum_{i=1}^{n/2} Z_i Z_i^{\top} \1\{Z_i \neq \star\}.
\end{align*}
Observe that $\Sigma_\star^{-\frac{1}{2}}Z \1\{Z \neq \star\}$
is a $O(1)$-subgaussian random vector, so by the concentration of i.i.d.\ subgaussian random vectors
(see e.g.~\cite[Theorem 4.7.1]{vershynin2025high}) and our assumption that $n \gtrsim d+\log(1/\delta)$,
we have that with probability at least $1 - \delta/2$,
\begin{align*}
  \norm{\Sigma_\star^{-\frac{1}{2}}\wt{\Sigma}\Sigma_\star^{-\frac{1}{2}} - \Sigma_\star^{-\frac{1}{2}}\E[ZZ^T\1\{Z \neq \star\}] \Sigma_\star^{-\frac{1}{2}}}_{\rm op}
  \leq \frac{1}{2} \norm{\Sigma_\star^{-\frac{1}{2}} \E[ZZ^T\1\{Z \neq \star\}] \Sigma_\star^{-\frac{1}{2}}}_{\rm op}
  \leq \frac{1}{2}.
\end{align*}
Furthermore, our contamination model implies that
\begin{align*}
  q(1-\epsilon)\Sigma_\star \preceq \E[ZZ^T\1\{Z \neq \star\}] \preceq (q(1-\epsilon) + \epsilon)\Sigma_\star,
\end{align*}
and so combining the above two displays, we have that with probability at least $1 - \delta/2$,
\begin{align*}
  \frac{q(1-\epsilon)}{2} \Sigma_{\star} \preceq \wt{\Sigma} \preceq 2(q(1-\epsilon) + \epsilon)\Sigma_{\star}.
\end{align*}
We condition on this event for the remainder of the proof.
Letting $M = \tilde{\Sigma}^{-\frac{1}{2}}$,
we have by rearranging the above display that
\begin{align}
  \frac{1}{2(q(1-\epsilon) + \epsilon)} I_d
  \preceq M \Sigma_\star M
  \preceq \frac{2}{q(1-\epsilon)} I_d.
  \label{eq:tildeSigma-guarantee}
\end{align}

Now define the scaled data $\tilde{Z}_i = M Z_{i + n/2}$
for $i \in \{1, \ldots, n/2\}$, over which we will apply a univariate
covariate estimator.
For this, we require the following lemma, whose proof we defer to Section~\ref{sec:proof-lem-univariate-cov-ub}.

\begin{lemma} \label{lem:univariate-cov-ub}
  Let $\epsilon \in [0, 1]$, $q \in (0, 1]$, $n \in \bN$, and $\delta \in (0, 1)$,
  and define $\alpha = \sqrt{\frac{\log(\frac{1}{\delta})}{q(1-\epsilon)n}}$.
  Further, let $\sigma_{\star} \in \bR_{++}$, let $P_{\sigma} = \sigma^2 \chi^2$
  denote the scaled $\chi$-squared distribution, and suppose that the observations
  $Z_1, \ldots, Z_n \overset{\mathrm{iid}}{\sim} R \in \mathcal{R}(P_{\sigma_{\star}}, \epsilon, q)$.
  There exists a pair of universal, positive constants $C_1, C_2$ such that if
  \[n \geq \frac{C_1 \log(\frac{1}{\delta})}{q(1-\epsilon)},\]
  then there exists an estimator $\widehat{\sigma}: \bR_{\star}^{n} \rightarrow \bR$ which satisfies, with probability at least $1 - \delta$,  
  \begin{align*}
    \bigl \lvert \widehat{\sigma}^2 - \sigma_{\star}^2 \bigr \rvert \leq \frac{C_2\log(1+\tau)}{\log(1+\tau/\alpha)}\cdot \sigma_{\star}^2.
  \end{align*}
\end{lemma}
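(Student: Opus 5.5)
The plan is to reduce to $q=1$, then build a \emph{tail-consistency} estimator. The tail of $\sigma^2\chi^2$ is very sensitive to $\sigma$, while realizable contamination can only distort tail masses by bounded multiplicative factors.

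\textbf{Step 1: reduction to $q=1$.} By Lemma~\ref{lem:gen-q-conversion}, I would pass to $\epsilon' = \epsilon/(\epsilon+q(1-\epsilon))$ with $q'$ observed fraction. This leaves $\tau$ unchanged: $\epsilon'/(1-\epsilon') = \epsilon/(q(1-\epsilon))$, which I still call $\tau$, slightly abusing notation. The number of observed samples concentrates around $nq'$, so $\alpha$ is unchanged up to constants. Work on the observed points only, with conditional law $Q_{\R}$.

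\textbf{Step 2: the tail bracket.} Write $S_\sigma(t) = \mathbb{P}(\sigma^2 G^2 > t)$. By the sandwich relation~\eqref{ineq:sandwich-Q_R} (normalizing the unnormalized tail $Q((t,\infty))$ by $Q(\R)$), every realizable $Q$ satisfies
\[
(1+\tau)^{-1} S_{\sigma_\star}(t) \le Q_{\R}\bigl((t,\infty)\bigr) \le (1+\tau) S_{\sigma_\star}(t) \quad \text{for all } t.
\]

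\textbf{Step 3: uniform relative concentration.} Let $\widehat S(t)$ be the empirical tail of the observed sample. I would invoke a uniform relative-deviation (ratio-type) bound for the VC-dimension-one class of half-lines. With probability $1-\delta$, simultaneously for all $t$ with $Q_{\R}((t,\infty)) \ge p_0$,
\[
\bigl|\widehat S(t) - Q_{\R}\bigl((t,\infty)\bigr)\bigr| \le C\sqrt{\frac{\log(1/\delta)}{n(1-\epsilon)\,Q_{\R}\bigl((t,\infty)\bigr)}}\; Q_{\R}\bigl((t,\infty)\bigr),
\]
where $p_0 \asymp \log(1/\delta)/(n(1-\epsilon))$ is permitted by the condition $n \ge C_1\log(1/\delta)/(q(1-\epsilon))$.

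\textbf{Step 4: the estimator.} Define $\widehat\sigma$ to be any $\sigma$ for which, for every $t$ with $\widehat S(t) \ge 2p_0$, the value $\widehat S(t)$ lies in $[(1+\tau)^{-1}S_\sigma(t)(1-\eta(t)),\,(1+\tau)S_\sigma(t)(1+\eta(t))]$. Here $\eta(t)$ is the deviation term of Step 3 evaluated with $\widehat S(t)$ plugged in. On the good event, $\sigma_\star$ itself is feasible, so the estimator is well defined.

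\textbf{Step 5: separation.} Suppose $\sigma = \sigma_\star(1+r)$ is feasible; the case $r<0$ is symmetric using the same tails with the ratio inverted. Feasibility of both $\sigma$ and $\sigma_\star$ at a threshold $t$ forces
\[
S_\sigma(t)/S_{\sigma_\star}(t) \le (1+\tau)^2(1+\eta)/(1-\eta).
\]
Choose $t$ with $S_{\sigma_\star}(t) = p$, where $p \asymp \min\{1, \alpha^2/\tau^2\}$ (up to a constant), so that $\eta \lesssim \min\{\tau, 1\}$. The right-hand side is then at most $1 + O(\tau)$ when $\tau \le 1$, and $O((1+\tau)^2)$ otherwise.

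For the left-hand side, Mills-ratio estimates with $t/\sigma_\star^2 \approx 2\log(1/p)$ give
\[
S_\sigma(t)/S_{\sigma_\star}(t) \gtrsim \exp\bigl(c\, r \log(1/p)\bigr)
\]
for $r \lesssim 1$, and a lower bound that grows in $r$ for larger $r$. Combining the two sides yields
\[
r \lesssim \frac{\log(1+\tau)}{\log(1/p)} \asymp \frac{\log(1+\tau)}{\log(1+\tau/\alpha)}
\]
in the regime $\tau \gtrsim \alpha$. When $\tau \lesssim \alpha$, take $p$ to be a constant; the same comparison gives $r \lesssim \tau + \alpha \asymp \alpha$, and this again matches the claimed bound up to constants.

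\textbf{Main obstacle.} I expect the hardest part to be the sharp two-sided Gaussian tail-ratio estimate in Step 5. It has to hold uniformly over the relevant $r$, including large $r$ and $\sigma<\sigma_\star$, and it must be paired with a relative-deviation bound that has no extra $\log n$ factor. If the clean ratio-type VC bound proves lossy, I would instead restrict to a geometric grid of $O(\log n)$ thresholds. The grid would be anchored at empirical quantiles, and I would absorb the union bound using the lower bound on $\delta$ and $n$.
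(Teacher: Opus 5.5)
Your overall mechanism is the paper's: compare upper tails of $\sigma^2\chi^2$ at one well-chosen threshold, where contamination can only distort mass by bounded factors. However, Step~5 breaks down for large $\tau$, and in that regime the estimator as you define it genuinely fails.

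\textbf{The large-$\tau$ regime.} First, the choice $p\asymp\min\{1,\alpha^2/\tau^2\}$ is wrong for $\tau>1$. It then lies below $p_0\asymp\alpha^2$, so the threshold is not admissible, and $\eta\asymp\tau$ rather than $\eta\lesssim 1$. You need $p\gtrsim(1+\tau)p_0$ to guarantee $\widehat S(t)\ge 2p_0$.

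More fundamentally, the ratio you want to make large obeys $S_\sigma(t)/S_{\sigma_\star}(t)\le 1/S_{\sigma_\star}(t)$. At admissible thresholds this is only polynomial in $1/\alpha$ and $1+\tau$, while the contamination tolerates a ratio $(1+\tau)^2$. Once $\tau$ is a large enough power of $1/\alpha$, no threshold separates $\sigma$ from $\sigma_\star$, however large $r$ is.

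Worse, suppose $1+\tau\ge 1/(2p_0)$, i.e.\ $\tau\gtrsim\alpha^{-2}$, which the hypotheses allow as $\epsilon\to 1$. Then every sufficiently large $\sigma$ satisfies all your constraints. Admissible $t$ are bounded, so $S_\sigma(t)\to 1$ uniformly and the upper constraint becomes trivial. The lower constraint, $\widehat S(t)\ge(1-\eta(t))S_\sigma(t)/(1+\tau)$, is implied by $\widehat S(t)\ge 2p_0$. Hence ``any feasible $\sigma$'' can have unbounded error.

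The missing observation is that in this regime the target $\log(1+\tau)/\log(1+\tau/\alpha)$ is itself of constant order for $\tau\ge\alpha^{-c}$. So you should output a bounded estimate such as $\widehat\sigma=0$; this is the paper's Case~4, $\tau>\frac{1}{20}\alpha^{-1/4}$. Run the tail comparison only for $\tau\le\alpha^{-c}$, where $\log(1/p)\asymp\log(1/\alpha)\asymp\log(1+\tau/\alpha)$ and your Mills-ratio separation does go through.

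\textbf{Step~3 is false as stated.} Let $N$ be the number of observed points. For fixed $\delta$, the standardized empirical process $\sup_t\sqrt{N}\,|\widehat S(t)-S(t)|/\sqrt{S(t)}$ over $S(t)\in[p_0,1/2]$ grows like $\sqrt{2\log\log N}$ (Eicker--Jaeschke). So no bound with only $\sqrt{\log(1/\delta)}$ can keep $\sigma_\star$ feasible at every threshold.

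Your fallback does not rescue this as written. The lemma allows constant $\delta$, in which case $\log(\log n/\delta)$ cannot be absorbed into $\log(1/\delta)$. A flat union bound over $O(\log n)$ thresholds would then degrade the $\tau\lesssim\alpha$ rate from $\alpha$ to $\sqrt{\log(\log n/\delta)/N}$.

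Two repairs are available:
\begin{itemize}
\item Peeling with confidence $\delta/k^2$ on the $k$-th dyadic shell of $S(t)$ works; the extra $\log\log(1/p)$ is absorbed into $\log(1/p)$.
\item Simpler, and the paper's route: drop relative deviations, which buy nothing here. The plain DKW bound with absolute accuracy $O(\alpha)$, used at level $p\asymp\alpha/\min\{\tau,1\}$, already gives relative accuracy $O(\min\{\tau,1\})$ there. Moreover $\log(1/p)\asymp\log(1+\tau/\alpha)$ for $C\alpha\le\tau\le\alpha^{-c}$.
\end{itemize}

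The paper also avoids your two-sided analysis. It takes $\widehat\sigma$ to be the supremum of Kolmogorov-feasible values, so $\widehat\sigma\ge\sigma_\star$ on the good event and only $\sigma>\sigma_\star$ must be ruled out.
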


Continuing, let $\cN$ denote a $\beta$--net of the unit sphere $\bS^{d-1}$, for $\beta = \frac{1}{16\sqrt{1+\tau}}$,
which has size $e^{O(d\log(1/\beta))}$.
For each $v \in \cN$, let $\widehat{\sigma}_v^2$ be the estimator from
Lemma~\ref{lem:univariate-cov-ub} applied to observations
$\{\tilde{Z}_i \cdot v\}_{i \in [n/2]}$ and failure probability $\delta/(2|\cN|)$.
Applying Lemma~\ref{lem:univariate-cov-ub} for each $v \in \cN$ and applying a union bound,
we have with probability at least $1 - \delta/2$, for all $v \in \cN$,
\begin{align*}
  \bigl \lvert \widehat{\sigma}_v^2 - v^{\top} M \Sigma_\star M v \bigr \rvert
  \leq \frac{C_2\log(1+\tau)}{\log\left(1+\tau \sqrt{\frac{nq(1-\epsilon)}{d\log(1/\beta) + \log(8/\delta)}}\right)} \cdot v^{\top} M \Sigma_\star M v.
\end{align*}
Next, we define $\widehat{\Sigma}$ as any $\Sigma$ such that for all $v \in \cN$,
\begin{align*}
  |\hat{\sigma}_v^2 - v^{\top} M \Sigma M v|
  \leq \underbrace{\left[\frac{C_2\log(1+\tau)}
    {\log\left(1+\tau \sqrt{\frac{nq(1-\epsilon)}{d\log(1/\beta) + \log(8/\delta)}}\right)}\right]}_{=:\gamma} \cdot v^{\top} M \Sigma M v,
\end{align*}
which is feasible as $\Sigma_\star$ is one such matrix.

Our choice of $\beta$ satisfies $\log(1/\beta) \asymp 1 + \log(1+\tau)$.
so it suffices to establish that $L(\widehat{\Sigma}, \Sigma) = O(\gamma)$.
We can assume that $\gamma \leq \frac{1}{4}$, as otherwise, we can instead take $\widehat{\Sigma} = 0$
which satisfies $L(\widehat{\Sigma}, \Sigma_\star) = 1 \leq 4\gamma$.

By the guarantees of our estimator, we have for all $v \in \cN$ that
\begin{align*}
\bigl \lvert v^{\top} M \widehat{\Sigma}M v - v^{\top} M \Sigma_\star M v \bigr \rvert
\leq \gamma v^\top M \widehat{\Sigma} M v + \gamma v^{\top} M \Sigma_\star M v.
\end{align*}
Re-arranging this we obtain that for all $v \in \cN$,
\begin{align*}
  \frac{1-\gamma}{1+\gamma} v^\top M \Sigma_\star M v
  \leq v^\top M\widehat{\Sigma}M v
  \leq \frac{1+\gamma}{1-\gamma} v^\top M \Sigma_\star M v.
\end{align*}
For $\gamma \leq \frac{1}{4}$, we have $\frac{1-\gamma}{1+\gamma} \geq 1 - 3\gamma$ and $\frac{1+\gamma}{1-\gamma} \leq 1 + 3\gamma$,
and so the above guarantee implies that for all $v \in \cN$,
\begin{align*}
  |v^\top M(\widehat{\Sigma} - \Sigma_\star)M v| \leq 3\gamma v^\top M \Sigma_\star M v.
\end{align*}

To turn this into a bound on relative spectral norm, first observe that
\begin{align*}
  L(\widehat{\Sigma}, \Sigma_\star)
  = \sup_{v \neq 0} \frac{|v^\top (\Sigma_\star^{-\frac{1}{2}}\widehat{\Sigma}\Sigma_\star^{-\frac{1}{2}} - I_d)v|}{v^\top v}
  = \sup_{v \neq 0} \frac{|v^\top (M\widehat{\Sigma}M - M \Sigma_\star M)v|}{v^\top M \Sigma_\star M v}.
\end{align*}
To bound this,
we first define the matrix
$E = \Sigma_\star^{-\frac{1}{2}}\widehat{\Sigma}\Sigma_\star^{-\frac{1}{2}} - I_d$
and note that both $L(\widehat{\Sigma}, \Sigma_\star) = \norm{E}_{\rm op}$
and that for any $v \in \bR^d$,
\begin{align*}
  \frac{v^\top (M\widehat{\Sigma}M - M \Sigma_\star M)v}{v^\top M \Sigma_\star M v}
  = \tilde{v}^\top E \tilde{v},
\end{align*}
where $\tilde{v} = \big\|\Sigma_\star^{\frac{1}{2}} M v\big\|^{-1} \Sigma_\star^{\frac{1}{2}} M v$.
For any $v \in \bS^{d-1}$, we know there exists $u \in \cN$ such that $\norm{v - u} \leq \beta$.
Let $\tilde{v}$ and $\tilde{u}$ be defined as above for $v$ and $u$, respectively.
Then we have
\begin{align}
  \left|
    \frac{v^\top (M\widehat{\Sigma}M - M \Sigma_\star M)v}{v^\top M \Sigma_\star M v}
    - \frac{u^\top (M\widehat{\Sigma}M - M \Sigma_\star M)u}{u^\top M \Sigma_\star M u}
  \right|
  &= |\tilde{v}^\top E \tilde{v} - \tilde{u}^\top E \tilde{u}| \notag \\
  &\leq |\tilde{v}^\top E \tilde{v} - \tilde{u}^\top E \tilde{v}| + |\tilde{u}^\top E \tilde{v} - \tilde{u}^\top E \tilde{u}| \notag \\
  &\leq 2\norm{E}_{\rm op} \cdot \norm{\tilde{v} - \tilde{u}}_2. \label{eq:cov-it-ub-net-approximation}
\end{align}
Furthermore, we can bound the distance between $\tilde{v}$ and $\tilde{u}$ as
\begin{align*}
  \big\|\tilde{v} - \tilde{u}\big\|
  &= \Bigg\|\frac{\Sigma_\star^{\frac{1}{2}}Mv}{\big\|\Sigma_\star^{\frac{1}{2}}Mv\big\|} - \frac{\Sigma_\star^{\frac{1}{2}}Mu}{\big\|\Sigma_\star^{\frac{1}{2}}Mu\big\|}\Bigg\| \\
  &\leq \Bigg\|\frac{\Sigma_\star^{\frac{1}{2}}Mv}{\big\|\Sigma_\star^{\frac{1}{2}}Mv\big\|}
      - \frac{\Sigma_\star^{\frac{1}{2}}Mv}{\big\|\Sigma_\star^{\frac{1}{2}}Mu\big\|}\Bigg\|
    + \Bigg\|\frac{\Sigma_\star^{\frac{1}{2}}Mv}{\big\|\Sigma_\star^{\frac{1}{2}}Mu\big\|}
      - \frac{\Sigma_\star^{\frac{1}{2}}Mu}{\big\|\Sigma_\star^{\frac{1}{2}}Mu\big\|}\Bigg\| \\
  &\leq \frac{2\big\|\Sigma_\star^{\frac{1}{2}}M(v-u)\big\|}{\big\|\Sigma_\star^{\frac{1}{2}}M u\big\|}
  \leq 4\beta \sqrt{\frac{q(1-\epsilon) + \epsilon}{q(1-\epsilon)}} = 4\beta\sqrt{1+\tau},
\end{align*}
where the final inequality follows from \eqref{eq:tildeSigma-guarantee}.
Plugging this bound into \eqref{eq:cov-it-ub-net-approximation}, and taking a supremum over $v \in \bS^{d-1}$, we obtain that
\begin{align*}
  L(\widehat{\Sigma}, \Sigma_\star)
  \leq 3\gamma + 8\beta\sqrt{1+\tau} \cdot L(\widehat{\Sigma}, \Sigma_\star) \cdot \beta
  \Longrightarrow
  L(\widehat{\Sigma}, \Sigma_\star) \leq 6\gamma,
\end{align*}
where the final implication follows from our choice of $\beta$.\qed

\subsection{Univariate upper bounds} \label{sec:it-ub-univariate}
Our information-theoretically optimal mean estimation algorithm follows the
well-worn path of applying optimal univariate estimators over a covering.
For our univariate estimators, we will use the minimum Kolmogorov distance
estimator introduced by~\cite{ma2024estimation}.  
is a minimum distance estimator using the Kolmogorov distance
$d_K: \mathcal{P}(\bR_{\star}) \times \mathcal{P}(\bR_{\star})$ defined as
\[
  d_K(R_1, R_2) := \sup_{A \in \mathcal{A}}\; \bigl \lvert R_1(A) - R_2(A)\bigr \rvert,
\]
where we take the collection $\mathcal{A}$ to denote all upper half intervals 
\[
  \mathcal{A} := \{(-\infty, r] \mid r \in \bR\}. 
\]
We note in passing that this definition of the Kolmogorov distance is a symmetrized variant of the usual Kolmogorov distance.

Given a collection of probability measures $\mathcal{R}' \subseteq \mathcal{P}(\bR_{\star})$ and a single probability measure $R \in \mathcal{P}(\bR_{\star})$, we define the projection in Kolmogorov distance onto the set $\mathcal{R}'$ as
\[
  d_K(R, \mathcal{R}') := \inf_{R' \in \mathcal{R}'}\; d_K(R, R').
\]
Next, given observations $Z_1, \ldots, Z_n \in \bR_{\star}$, we let
\[
  \widehat{R}_n := \frac{1}{n} \sum_{i=1}^{n} \delta_{Z_i},
\]
denote the empirical distribution of the observations.  

Importantly, the empirical distribution converges in Kolmogorov distance to its
population version at the parametric rate, as the following lemma establishes.
\begin{lemma}
\label{lem:DKW-Kolmogorov}
Let $\epsilon \in [0, 1)$, $q \in (0, 1]$, and $P \in \mathcal{P}(\bR)$.
Suppose that
$Z_1, \ldots, Z_n \overset{\mathrm{iid}}{\sim} R \in \mathcal{R}(P, \epsilon, q)$
and let $\widehat{R}_n$ denote the empirical distribution of the observations.
For any $\delta \in (0, 1)$, it holds that
\[
  \mathbb{P}\biggl\{d_K\Bigl(\widehat{R}_n, \mathcal{R}(P, \epsilon, q)\Bigr)
  > 3 \sqrt{\frac{\{q (1-\epsilon) + \epsilon\} \log(\frac{1}{\delta})}{n}} \biggr\}
  \leq \delta.
\]
\end{lemma}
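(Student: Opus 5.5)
Since $R$ itself lies in $\mathcal{R}(P,\epsilon,q)$, I would first note that
\[
d_K\bigl(\widehat{R}_n, \mathcal{R}(P,\epsilon,q)\bigr) \leq d_K(\widehat{R}_n, R) = \sup_{r \in \bR}\bigl\lvert \widehat{R}_n((-\infty,r]) - R((-\infty,r])\bigr\rvert .
\]
It therefore suffices to bound the sup-deviation of the empirical measure over half-lines. The point $\star$ lies in none of the sets $(-\infty,r]$. So each indicator $\1\{Z_i \in (-\infty,r]\}$ is a Bernoulli variable with mean $p_r := R((-\infty,r]) \leq R(\bR) \leq q(1-\epsilon)+\epsilon =: \bar p$, where the last inequality is Lemma~\ref{lem:all-or-nothing-characterization} integrated over $\bR$. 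The goal is therefore a DKW-type inequality in which the variance factor $\bar p$ appears instead of $1$.

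I would carry this out via a reduction to a uniform empirical process. Let $F(r) = R((-\infty,r])$, a sub-distribution function with total mass $\bar p_0 := R(\bR) \leq \bar p$. Condition on the number $N$ of non-missing observations, so $N \sim \Bin(n,\bar p_0)$. Given $N$, the non-missing points are i.i.d.\ from the normalized distribution $F/\bar p_0$. Then
\[
\widehat{R}_n((-\infty,r]) - F(r) = \frac{N}{n}\bigl(\widehat G_N(r) - G(r)\bigr) + \Bigl(\frac{N}{n} - \bar p_0\Bigr) G(r),
\]
where $G$ is the normalized CDF and $\widehat G_N$ is its empirical version. The second term is at most $|N/n - \bar p_0|$, and Bernstein's inequality bounds it by $\sqrt{2\bar p_0\log(2/\delta)/n} + \log(2/\delta)/n$. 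For the first term, the DKW inequality (Massart's constant) gives $\sup_r|\widehat G_N - G| \leq \sqrt{\log(2/\delta)/(2N)}$. Multiplying by $N/n$ gives $\sqrt{N\log(2/\delta)/(2n^2)}$, and on the Bernstein event $N \lesssim n\bar p_0 + \log(1/\delta)$. The two contributions are each of order $\sqrt{\bar p\log(1/\delta)/n}$ plus a lower-order $\log(1/\delta)/n$ term.

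The main obstacle is getting the clean constant $3$ with no additive $\log(1/\delta)/n$ term and with $\log(1/\delta)$ rather than $\log(2/\delta)$, after splitting $\delta$ across the two events. To handle the linear term, I would note that if $\log(1/\delta)/n > \bar p/9$, then the claimed threshold $3\sqrt{\bar p\log(1/\delta)/n}$ exceeds $\bar p$. Since $\lvert \widehat{R}_n - R \rvert$ on any half-line is trivially at most $\max(N/n,\bar p)$, this regime needs only a Chernoff tail bound on $N$. In the complementary regime the linear term is dominated by the square-root term. For the constants I would use a single Bousquet/Talagrand-type inequality for the class of half-line indicators, whose VC dimension is $1$ and whose variance is at most $\bar p$. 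If that cannot be made tight, I would instead tune $\delta/2$ splits and absorb the factors, since $\delta<1$ and the constant $3$ leaves some slack.
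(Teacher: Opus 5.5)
Your reduction to $d_K(\widehat R_n,R)$ and the decomposition conditional on $N$ (DKW for the normalized part, Bernstein for $N/n-\bar p_0$) is the route the paper points to. The paper omits the proof and refers to DKW plus Bernstein. This route correctly gives a bound of order $\sqrt{\bar p\log(4/\delta)/n}+\log(4/\delta)/n$. The gap is the final step, where you try to drop the additive $\log(1/\delta)/n$ term and replace $\log(4/\delta)$ by $\log(1/\delta)$ for every $\delta\in(0,1)$. Neither can be done, because the statement as written is false in exactly those corners.

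\emph{Case $\delta\uparrow 1$.} Writing ``since $\delta<1$'' does not let you absorb the split, because $\log(2/\delta)/\log(1/\delta)\to\infty$ as $\delta\uparrow 1$. Concretely, take $\epsilon=0$, $q=1$ and $P$ continuous. Then $\mathcal R(P,0,1)=\{P\}$, and $d_K(\widehat R_n,P)\ge 1/(2n)$ surely, since the empirical CDF has jumps of size $1/n$. But $3\sqrt{\log(1/\delta)/n}<1/(2n)$ once $\delta>e^{-1/(36n)}$, so the left-hand probability equals $1>\delta$.

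\emph{Small-sample regime.} This regime cannot be closed by a Chernoff bound on $N$. Near its boundary $\log(1/\delta)/n\approx\bar p/9$, the threshold is only about $\bar p$. When $\bar p_0=\bar p$, $N/n$ exceeds this with probability bounded away from zero, so the crude bound $\max(N/n,\bar p)$ is too lossy there. Deeper in the regime, the lemma itself fails. Take $n=1$, $\epsilon=0$, $q=10^{-3}$, $\delta=e^{-10}$. Letting $r\to\infty$ gives $d_K(\widehat R_1,R)\ge\lvert \1\{Z_1\neq\star\}-q\rvert$. This is at least $1-q>0.3=3\sqrt{q\log(1/\delta)}$ with probability $q=10^{-3}>e^{-10}$. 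The same happens for large $n$ with $nq=1$ and $\log(1/\delta)=100$: the threshold is $30/n$, and $\mathbb{P}(N\ge 32)\to\mathbb{P}(\mathrm{Poi}(1)\ge 32)>10^{-36}\gg e^{-100}$. A Bousquet/Talagrand inequality does not rescue this. It carries the same $\log(1/\delta)/n$ term, which these examples show is genuinely necessary.

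What your argument does establish, and all that the paper needs, is the bound under two additional hypotheses: $n\{q(1-\epsilon)+\epsilon\}\ge C\log(1/\delta)$, and $\delta\le 1/2$ (or $\log(2/\delta)$ in place of $\log(1/\delta)$). The first is implied by the assumption $n\ge C_1\log(1/\delta)/(q(1-\epsilon))$ under which Corollary~\ref{cor:univariate-mean} and Lemma~\ref{lem:univariate-cov-ub} invoke this lemma. Under these hypotheses you are on the Bernstein event $N\le 2n\bar p$, the linear term satisfies $\log(1/\delta)/n\le\sqrt{\bar p\log(1/\delta)/(Cn)}$, and $\log(4/\delta)\le 3\log(1/\delta)$. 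So your decomposition closes with an absolute constant. The specific constant $3$ is not automatic, however. With a naive $\delta$-split between DKW and two-sided Bernstein, the constant exceeds $3$ when $\delta$ is near $1/2$, though it is fine for small $\delta$. Either state the lemma with an unspecified absolute constant or use a sharper maximal inequality; only the order matters downstream.
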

We omit the proof of this lemma as its proof follows an identical sequence of steps leading to~\cite[Ineq. (56)]{ma2024estimation} (in particular applying the DKW inequality~\cite{massart1990tight} in conjunction with Bernstein's inequality~\cite{vershynin2025high}).  Note that here we depart slightly from the setting of~\cite{ma2024estimation} in that we consider the symmetrized Kolmogorov distance, but a careful inspection of their proof shows that the result remains unchanged.

\subsubsection{Univariate mean estimation: Proof of Corollary~\ref{cor:univariate-mean}} \label{sec:univariate-mean-it-proof}
Let $P_{\theta} = \mathsf{N}(\theta, 1)$.  We consider the minimum distance estimator $\widehat{\theta}$, defined to be any element
    \begin{align} \label{def:univariate-kolmogorov-mean}
    \widehat{\theta}(Z_1, \ldots, Z_n) \in \argmin_{\theta \in \bR}\, d_K\bigl(\widehat{R}_n, \mathcal{R}(P_{\theta}, \epsilon, q)\bigr).
    \end{align}
    In words, the estimator $\widehat{\theta} \in \bR$ is taken as the parameter which contains the closest realizable contamination to the empirical distribution $\widehat{R}_n$.
    
Applying Lemma~\ref{lem:DKW-Kolmogorov} in conjunction with the triangle inequality, we deduce that
\[
d_K(\mathcal{R}(P_{\widehat{\theta}}, \epsilon, q), \mathcal{R}(P_{\theta_{\star}}, \epsilon, q)) \leq 6\sqrt{\frac{\{q(1 - \epsilon) + \epsilon\}\log(\frac{1}{\delta})}{n}}
= 6q(1-\epsilon)\alpha\sqrt{1+\tau}.
\]
Note further that, for any $r \in \bR$, 
\begin{align*}
  d_K\bigl(\mathcal{R}(P_{\theta}, \epsilon, q), \mathcal{R}(P_{\theta_{\star}}, \epsilon, q)\bigr) &\geq q(1-\epsilon) \cdot \mathbb{P}(G + \theta \leq \theta_{\star} - r) - \bigl\{q(1 - \epsilon) + \epsilon\bigr\} \cdot \mathbb{P}(G + \theta_{\star} \leq \theta_{\star} - r)\\
  &= q(1-\epsilon) \cdot \big[\underbrace{\bar{\Phi}(r - t) - (1 + \tau)\,  \bar{\Phi}(r)}_{=: \; g(t;r)} \big],
\end{align*}
where above we have $G \sim \mathsf{N}(0,1)$ and recall that $\bar{\Phi}$ is the survival function of the standard Gaussian.
The final relation follows by taking $\theta = \theta_{\star} - t$.  The function $g: \bR_{+} \times \bR \rightarrow \bR$ defined above is strictly increasing in its first argument. Hence, 
\begin{align*}
\bigl \lvert \widehat{\theta} - \theta_{\star} \bigr \rvert &\leq \sup\, \biggl\{t \geq 0: \lvert \theta - \theta_{\star} \rvert \leq t \, \text{ and } \, d_K\bigl(\mathcal{R}(P_{\theta}, \epsilon, q), \mathcal{R}(P_{\theta_{\star}}, \epsilon, q)\bigr) \leq 6q(1-\epsilon)\alpha\sqrt{1+\tau}\biggr\} \\
&\leq \sup\, \biggl\{t \geq 0: \; \sup_{r \in \bR}\, g(t; r) \leq 6\alpha\sqrt{1+\tau}\biggr\}\\
&= \inf\,\biggl\{t \geq 0:  \; \sup_{r \in \bR}\, g(t; r) > 6\alpha\sqrt{1+\tau}\biggr\}.
\end{align*}

Re-arranging and invoking the decreasing nature of $\bar{\Phi}^{-1}$, we see that for any choice of $r > 0$, the condition is satisfied if 
\begin{align} \label{ineq:exact-condition-t-mean}
t \geq r - \bar{\Phi}^{-1}\biggl((1 + \tau) \bar{\Phi}(r) + 6\alpha \sqrt{1+\tau}\biggr).
\end{align}

We next consider several cases for the value of $\tau$, select values of $r$ and upper bound the right-hand side of~\eqref{ineq:exact-condition-t-mean}.  To facilitate the analysis, we introduce the shorthand
\[
    \Delta := \tau \bar{\Phi}(r) + 6\alpha\sqrt{1+\tau}.
\]
Applying the mean value theorem in conjunction with Lemma~\ref{lem:Gaussian-Q-prop}, we deduce that
\begin{align} \label{ineq:expansion-Q-inv}
\bar{\Phi}^{-1}\bigl(\bar{\Phi}(r) + \Delta\bigr) &\geq r - \Delta \cdot \sup_{x' \in [\bar{\Phi}(r), \bar{\Phi}(r) + \Delta]}\, \frac{1}{\phi(\bar{\Phi}^{-1}(x'))} \nonumber\\
&\geq r - \Delta \cdot \sup_{x' \in [\bar{\Phi}(r), \bar{\Phi}(r) + \Delta]}\, \frac{1}{x' \bar{\Phi}^{-1}(x')} \nonumber \\
&\geq r - \frac{\Delta}{\bar{\Phi}(r) \cdot \bar{\Phi}^{-1}\bigl(\bar{\Phi}(r) + \Delta\bigr)}.
\end{align}

We proceed with by consider several cases on $\tau$.
Before doing so, first observe that our assumption $n \geq \frac{C_1 \log(\frac{1}{\delta})}{q(1-\epsilon)}$ implies that
$\alpha^2 \leq 1/C_1$.

\paragraph{Case 1: $\tau \leq 125\alpha$.} 
In this case, we set $r=2$.  Note that by assumption, $\alpha^2 \leq 1/C_1$.
Expanding the definition of $\Delta$ and taking $C_1$ sufficiently large yields the conclusion 
$
\Delta \leq C\alpha
$
for a sufficiently large constant $C$.
Applying the first inequality in~\eqref{ineq:expansion-Q-inv}, we find that
\[
\bar{\Phi}^{-1}\bigl(\bar{\Phi}(r) + \Delta\bigr) \geq r - \Delta \cdot \sup_{x' \in [\bar{\Phi}(r), \bar{\Phi}(r) + \Delta]}\, \frac{1}{\phi(\bar{\Phi}^{-1}(x'))} \geq r - \frac{\Delta}{\phi(r)} = r - \Delta \sqrt{2\pi} e^{r^2/2} = r - \Delta e\sqrt{2\pi},
\]
where the second inequality follows because the Gaussian PDF $\phi$ is decreasing on non-negative reals.  It thus follows that any separation
\[
t \geq r - \biggl(r - C e\sqrt{2\pi} \alpha \biggr) = C e\sqrt{2\pi}\alpha
\asymp \frac{\log(1+\tau)}{\sqrt{\log(1+\tau^2/\alpha^2)}}
\]
suffices, where the final equality (up to constant factors) holds because $\tau$ and $\tau/\alpha$ are both bounded by a constant in this regime.

\paragraph{Case 2: $125 \alpha < \tau \leq 8$.}  In this case, we set
\[
r = \sqrt{\log\bigl(\tau/\{6\sqrt{2\pi(1+\tau)} \alpha\}\bigr)} \quad\Longrightarrow\quad
6\sqrt{2\pi(1+\tau)} \cdot \alpha e^{r^2} = \tau,
\]
and note $r \geq 1$ for this range of $\tau$.
Moreover, applying the standard Mills' ratio lower bound (see \cref{lem:mills}) yields
$\bar{\Phi}(r) \geq \frac{e^{-r^2/2}}{r\sqrt{2\pi}} \geq \frac{e^{-r^2}}{\sqrt{2\pi}}$.
It thus follows that
\begin{align*} %
\frac{\Delta}{\bar{\Phi}(r)} \leq \tau + 6\sqrt{2\pi(1 + \tau)} \cdot  \alpha e^{r^2} = 2\tau.
\end{align*}
On the other hand, applying the first inequality in~\eqref{ineq:expansion-Q-inv} in conjunction with the decreasing nature of the Gaussian PDF $\phi$, we deduce that
\[
\bar{\Phi}^{-1}\bigl(\bar{\Phi}(r) + \Delta\bigr) \geq r - \frac{\Delta}{\phi(r)} \geq r - \frac{\tau}{r} + 6\sqrt{2\pi(1 + \tau)} \cdot  \alpha e^{r^2} \geq r, 
\]
where the penultimate inequality follows from the Mills' ratio upper bound $\bar{\Phi}(x) \leq \phi(x)/x$ (see \cref{lem:mills}) and the final inequality follows because $r \geq 1$. 
Combining the inequalities in the previous two displays with the final inequality in~\eqref{ineq:expansion-Q-inv}, we find that any separation
\[
t \geq r - \Bigl(r - \frac{2\tau}{r}\Bigr) \geq \frac{2\tau}{r} = \frac{2\tau}{\sqrt{\log(\tau/\alpha) - \log(6\sqrt{2\pi(1+\tau)})}},
\]
suffices.
To conclude this case, we have that $\log(1+\tau) \asymp \tau$ because $\tau < 8$ and
\[\log(\tau/\alpha) - \log(6\sqrt{2\pi(1+\tau)}) \alpha) \asymp \log(\tau/\alpha) \asymp \log(1 + \tau^2/\alpha^2)\] because
$\frac{\tau}{\alpha} > 125$ and $6\sqrt{2\pi(1+\tau)} < \frac{125}{2}$.

\paragraph{Case 3: $8 < \tau \leq \frac{1}{20}\alpha^{-1/4}$.}
In this case, we take
\[
r = \sqrt{\log(1/6\sqrt{2\pi}\alpha)}.
\]
Note that under this setting, taking $C_1$ large enough implies that $r \geq 1$, which combined with the Mills' ratio lower bound (see \cref{lem:mills}) gives
\begin{align*}
  \bar{\Phi}(r) \geq \frac{r}{r^2+1} \frac{e^{-r^2/2}}{\sqrt{2\pi}}
  \geq \frac{1}{2\sqrt{2\pi}}e^{-r^2}
  = 3\alpha.
\end{align*}
Then because $\tau > 8$, we find that
\[
6 \alpha \sqrt{1+\tau} \leq \tau \bar{\Phi}(r).
\]
In turn, this implies that
\[
\bar{\Phi}^{-1}\bigl(\bar{\Phi}(r) + \Delta\bigr) \geq \bar{\Phi}^{-1}\bigl(2\tau \bar{\Phi}(r)\bigr).
\]
We claim that with the shorthand $\zeta := \bar{\Phi}^{-1}\bigl(2\tau \bar{\Phi}(r)\bigr)$, the following inequality holds, deferring its proof to the end of the case,
\begin{align}
    \label{ineq:Q-inv-lb-tighter}
    \zeta^2 \geq r^2 - 2\log(2\tau) + 2\log\biggl(\frac{r}{\zeta + 1/\zeta}\biggr).
\end{align}
We further note the weaker inequality
\[
\zeta \geq \frac{1}{2}\sqrt{\log\biggl(\frac{1}{2 \tau \bar{\Phi}(r)}\biggr)} \overset{(a)}{\geq} \frac{1}{2}\sqrt{\log\biggl(\frac{e^{r^2/2}}{2 \tau}\biggr)} = \frac{1}{2}\sqrt{\frac{r^2}{2} + \log\bigl(1/(2\tau)\bigr)} \overset{(b)}{\geq} \frac{r}{4}
\geq \frac{1}{4}
\]
where above, step $(a)$ follows from the bound $\bar{\Phi}(r) \leq e^{-r^2/2}$ and step (b) follows since for the considered range of $\tau$ and the setting of $r$, $2 \tau \leq e^{r^2/4}$.  

On the other hand, we have that $\zeta \leq r$ because $2\tau > 1$ and $\bar{\Phi}^{-1}$ is decreasing.
Combining these inequalities, we deduce that 
\[
\frac{r}{\zeta + 1/\zeta} \geq \frac{r}{r + 4} \geq \frac{1}{5}.
\]
Substituting this back into the inequality~\eqref{ineq:Q-inv-lb-tighter} then yields
\[
\zeta^2 \geq r^2 - 2\log(10\tau) \geq r^2/2,
\]
where the final inequality follows because $10\tau \leq e^{r^2/4}$.  Hence, we find that
\[
r - \sqrt{r^2 - 2\log(\tau/4)} \leq \frac{4\log(\tau/4)}{r},
\]
and consequently deduce that any separation
\[
t \geq \frac{4\log(\tau/4)}{r} = \frac{4\log(\tau/4)}{\sqrt{\log(1/6\sqrt{2\pi}\alpha)}},
\]
suffices.
Then we have that $\log(1+\tau) \asymp \log(\tau/4)$ because $\tau > 8$ and $\log(1/6\sqrt{2\pi} \alpha) \asymp \log(\tau/\alpha)$ because,
taking $C_1$ large enough, $\log(1/\alpha)$ is bounded below by a sufficiently large constant and $\log 8 \leq \log(\tau) \leq \frac{1}{4}\log(1/\alpha) - \log(20)$.

It remains to establish the inequality~\eqref{ineq:Q-inv-lb-tighter}.

\medskip
\noindent \underline{Proof of the inequality~\eqref{ineq:Q-inv-lb-tighter}:}
Note that $2 \tau \bar{\Phi}(r) \leq \frac{1}{2}$, so that $\zeta \geq 0$.  We thus apply the standard Mills' ratio bounds (see \cref{lem:mills}) to obtain the sandwich relation
\[
\frac{\zeta}{\zeta^2 + 1} \phi(\zeta) \leq \bar{\Phi}(\zeta) = 2 \tau \bar{\Phi}(r) \leq \frac{2 \tau}{r} \phi(r).
\]
It thus follows that
\[
\frac{1}{\zeta + 1/\zeta} e^{-\zeta^2/2} \leq \frac{2\tau}{r} e^{-r^2/2}.
\]
Hence, taking logarithms and re-arranging yields the conclusion
\[
\zeta^2 \geq r^2 - 2\log(2 \tau) + 2\log\biggl(\frac{r}{\zeta + 1/\zeta}\biggr),
\]
as desired.

\paragraph{Case 4: $\tau > \frac{1}{20}\alpha^{-\frac{1}{4}}$}
In this case, we take a slightly modified version of the previous estimator.
Rather than the Kolmogorov distance, we take conditional Kolmogorov distance
\begin{align*}
  d_K(P, Q \mid \bR) := d_K(P(\cdot \mid \bR), Q(\cdot \mid \bR)),
\end{align*}
then we take $\widehat{\theta}$ to be
\begin{align} \label{def:univariate-cond-kolmogorov-mean}
  \widehat{\theta}(Z_1, \ldots, Z_n) \in\argmin_{\theta \in \bR}\,
    d_K\bigl(\widehat{R}_n, \mathcal{R}(P_{\theta}, \epsilon, q) \mid \bR\bigr).
\end{align}
Similar to the previous estimator, we can easily bound the conditional Kolmogorov
distance between $P_{\widehat{\theta}}$ and $P_{\theta_\star}$.
Let $\bar{q} = R(\bR) \geq q(1-\epsilon)$ and let $\bar{R} = R(\cdot \mid \bR)$
and let $S \subseteq [n]$ be the random set of indices
$S = \{i \in [n] \mid Z_i \neq \star\}$.
Observe that $|S| \sim \Bin(n, \bar{q})$.
By Bernstein's inequality and our assumption that
$n \gtrsim \frac{\log(1/\delta)}{q(1-\epsilon)}$, we have $|S| \geq \bar{q}n/2$
with probability at least $1-\delta/2$.
Now conditioned on $S$, observe that $\{Z_i\}_{i \in S} \simiid \bar{R}^{\otimes |S|}$,
so we can apply (e.g.) the DKW inequality~\cite{massart1990tight} to obtain that with probability at least $1-\delta/2$,
\begin{align*}
  d_K(\hat{R}_n, \cR(P_{\theta_\star}, \epsilon, q) \mid \bR)
  \leq d_K(\hat{R}_n, R \mid \bR)
  = d_K\left(\frac{1}{|S|} \sum_{i \in S} \delta_{Z_i}, \bar{R}\right)
  \leq \sqrt{\frac{\log(\frac{1}{\delta})}{2|S|}}.
\end{align*}
Thus by a union bound and triangle inequality, we have with probability at least $1-\delta$ that
\begin{align}
d_K(\mathcal{R}(P_{\widehat{\theta}}, \epsilon, q), \mathcal{R}(P_{\theta_{\star}}, \epsilon, q) \mid \bR)
\leq 2\sqrt{\frac{\log(\frac{1}{\delta})}{q(1-\epsilon)n}}
\leq 2 C_1^{-\frac{1}{2}}.
\label{eq:cond-kolmogorov-guarantee-mean}
\end{align}

We will be done once we establish a lower bound on
$d_K(\mathcal{R}(P_{\theta_{\star}}, \epsilon, q), \mathcal{R}(P_{\theta}, \epsilon, q) \mid \bR)$
for any $\theta$ such that $|\theta - \theta_{\star}|$ is sufficiently large.
Let $R \in \mathcal{R}(P_{\theta_{\star}}, \epsilon, q)$ and
$R' \in \mathcal{R}(P_{\theta}, \epsilon, q)$ be arbitrary contaminations.
By translating and negating the observations if necessary,
we may assume without loss of generality that $\theta_{\star} = 0$ and $\theta > 0$.
Then by the definition of our contamination set, we have for all $S \subseteq \bR$
\[
  q(1-\eps) \Phi(S) \leq R(S) \leq (q(1-\eps) + \epsilon) \Phi(S)
  = (1+\tau) q(1-\epsilon) P_\theta(S),
\]
and similar for $R'$ with $\Phi(S-\theta)$ in place of $\Phi(S)$.
Noting that $(a, b) \mapsto a/(a+b)$ is increasing in $a$ and decreasing in $b$,
we have that for any $r \in \bR_+$,
\begin{align*}
  d_K(R, R' \mid \bR)
  &\geq R(\cdot \leq r \mid \bR) - R'(\cdot \leq r \mid \bR) \\
  &= \frac{R(\cdot \leq r)}{R(\cdot \leq r) + R(\cdot > r)}
     - \frac{R'(\cdot \leq r)}{R'(\cdot \leq r) + R'(\cdot > r)} \\
  &= \frac{\Phi(r)}{\Phi(r) + (1+\tau)(1-\Phi(r))}
    - \frac{(1+\tau)\Phi(r-\theta)}{(1+\tau)\Phi(r-\theta) + (1-\Phi(r-\theta))} \\
  &= \frac{\Phi(r)(1-\Phi(r-\theta)) - (1+\tau)^2\Phi(r-\theta)(1-\Phi(r))}
    {[\Phi(r) + (1+\tau)(1-\Phi(r))][(1+\tau)\Phi(r-\theta) + (1-\Phi(r-\theta))]} \\
  &= \frac{\Phi(r)\Phi(\theta-r) - (1+\tau)^2\Phi(r-\theta)\Phi(-r)}
    {[\Phi(r) + (1+\tau)\Phi(-r)][(1+\tau)\Phi(r-\theta) + \Phi(\theta-r)]},
\end{align*}
where the final equality follows by applying the symmetry of the Gaussian distribution.
Now for $\theta$ such that $\Phi(-\theta/2) \leq \frac{1}{4}(1+\tau)^{-1}$,
by taking $r = \theta/2$, we have
\begin{align*}
  d_K(R, R' \mid \bR)
  &\geq \frac{\Phi(\theta/2)^2 - (1+\tau)^2\Phi(-\theta/2)^2}
    {[\Phi(\theta/2) + (1+\tau)\Phi(-\theta/2)]^2}
  \geq \frac{\frac{1}{4} - \frac{1}{16}}{(\frac{1}{2} + \frac{1}{4})^2}
  \geq 3C_1^{-\frac{1}{2}},
\end{align*}
where we used that $\Phi(\theta/2) \geq \frac{1}{2}$ and $\Phi(-\theta/2) \leq \frac{1}{4}(1+\tau)^{-1}$
in the second inequality and took $C_1$ sufficiently large in the final inequality.
By a Gaussian tail bound,
$\Phi(-\theta/2) \leq e^{-\theta^2/8}$, so
$\Phi(-\theta/2) \leq \frac{1}{4}(1+\tau)^{-1}$ 
holds for all $\theta \geq \sqrt{8\log(4(1+\tau))}$.

Therefore, by \cref{eq:cond-kolmogorov-guarantee-mean}, we have that 
with probability at least $1-\delta$,
\[
  |\widehat{\theta} - \theta_{\star}| \leq \sqrt{8\log(4(1+\tau))}
  \asymp \frac{\log(1+\tau)}{\sqrt{\log(1 + \tau^2/\alpha^2)}},
\]
where the final asymptotic equivalence holds because $\tau > \frac{1}{20}\alpha^{-1/4}$.

This proves the desired result in this case and concludes the proof of the corollary.
\qed

\subsubsection{Univariate variance estimation: Proof of Lemma~\ref{lem:univariate-cov-ub}} \label{sec:proof-lem-univariate-cov-ub}
We note that this proof is nearly identical to that of Corollary~\ref{cor:univariate-mean}, hence we omit several details already provided in that proof. 

We consider the minimum distance estimator $\widehat{\sigma}$, defined to be any element
\begin{align} \label{def:univariate-kolmogorov-var}
\widehat{\sigma}(Z_1, \ldots, Z_n) \in \sup\left\{\sigma \in \mathbf{R}_{++} \mid d_K\bigl(\widehat{R}_n, \mathcal{R}(P_{\sigma}, \epsilon, q)\bigr) \leq 3q(1-\epsilon)\alpha\sqrt{1+\tau}\right\}.
\end{align}
Applying Lemma~\ref{lem:DKW-Kolmogorov} in conjunction with the triangle inequality, we deduce that
\[
\hat{\sigma} \geq \sigma_\star
\qquad\textrm{and}\qquad
d_K(\mathcal{R}(P_{\widehat{\sigma}}, \epsilon, q), \mathcal{R}(P_{\sigma_{\star}}, \epsilon, q)) \leq 6q(1-\epsilon)\alpha\sqrt{1+\tau},
\]
so it remains to upper bound $\hat{\sigma}$.
Note further that, for any $r \in \mathbf{R}$, 
\begin{align*}
2 d_K\bigl(\mathcal{R}(P_{\sigma}, \epsilon, q), \mathcal{R}(P_{\sigma_{\star}}, \epsilon, q)\bigr) &\geq q(1-\epsilon) \cdot \mathbb{P}(\sigma^2 G^2 \geq r) - \bigl\{q(1 - \epsilon) + \epsilon\bigr\} \cdot \mathbb{P}(\sigma_{\star}^2 G^2 \geq r)\\
 &= q(1 - \epsilon) \, \bar{\Phi}\biggl(\sqrt{\frac{r}{\sigma^2}}\biggr) - \bigl\{q(1-\epsilon) + \epsilon\bigr\}\,  \bar{\Phi}\biggl(\sqrt{\frac{r}{\sigma_{\star}^2}}\biggr),
\end{align*}
where above we have used the notation $G \sim \mathsf{N}(0,1)$ and let $\bar{\Phi}$ denote the survival function of the standard Gaussian (that is, $\bar{\Phi}(x) = \mathbb{P}(G \geq x)$).   Letting $\sigma^2 = \sigma_{\star}^2 (1 + t)^2$ and re-scaling, we see that
        \begin{align*}
 d_K\bigl(\mathcal{R}(P_{\sigma}, \epsilon, q), \mathcal{R}(P_{\sigma_{\star}}, \epsilon, q)\bigr) &\geq q(1 - \epsilon) \bigg[\underbrace{\bar{\Phi}\biggl(\frac{r}{1 + t}\biggr) - (1+\tau)  \bar{\Phi}(r)}_{=:\; g(t,r)}\bigg].
\end{align*}
Note that the function $g$ defined above is strictly increasing in its first argument.  It thus follows that with
\begin{align*}
\psi(\epsilon, q, n, \delta) &:= \sup\, \biggl\{t \geq 0: \; \sup_{r \in \mathbf{R}}\, g(t; r) \leq 6\alpha\sqrt{1+\tau}\biggr\} \\
&= \inf\,\biggl\{t \geq 0:  \; \sup_{r \in \mathbf{R}}\, g(t; r) > 6\alpha\sqrt{1+\tau}\biggr\},
\end{align*}
we have
\[
|\widehat{\sigma}^2 - \sigma_\star^2| \leq \sigma_{\star}^2 \cdot \bigl[2 \psi(\epsilon, q, n, \delta) + \{\psi(\epsilon, q, n, \delta)\}^2\bigr].
\]
Re-arranging and invoking the decreasing nature of $\bar{\Phi}^{-1}$, we see that for any choice of $r > 0$, the condition is satisfied if 
\begin{align} \label{ineq:exact-condition-t-var}
t \geq r \cdot \biggl\{\bar{\Phi}^{-1}\biggl((1 + \tau) \bar{\Phi}(r) + 6\alpha\sqrt{1+\tau}\biggr)\biggr\}^{-1} - 1.
\end{align}
Let us now bound this term in three cases, depending on the value of $\tau$.  

\paragraph{Case 1: $\tau \leq 125\alpha$.} 
In this case, we set $r=2$.  The same sequence of steps as in Case 1 of the proof of Corollary~\ref{cor:univariate-mean} yields
\[
\bar{\Phi}^{-1}\biggl((1 + \tau) \bar{\Phi}(r) + 6\alpha\sqrt{1+\tau}\biggr) \geq r - \Delta e\sqrt{2\pi}, 
\]
where $\Delta = \tau \bar{\Phi}(r) + 6\alpha\sqrt{1+\tau}$.
Hence, since $\Delta \lesssim \alpha$ is smaller than a sufficiently small constant,
\[
\frac{r}{r - \Delta e \sqrt{2\pi}} - 1 \leq \frac{\Delta e \sqrt{2\pi}}{r - \Delta e \sqrt{2\pi}} \leq C\alpha,
\]
and so any separation
\[
\psi(\epsilon, q, n, \delta) = C\alpha
\]
suffices.  In turn, for large enough $C_1$, this implies that $\psi(\epsilon, q, n, \delta) < 1$, so that in this case
\[
    \bigl \lvert \widehat{\sigma}^2 - \sigma_{\star}^2 \bigr \rvert \leq C \alpha \asymp \frac{\log(1+\tau)}{\log(1+\tau/\alpha)},
\]
where the last equality is because $\tau \lesssim \alpha \lesssim 1$.

\paragraph{Case 2: $125\alpha < \tau \leq 8$.}  As in Corollary~\ref{cor:univariate-mean}, in this case, we set
\[
r = \sqrt{\log(\tau/12\alpha)},
\]
and deduce the lower bound
\[
\bar{\Phi}^{-1}\biggl((1 + \tau) \bar{\Phi}(r) + 6\alpha\sqrt{1+\tau}\biggr) \geq r - \frac{2\tau}{r/2}. 
\]
Hence, since
\[
\frac{r}{r - \frac{2\tau}{r/2}} - 1 \leq \frac{C\tau}{r^2} < 1,
\]
we deduce that in this case 
\[
    \bigl \lvert \widehat{\sigma}^2 - \sigma_{\star}^2 \bigr \rvert \leq \frac{C\tau}{\log(\tau/12\alpha)}
    \asymp \frac{\log(1+\tau)}{\log(1+\tau/\alpha)},
\]
where the last inequality is because $\alpha \lesssim \tau \lesssim 1$.

\paragraph{Case 3: $8 < \tau \leq \frac{1}{20}\alpha^{-1/4}$.}
As in Corollary~\ref{cor:univariate-mean}, in this case, we take
\[
r = \sqrt{\log(1/12\sqrt{10}\alpha)}.
\]
Under this setting, following the same steps as in Corollary~\ref{cor:univariate-mean}, we find that
\[
\bar{\Phi}^{-1}\biggl((1 + \tau) \bar{\Phi}(r) + 6\alpha\sqrt{1+\tau}\biggr) \geq \sqrt{r^2 - 2\log(\tau/4)}.
\]
Thus, since
\[
\frac{r}{\sqrt{r^2 - 2\log(\tau/4)}} -1 \leq \frac{C\log(\tau/4)}{r^2},
\]
we deduce that in this case, 
\[
    \bigl \lvert \widehat{\sigma}^2 - \sigma_{\star}^2 \bigr \rvert \leq \frac{C\log \tau}{\log(1/12\sqrt{10}\alpha)}
    \asymp \frac{\log(1+\tau)}{\log(1+\tau/\alpha)},
\]
where the last equality is because $\alpha \lesssim 1$ and $1 \lesssim \tau \lesssim \alpha^{-1/4}$.

\paragraph{Case 4: $\tau > \frac{1}{20}\alpha^{-1/4}$.}  In this case, instead of our previous estimator, we simply take $\widehat{\sigma} = 0$.
Because $\alpha^2 \leq \frac{1}{C_1}$ for sufficiently large $C_1$ and $\tau > \frac{1}{20}\alpha^{-1/4}$, we have that $\lvert \sigma_{\star}^2 - \widehat{\sigma}^2 \rvert = \sigma_{\star}^2 \leq \frac{C_2 \log(1+\tau)}{\log(1+\tau/\alpha)} \sigma_\star^2$ by picking $C_2$ sufficiently large.

This proves the desired result in all cases.
\qed

\section{Proof of upper bound for linear regression}
\subsection{Linear regression: Proof of Theorem~\ref{thm:lin-reg-ub-lb} upper bound}\label{pf:lin-reg-ub}
We prove the claim for general $q \in (0, 1]$.
We show that our choice of estimator $\widehat{\theta}\ofk_n$ achieves the desired rate.
To do so, we introduce some key lemmas to control relevant quantities with high probability.
We defer the proofs of \cref{lem:grad_f_n-ub,lem:mu_n-lb} to the subsequent subsections.

\begin{lemma}\label{lem:grad_f_n-ub}
  Let $\alpha$ be as in Theorem~\ref{thm:lin-reg-ub-lb}, $m = q(1-\epsilon)n$,
  and suppose that $\delta \geq m^{-c}$ for some constant $c > 0$.
  With probability at least $1-O(\delta)$, it holds that
  \begin{align}
  \bigl \| \nabla F\ofk_n(\theta_\star) \bigr \|
    \lesssim q(1-\epsilon) k \sigma^{2k-1} \cdot &\Bigl\{\tau (2k-2)!! + \alpha (1+\sqrt{\tau})\sqrt{(4k-3)!!} \notag\\
    &\qquad + \gamma_{1} m^{-\frac{1}{4}} + \gamma_{2} m^{-\frac{1}{2}} + \gamma_3 m^{-1}\Bigr\},
    \label{eq:grad_f_n-ub}
  \end{align}
  where
  \begin{align*}
     \gamma_1 &= \alpha \log^{\frac{1}{2}}(m)\sqrt{(4k-3)!!} + \alpha (1 + \tau^{\frac{1}{4}})O(\log\{(1+\tau)m\})^{\frac{k}{2}}, \\
     \gamma_2 &= \alpha [\log^{\frac{1}{2}}(m)\sqrt{(4k-3)!!} + O(\log\{(1+\tau)m\})^{\frac{k}{2}+\frac{1}{4}}]
       + \sqrt{\tau} O(\log\{(1+\tau)m\})^{\frac{k}{2}}, \\
     \gamma_3 &= \log(m) (2k-2)!! + O(\log\{(1+\tau)m\})^{\frac{k+1}{2}}.
  \end{align*}
\end{lemma}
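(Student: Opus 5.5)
We will write $\nabla F\ofk_n(\theta_\star) = -\frac{2k}{n}\sum_{i=1}^n \1\{(x_i,y_i)\neq\star\}\,(\sigma g_i)^{2k-1}x_i$, where $y_i - x_i^\top\theta_\star = \sigma g_i$ with $g_i \sim \mathsf N(0,1)$ independent of $x_i$. We then split into the population mean and the fluctuation. The population part is $2k\sigma^{2k-1}\,\|\E[\1\{\cdot\neq\star\} G^{2k-1}X]\|_2$. It is bounded exactly as in the warm-up of Section~\ref{sec:warmup-lr}, with general $q$: the MCAR component contributes zero by independence, and the MNAR component contributes at most $\epsilon\sqrt{2/\pi}\,(2k-2)!!$. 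Since $\epsilon = q(1-\epsilon)\tau$ up to the normalization of $q$, this gives the leading term $q(1-\epsilon)k\sigma^{2k-1}\tau(2k-2)!!$.

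For the fluctuation we use the variational form $\sup_{v\in\cN}$ over a $1/2$-net of size $5^d$. This costs a factor $2$ and turns the union bound into $d+\log(1/\delta)$, which produces $\alpha$ after normalizing by $m=q(1-\epsilon)n$. For fixed $v$, the summands $W_i = \1_i\, g_i^{2k-1}(v^\top x_i)$ are i.i.d. but heavy-tailed, since they are products of powers of Gaussians. We therefore truncate at level $|g_i| \le \sqrt{C\log((1+\tau)m)}$ and $|v^\top x_i| \le \sqrt{C\log m}$.

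On the truncated variables we apply Bernstein's inequality. The variance is at most $\E[\1\, G^{4k-2}(v^\top X)^2] \le (q(1-\epsilon)+\epsilon)(4k-3)!!$, and the factor $(1+\tau)q(1-\epsilon)$ is the source of $(1+\sqrt\tau)$ after taking square roots. This yields the term $\alpha(1+\sqrt\tau)\sqrt{(4k-3)!!}$. The range term, of size $O(\log((1+\tau)m))^{k-1/2}\sqrt{\log m}\cdot(d+\log(1/\delta))/n$, accounts for the pieces of $\gamma_2$ and $\gamma_3$ carrying the powers $(\log)^{k/2+\dots}$ once we rewrite $(d+\log(1/\delta))/m=\alpha^2$ and absorb factors.

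The truncation bias, i.e. the difference between the truncated and untruncated means, is controlled by Gaussian tail integrals. Its contribution is polynomially small in $m$, which gives the $\gamma_3 m^{-1}$ piece with $\log(m)(2k-2)!!$. The mixed $m^{-1/4}$ and $m^{-1/2}$ terms come from splitting on whether any sample exceeds the truncation level. With probability at most $m^{-c'} \lesssim \delta$ some sample does, and we use $\delta \ge m^{-c}$ to charge this to the failure probability. On the complement, a second-moment/Chebyshev-type bound on the counts of large samples, combined with Cauchy--Schwarz, yields the $\sqrt{\log m}$ and $\tau^{1/4}$ factors.

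We expect the main obstacle to be this bookkeeping: choosing truncation levels so that the tail event has probability $O(\delta)$ using only $\delta \ge m^{-c}$, while the bias and range terms match the stated $\gamma_j$. We would handle it by first proving a one-direction bound with generic truncation level $L$ and then optimizing $L \asymp \sqrt{\log((1+\tau)m)}$.
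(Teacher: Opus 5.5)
Your population-bias computation and the variance proxy $(q(1-\epsilon)+\epsilon)(4k-3)!!$ are fine and give the first two terms. The gap is in the fluctuation step.

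\textbf{The range term is not absorbed.} Bernstein on the truncated $W_i$, with a union bound over the $5^d$-point net, produces a range term $\frac{1}{n}L_g^{2k-1}L_x\,(d+\log(1/\delta)) = q(1-\epsilon)\,\alpha^2 L_g^{2k-1}L_x \gtrsim q(1-\epsilon)\,\alpha^2(\log m)^{k}$. You claim it is absorbed into $\gamma_2 m^{-1/2}$ and $\gamma_3 m^{-1}$, but it is not. Those terms carry the factors $\alpha m^{-1/2}$ and $m^{-1}$, and $\alpha^2 \geq \alpha m^{-1/2} \geq m^{-1}$. They also carry only the log-powers $k/2+1/4$ and $(k+1)/2$, not $k$. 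Concretely, take $\alpha$ constant, i.e.\ $n \asymp d/(1-\epsilon)$, and let $m\to\infty$. Every term on the right-hand side of the lemma stays bounded, and the $\gamma_j$ terms tend to $0$. Your extra term instead grows like $(\log d)^k$.

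\textbf{The truncation event fails.} The truncation $|v^\top x_i|\le\sqrt{C\log m}$ cannot hold simultaneously over the net with probability $1-m^{-c'}$. For a $1/2$-net, $\max_{v\in\mathcal{N}}|v^\top x_i|\ge \|x_i\|_2/2\approx\sqrt d/2$. So once $d\gg\log m$, essentially every sample violates the truncation in some direction, and the ``charge the tail event to $\delta$'' step breaks. The Chebyshev/Cauchy--Schwarz step you describe does not repair this.

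\textbf{The missing idea.} Decouple the scalar noise $\mathbf{g}$ from the covariates. Then all heavy-tailed concentration is one-dimensional, and the $5^d$ union bound only ever hits a Gaussian process. To do this, split by $b_i$ rather than into mean plus fluctuation.
\begin{itemize}
\item For $b_i=0$, the observation indicators are independent of $(x_i,g_i)$. Conditionally on them and on $\mathbf{g}$, $\frac1n\sum_{i\le n_0}g_i^{2k-1}x_i\sim\mathsf{N}\bigl(0,\,n^{-2}\sum_{i\le n_0}g_i^{4k-2}I_d\bigr)$ exactly. Its norm is therefore at most $\frac1n\bigl(\sum_{i \le n_0} g_i^{4k-2}\bigr)^{1/2}\bigl(\sqrt d+\sqrt{2\log(1/\delta)}\bigr)$.
\item For $b_i=1$, the indicator depends arbitrarily on $(x_i,g_i)$, so do not center it. Instead bound $\bigl|\sum_{b_i=1}\omega_i g_i^{2k-1}v^\top x_i\bigr|\le\sum_{b_i=1}|g_i|^{2k-1}|v^\top x_i|$. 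Conditionally on $\mathbf g$, this is a $(\sum_i a_i^2)^{1/2}$-Lipschitz function of a Gaussian vector, with $a_i=|g_i|^{2k-1}$. The net then costs only $\sqrt{\sum_i a_i^2\,(d+\log(1/\delta))}$, with no range term. The conditional mean $\sqrt{2/\pi}\sum_{b_i=1}|g_i|^{2k-1}$ supplies the $\tau(2k-2)!!$ bias.
\end{itemize}
What remains are the scalar sums $\sum|g_i|^{2k-1}$ and $\sum g_i^{4k-2}$. These need no net: truncate at $\sqrt{\log(n/\delta)}$ via Lemma~\ref{lem:gaussian-power-concentration-truncation}, which is where $\delta\ge m^{-c}$ enters.

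This also corrects your bookkeeping for the lower-order terms:
\begin{itemize}
\item The $m^{-1/4}$ terms come from square roots of $n_0(4k-3)!!+O(\log m)^{k}\sqrt{n_0}$.
\item The $\tau^{1/4}$ factors come from the analogous expression with $n_1\lesssim\tau m+\log m$.
\item The $m^{-1}\log(m)(2k-2)!!$ term comes from the binomial deviation of $n_1$, not from truncation bias.
\end{itemize}
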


\begin{lemma}\label{lem:mu_n-lb}
  Let $\alpha$ be as in Theorem~\ref{thm:lin-reg-ub-lb}.  Suppose $\alpha^{-2} \gtrsim e^{10 k \log^2k}$.
  With probability at least $1-O(\delta)$, the empirical risk \(F\ofk_n\) is uniformly strongly convex:
  \begin{align*}
    \inf_{\theta \in \R^d} \inf_{v \in \bS^{d-1}} 
    v^T \nabla^2 F\ofk_n(\theta) v 
    \gtrsim q(1-\epsilon)\sigma^{2k-2}(2k+1)!!.
  \end{align*}
\end{lemma}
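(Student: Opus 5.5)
The plan is to lower bound the Hessian using only the MCAR samples, reduce the infimum over $\theta$ to an infimum over halfspaces via a sign-agreement trick, and then apply a uniform empirical-process bound to a truncated, bounded version of the resulting sum.

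\textbf{Setup.} Write the Hessian as
\[
v^\top \nabla^2 F\ofk_n(\theta) v = \frac{2k(2k-1)}{n}\sum_{i=1}^n \1\{(x_i,y_i)\neq\star\}\,(y_i - x_i^\top\theta)^{2k-2}(v^\top x_i)^2 .
\]
Every summand is nonnegative, so I would drop all MNAR observations and keep only the MCAR ones, indexed by $I$. By a Chernoff bound, $|I|\gtrsim m = q(1-\epsilon)n$ with probability $1-\delta$. On $I$ the pairs are i.i.d.\ with $y_i - x_i^\top\theta = \sigma g_i + x_i^\top u$, where $u = \theta_\star-\theta$, $g_i\sim\mathsf N(0,1)$, $x_i\sim\mathsf N(0,I_d)$, and $g_i$ is independent of $x_i$.

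\textbf{Removing the dependence on $\theta$.} The infimum over $\theta$ is the main difficulty, since $u$ is unbounded. I would use the pointwise inequality
\[
(\sigma g_i + x_i^\top u)^{2k-2} \ge \sigma^{2k-2} g_i^{2k-2}\,\1\{\operatorname{sign}(g_i) = \operatorname{sign}(x_i^\top u)\},
\]
which holds because when the signs agree, $|\sigma g_i + x_i^\top u|\ge \sigma|g_i|$. (Treat $x_i^\top u=0$ as agreeing with either sign.) This leaves a quantity that depends on $u$ only through the halfspace $\{x: x^\top u\ge 0\}$.

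\textbf{Truncation and expectation.} Next I would truncate both factors: replace $g_i^{2k-2}$ by $g_i^{2k-2}\wedge T_1$ and $(v^\top x_i)^2$ by $(v^\top x_i)^2\wedge T_2$. Take $T_1 = (Ck\log k)^{k-1}$, or more precisely $g$-thresholds at $\sqrt{Ck\log k}$, and $T_2 = C\log k$. At these levels the truncated Gaussian moments lose only a constant factor. Since $g$ is symmetric and independent of $x$, the sign-agreement event has conditional probability $1/2$ given $g$'s magnitude and $x$. The population expectation is therefore
\[
\gtrsim \tfrac12\,\sigma^{2k-2}(2k-3)!!\cdot \mathbb E\bigl[(v^\top x)^2\wedge T_2\bigr] \gtrsim \sigma^{2k-2}(2k-3)!!,
\]
uniformly over $u$ and $v$. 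Multiplying by $2k(2k-1)$ gives order $k\,(2k-1)!!\,\sigma^{2k-2}\asymp (2k+1)!!\,\sigma^{2k-2}$. The rescaling $|I|/n\asymp q(1-\epsilon)$ then gives the target rate.

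\textbf{Uniform concentration (main obstacle).} Finally, I would show that the empirical average over $I$ of the bounded class
\[
\bigl\{(x,g)\mapsto (g^{2k-2}\wedge T_1)\,((v^\top x)^2\wedge T_2)\,\1\{\operatorname{sign} g=\operatorname{sign} x^\top u\}: u,v\in\bS^{d-1}\bigr\}
\]
concentrates uniformly. This class has pseudo-dimension $O(d)$, being built from halfspaces and thresholded quadratics in $v$; alternatively, handle $v$ with a net and a Lipschitz argument. Talagrand/Bousquet concentration then gives a uniform deviation of order
\[
T_1T_2\Bigl(\sqrt{\tfrac{d+\log(1/\delta)}{m}}+\tfrac{d+\log(1/\delta)}{m}\Bigr)\lesssim T_1T_2\,\alpha .
\]
The main obstacle is making this deviation smaller than half the expectation, i.e.\ $(Ck\log k)^{k}\log k\cdot\alpha\ll (2k-3)!!$. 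This is exactly where the hypothesis $\alpha^{-2}\gtrsim e^{10k\log^2 k}$ enters, because it gives $\alpha\le e^{-5k\log^2k}$, which dominates $(Ck\log k)^k$. If the truncation levels turn out too crude, I would first try a variance-sensitive (Bernstein-type) bound, using that the second moment of the class is at most $T_1\cdot(4k-5)!!$-like quantities, to gain a square root.
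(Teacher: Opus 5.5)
Your proposal is correct and follows essentially the same route as the paper. Both arguments discard the MNAR terms by nonnegativity, use agreement of the signs of $g_i$ and $x_i^\top(\theta_\star-\theta)$ so that $\theta$ enters only through a halfspace, truncate the noise at level $\sqrt{k}\,\mathrm{polylog}(k)$, and absorb a VC-type uniform deviation using $\alpha^{-2}\gtrsim e^{10k\log^2 k}$. The difference is only in execution: the paper uses one-sided agreement ($x_i^\top(\theta_\star-\theta)\ge 0$, $g_i\ge 1$), bins $g_i$ into geometric shells $[t_{\ell-1},t_\ell)$, and replaces $(x_i^\top v)^2$ by the indicator $|x_i^\top v|\ge\sqrt{\pi/8}$, so it only needs VC bounds for classes of sets. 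You instead work directly with a bounded real-valued class via pseudo-dimension; replace $\log k$ by $\log(3k)$ in $T_1,T_2$ so that the case $k=1$ is not degenerate.
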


\begin{proof}[Proof of \cref{thm:lin-reg-ub-lb}]
Suppose first that we choose $k$ satisfying $e^{10 k \log^2k} \lesssim \alpha^{-2}$.
By Lemmas~\ref{lem:grad_f_n-ub}, we deduce that with probability at least $1-\delta/2$, $F_n\ofk$ is $\mu_n$--strongly convex.
From strong convexity of $F_{n}^{(k)}$, we obtain the inequality
\[
\bigl\{ \nabla F_n\ofk \bigl(\widehat{\theta}_n\ofk\bigr) - \nabla F\ofk_n(\theta_{\star}) \bigr\}^{\top} \bigl\{\widehat{\theta}_n\ofk - \theta_{\star}\bigr\} \geq \mu_n \cdot \| \widehat{\theta}_n\ofk - \theta_{\star} \|_2^2.
\]
Since by definition, $\nabla F_n\ofk \bigl(\widehat{\theta}_n\ofk\bigr) = 0$, applying the Cauchy--Schwarz inequality to the left-hand side and re-arranging yields
\begin{align*}
  \bigl \| \theta_\star - \widehat{\theta}\ofk_n \bigr \|_2
  \leq \frac{\bigl \| \nabla F\ofk_n(\theta_\star)\bigr\|_2}{\mu_n}.
\end{align*}
Applying Lemmas~\ref{lem:grad_f_n-ub} and~\ref{lem:mu_n-lb}, which hold with probability $1 - \delta$, then yields
\begin{align}
 \frac{1}{\sigma} \cdot \bigl\| \widehat{\theta}\ofk_n - \theta_\star\bigr \| &\lesssim  \frac{k \cdot \bigl\{\tau (2k-2)!! + \alpha (1+\sqrt{\tau})\sqrt{(4k-3)!!} + \gamma_{1} m^{-\frac{1}{4}} + \gamma_{2} m^{-\frac{1}{2}} + \gamma_3 m^{-1}\bigr\}}{(2k+1)!!}\nonumber\\
 &\lesssim \frac{\tau}{\sqrt{k}} + \frac{2^k \alpha (1+\sqrt{\tau})}{\sqrt{k}} + \frac{\gamma_{1} m^{-\frac{1}{4}} + \gamma_{2} m^{-\frac{1}{2}} + \gamma_3 m^{-1}}{(2k-1)!!},
  \label{eq:est-err-bd}
\end{align}
where the first two terms in the final step follow from Stirling's inequality (see \cref{fact:stirling}).
We analyze this bound in different cases for $\tau$.
To do so, let $C' \geq 1$ be a sufficiently large constant and $c < 1$ a sufficiently small constant that we will specify later.

\paragraph{Small $\tau$: $\tau \leq C'\alpha$.}
In this case we take $k = 1$ and have that $\alpha^{-2} \gtrsim e^{10 k\log^2 k}$ because $\alpha \leq 1/\sqrt{C}$ by our assumption $m \geq C (d+\log(\frac{1}{\delta}))$.
Because $\tau \leq C'\alpha \lesssim 1$, the error bound \eqref{eq:est-err-bd} simplifies to
\begin{align*}
  \frac{1}{\sigma} \cdot \bigl\| \widehat{\theta}\ofk_n - \theta_\star\bigr \|
  \lesssim \alpha \cdot \big(1 + m^{-\frac{1}{4}}\sqrt{\log m} + m^{-\frac{1}{2}}\log^{\frac{3}{4}}(m)\big) + m^{-1} \log m 
  \lesssim \alpha,
\end{align*}
where the final step follows because $\alpha \geq \frac{1}{\sqrt{m}}$.
In this regime, $\frac{\tau (1 \vee \log\log(\tau/\alpha))}{\sqrt{\log(1 + \tau^2/\alpha^2)}} \asymp \alpha$,
so this matches our desired rate.

\paragraph{Large $\tau$: $\tau \geq C'\alpha$.}
In this case, we take $k = \big\lfloor\frac{c\log(\tau/\alpha)}{(\log\log(\tau/\alpha))^2}\big\rfloor$.
By choosing $C'$ sufficiently large (as a function of $c$), we have that $k \geq 1$ because $\tau/\alpha \geq C'$.
This implies $|\log k| = \log k \leq \log\log(1/\alpha)$
and so $k \log^2 k \leq c\log(1/\alpha)$.
By picking $c = 0.1$, we have that $k$ is small enough to apply \cref{lem:mu_n-lb}.
Substituting this into \eqref{eq:est-err-bd}, we have
\begin{align}
  &\frac{1}{\sigma} \cdot \bigl\| \widehat{\theta}\ofk_n - \theta_\star\bigr \| \notag \\
  &\qquad\lesssim \frac{\tau \log\log(\tau/\alpha)}{\sqrt{\log(\tau/\alpha)}}
  + \frac{\alpha (\tau/\alpha)^{\frac{c\log 2}{(\log\log(\tau/\alpha))^2}} \log\log(\tau/\alpha)}{\sqrt{\log(\tau/\alpha)}}
  + \frac{\gamma_{1} m^{-\frac{1}{4}} + \gamma_{2} m^{-\frac{1}{2}} + \gamma_3 m^{-1}}{(2k-1)!!} \label{eq:large-tau-err}
\end{align}
Enlarging $C'$ if necessary, we see that $\frac{c\log 2}{(\log\log(\tau/\alpha))^2} \leq 1$, which implies that the second term is dominated by the first.
We now bound $\frac{\gamma_1}{m^{\frac{1}{4}}(2k-1)!!}$.
We have
\begin{align*}
  \frac{\gamma_1}{m^{\frac{1}{4}}(2k-1)!!}
  &\asymp \frac{\alpha 2^k \log^{\frac{1}{2}}(m)}{m^{\frac{1}{4}}} + \frac{\alpha O(\log(m))^{\frac{k}{2}}}{m^{\frac{1}{4}}(2k-1)!!}.
\end{align*}
Because $(2k-1)!! = O(k)^k$, we see that $\frac{O(\log m)^{\frac{k}{2}}}{(2k-1)!!} \leq e^{O(\sqrt{\log m})} \leq m^{\frac{1}{8}}$ for $m \geq C''$, where $C''$ is a sufficiently large constant.
Now because $\alpha^{-2} \leq m$, we have that $m^{\frac{1}{8}} \gtrsim \sqrt{\log(1/\alpha)}$.
Applying these bounds to the above display, we see that as long as $m \geq C'' \vee e^{(8\log 2)k}$,
\[\frac{\gamma_1}{m^{\frac{1}{4}}(2k-1)!!} \lesssim \frac{\alpha}{\sqrt{\log(1/\alpha)}}.\]
Note that by taking $C_1$ to be a sufficiently large constant, the assumption that $\alpha \leq 1/\sqrt{C_1}$ implies that $\frac{8c\log 2}{(\log\log(1/\alpha))^2} \leq 2$.
This in turn implies $m \geq e^{(8\log 2)k}$ because \[e^{(8\log 2)k} = \alpha^{-\frac{8c\log 2}{(\log\log(1/\alpha))^2}} \leq \alpha^{-2} \leq m.\]
The other two terms in \eqref{eq:large-tau-err} follow similarly, so we have the error bound
\begin{align*}
  \frac{1}{\sigma} \cdot \bigl\| \widehat{\theta}\ofk_n - \theta_\star\bigr \|
  \lesssim \frac{\tau \log\log(\tau/\alpha)}{\sqrt{\log(\tau/\alpha)}} + \frac{\alpha}{\sqrt{\log(1/\alpha)}} \asymp \frac{\tau (1 \vee \log\log(\tau/\alpha))}{\sqrt{\log(1 + \tau^2/\alpha^2)}},
\end{align*}
as desired.
\end{proof}

It remains to prove the above lemmas.  We introduce some notation that will be useful in both lemmas.  In particular, by the usual decomposition, we know that there exists $\Omega^\mcar \sim \Bern(q), \Omega^\mnar \in \{0, 1\}, B \sim \Bern(\epsilon), (X, Y) \sim P$ such that $R$ is the law of
\begin{align*}
  R = \mathsf{Law}(Z), \quad \text{ and } \quad Z = (X, Y) \ostar \bigl\{(1-B) \Omega^\mcar + B \Omega^\mnar\bigr\},
\end{align*}
where $\Omega^\mcar$, $B$, and $(Z, \Omega^\mnar)$ are mutually independent.  Notice that this slightly generalizes the development in Section~\ref{sec:lin-reg} where we specify $q=1$.
Moreover, $Y$ can be generated as $X \cdot \theta_\star + \sigma g$ for $g \sim \mathsf{N}(0, 1)$ independent of $X$.
Observe that $Z \in \mathbb{R}^{d+1}$ if and only if $(1-B)\Omega^\mcar + B\Omega^\mnar = 1$.  Let $\omega^\mcar_i, \omega^\mnar_i, b_i$ be the realizations of these random variables in the sample.

\subsubsection{Proof of Lemma~\ref{lem:grad_f_n-ub}}
Expanding out $\nabla F^{(k)}_n(\theta_\star)$ and using the fact that $y_i = X_i \cdot \theta_\star + g_i$, we have
\begin{align*}
  \nabla F^{(k)}_n(\theta_\star)
  &= \frac{2k \sigma^{2k-1}}{n}\sum_{i=1}^n ((1-b_i)\omega^\mcar_i + b_i \omega^\mnar_i) g_i^{2k-1} X_i
\end{align*}
so that
\begin{align}
  \norm{\nabla F^{(k)}_n(\theta_\star)}
  &\leq 2k \sigma^{2k-1}\norm{\frac{1}{n}\sum_{i : b_i = 0} \omega^\mcar_i g_i^{2k-1} X_i} + 2k \sigma^{2k-1}\norm{\frac{1}{n} \sum_{i : b_i = 1} \omega^\mnar_i g_i^{2k-1} X_i}
  \label{eq:grad-ub}
\end{align}
We will be done once we bound each vector norm in \eqref{eq:grad-ub} with probability at least $1-O(\delta)$ by
\begin{align}
  B := q(1-\epsilon) \cdot \left\{\tau (2k-2)!! + \alpha (1+\sqrt{\tau})\sqrt{(4k-3)!!} + \gamma_{1} m^{-\frac{1}{4}} + \gamma_{2} m^{-\frac{1}{2}} + \gamma_3 m^{-1}\right\},
  \label{eq:norm-bd-goal}
\end{align}
where we recall that
\begin{align*}
 \gamma_1 &= \alpha \log^{\frac{1}{2}}(m)\sqrt{(4k-3)!!} + \alpha (1 + \tau^{\frac{1}{4}})O(\log\{(1+\tau)m\})^{\frac{k}{2}},  \\
 \gamma_2 &= \alpha [\log^{\frac{1}{2}}(m)\sqrt{(4k-3)!!} + O(\log \{(1+\tau)m\})^{\frac{k}{2}+\frac{1}{4}}]
   + \sqrt{\tau} O(\log\{(1+\tau)m\})^{\frac{k}{2}}, \\
 \gamma_3 &= \log(m) (2k-2)!! + O(\log\{(1+\tau)m\})^{\frac{k+1}{2}}.
\end{align*}

\paragraph{Bounding the first term of \eqref{eq:grad-ub}.}
Observe that $(1-b_i)\omega_i^{\mcar} \sim \Bern(q(1-\epsilon))$ are i.i.d.\ and independent of $\{(g_i, X_i)\}$.
Thus, conditioning on $n_0 = \sum_{i=1}^n (1-b_i)\omega_i$, we can assume without loss of generality that the first $n_0$ samples are the samples for which $(1-b_i)\omega_i = 1$ and then bound
\begin{align*}
  \norm{\frac{1}{n}\sum_{i=1}^{n_0} g_i^{2k-1} X_i}.
\end{align*}
Now we condition on $\bg = \{g_i\}$ and observe that $\frac{1}{n}\sum_{i=1}^{n_0} g_i^{2k-1} X_i \sim N(0, \sigma^2(\bg) I_d )$, where
\begin{align*}
  \sigma^2(\bg) = \frac{1}{n^2}\sum_{i=1}^{n_0} g_i^{4k-2}.
\end{align*}
Thus a standard Gaussian tail bound yields that with probability at least $1-\delta$,
\begin{align}
  \norm{\frac{1}{n}\sum_{i=1}^{n_0} g_i^{2k-1} X_i}
  \leq \frac{1}{n} \cdot \sqrt{\sum_{i=1}^{n_0} g_i^{4k-2}} \cdot \left(\sqrt{d} + \sqrt{2\log(\tfrac{1}{\delta})}\right) \label{eq:norm-bd-lr}.
\end{align}
We have by \cref{lem:gaussian-power-concentration-truncation} and the assumption that $\delta \geq m^{-c}$ that with probability at least $1-\delta$,
\begin{align}
  \sum_{i=1}^{n_0} g_i^{4k-2}
  &\leq n_0 (4k-3)!! + \sqrt{n_0 2^{4k-3} \log(2/\delta) \log(2n_0/\delta)^{2k-1}} \notag \\
  &\lesssim n_0 (4k-3)!! + [O(\log n_0 + \log m)]^k \sqrt{n_0}. \label{eq:g-bd-lr}
\end{align}
Now because $n_0 \sim \Bin(n, q(1-\epsilon))$ we apply Bernstein's inequality to obtain that with probability at least $1-e^{-\Omega(m)} \geq 1-\delta$, $n_0 \leq 2m$.

Putting it all together, we deduce that with probability at least $1-O(\delta)$,
\begin{align*}
  \norm{\frac{1}{n}\sum_{i=1}^{n_0} g_i^{2k-1} X_i}
  &\lesssim \frac{\sqrt{d + \log(\frac{1}{\delta})}}{n} \cdot \left(\sqrt{m q(1-\epsilon) (4k-3)!!}
    + O(\log m)^{\frac{k}{2}}m^{\frac{1}{4}} \right) \\
  &= q(1-\epsilon)\alpha
    \left(\sqrt{(4k-3)!!} + m^{-\frac{1}{4}}O(\log m)^{\frac{k}{2}}\right).
\end{align*}
This is bounded by $O(B)$ because $\gamma_1 \geq \alpha \cdot O(\log m)^{\frac{k}{2}}$.

\paragraph{Bounding the second term of \eqref{eq:grad-ub}.}
Again observe that $\{b_i\}$ is independent of everything else, so we can $n_1 = \sum_{i=1}^n b_i$ and assume without loss of generality that the first $n_1$ values of $b_i$ are equal to $1$.
Then we have
\begin{align*}
  \Bigl \| \frac{1}{n} \sum_{i : b_i = 1} \omega^\mnar_i g_i^{2k-1} X_i\Bigr\|
  &= \frac{1}{n} \sup_{v \in \bS^{d-1}} \left| \sum_{i=1}^{n_1} \omega^\mnar_i g_i^{2k-1} \cdot X_i ^{\top} v \right| \\
  &\leq \frac{1}{n} \sup_{v \in \bS^{d-1}} \sum_{i=1}^{n_1} |g_i^{2k-1}| \cdot |X_i^{\top} v|.
\end{align*}
Now we condition on $\bg = \{g_i\}_{i=1}^{n}$ and let $a_i = |g_i^{2k-1}|$.

We define $T_v := \sum_{i=1}^{n_1} a_i |X_i^{\top} v|$
and let $\cN$ be a $1/2$-packing of $\bS^{d-1}$ of size at most $5^d$.
Then for any $v \in \bS^{d-1}$, let $v' \in \cN$ satisfy $\norm{v - v'} \leq 1/2$ and observe that
\begin{align*}
  |T_v - T_{v'}|
  \leq \sum_{i=1}^{n_1} a_i \cdot \big| |X_i^{\top} v| - |X_i^{\top} v'| \big|
  \leq \sum_{i=1}^{n_1} a_i \cdot |X_i^{\top} (v - v')|
  \leq \norm{v - v'} \cdot \sum_{i=1}^{n_1} a_i |X_i^{\top} u|
\end{align*}
where $u \in \bS^{d-1}$ is such that $v = v' + \norm{v - v'} u$.
We thus have that
\begin{align*}
  T_v \leq T_{v'} + \norm{v - v'} \sum_{i=1}^{n_1} a_i |X_i^{\top} u|
  \leq \sup_{u \in \cN} T_u + \frac{1}{2} \sup_{u \in \bS^{d-1}} T_u.
\end{align*}
Taking a supremum over $v \in \bS^{d-1}$ and rearranging yields
$\sup_{v \in \bS^{d-1}} T_v \leq 2 \sup_{v \in \cN} T_v$.

Since $X_i \simiid \mathsf{N}(0, I_d)$, we have that for each $v \in \cN$ that $\E[|X_i^{\top} v|] = \sqrt{2/\pi}$ and that $T_v - \sqrt{2/\pi}\sum_{i=1}^{n_1} a_i$ is $\left(\sum_{i=1}^{n_1} a_i^2\right)$-subgaussian.
Thus,
\begin{align*}
  \P\left(\sup_{v \in \cN} T_v - \sqrt{\frac{2}{\pi}}\sum_{i=1}^{n_1} a_i \geq t\right)
  \leq |\cN| \sum_{v \in \cN} \P\left(T_v - \sqrt{\frac{2}{\pi}} \sum_{i=1}^{n_1} a_i \geq t\right)
  \leq 5^d \exp\left(-\frac{t^2}{2\sum_{i=1}^{n_1} a_i^2}\right).
\end{align*}
Picking $t = \sqrt{2(d\log 5 + \log(\frac{1}{\delta}))\sum_{i=1}^{n_1} a_i^2}$, we have with probability at least $1-\delta$ that
\begin{align*}
  \sup_{v \in \cN} T_v
  \leq \sqrt{\frac{2}{\pi}}\sum_{i=1}^{n_1} a_i + \sqrt{\sum_{i=1}^{n_1} a_i^2(d\log 5 + \log(\tfrac{1}{\delta}))}.
\end{align*}
Assembling the pieces, we have conditional on $n_1$ and $\bg$ that with probability at least $1-\delta$,
\begin{align}
  \norm{\frac{1}{n} \sum_{i : b_i = 1} \omega^\mnar_i g_i^{2k-1} X_i}
  \lesssim \frac{1}{n} \left(\sum_{i=1}^{n_1} |g_i|^{2k-1} + \sqrt{\sum_{i=1}^{n_1} g_i^{4k-2}(d + \log(\tfrac{1}{\delta}))}\right).
  \label{eq:bi1-ub}
\end{align}
Again by Lemma~\ref{lem:gaussian-power-concentration-truncation}, we have with probability at least $1-\delta$ that
\begin{align*}
  \sum_{i=1}^{n_1} |g_i|^{2k-1}
  &\leq n_1 (2k-2)!! + O(\log (n_1/\delta))^{\frac{k}{2}}\sqrt{n_1} \quad \text{ and }\\
  \sum_{i=1}^{n_1} |g_i|^{4k-2} 
  &\leq n_1 (4k-3)!! + O(\log (n_1/\delta))^{k}\sqrt{n_1}.
\end{align*}
Recalling that $n_1 \sim \Bin(n, \epsilon)$, we have that with probability at least $1-\delta$ that $n_1 \lesssim \epsilon n + \log (\frac{1}{\delta}) = \tau m + O(\log m)$.
Now we condition on all these events, which occur simultaneously with probability at least $1-O(\delta)$.
Note that $\log(n_1/\delta) = \log(\tau m + O(\log m)) + O(\log m)) \asymp \log((1+\tau) m)$.
Then the first term in \eqref{eq:bi1-ub} is upper bounded by 
\begin{align*}
  \frac{1}{n} \sum_{i=1}^{n_1} |g_i|^{2k-1}
  &\lesssim \frac{1}{n} \cdot \big[(\tau m + \log m) (2k-2)!! + O(\log \{(1+\tau)m\})^{\frac{k}{2}}(\sqrt{\tau m} + \sqrt{\log m})\big] \\
  &\asymp q(1-\epsilon) \Big[\tau (2k-2)!! + m^{-1}\log(m) (2k-2)!! \\
  &\hspace{72pt}+ m^{-\frac{1}{2}}O(\log \{(1+\tau)m\})^{\frac{k}{2}}\sqrt{\tau} + m^{-1}O(\log \{(1+\tau)m\})^{\frac{k+1}{2}}\Big].
\end{align*}
Collecting terms with the same exponent on $m$ and recalling the definitions of $\gamma_2$ and $\gamma_3$, we have that the above display is $O(B)$ because
\begin{align*}
  \gamma_2 m^{-\frac{1}{2}} &\geq m^{-\frac{1}{2}}O(\log \{(1+\tau)m\})^{\frac{k}{2}}\sqrt{\tau} \quad\textrm{and} \\
  \gamma_3 m^{-1} &\geq m^{-1}(\log(m) (2k-2)!! + O(\log \{(1+\tau)m\})^{\frac{k+1}{2}}).
\end{align*}
The second term in \eqref{eq:bi1-ub} is bounded above by
\begin{align*}
  \frac{1}{n}\sqrt{\sum_{i=1}^{n_1} g_i^{4k-2}(d + \log(\tfrac{1}{\delta}))}
  &\lesssim
  \frac{\sqrt{d + \log(\frac{1}{\delta})}}{n}
  \cdot \Big[\sqrt{\tau m (4k-3)!!} + \log^{\frac{1}{2}}(m)\sqrt{(4k-3)!!} \\
  &\hspace{80pt}+ O(\log \{(1+\tau)m\})^{\frac{k}{2}}[(\tau m)^{\frac{1}{4}} + \log^{\frac{1}{4}}(m)]\Big] \\
  &\asymp\ q(1-\epsilon)\alpha \Big[\sqrt{\tau (4k-3)!!} + m^{-\frac{1}{2}}\log^{\frac{1}{2}}(m) \sqrt{(4k-3)!!} \\
  &\hspace{80pt}+ O(\log\{(1+\tau)m\})^{\frac{k}{2}}[m^{-\frac{1}{4}}\tau^{\frac{1}{4}} + m^{-\frac{1}{2}}\log^{\frac{1}{4}}(m)]\Big].
\end{align*}

Similar to the previous term, this display is also $O(B)$ because
\begin{align*}
  \gamma_1 m^{-\frac{1}{4}} &\geq \alpha \tau^\frac{1}{4} O(\log((1+\tau)m))^{\frac{k}{2}} m^{-\frac{1}{4}} \quad \textrm{and}\\
  \gamma_2 m^{-\frac{1}{2}} &\geq \alpha(\log^{\frac{1}{2}}(m)\sqrt{(4k-3)!!} + \log^{\frac{1}{4}}(m)O(\log((1+\tau)m))^{\frac{k}{2}}) m^{-\frac{1}{2}}.
\end{align*}
We have thus bounded all terms by $O(B)$, proving the claim.
\qed

\subsubsection{Proof of Lemma~\ref{lem:mu_n-lb}}

Expanding out $\nabla^2 F^{(k)}_n$, we have for any unit vector $v$ that
\begin{align*}
v^T \nabla^2F\ofk_n(\theta) v
&= \frac{(2k)(2k-1)}{n}\sum_{i=1}^n ((1-b_i)\omega^\mcar_i + b_i \omega^\mnar_i)
  \bigl(\bx_i^\top(\theta_\star-\theta)+\sigma g_i\bigr)^{2k-2}
  (\bx_i^\top v)^2 \nonumber\\
&\geq \frac{(2k)(2k-1)}{n}\sum_{i=1}^n (1-b_i)\omega^\mcar_i
  \bigl(\bx_i^\top(\theta_\star-\theta)+\sigma g_i\bigr)^{2k-2}
  (\bx_i^\top v)^2,
\end{align*}
where the inequality is because the summands are nonnegative.
Defining the random set $\cI = \{i \in [n] \mid (1-b_i)\omega^\mcar_i = 1\}$, we equivalently have
\begin{align}
  v^T \nabla^2 F\ofk_n(\theta) v
  \geq \frac{(2k)(2k-1)}{n}\sum_{i \in \cI} \bigl(\bx_i^\top(\theta_\star-\theta)+\sigma g_i\bigr)^{2k-2}
  (\bx_i^\top v)^2 \label{eq:hess-Fk}
\end{align}
We condition on $\cI$ and let $n_0 = |\cI|$.
Because $((1-b_i)\omega_i^\mcar)_{i \in [n]}$ is independent of $((\bx_i,g_i))_{i \in [n]}$, we have that condition on $\cI$,
$((\bx_i, g_i))_{i \in \cI}$ are $n_0$ i.i.d.\ samples from $\mathsf{N}(0, I_{d}) \otimes \mathsf{N}(0, \sigma^2)$.

We now introduce a finite sequence of noise bins indexed by $\ell = 0, \ldots, L$, where $t_\ell = (1 + \tfrac{1}{2k})^\ell$ and $L$ is the unique integer such that $2\sqrt{k}\log(3k) \leq t_L < (1 + \tfrac{1}{2k})[2\sqrt{k}\log(3k)]$.
Then for each $\ell \in [L]$, $\theta \in \R^d$, and unit vector $v$, we define the subsets
\begin{align*}
S_{\ell,\theta,v} := \bigl\{ i \in \cI :\, \bx_i \cdot (\theta_\star-\theta) \geq 0, |\bx_i \cdot v| \geq \sqrt{\pi/8}, \text{ and } g_i \in [t_{\ell-1}, t_\ell) \bigr\}.
\end{align*}
For $i \in S_{\ell, \theta, v}$, we have the lower bounds
\begin{align*}
(y_i - \bx_i^\top \theta) = (\bx_i^\top \theta_\star + \sigma g_i) - \bx_i^\top \theta \geq \sigma t_{\ell-1} \quad \text{and} \quad (\bx_i^\top v)^2 \geq \frac{\pi}{8} > \frac{1}{4}.
\end{align*}
Now observe that for each $\theta$ and $v$, each index $i \in \cI$ is in $S_{\ell,\theta,v}$ for at most one $\ell \in [L]$.
Applying these lower bounds to \eqref{eq:hess-Fk}, we have that
\begin{align*}
v^\top \nabla^2 F\ofk_n(\theta) v
&\geq \frac{(2k)(2k-1)}{n} \sum_{\ell=1}^{L} \sum_{i \in S_{\ell,\theta,v}} (\sigma t_\ell)^{2k-2} (\bx_i^\top v)^2
\geq \frac{k^2\sigma^{2k-2}}{2n} \sum_{\ell=1}^{L} |S_{\ell,\theta,v}| t_{\ell-1}^{2k-2}.
\end{align*}

To prove the claim, we find uniform lower bounds on $|S_{\ell,\theta,v}|$ that hold with probability at least $1-O(\delta)$. 
For $\ell \in [L]$, let $p_{\ell} := \mathbb{P}(G \in [t_{\ell-1}, t_{\ell}))$, where $G \sim \mathsf{N}(0, 1)$.
Observe that for $\ell \in [L]$,
\begin{align*}
  p_\ell
  &= \frac{1}{\sqrt{2\pi}} \int_{t_{\ell-1}}^{t_\ell} e^{-t^2/2} dt
  \geq \frac{t_\ell - t_{\ell-1}}{\sqrt{2\pi}} e^{-t_\ell^2/2}
  = \frac{1}{(2k+1)\sqrt{2\pi}}t_\ell e^{-t_\ell^2/2}.
\end{align*}
Now because $t \mapsto te^{-t^2/2}$ is decreasing for $t \geq 1$ and because  for all $\ell \in [L]$, $t_\ell \leq 3\sqrt{k}\log(3k)$ we have that
\begin{align}
  p_\ell
  &\geq \frac{3\sqrt{k}\log(3k)}{(2k+1)\sqrt{2\pi}}e^{-\frac{9}{2}k\log^2(3k)}
  \gtrsim e^{-5k\log^2 k}.
  \label{eq:pl-lb}
\end{align}

We now have the following claim which we prove later.
\begin{claim}\label{claim:Sltv-lb}
  Let $C > 0$ be a sufficiently large constant.
  With probability at least $1-\delta$,
  it holds simultaneously for all $\ell \in [L]$, $\theta \in \R^d$, and $v \in \bS^{d-1}$ that
  \begin{align}
    |S_{\ell,\theta,v}| \geq \frac{p_\ell n_0}{4} - C\sqrt{(d + \log(\frac{1}{\delta}))n_0}. \label{eq:Sltv-lb}
  \end{align}
\end{claim}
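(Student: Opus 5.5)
Conditional on $\cI$ (so that $\{(\bx_i,g_i)\}_{i\in\cI}$ are $n_0$ i.i.d.\ draws from $\mathsf{N}(0,I_d)\otimes\mathsf{N}(0,1)$), I will treat $|S_{\ell,\theta,v}|$ as an empirical count over a VC class and apply a uniform deviation bound. Write $w=\theta_\star-\theta$. Then $S_{\ell,\theta,v}$ counts indices with $(\bx_i,g_i)\in A_{\ell,w,v}$, where
\[
A_{\ell,w,v}:=\{(x,g): x^\top w\ge 0\}\cap\bigl(\{x^\top v\ge \sqrt{\pi/8}\}\cup\{x^\top v\le -\sqrt{\pi/8}\}\bigr)\cap\{g\in[t_{\ell-1},t_\ell)\}.
\]
The indicator of this set is a fixed Boolean combination of halfspaces in $\R^{d+1}$. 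Its last factor depends only on $g$, which is not a scaled copy of $\bx$, so I will treat $\{g\in[t_{\ell-1},t_\ell)\}$ as an intersection of two halfspaces in $(x,g)$. The combination uses a constant number of halfspaces, and halfspaces in $\R^{d+1}$ have VC dimension $d+2$. By the standard bound for $k$-fold Boolean combinations, the class $\{A_{\ell,w,v}\}$ over all $\ell,w,v$ therefore has VC dimension $O(d)$.

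\textbf{Population lower bound.} I need $\P((\bx,g)\in A_{\ell,w,v})\ge p_\ell/4$ for every $\ell,w,v$. Since $g$ is independent of $\bx$, the probability factors as $p_\ell\cdot\P(\bx^\top w\ge0,\ |\bx^\top v|\ge\sqrt{\pi/8})$. For the second factor, first note $\P(|\bx^\top v|\ge\sqrt{\pi/8})=2\bar\Phi(0.627)\approx0.53$. Next, the joint law of $(\bx^\top w,\bx^\top v)$ is invariant under $\bx\mapsto-\bx$, and the event $\{|\bx^\top v|\ge c\}$ is symmetric under this map. So conditioned on that event, $\bx^\top w\ge0$ has probability at least $1/2$, since $\P(\bx^\top w=0)=0$ when $w\ne0$. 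When $w=0$ the constraint holds trivially. The second factor is thus at least $0.26\ge 1/4$. I will double check the constant $\sqrt{\pi/8}$ numerically; any slack can be absorbed into the $1/4$.

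\textbf{Uniform deviation and the main obstacle.} I will apply the VC uniform convergence inequality: with probability $1-\delta$, simultaneously over the class,
\[
\Bigl|\tfrac1{n_0}\#\{i:(\bx_i,g_i)\in A\}-\P(A)\Bigr|\le C\sqrt{(d+\log(1/\delta))/n_0}.
\]
The standard form has this rate without a log factor, obtained via chaining and Dudley's bound for VC classes; a cruder bound would carry $\sqrt{d\log n_0}$. Multiplying through by $n_0$ gives \eqref{eq:Sltv-lb} for all $\ell,\theta,v$ at once. Because $\ell$ is one of the indexed parameters and $L$ is finite, no additional union bound is needed. Integrating over $\cI$ preserves the $1-\delta$ probability. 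The main obstacle is getting the deviation without a logarithmic factor. If the sharp VC bound is not available, I will fall back to a union bound over the $L=O(k\log k)$ bins together with a net argument for $(w,v)$. That fallback fails for the discontinuous indicators, so I would instead use the chaining-based result from the literature, e.g.\ the one in Vershynin, \emph{High-Dimensional Probability}, Theorem 8.3.23.
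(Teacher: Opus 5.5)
Your proposal is correct and follows essentially the same route as the paper. Both arguments bound the VC dimension of the intersection of a homogeneous halfspace in $x$, a slab complement $\{|x^\top v|\ge\sqrt{\pi/8}\}$, and an interval in $g$ by $O(d)$. Both then use the VC empirical-process bound from Vershynin to get an $O(\sqrt{d/n_0})$ expected uniform deviation and lower bound the population probability by $p_\ell/4$ via independence of $x$ and $g$. The paper upgrades the expected deviation to probability $1-\delta$ explicitly via bounded differences, and you should state that step too. Your symmetry argument under $x\mapsto -x$, giving $\mathbb{P}(x^\top w\ge 0,\ |x^\top v|\ge\sqrt{\pi/8})\ge \tfrac12\mathbb{P}(|x^\top v|\ge\sqrt{\pi/8})\approx 0.265$, is cleaner and more reliable than the paper's union-bound step. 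That step, as written, states the inequality for the wrong event. It would need $\mathbb{P}(|x^\top v|<\sqrt{\pi/8})\le 1/4$, but this probability is about $0.47$.
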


We condition on the event defined in the claim.
Now recall that $n_0 \sim \Bin(n, q(1-\epsilon))$.
Then, $\alpha^2 \leq 1/C_1$ implies $q(1-\epsilon)n \gtrsim \log(\frac{1}{\delta})$ and so with probability at least $1-\delta$, we have that $q(1-\epsilon)n/2 \leq n_0 \leq 2q(1-\epsilon)n$.
Further conditioning on this event, we have by combining this with \eqref{eq:Sltv-lb} that for all $\ell \in [L]$,
\[
|S_{\ell,\theta,v}| \geq \frac{p_{\ell}q(1-\epsilon)n}{8} - C\sqrt{2(d + \log(\frac{1}{\delta}))q(1-\epsilon)n}.
\]
Now, applying the assumption $\alpha^{-2} \geq C' e^{10 k\log^2 k}$ combined with \eqref{eq:pl-lb} implies
\begin{align*}
  C\sqrt{2(d + \log(\frac{1}{\delta}))q(1-\epsilon)n} \leq \frac{C\sqrt{2}}{C'} \cdot e^{5 k\log^2 k}n \leq \frac{p_{\ell}q(1-\epsilon)n}{16},
\end{align*}
where the final inequality follows by taking $C'$ to be a sufficiently large constant.
Thus, it holds for all $\ell \in [L]$ that
\[
|S_{\ell,\theta,v}| \geq \frac{q(1-\eps)p_\ell n}{16}.
\]
Consequently, for all $\theta \in \R^d$ and all unit vectors $v$, we have
\begin{align*}
  v^T \nabla^2 F\ofk_n(\theta) v \geq \frac{(1-\eps)k^2\sigma^{2k-2}}{16} \sum_{\ell=1}^{L} p_\ell t_{\ell-1}^{2k-2}.
\end{align*}

Recalling that $t_{\ell-1} = (1+\frac{1}{2k})^{-1}$ and $p_\ell = \P(G \in [t_{\ell-1}, t_\ell))$, we have
\begin{align*}
  \sum_{\ell=1}^{L} p_\ell t_{\ell-1}^{2k-2}
  &= \left(1+\frac{1}{2k}\right)^{-1} \sum_{\ell=1}^{L} p_\ell t_{\ell}^{2k-2}
  = \left(1+\frac{1}{2k}\right)^{-1} \sum_{\ell=1}^{L} \int_{t_{\ell-1}}^{t_\ell} t_{\ell}^{2k-2} \phi(t) dt
  \\&\geq \frac{2}{3}\sum_{\ell=1}^{L} \int_{t_{\ell-1}}^{t_\ell} t^{2k-2} \phi(t) dt
  = \frac{2}{3} \int_{1}^{t_L} t^{2k-2} \phi(t) dt.
\end{align*}

The last expression is a truncated Gaussian moment and Lemma~\ref{lemma:trunc-kmom-lb} gives that $\int_{1}^{t_L} t^{2k-2} \phi(t) dt \geq \frac{(2k-3)!!}{3}$ because $t_L \geq 2\sqrt{k}\log(3k)$.
Combining the pieces we have for all $\theta \in \R^d$ and all unit vectors $v$ that
\begin{align*}
  v^T \nabla^2 F\ofk_n(\theta) v \geq \frac{q(1-\eps) k^(2k-3)!!\sigma^{2k-2}}{36}
  \asymp q(1-\eps)\sigma^{2k-2}(2k+1)!!,
\end{align*}
proving the claim. \qed

\paragraph{Proof of the Claim~\ref{claim:Sltv-lb}.}
The proof of this bound follows from a standard VC dimension argument.
For each $\ell \in [L]$, we define the function classes
\begin{align*}
\mathcal{F}_\ell := \Bigl\{(\bx, g) \mapsto \mathbf{1}\Bigl\{ \bx \cdot (\theta_{\star} - \theta)&\geq 0,\, |\bx \cdot v| \geq \sqrt{\frac{\pi}{8}}, \text{ and } t_{\ell} \leq g < t_{\ell + 1}\Bigr\}: v \in \bS^{d-1},\, \theta \in \theta_\star + \bS^{d-1}\Bigr\}
\end{align*}
and also define $\mathcal{F} = \cup_{\ell \in [L]} \mathcal{F}_\ell$.
Observe that $\mathcal{F}$ is a subclass of the 3-fold intersection of the following function classes:
\begin{align*}
  \mathcal{F}_1 &= \Bigl\{(x, g) \mapsto \mathbf{1}\{ \bx \cdot (\theta_{\star} - \theta) \geq 0\}: \theta \in \theta_\star + \bS^{d-1} \Bigr\} \\
  \mathcal{F}_2 &= \Bigl\{(x, g) \mapsto \mathbf{1}\{ |\bx \cdot v| \geq \sqrt{\pi/8}\}: v \in \bS^{d-1}\Bigr\} \\
  \mathcal{F}_3 &= \Bigl\{(x, g) \mapsto \mathbf{1}\{ a \leq g < b \}: a,b \in \mathbb{R}\Bigr\}.
\end{align*}

Common VC-dimension calculations give that $\mathrm{VC}(\mathcal{F}_1) \leq d+1$, $\mathrm{VC}(\mathcal{F}_2) \leq 2(d+1)$ and $\mathrm{VC}(\mathcal{F}_3)\leq 2$.
We can then apply Lemma~\ref{lem:vc-dim-intersection} to deduce that $\mathrm{VC}(\mathcal{F}) \leq 12\log(18)(d+1) \asymp d$.
Using this VC-dimension bound, we can bound the expected worst-case deviation using \cite[Theorem 8.3.5]{vershynin2025high} to obtain
\begin{align*}
    \mathbb{E}\biggl[\sup_{f \in \mathcal{F}} \Bigl \lvert \frac{1}{n_0} \sum_{i=1}^{n_0} f(\bx_i, g_i) - \mathbb{E}\bigl[f(\bx, g)\bigr]\Bigr \rvert \biggr] \lesssim \sqrt{\frac{d}{n}}.
\end{align*}
To convert this to a high probability bound on deviation, we note that the random variable $\sup_{f \in \mathcal{F}} \bigl \lvert \frac{1}{n_0} \sum_{i=1}^{n_0} f(\bx_i, g_i) - \mathbb{E}\bigl[f(\bx, g)\bigr]\bigr \rvert$ satisfies the bounded differences property with constant $1/n_0$, so by combining the preceding display with, e.g.,~\cite[Theorem 6.2]{BouLM13}, we obtain
\begin{align}
  \sup_{f \in \mathcal{F}} \Bigl \lvert \frac{1}{n} \sum_{i=1}^{n_0} f(\bx_i, g_i) - \mathbb{E}\bigl[f(\bx, g)\bigr]\Bigr \rvert
  \lesssim \sqrt{\frac{d}{n}} + \sqrt{\frac{\log(2/\delta)}{n_0}} \quad \text{ with probability} \geq 1 - \delta. \label{eq:hp-deviation-bd}
\end{align}
Now, since $\bx \sim \mathsf{N}(0, I_d)$, we have that for all $\theta \in \theta_\star + \bS^{d-1}$ and all $v \in \bS^{d-1}$ that
\begin{align*}
  &\P(\bx \cdot (\theta_\star - \theta) < 0) = \frac{1}{2}
  ~~~\textrm{and}~~~
  \P\left(|\bx \cdot v| \geq \sqrt{\frac{\pi}{8}}\right)
  \leq \frac{1}{\sqrt{2\pi}}\cdot \sqrt{\frac{\pi}{8}}
  = \frac{1}{4} \\
  \Longrightarrow&
  \P\left(\bx \cdot (\theta_\star - \theta) \geq 0 \text{ and } |\bx \cdot v| \geq \sqrt{\frac{\pi}{8}}\right) \geq \frac{1}{4}.
\end{align*}
Further, because $\bx$ and $g$ are independent, we have that
$
\mathbb{E}\bigl[f(\bx, g)\bigr] \geq \frac{1}{4} p_{\ell},
$
for every $f \in \mathcal{F}_{\ell}$.
Combining this with \eqref{eq:hp-deviation-bd} proves the bound.\qed

\subsection{Efficient algorithm} \label{sec:lr-efficient-alg}
So far we have shown the rates achievable by some $\widehat{\theta}\ofk$.
Now we show that an efficient estimator has a similar rate.
We consider a two-step procedure.

First we observe that because the loss which $\widehat{\theta}^{(1)}$ minimizes is a strongly convex, quadratic function, we can efficiently compute it to any desired accuracy.
By the analysis of the proof of the upper bound of \cref{thm:lin-reg-ub-lb}, we know that both 
\begin{align*}
  \bigl\|\widehat{\theta}^{(1)} - \theta_\star\bigr\|_2
  \leq \underbrace{C\sigma(1 + {\rm poly}(\tau))}_{=: R}
  \qquad\textrm{and}\qquad
  \bigl \|\widehat{\theta}^{(k)} - \theta_\star\bigr\|_2
  \leq \underbrace{\frac{C\sigma\tau\log\log(1/\alpha)}{\sqrt{\log(1/\alpha)}}}_{=: \beta},
\end{align*}
for a sufficiently large constant $C$.
Then for the first step of the procedure, we compute $\tilde{\theta}_1$ satisfying $\|\tilde{\theta}_1 - \widehat{\theta}^{(1)}\|_2 \leq R$ which implies that $\| \tilde{\theta}_1 - \widehat{\theta}\ofk|_2 \leq 2R+\beta$.
Recall by \cref{lem:mu_n-lb}, we have with probability at least $1-\delta$ that $F\ofk_n$ is $\mu_n$-strongly convex.
We then run the ellipsoid algorithm (see, e.g.,~\cite[Theorem 2.4]{Bub15}) on the domain $\mc{B} = \mathbf{B}_2(\tilde{\theta}_1, 2(R+\beta))$, until we obtain $\tilde{\theta}_2$ satisfying an excess risk bound on $F\ofk_n$ of at most $\mu_n \beta^2/2$ using a gradient oracle.
Strong convexity implies that $\|\tilde{\theta}_2 - \widehat{\theta}\ofk\|_2 \leq \beta$ and so $\tilde{\theta}_2$ has the same error, up to constant factors, as $\widehat{\theta}\ofk$.
To establish runtime, all that remains is to establish that the set 
\[
\mc{C} := \Bigl\{\theta:\; F\ofk_n(\theta) - F\ofk_n(\widehat{\theta}\ofk_n) \leq \mu_n \frac{\beta^2}{2}\Bigr\},
\]
contains a ball of large enough size.  To this end, on the domain $\mc{B}$, we can crudely upper bound the gradient norm $\| \nabla^2 F\ofk_n(\theta)\|_2$ as
\begin{align*}
  \sup_{\theta \in \mc{B}}\; \bigl\|\nabla^2 F\ofk_n(\theta)\bigr\|_2
  &\lesssim k^2 \sup_{i \in [n]} \; \bigl\{
  \bigl(2(R+\beta)\norm{\bx_i}_2+\sigma |g_i|\bigr)^{2k-2}
  \norm{x_i}_2^2\bigr\} \\
  &\lesssim O\bigl((R+\beta+ 1)\sqrt{d+\log(n/\delta)}\bigr)^{2k}.
\end{align*}
Thus, $\mathbf{B}_2\bigl(\widehat{\theta}\ofk, r\bigr) \subseteq \mc{C}$, where
\[
  r \gtrsim \frac{q(1-\epsilon)\sigma^{2k-2}(2k+1)!!\beta^2}{O((R+\beta+ 1)\sqrt{d+\log(n/\delta)})^{2k}}.
\]
Therefore, the ellipsoid portion of the algorithm has an oracle complexity of
\begin{align}
    O\left(d^2 \log \left(\frac{(R+\beta){O((R+\beta+ 1)\sqrt{d+\log(n/\delta)})^{2k}}}{q(1-\epsilon)\sigma^{2k-2}(2k+1)!!\beta^2}\right)\right),
\end{align}
which is polynomial in $k,d,n$.

\section{Proofs of information-theoretic lower bounds}

\subsection{Lower bound constructions}
\label{sec:it-bound-univariate}

For all lower bounds, we rely on hidden-direction distributions of the form $P_{A, v}$ (see \Cref{def:hidden-direction-dist}) in both the information-theoretic and SQ lower bounds.
Crucial to proving these is the following lemma, which characterizes the bounds on the density of distributions in the contamination set of a given distribution.

\begin{corollary}[Corollary of Lemma~\ref{lem:all-or-nothing-characterization}]
\label{cor:conditional-distribution-realizable}
Let $P$ be a distribution over $\R^d$.
Let $L = q(1-\epsilon)$ and $U = q(1- \epsilon) + \epsilon$.
Let $b \in [L,U]$ be arbitrary.
Then $Q \in \cR_\R(P,\epsilon,\pobserved)$
if for all $z \in \R^d$
\begin{align*}
\frac{L}{b} \leq \frac{dQ}{dP}(z) \leq \frac{U}{b}\,.
\end{align*}
Furthermore, suppose $Q$ satisfies the condition above for some $b \in [L,U]$.
Consider the distribution $Q'$ over $\R_\star^d$ which outputs $\star^d$ with probability $1-b$ and $X \sim Q$ with probability $b$.
Then $Q' \in \cR(P,\epsilon,\pobserved)$.
\end{corollary}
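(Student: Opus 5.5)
The plan is to prove the second statement directly, since it implies the first. Given $b \in [L,U]$ and a probability measure $Q$ on $\R^d$ with $L/b \le \frac{dQ}{dP} \le U/b$, we build $Q'$ on $\R^d_\star$ by placing mass $1-b$ on $\star^d$ and restricting it to $\R^d$ as $bQ$. We then check the likelihood-ratio criterion of Lemma~\ref{lem:all-or-nothing-characterization}. On $\R^d$, the density of $Q'$ with respect to $P$ is $b \cdot \frac{dQ}{dP}(z)$. The hypothesis gives
\[
q(1-\epsilon) = L \le b\,\frac{dQ}{dP}(z) \le U = q(1-\epsilon)+\epsilon \quad \text{for all } z\in\R^d,
\]
so Lemma~\ref{lem:all-or-nothing-characterization} yields $Q' \in \cR(P,\epsilon,q)$. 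The only bookkeeping point is that $Q'$ is a valid probability measure: its total mass is $(1-b) + b\,Q(\R^d) = 1$, and $1-b \ge 0$ because $b \le U \le 1$. The bound $U \le 1$ holds since $q(1-\epsilon)+\epsilon \le (1-\epsilon)+\epsilon$.

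For the first statement, it suffices to observe that the conditional law of $Q'$ given that the observation lies in $\R^d$ is $bQ/(b\,Q(\R^d)) = Q$. By the definition of $\cR_\R(P,\epsilon,q)$ as the set of such conditionals, this gives $Q \in \cR_\R(P,\epsilon,q)$. One degenerate case needs attention: if $b=0$, the conditioning is undefined. This happens only when $L=0$, and then $U/b$ is infinite. I would restrict to $b>0$, which is implicit in the statement because $L/b$ and $U/b$ must make sense.

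I do not expect any real obstacle. The one point requiring care is making sure Lemma~\ref{lem:all-or-nothing-characterization} is applied to the unnormalized restriction of $Q'$ to $\R^d$, whose mass is $b$, and not to the normalized conditional $Q$. Keeping the two straight is exactly what converts the ratio bounds $L/b$ and $U/b$ into the bounds $L$ and $U$ required by the lemma.
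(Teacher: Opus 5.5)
Your proposal is correct and matches the intended argument. The paper gives no separate proof and treats the statement as an immediate consequence of Lemma~\ref{lem:all-or-nothing-characterization}; your check that the restriction $bQ$ of $Q'$ to $\R^d$ has density in $[L,U]$ relative to $P$, and that its conditional law is $Q$, is exactly that consequence. Your side remarks, $b \le U \le 1$ and the implicit requirement $b>0$, are sound bookkeeping that the paper leaves unstated.
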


\begin{lemma}[Mean estimation hard instance]\label{lem:univariate-it-hard-instance-mean}
    Let $q \in (0, 1]$, $\epsilon \in [0, 1)$, and $\tau = \frac{\epsilon}{q(1-\epsilon)}$.
    Let $v \in \bS^{d-1}$ and
    let $b, \gamma, R \in \mathbf{R}_{++}$ such that
    \begin{align*}
      b = q(1- \epsilon) \sqrt{1 + \tau},
      \quad
      \gamma^2 \leq 0.25 \log(1+ \tau), \quad R \geq 2\gamma, 
      \quad\textrm{and}\quad
      R \leq \frac{\log(1 + \tau)}{8 \gamma},
    \end{align*}
    With $\beta = \frac{\P(|G+\gamma|\leq R)}{\P(|G|\leq R)}$, let $A$ denote the distribution over $\mathbf{R}$ with density
    \begin{align}
        A(x) := 
        \begin{cases}
        \beta \phi(x) & |x| \leq R\\
         \phi(x;\gamma) & \text{otherwise}   \,,
        \end{cases}
        \label{eq:it-bound-mean-A}
    \end{align}
    and define $Q \in \mathcal{P}(\mathbf{R}^d_{\star})$ as 
    \[
    Q(\{\star^{d}\}) = 1 - b \quad \text{ and } \quad Q(x) = b \cdot P_{A, v}(x).
    \]
    Then $Q \in \mathcal{R}\bigl(\mathsf{N}(\gamma v, I_d), \epsilon, q\bigr)$. 
    Furthermore,  $|\beta - 1| \lesssim \gamma e^{-R^2/4}$.
\end{lemma}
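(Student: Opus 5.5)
The plan is to reduce to a one-dimensional likelihood-ratio check via Corollary~\ref{cor:conditional-distribution-realizable}, and then bound that ratio separately on the inside and outside regions.

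First, $A$ is a probability density. Its mass on $[-R,R]$ is $\beta\,\P(|G|\le R)=\P(|G+\gamma|\le R)$, since $\phi(\cdot;\gamma)$ is the density of $G+\gamma$. Its mass outside is $\P(|G+\gamma|>R)$, so the total mass is one. Next, $P_{A,v}$ and $\mathsf N(\gamma v,I_d)$ share the Gaussian component orthogonal to $v$. Hence
\[
\frac{\mathrm d P_{A,v}}{\mathrm d\,\mathsf N(\gamma v,I_d)}(x)=\frac{A(v^\top x)}{\phi(v^\top x;\gamma)}.
\]
By Corollary~\ref{cor:conditional-distribution-realizable} with $L=q(1-\epsilon)$, $U=q(1-\epsilon)(1+\tau)$ and $b=\sqrt{LU}=q(1-\epsilon)\sqrt{1+\tau}$, which lies in $[L,U]$, it suffices to show
\[
(1+\tau)^{-1/2}\le \frac{A(t)}{\phi(t;\gamma)}\le (1+\tau)^{1/2}\qquad\text{for all } t\in\R .
\]
The corollary then also gives $Q\in\mathcal R(\mathsf N(\gamma v,I_d),\epsilon,q)$ directly.

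For $|t|>R$ the ratio equals $1$, so nothing needs to be checked there.

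For $|t|\le R$ the ratio is $\beta\,\phi(t)/\phi(t;\gamma)=\beta\exp(\gamma^2/2-\gamma t)$. The exponent lies in $[\gamma^2/2-\gamma R,\ \gamma^2/2+\gamma R]$. Using $\gamma^2\le 2\gamma R$ (from $R\ge2\gamma$, which gives $\gamma^2/2\le \gamma R/4$) together with $\gamma R\le \log(1+\tau)/8$, the exponent has absolute value at most $\tfrac{5}{4}\gamma R\le \tfrac{5}{32}\log(1+\tau)$. It then remains to show $|\log\beta|\le c\log(1+\tau)$ with $c\le 1/2-5/32$, and this is the main step.

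To bound $\beta$, write
\[
\beta-1=\frac{\P(|G|>R)-\P(|G+\gamma|>R)}{\P(|G|\le R)}.
\]
The numerator equals $\int_{|x|>R}\bigl(\phi(x)-\phi(x;\gamma)\bigr)\,dx$. Pairing $x$ with $-x$ gives
\[
\int_{R}^{\infty}\bigl(2\phi(x)-\phi(x-\gamma)-\phi(x+\gamma)\bigr)\,dx
=\int_{R}^{\infty}\phi(x)\,e^{-\gamma^2/2}\bigl(2e^{\gamma^2/2}-2\cosh(\gamma x)\bigr)\,dx .
\]
Its magnitude is at most
\[
\int_R^\infty\phi(x)\bigl(\gamma^2+\gamma x\,e^{\gamma x}\bigr)\,dx\lesssim \gamma\int_R^\infty x\,e^{\gamma x-x^2/2}\,dx .
\]
Since $\gamma\le R/2$, for $x\ge R$ we have $\gamma x-x^2/2\le -x^2/4$, so the integral is $\lesssim \gamma e^{-R^2/4}$. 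Alternatively, one can apply the mean value theorem in $\gamma$ to $\P(|G+s|>R)$, whose derivative has size $\lesssim e^{-(R-s)^2/2}\le e^{-R^2/8}$; the pairing argument gives the sharper exponent, so I will use it. The denominator is at least $\P(|G|\le 2\gamma)$, and when $\gamma$ is not small it is at least a constant. This gives $|\beta-1|\lesssim\gamma e^{-R^2/4}$.

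Finally, $\gamma e^{-R^2/4}\le \gamma$ and $\gamma^2\le 0.25\log(1+\tau)$. In the regime where $\log(1+\tau)$ is small, this makes $|\log\beta|\lesssim\gamma$, and I need that to be at most a constant times $\log(1+\tau)$. That is the delicate constant-chasing step: when $\gamma$ is tiny relative to $\log(1+\tau)$ the bound on $\beta$ alone may not suffice. I would instead use the exact cancellation $\beta\,\P(|G|\le R)=\P(|G+\gamma|\le R)$ together with the fact that the inner ratio $e^{\gamma^2/2-\gamma t}$ is at least $e^{-5\gamma R/4}$ and at most $e^{5\gamma R/4}$. Integrating this pointwise ratio against $\phi$ on $[-R,R]$ forces $\beta\in[e^{-5\gamma R/4},e^{5\gamma R/4}]$. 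The total log-ratio is therefore at most $\tfrac{5}{2}\gamma R\le \tfrac{5}{16}\log(1+\tau)<\tfrac12\log(1+\tau)$, which closes the argument without needing the tail estimate.
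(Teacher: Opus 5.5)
Your realizability argument is correct and is essentially the paper's. You reduce via Corollary~\ref{cor:conditional-distribution-realizable} with $b=\sqrt{LU}$ to the one-dimensional condition $|\log(A(t)/\phi(t;\gamma))|\le\tfrac12\log(1+\tau)$, and you control $\beta$ as a $\phi$-weighted average of $e^{\gamma t-\gamma^2/2}$ over $[-R,R]$. The paper also uses $\beta\le 1$, which improves your constant $\tfrac{5}{16}$ to $\tfrac14$; that extra slack is quoted later in the SQ construction. For the lemma itself, $\tfrac{5}{16}<\tfrac12$ is all you need.

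The gap is in the bound on $|\beta-1|$. Your step ``since $\gamma\le R/2$, for $x\ge R$ we have $\gamma x-x^2/2\le -x^2/4$'' is false. It requires $\gamma\le x/4$, and at $\gamma=R/2$, $x=R$ the exponent is $0$. If you keep the factor $e^{-\gamma^2/2}$ that you discarded, the relevant quantity is $\phi(x)e^{\gamma x-\gamma^2/2}=\phi(x-\gamma)$. At $x=R$ this is only $\phi(R/2)\propto e^{-R^2/8}$.

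In fact no argument can deliver the exponent $R^2/4$ over the stated range. The choice $\gamma=R/2$ is admissible whenever $\log(1+\tau)\ge 4R^2$, and then
\[
1-\beta=\frac{\P(G\in[R/2,R])-\P(G\in[R,3R/2])}{\P(|G|\le R)}\asymp\frac{e^{-R^2/8}}{R},
\]
which is not $O(\gamma e^{-R^2/4})$ as $R\to\infty$.

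What is true is $|\beta-1|\lesssim\gamma e^{-R^2/8}$. Your mean-value-theorem remark gives exactly this: the derivative of $s\mapsto\P(|G+s|>R)$ is $\phi(R-s)-\phi(R+s)\in[0,\phi(R/2)]$ for $0\le s\le R/2$. So that is the route to keep, not the pairing one. It is also all the paper's own proof establishes, since its last step $\gamma\phi(R/2)\lesssim\gamma e^{-R^2/4}$ is a slip. It is also all the downstream uses require (Lemma~\ref{lem:it-approximate-moments} and the SQ constructions), namely $\gamma e^{-\Omega(R^2)}$.

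Separately, treating $\P(|G|\le R)$ as a constant is only valid for $R\gtrsim 1$. For $R\le 1$, your averaging bound $\beta\in[e^{-5\gamma R/4},e^{5\gamma R/4}]$ already gives $|\beta-1|\lesssim\gamma R\le\gamma$, which covers that case.
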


\begin{proof}
First observe that $A$ is a valid distribution since it integrates to $1$ on $\R$.
Furthermore, we have that $\beta \leq 1$ and $\beta \geq \min_{|x| \leq R} \frac{\phi(x; \gamma)}{\phi(x)} = \min_{|x| \leq R}\exp( \gamma x - \gamma^2/2 ) = \exp(-\gamma R - \gamma^2/2)$.

Let $f(x)$ be the density of $P_{A,v}$ at a point $x$. \Cref{cor:conditional-distribution-realizable} states that $Q$ is a valid realizable contamination as long as 
$\P_Q(X \neq  \star^d) = b \in (L, U)$
and $\frac{f(x)}{\phi(x;\gamma)} \in (L',U')$ for $L' = L/b$ and $U' = U/b$ and $L = q(1- \epsilon)$ and $U = q(1- \epsilon) + \epsilon$.

Observe that our choice of $b$ satisfies $b^2 = LU$ and thus $|\log L'| = |\log U'| = \log \sqrt{1 + \tau}$.
Therefore, for $Q$ to be a valid realizable contamination of $\mathsf{N}(\gamma v,I)$,
we require that $\left|\log\frac{f(x)}{\phi(x;\gamma)}\right|\leq 0.5\log(1 + \tau)$ for all $x \in \R^d$.
In fact, we shall establish a stronger condition that $\left|\log\frac{f(x)}{\phi(x;\gamma)}\right|\leq 0.25\log(1 + \tau)$ for all $x \in \R^d$;
this stronger requirement will be useful in proving SQ lower bounds.
Observe that 
\begin{align*}
    \frac{f(x)}{\phi(x;\gamma)} = \frac{\phi_{v^\perp} (x) A(v^\top x)}{\phi_{v^\perp} (x) \phi(v^\top x; \gamma)} = \frac{A(v^\top x)}{\phi(v^\top x; \gamma)}\,.
\end{align*}
We denote $x' = v^\top x$ in the rest of the proof for brevity.
We thus require that $\left|\log \frac{A(x')}{\phi(x'; \gamma)}\right| \leq 0.5 \log(1 + \tau) $ for all $x' \in \R$.
By the definition of $A$ in \Cref{eq:it-bound-mean-A}, this obviously holds on the domain $|x'| > R$.
For $|x'| \leq R$,
this ratio is equal to
\begin{align*}
\max_{|x'|\leq R}\left|\log\frac{A(x')}{\phi(x';\gamma)}\right| &= \max_{|x'|\leq R}\left|\log\frac{\beta \phi(x')}{\phi(x';\gamma)}\right| = \max_{|x'|\leq R}\left|\log \beta e^{\gamma^2/2 - \gamma x'} \right| = \left|\log \beta e^{\gamma^2/2 + \gamma R} \right| \\
&\leq  \left|\log \beta e^{\gamma^2/2}\right|  +   \gamma R \,.
\end{align*}
It suffices if each term is less than $0.125\log(1 + \tau)$, and observe that the second term, $\gamma R$, indeed satisfies this under the condition $R \leq \log(1 + \tau)/ (8 \gamma )$.

For the first term, observe that $\log \beta e^{\gamma^2/2} \leq \gamma^2/2$ since $\beta \leq 1$, which is also appropriately bounded under the condition on $\gamma^2$ listed in the statement.
Finally, $\log \beta e^{\gamma^2/2} \geq \log e^{- \gamma R - \gamma^2/2} e^{\gamma^2/2} = - \gamma R$.
And thus $-\log \beta e^{\gamma^2/2} \leq \gamma R $, which as we argued earlier is indeed small.
Therefore,
\begin{align*}
\max_{x' \in \R}\left|\log\frac{A(x')}{\phi(x';\gamma)}\right| &\leq 0.25 \log(1+\tau).
\end{align*}

Finally, we state an upper bound on $|\beta - 1|$ which is equal to
\begin{align*}
   |\beta - 1| &= \frac{|\P(|G + \gamma| \leq R) - \P(|G|\leq R)|}{\P(|G|\leq R)} \lesssim \left| \P(G \in [-R - \gamma, R-\gamma])  - \P(G \in [-R,R])| \right| \\
   &\lesssim \P(G \in [R -\gamma, R]) \leq \gamma \phi(R/2)  \lesssim \gamma e^{-R^2/4}\,,
\end{align*} 
where we use that $R \geq 2\gamma$.
\end{proof}

\begin{lemma}[Covariance estimation hard instance]
\label{lem:univariate-it-hard-instance-cov}
Let $q \in (0, 1]$, $\epsilon \in [0, 1)$, $v \in \bS^{d-1}$ and
let $b, \gamma, R \in \mathbf{R}_{++}$ satisfy
\begin{align*}
  b = q(1- \epsilon) \sqrt{1 + \tau},
  \quad
  \gamma \leq \tau,
  \quad\textrm{and}\quad
  R^2 \leq \frac{1+\gamma}{\gamma}\log(1 + \tau),
\end{align*}
With $\beta = \frac{\P(\sqrt{1+\gamma}|G|\leq R)}{\P(|G|\leq R)}$, let $A$ denote the distribution over $\mathbf{R}$ with density
\begin{align}
    A(x) := 
    \begin{cases}
    \beta \phi(x) & |x| \leq R\\
     \phi(x;0, 1+\gamma) & \text{otherwise}   \,,
    \end{cases}
    \label{eq:it-bound-cov-A}
\end{align}
and define $Q \in \mathcal{P}(\mathbf{R}^d_{\star})$ as 
\[
Q(\{\star^{d}\}) = 1 - b \quad \text{ and } \quad Q(x) = b \cdot P_{A, v}(x).
\]
Then $Q \in \mathcal{R}\bigl(\mathsf{N}(0, I_d + \gamma vv^T), \epsilon, q\bigr)$. 
Furthermore,
\[
|\beta - 1| \lesssim \min\left\{1,R \cdot \left|1 - \frac{1}{\sqrt{1+\gamma}}\right|\right\} e^{-\Omega(R^2/(1+\gamma))}.
\]
\end{lemma}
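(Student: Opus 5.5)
We follow the proof of Lemma~\ref{lem:univariate-it-hard-instance-mean} nearly verbatim, with the location shift replaced by a variance inflation in direction $v$.

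\textbf{Validity of $A$.} First, $A$ is a probability density. The mass on $\{|x|>R\}$ is $\P(\sqrt{1+\gamma}|G|>R)$. The mass on $\{|x|\le R\}$ is $\beta\,\P(|G|\le R)=\P(\sqrt{1+\gamma}|G|\le R)$, so the total is $1$. Since $\sqrt{1+\gamma}|G|$ stochastically dominates $|G|$, we also get $\beta\le 1$.

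\textbf{Reduction to a univariate ratio.} The density of $P_{A,v}$ at $x$ is $f(x)=A(v^\top x)\phi_{v^\perp}(x)$. The density of $\mathsf{N}(0,I_d+\gamma vv^\top)$ factors as $\phi(v^\top x;0,1+\gamma)\phi_{v^\perp}(x)$, since $v$ is an eigenvector and the orthogonal complement is standard. Hence the likelihood ratio depends only on $x'=v^\top x$ and equals $A(x')/\phi(x';0,1+\gamma)$.

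With $b=q(1-\epsilon)\sqrt{1+\tau}$, we have $b^2=LU$, so $L/b=(1+\tau)^{-1/2}$ and $U/b=(1+\tau)^{1/2}$. By Corollary~\ref{cor:conditional-distribution-realizable}, it suffices to show
\[
\Bigl|\log\frac{A(x')}{\phi(x';0,1+\gamma)}\Bigr|\le \tfrac12\log(1+\tau)\quad\text{for all }x'.
\]
Mirroring the mean case, we aim for the stronger bound $\tfrac14\log(1+\tau)$, possibly up to adjusting constants. For $|x'|>R$ the ratio is $1$.

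\textbf{The bulk region $|x'|\le R$.} Here
\[
\log\frac{\beta\phi(x')}{\phi(x';0,1+\gamma)}=\log\beta+\tfrac12\log(1+\gamma)-\frac{x'^2}{2}\cdot\frac{\gamma}{1+\gamma}.
\]
This is monotone in $x'^2$, so its extremes occur at $x'=0$ and $|x'|=R$.

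\emph{Upper bound.} Since $\beta\le1$, the expression is at most $\tfrac12\log(1+\gamma)\le\tfrac12\log(1+\tau)$, using $\gamma\le\tau$.

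\emph{Lower bound.} Bound $\beta$ from below by the minimum ratio on $[-R,R]$: $\beta\ge\min_{|x|\le R}\phi(x;0,1+\gamma)/\phi(x)$, and this minimum is attained at $x=0$, giving $\beta\ge(1+\gamma)^{-1/2}$. Then $\log\beta+\tfrac12\log(1+\gamma)\ge0$, so the expression is at least $-\tfrac{R^2}{2}\cdot\tfrac{\gamma}{1+\gamma}\ge-\tfrac12\log(1+\tau)$ by the hypothesis on $R^2$.

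Combining the two gives exactly the required $\tfrac12\log(1+\tau)$. Attaining the stronger $\tfrac14$ margin would need slightly tighter constants; the statement only requires membership, so $\tfrac12$ suffices. Once this holds, Corollary~\ref{cor:conditional-distribution-realizable} gives $Q\in\mathcal{R}$.

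\textbf{Bound on $|\beta-1|$.} Write
\[
1-\beta=\frac{\P(|G|\le R)-\P(|G|\le R/\sqrt{1+\gamma})}{\P(|G|\le R)}.
\]
The denominator is bounded below by a constant when $R\gtrsim1$; for small $R$ the trivial bound $\min\{1,\cdot\}$ covers it. The numerator is $2\P\bigl(G\in[R/\sqrt{1+\gamma},R]\bigr)$, which is at most $2R\bigl(1-(1+\gamma)^{-1/2}\bigr)\phi\bigl(R/\sqrt{1+\gamma}\bigr)$. This gives the factor $e^{-R^2/(2(1+\gamma))}$. Together with $|\beta-1|\le1$ (from $0\le\beta\le1$), this yields the claimed bound.

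The only delicate point is handling the denominator when $R$ is small, and the $\min\{1,\cdot\}$ absorbs that case.
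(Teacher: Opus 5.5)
Your realizability argument is correct and is essentially the paper's proof. You reduce to the univariate ratio $A(x')/\phi(x';0,1+\gamma)$, use $(1+\gamma)^{-1/2}\le\beta\le 1$, and check the two extremes using $\gamma\le\tau$ and the hypothesis on $R^2$. The paper likewise obtains only the margin $\tfrac12\log(1+\tau)$ in this lemma.

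The bound on $|\beta-1|$ has two problems.

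\emph{Small $R$.} The $\min\{1,\cdot\}$ does not absorb small $R$. As $R\to 0$ you have $\beta\to(1+\gamma)^{-1/2}$, so $|\beta-1|\to 1-(1+\gamma)^{-1/2}$. The right-hand side $\min\{1,R(1-(1+\gamma)^{-1/2})\}e^{-\Omega(R^2/(1+\gamma))}$, however, tends to $0$. So the estimate is false without a lower bound on $R$. The paper's proof states it under $R\gtrsim\sqrt{1+\gamma}$; the condition actually needed is $R\gtrsim 1$, which makes the denominator $\mathbb{P}(|G|\le R)\gtrsim 1$. This holds in all applications, but you should impose it rather than claim the minimum covers it.

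\emph{Large $R(1-(1+\gamma)^{-1/2})$.} The $\min\{1,\cdot\}$ is there to handle the regime $R(1-(1+\gamma)^{-1/2})\gtrsim 1$. In that regime the trivial bound $|\beta-1|\le 1$ does not provide the factor $e^{-\Omega(R^2/(1+\gamma))}$. Combined with your interval estimate it only gives $\min\{1,\,R(1-(1+\gamma)^{-1/2})e^{-R^2/(2(1+\gamma))}\}$. This is not $e^{-\Omega(R^2/(1+\gamma))}$ once $\gamma$ is very large compared with $e^{R^2/(1+\gamma)}$, which the hypotheses allow because $\tau$ is unbounded. For bounded $\gamma$ your version happens to suffice.

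The missing step, which the paper uses, is the Gaussian tail bound
$\mathbb{P}\bigl(G\in[R/\sqrt{1+\gamma},R]\bigr)\le\mathbb{P}\bigl(G\ge R/\sqrt{1+\gamma}\bigr)\le e^{-R^2/(2(1+\gamma))}$.
Taking the minimum of this and your interval bound gives exactly $\min\{1,R(1-(1+\gamma)^{-1/2})\}\,e^{-R^2/(2(1+\gamma))}$.
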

\begin{proof}
    Observe that $\beta$ is chosen to ensure that $A$ is a valid probability distribution.  Moreover, by definition, 
    $\beta$ satisfies the bounds
    \begin{align} \label{ineq:beta-bounds-cov}
    \beta \leq 1 \quad \text{ and } \quad \beta \geq \min_{|x| \leq R} \frac{1}{\sqrt{1+\gamma}}\frac{e^{-x^2/2(1+\gamma)}}{e^{-x^2/2}} = \frac{1}{\sqrt{1+\gamma}}.
    \end{align}
    With $\Sigma = I_d + \gamma v v^{\top}$ and following the proof of \Cref{lem:univariate-it-hard-instance-mean},
    by our choice of $b$,
    it suffices to show that for all $x \in \R^d$: $\bigl \lvert\log \frac{f(x)}{\phi(x;0,\Sigma)}\bigr \rvert \leq \log \sqrt{1 + \tau}$, where $f$ denotes the density of the hidden direction distribution $P_{A, v}$.
    Using the definition of $P_{A,v}$, the likelihood ratio is equal to
    \begin{align*}
        \frac{f(x)}{\phi(x;0,\Sigma)} = \frac{A(x')}{\phi(x';0,1+\gamma)},
    \end{align*}
    where $x' = v^\top x$.
    From the definition of $A$, it is clear that the above likelihood ratio is equal to $1$ when $|x'| > R$, and thus the desired bound is satisfied.
    On the other hand, for $|x'| \leq R$, we have
    \begin{align*}
    \log \frac{A(x')}{\phi(x';0,1+\gamma)}
    &= \log \bigl(\beta \sqrt{1 +\gamma}\bigr) + \log e^{\frac{-x'^2\gamma}{2(1+\gamma)}} 
     =  \log \bigl(\beta \sqrt{1+\gamma}\bigr) - \frac{x'^2 \gamma}{2(1+\gamma)}.
    \end{align*}
    In order to show that $Q \in \mathcal{R}\bigl(\mathsf{N}(0, I_d + \gamma vv^T), \epsilon, q\bigr)$, it remains to establish the pair of inequalities
    \begin{align*}
      \log\bigl(\beta \sqrt{1+\gamma}\bigr) \leq \log \sqrt{1+\tau}
      \quad\textrm{ and }\quad
      \log \bigl(\beta\sqrt{1+\gamma} \bigr) - \frac{R^2\gamma}{2(1+\gamma)} \geq -\log \sqrt{1+\tau}.
    \end{align*}
    The first inequality follows by applying the upper bound $\beta \leq 1$ in~\eqref{ineq:beta-bounds-cov} in conjunction with the assumption $\gamma \leq \tau$.
    Turning to the second inequality, we have
    \[
    \log \bigl(\beta\sqrt{1+\gamma} \bigr) - \frac{R^2\gamma}{2(1+\gamma)} \geq - \frac{R^2\gamma}{2(1+\gamma)} \geq -\log \sqrt{1+\tau},
    \]
where the first step follows from the lower bound $\beta \geq \frac{1}{\sqrt{1+\gamma}}$ in~\eqref{ineq:beta-bounds-cov} and the final step follows from the assumption
    $R^2 \leq \frac{1+\gamma}{\gamma} \log(1+\tau)$.
 
    Finally, we obtain a bound on $|\beta -1|$, which holds under the condition $R  \gtrsim \sqrt{1+\gamma}$.  In particular, expanding yields
    \begin{align*}
    |\beta - 1| &= \frac{ \P(|G| \leq R) - \P(|G| \leq \frac{R}{\sqrt{1+\gamma}})}{\P(|G|\leq R)}
    = \frac{\P\bigl(|G| \in \bigl[\frac{R}{\sqrt{1+\gamma}}, R\bigr] \bigr)}{\P(|G|\leq R)} \\
    &\lesssim \P\left(|G| \in \left[\frac{R}{\sqrt{1+\gamma}}, R\right] \right) \\
    &\lesssim  \min\biggl\{\P\biggl(|G| \geq \frac{R}{\sqrt{1+\gamma}}\biggr), \biggl \lvert R - \frac{R}{\sqrt{1+\gamma}} \biggr \rvert \cdot \phi\biggl(\frac{R}{\sqrt{1+\gamma}}\biggr)\biggr\} \\
    &\lesssim \min\left( 1, R \cdot \left|1 - \frac{1}{\sqrt{1+\gamma}}\right| \right) e^{-\Omega(R^2/(1+\gamma))}.
    \end{align*}
\end{proof}

\begin{lemma}[Linear regression hard instance]
\label{lem:univariate-it-hard-instance-lr}
Let $q \in (0, 1]$, $\epsilon \in [0, 1)$, and $\tau = \frac{\epsilon}{q(1-\epsilon)}$.
Let $v \in \bS^{d-1}$ and
let $b, \gamma, r, R \in \mathbf{R}_{++}$ such that
\begin{align*}
  b = q(1- \epsilon) \sqrt{1 + \tau},
  \quad
  \gamma^2 r^2 \leq 0.5 \log(1+ \tau),
  \quad\textrm{and}\quad
  R \leq \frac{\log(1 + \tau)}{8 \gamma r}.
\end{align*}
Let $P_*$ denote the distribution over $(X,y)$: $X \sim \mathsf{N}(0,I_d)$ and $y |X \sim \mathsf{N}(\gamma v^\top X,1)$. 
For all $x \in \mathbf{R}^d$, with $\beta_x \in (0, 1]$ defined implicitly, let $A_x$ denote the distribution over $\mathbf{R}$ with density
\begin{align}
    A_x(y) := 
    \begin{cases}
    \beta_x \phi(y) & |v^\top x| \leq r \text{ and } |y|\leq R\\
     \phi(y; \gamma v^\top x,1) & \text{otherwise}   \,.
    \end{cases}
    \label{eq:it-bound-lr-A}
\end{align}
and define $Q \in \mathcal{P}(\mathbf{R}^{d+1}_{\star})$ as 
\[
Q(\{\star^{d+1}\}) = 1 - b \quad \text{ and } \quad Q(x, y) = b \cdot \phi(x)A_x(y).
\]
Then $Q \in \mathcal{R}\bigl(P_*, \epsilon, q\bigr)$.

\end{lemma}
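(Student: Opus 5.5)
The plan mirrors the proof of Lemma~\ref{lem:univariate-it-hard-instance-mean}. First, $\beta_x$ is defined implicitly by requiring $A_x$ to integrate to one. This gives $\beta_x = \P(|G+\gamma v^\top x|\le R)/\P(|G|\le R)$ when $|v^\top x|\le r$, and $\beta_x=1$ otherwise, so $Q$ restricted to $\R^{d+1}$ has total mass $b$. By Corollary~\ref{cor:conditional-distribution-realizable} with $L=q(1-\epsilon)$ and $U=q(1-\epsilon)+\epsilon$, our choice $b^2 = LU$ gives $L/b = (1+\tau)^{-1/2}$ and $U/b=(1+\tau)^{1/2}$. It therefore suffices to show
\[
\Bigl|\log \frac{\phi(x)A_x(y)}{\phi(x)\phi(y;\gamma v^\top x,1)}\Bigr| = \Bigl|\log\frac{A_x(y)}{\phi(y;\gamma v^\top x,1)}\Bigr| \le \tfrac12\log(1+\tau)
\]
for all $(x,y)$. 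The density of $P_*$ is $\phi(x)\phi(y;\gamma v^\top x,1)$, so the $\phi(x)$ factors cancel.

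Outside the region $\{|v^\top x|\le r,\ |y|\le R\}$ the ratio equals $1$, so there is nothing to check. Inside it, write $s=\gamma v^\top x$, so that $|s|\le \gamma r$. The log-ratio is
\[
\log\beta_x + \frac{(y-s)^2 - y^2}{2} = \log\beta_x - sy + \frac{s^2}{2}.
\]
I bound each piece. First, $|sy|\le \gamma r R\le \log(1+\tau)/8$ by the hypothesis on $R$. Second, $0\le s^2/2\le \gamma^2r^2/2\le \log(1+\tau)/4$ by the hypothesis on $\gamma r$. Third, $\beta_x$ is controlled exactly as in the mean case. On one side, $\beta_x\le 1$, because the Gaussian mass of a symmetric interval is maximized when the interval is centered. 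On the other side, $\beta_x \ge \min_{|y|\le R}\phi(y;s,1)/\phi(y) = e^{-|s|R - s^2/2}$.

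Combining these, the upper bound is $\log\beta_x - sy + s^2/2 \le |s|R + s^2/2 \le \tfrac18\log(1+\tau)+\tfrac14\log(1+\tau)$, which is below $\tfrac12\log(1+\tau)$. For the lower bound, $\log\beta_x + s^2/2 \ge -|s|R$, so the whole expression is at least $-2|s|R\ge -\tfrac14\log(1+\tau)$. Both sides therefore lie within $\tfrac12\log(1+\tau)$, and Corollary~\ref{cor:conditional-distribution-realizable} yields $Q\in\mathcal{R}(P_*,\epsilon,q)$.

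The only mildly delicate point is the monotonicity claim $\beta_x\le 1$, that is, $\P(|G+s|\le R)\le \P(|G|\le R)$. This follows from Anderson's inequality, or directly because $s\mapsto \Phi(R-s)-\Phi(-R-s)$ has derivative $\phi(R+s)-\phi(R-s)$, which is $\le 0$ for $s\ge0$. Everything else is bookkeeping with the stated parameter constraints.
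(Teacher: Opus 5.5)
Your proof is correct and follows essentially the same route as the paper's. Both cancel the $\phi(x)$ factors and use Corollary~\ref{cor:conditional-distribution-realizable} with $b=\sqrt{LU}$ to reduce to bounding $\log\beta_x - sy + s^2/2$ on the truncation region, then control that quantity via $e^{-|s|R-s^2/2}\le\beta_x\le 1$ and the two parameter constraints. Your explicit derivative argument for $\beta_x\le 1$ and your slightly tighter lower bound on $\log\beta_x + s^2/2$ are minor refinements of the same argument.
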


\begin{proof}
For any $x$, $A_x$ is a valid distribution since it integrates to $1$: (i) this is obvious if $\gamma |v^\top x| > r$ and (ii) for $\gamma|v^\top x| \leq r$, this is satisfied if we choose
\begin{align*}
    \beta_x \P(|G|\leq R) + \P(|G+ \gamma x^\top v|\leq R) = 1 \iff \beta_x = \frac{\P(|G + \gamma x^\top v|\leq R)}{\P(|G|\leq R)}\,.
\end{align*}
Note that for all $x$, $\beta_x \in (0, 1]$.
Using the same argument as in the proof of \Cref{lem:univariate-it-hard-instance-mean} for a fixed $x$, we get that
\begin{align*}
    1 \geq \beta_x &\geq \min_{|y| \leq R} \frac{\phi(y; \gamma x^\top v,1)}{\phi(y)}
    = \min_{|y|\leq R} \frac{e^{-(y-\gamma x^\top v )^2/2}}{e^{-y^2/2}}
    = \min{|y|\leq R} e^{-\gamma^2 (x^\top v)^2/2 + \gamma y x^\top v}\\
    &= e^{-\gamma^2 (x^\top v)^2/2 - \gamma R |x^\top v|}\,.
\end{align*}

Let $f(x,y)$ be the conditional density of $Q$ under the event $\{(x,y) \neq \star^{d+1}\}$ and let $p(x,y)$ denote the density of the linear model $X \sim \mathsf{N}(0,I_d)$ and $y \sim \mathsf{N}(\gamma v^\top X, 1)$.
As earlier,
we want the following likelihood ratio to lie in the range as per \Cref{cor:conditional-distribution-realizable}.
\begin{align*}
    \frac{f(x,y)}{p(x,y)} = \frac{A_x(y)}{\phi(y; \gamma v^\top x, 1)} = \begin{cases}
        \frac{\beta_x \phi(y)}{\phi(y; \gamma v^\top x, 1)} & \text{ if } |y|\leq R \text{ and } \gamma|v^\top x| \leq r\\
        1 & \text{ otherwise}
    \end{cases}  
\end{align*}
By \cref{cor:conditional-distribution-realizable} and our choice of $b$, we are done once we establish that the absolute value of the logarithm of this density ratio is uniformly upper bounded by $0.5 \log(1+\tau)$.
As per the definition, this needs to be checked only when $|y|\leq R$ and $\gamma|v^\top x| \leq r$.
For such $(x,y)$,
the ratio is
\begin{align*}
    \frac{f(x,y)}{p(x,y)} = \frac{\beta_x e^{-y^2/2}}{e^{-(y- \gamma x^\top v)^2/2}} = \beta_x e^{\gamma^2 (x^\top v)^2/2} e^{-\gamma y x^\top v}
\end{align*}
And therefore, the logarithm satisfies
\begin{align*}
    \log \frac{f(x,y)}{p(x,y)} = \log \beta_x e^{\gamma^2 (x^\top v)^2/2} - \gamma y x^\top v\,.
\end{align*}
It would suffice if the absolute value of each term is at most $0.25 \log(1+\tau)$.
The second term is at most $\gamma Rr$, which by assumption is bounded appropriately.
For the first term,  we  use that $e^{-\gamma^2 (x^\top v)^2/2 - \gamma R |x^\top v|} \leq \beta_x \leq 1$ to get
\begin{align*}
    \log \beta_x e^{\gamma^2 (x^\top v)^2/2} \leq \gamma^2 r^2/2
    \quad\textrm{and}\quad
    \log \beta_x e^{\gamma^2 (x^\top v)^2/2} \geq \log e^{-2\gamma R |x^\top v|} = -2\gamma R |x^\top v|.
\end{align*}
Thus, $| \log \beta_x e^{\gamma^2 (x^\top v)^2/2}|\leq \max( \gamma^2 r^2/2, 2\gamma R r)$, which is also bounded appropriately.
\end{proof}

\subsection{Information-theoretic lower bounds}\label{app:it-lbs}
\subsubsection{Mean estimation: Proof of Theorem~\ref{thm:mean-est-it-ub-lb} lower bound}\label{pf:mean-est-it-lb}

We prove the claim for general $q \in (0, 1]$.
We introduce the quantities $\tau = \frac{\epsilon}{q(1-\epsilon)}$, $\tau' = \log(1+\tau)$, and $b = q(1-\epsilon)\sqrt{1+\tau}$.
We claim that it suffices to prove that
\begin{align}
\label{ineq:sufficient-cond-minimax-mean-lb}
\cM := \cM_n(\delta, \cP_{\rm mean}(\sigma^2, \epsilon, q),L)
\gtrsim T_{\mathrm{parametric}} \vee T_{\mathrm{dimension}} \vee T_{\mathrm{confidence}},
\end{align}
where 
\begin{align*}
    T_{\mathrm{parametric}} := &\sqrt{\frac{d + \log(\frac{1}{\delta})}{nq(1-\epsilon)}}, \quad T_{\mathrm{dimension}} := \sqrt{\tau'} \wedge \frac{\tau '}{\sqrt{\log\left(3 + \frac{cn b\tau'^2}{d}\right)}}\, , \\
    \quad &\text{ and } \quad T_{\mathrm{confidence}} := \sqrt{\tau'} \wedge \frac{\tau '}{\sqrt{\log\left(3 + \frac{C n b\tau'^2}{\log(1/2\delta)}\right)}},
\end{align*} 
for a sufficiently large constant $C$.
We defer the proof of this reduction to the end of the section.
We next prove each lower bound in turn.

\paragraph{Proof of $\cM \gtrsim T_{\mathrm{parameteric}}$:}
We consider the contamination that censors everything, so that the resulting distribution is MCAR with probability or observation equal to $q(1-\epsilon)$.
Then $\cM \gtrsim T_{\mathrm{parametric}}$ is essentially the standard minimax mean estimation lower bound applied with the fact that there are $O(nq(1-\epsilon))$ non-missing samples with high probability, see \cite[Prop.\ 47]{ma2024estimation} for a formal argument. 

\paragraph{Proof of $\cM \gtrsim T_{\mathrm{dimension}}$:}
This case forms the technical bulk of the proof and proceeds via Fano's inequality.  We begin by defining several parameters.  We will let $\gamma := \gamma(\epsilon, q, n, d)$ denote our separation parameter.  Throughout we will impose the constraint
\begin{align} \label{ineq:constraint-gamma-mean-est}
\gamma \leq \frac{\sqrt{\tau'}}{10},
\end{align}
and set a truncation threshold $R = \tau'/(10\gamma)$.  Note that from the constraint~\eqref{ineq:constraint-gamma-mean-est}, we deduce the bound
\begin{align} \label{ineq:ub-gamma-R}
    \gamma \leq \frac{\sqrt{\tau'}}{10} \leq \frac{R}{10} \leq \frac{R}{2}.
\end{align}
Equipped with these parameters, we let $\mathcal{N}$ denote a $1/2$--packing of $\mathbf{S}^{d-1}$, which by, e.g.,~\cite[Corollary 4.2.11]{vershynin2025high}, exists and satisfies the cardinality bound $|\mathcal{N}| \leq 5^d$.  For each $v \in \mathcal{N}$, it suffices to construct distributions $P_{v}$ which satisfy the pair of desiderata
\begin{enumerate}
    \item (Valid contamination) $P_v\in \cR(\mathsf{N}(\gamma v, I_\dimension), \epsilon, \pobserved)$.
    \item (Equal probability of missing mass) The missingness probability $P_v \bigl(\{\star\}^d\bigr) = 1-b$.
\end{enumerate}

We will additionally define a central distribution $H \in \mathcal{P}\bigl(\mathbf{R}^d \cup \{\star^d\}\bigr)$ as
\begin{align} \label{def:central-H-mean}
H\bigl(\{\star^{d}\}\bigr) = 1 - b \quad \text{ and } \quad H(x) = b \cdot \phi_d(x) \text{ for all } x \in \mathbf{R}^d.
\end{align}
By Fano's inequality (Lemma~\ref{lem:fano}), it suffices to construct distributions $P_{\theta}$ which satisfy the two desiderata above and such that
for all $v \in \cN$, it holds that $\mathrm{KL}(P_{v}, H) \leq \frac{d}{2n}$.
Indeed, if this KL bound holds, then by tensorization of the KL divergence,
\[
\frac{\frac{1}{|\cN|} \sum_{v \in \cN} \, \mathrm{KL}\bigl(P_{v}^{\otimes n}, H^{\otimes n}\bigr) - \log(2 - |\cN|^{-1})}{\log |\cN|} \leq \frac{\frac{1}{2}d}{d \log{5}}
< \frac{1}{2}.
\]

We take $P_v$ to be the hard distribution $Q$ from \Cref{lem:univariate-it-hard-instance-mean}, with all lemma parameter chosen identically to this theorem.
The remainder of this proof is dedicated to establishing the bound on $\mathrm{KL}(P_{v}, H)$.

Since the probability of observing $\{\star^d\}$ is identical under both $P_v$ and $H$, and since $P_v$ and $H$ are supported on $\{\star^d\} \cup \mathbf{R}^d$, we have
\begin{align}
   \label{eq:it-mean-kl-div-1}
    \mathrm{KL}(P_v,H) = b \cdot \mathrm{KL}( P_v',H'),
\end{align}
where $P_v' = P_{A,v}$ and $H'= \mathsf{N}(0,I_d)$ denote the respective conditional distributions over $\mathbf{R}^d$.
Denoting $x' := v^\top x$ and $\bar{x}$ the projection of $x$ onto $v^\perp$,
observe that the density of $P_v'$ is $p_v'(x) = \phi_{d-1}(\bar{x}) A(v^\top x)$ and that of
$H'$ is $h'(x) = \phi_{d-1}(\bar{x}) \phi(v^\top x)$. 
Applying the definition of $A$ in \Cref{eq:it-bound-mean-A} yields
\begin{align*}
    \frac{p_v'(x)}{h'(x)} &= \frac{A(x')}{\phi(x')} =  \begin{cases}
        \beta & \text{ if } |x'| \leq R \\
       e^{-0.5\gamma^2+ \gamma x'} & \text{ if } |x'| > R.   \end{cases}
\end{align*}
In turn, this implies that the log-likelihood ratio satisfies
\begin{align*}
    \log \frac{p_v'(x)}{h'(x)} &= (\log \beta )\1_{|x'|\leq R} + (-0.5 \gamma^2 + \gamma x' )\1_{|x'| > R} \leq  \gamma x'\1_{|x'| > R},  
    \end{align*}
    where we have used the bound $\beta \leq 1$.
The expectation of $\log \frac{p_v'(x)}{h'(x)}$ under $P'_{\theta}(x)$ is exactly equal to $\KL(P_v',H')$.
Since the upper bound depends only on $x' \sim A$, it is equivalent to take an expectation of this term over $x' \sim A$,
which as per \Cref{eq:it-bound-mean-A} is equal to 
\begin{align}
\nonumber
\KL(P_v',H') &\leq \E_{X \sim A}[\gamma X \1_{|X| > R}] =\gamma \E[ (G + \gamma) \1_{|G + \gamma| > R}] \\
&= \gamma^2 \P(|G + \gamma| > R) + \gamma\E [G \1_{|G + \gamma|>R}]\,.
\label{eq:it-mean-kl-div-2}
\end{align}
To calculate an upper bound on the first term, we use that $\gamma< R/2$ and obtain that $\P(|G + \gamma| > R) \leq \P(|G| > R/2) \lesssim e^{-\Omega(R^2)}$.
For the second term, we use the following inequalities that rely on symmetry:
\begin{align}
\nonumber
    \E [G \1_{|G + \gamma|>R}] &=  \E [G \1_{|G|>R}]  +  \E [G \left( \1_{|G + \gamma|>R} -\1_{|G| \geq R}\right)]  \\
 \nonumber
   &=\E [G \left( \1_{|G + \gamma|>R} -\1_{|G| \geq R}\right)] \\
\nonumber
    &\leq \E [|G| \left| \1_{|G + \gamma|>R} -\1_{|G| \geq R}\right|] \\
\nonumber
    &\leq \E [|G| \1_{|G| \in [R-\gamma, R + \gamma]}] \\ 
    \nonumber
\nonumber
    &\leq 4 \gamma \max_{x \in [-R/2,3R/2]} x \phi(x) \\
    &\lesssim \gamma e^{-\Omega(R^2)},\,
   \label{eq:it-mean-kl-div-3}
\end{align}
where we use that $x e^{-x^2/2} \lesssim e^{-x^2/4}$ for all $x>0$.
Combining \Cref{eq:it-mean-kl-div-1,eq:it-mean-kl-div-2,eq:it-mean-kl-div-3}, we deduce the inequality
\begin{align} \label{ineq:KL-bound-mean-est}
    \mathrm{KL}(P_v, H)
    \lesssim b\gamma^2 e^{-\Omega(R^2)}
    \lesssim b\gamma^2 e^{- c \frac{\tau'^2}{\gamma^2}}
    \lesssim b\tau'^2 y e^{-1/y}\,,
\end{align}
where we use the value of $R =\tau'/(10\gamma)$ and define $y:= \frac{\gamma^2}{c \tau'^2}$, for a sufficiently small constant $c$.
For this to be less than $\frac{d}{2n}$,
it suffices that $ye^{-1/y} \leq \rho':=\frac{d}{C b\tau'^2n}$ for a sufficiently large constant $C$.
That is, $y \leq \frac{1}{W_0(1/\rho')}$, where $W_0(x)$ is the Lambert W function (principal branch),
which is the unique positive value such that $W_0(x) e^{W_0(x)} = x$ for $x\geq 0$.
Observe that $\log(3+x) e^{\log(3+x)} > x + 3 > x$ and thus $W_0(x)\leq \log(3+x)$.
This condition is always satisfied whenever $ \gamma \lesssim \tau' \sqrt{y} = \frac{\tau '}{\sqrt{W_0\left(\frac{C n b\tau'^2}{d}\right)}}$.
And since $\gamma$ must also satisfy that $\gamma \lesssim \sqrt{\tau'}$
for all of these calculations to be valid,
we get the desired lower bound in mean estimation
\begin{align}
    \cM \gtrsim \gamma \gtrsim \sqrt{\tau'} \wedge \frac{\tau '}{\sqrt{\log\left(3 + \frac{C n b\tau'^2}{d}\right)}} = T_{\rm dimension},
    \label{eq:mean-estimation-dim-lb}
\end{align}
which concludes this part of the proof.

\paragraph{Proof of $\cM \gtrsim T_{\mathrm{confidence}}$:} We will use the same constructions as in the previous part of the proof.  To this end, note that $H \in \cR(\mathsf{N}(0,I_\dimension), \epsilon, \pobserved)$.  We let $v$ be any unit vector, let $Q$ be the distribution from Lemma~\ref{lem:univariate-it-hard-instance-mean}.
Applying the Bretagnolle--Huber inequality in conjunction with the KL bound from~\eqref{ineq:KL-bound-mean-est} yields
\[
\mathrm{TV}(Q^{\otimes n}, H^{\otimes n})
\leq \sqrt{1 - e^{-n\mathrm{KL}(Q,H)}} \leq \sqrt{1 - \mathrm{exp}\bigl\{-n \cdot b \tau'^2 y e^{-1/y}\bigr\}}.
\]
Following the same steps as in the previous part and ensuring $y e^{-y} \lesssim \frac{\log(1/2\delta)}{b\tau'^2n}$ implies that $\mathrm{TV}(P_{\theta}^{\otimes n}, H^{\otimes n}) \leq 1 - 2\delta$.  Hence, we apply~\cite[Lemma 5]{ma2024high} to deduce that 
\begin{align}
    \cM \gtrsim \sqrt{\tau'} \wedge \frac{\tau '}{\sqrt{\log\left(3 + \frac{C n b\tau'^2}{\log(1/2\delta)}\right)}}\,,
    \label{eq:mean-estimation-hp-lb}
\end{align}
as desired.  

\paragraph{Proof of the reduction~\eqref{ineq:sufficient-cond-minimax-mean-lb}:}
Re-phrasing \eqref{ineq:sufficient-cond-minimax-mean-lb}, we have the minimax lower bound
\begin{align*}
    \cM
    \gtrsim
    \alpha + \sqrt{\tau'} \wedge \frac{\tau '}{\sqrt{\log\left(3 + \frac{C \sqrt{1+\tau}\tau'^2}{\alpha^2}\right)}}\,,
\end{align*}
where $\alpha = \sqrt{\frac{d+\log(1/\delta)}{nq(1-\epsilon)}} \leq 1/\sqrt{C_1}$ and $C$ is a sufficiently large constant.
Let $C'$ be sufficiently large constant that we will specify later.
Then by considering several cases, we will show that this is equivalent to the claimed lower bound in Theorem~\ref{thm:mean-est-it-ub-lb}.

\begin{itemize}
\item 
\textbf{Case 1: $\tau \leq C'\alpha$.}
This regime is immediate because
\begin{align*}
  \frac{\log(1 + \tau)}{\sqrt{\log(1 + \tau^2/\alpha^2)}}
  \asymp \tau\sqrt{\alpha^2/\tau^2} = \alpha \lesssim \cM.
\end{align*}

\item
\textbf{Case 2: $C'\alpha \leq \tau \leq 1$.}
Then because $\tau' \leq \tau \leq 1$, we have that
\begin{align*}
  \sqrt{\tau'} \wedge \frac{\tau '}{\sqrt{\log\left(3 + \frac{C \sqrt{1+\tau}\tau'^2}{\alpha^2}\right)}}
  &= \frac{\tau '}{\sqrt{\log\left(3 + \frac{C \sqrt{1+\tau}\tau'^2}{\alpha^2}\right)}} \\
  &\geq \frac{\log(1+\tau)}{\sqrt{\log(3 + C\sqrt{2}\tau^2/\alpha^2)}} \\
  &\geq \frac{\log(1+\tau)}{\sqrt{2\log(1 + \tau^2/\alpha^2)}},
\end{align*}
where the final inequality follows from $\tau/\alpha \geq C'$ by picking $C'$ large enough (depending on $C$).

\item
\textbf{Case 3: $1 \leq \tau \leq \alpha^{-2}/C_1$.}
In this regime, $\log\left(3 + \frac{C \sqrt{1+\tau}\tau'^2}{\alpha^2}\right) \asymp \log(1/\alpha)$ so that
\begin{align*}
    \sqrt{\tau'} \wedge \frac{\tau '}{\sqrt{\log\left(3 + \frac{C \sqrt{1+\tau}\tau'^2}{\alpha^2}\right)}}
    \asymp
    \sqrt{\log(1+\tau)} \cdot \left[1 \wedge \sqrt{\frac{\log(1+\tau)}{\log(1/\alpha)}}\right].
\end{align*}
Then because $\tau \leq \alpha^{-2}/C_1$, we have $\log(1+\tau) \lesssim \log(1/\alpha)$, and hence
\begin{align*}
   \sqrt{\tau'} \wedge \frac{\tau '}{\sqrt{\log\left(3 + \frac{C \sqrt{1+\tau}\tau'^2}{\alpha^2}\right)}}
   \gtrsim \frac{\log(1+\tau)}{\sqrt{\log(1/\alpha)}}
   \asymp \frac{\log(1+\tau)}{\sqrt{\log(1 + \tau^2/\alpha^2)}},
\end{align*}
where the final equality (up to constant factors) follows because $1 \leq \tau \leq \alpha^{-2}/C_1$.

\item
\textbf{Case 4: $\tau > \alpha^{-2}/C_1$.}
In this regime, $\log\left(3 + \frac{C \sqrt{1+\tau}\tau'^2}{\alpha^2}\right) \asymp \log(1 + \tau)$ so that
\begin{align*}
    \sqrt{\tau'} \wedge \frac{\tau '}{\sqrt{\log\left(3 + \frac{C \sqrt{1+\tau}\tau'^2}{\alpha^2}\right)}}
    \asymp
    \sqrt{\log(1+\tau)}.
\end{align*}
Moreover, $\log(1 + \tau^2/\alpha^2) \asymp \log(1 + \tau)$ because $\tau > \alpha^{-2}/C_1$.
Thus,
\begin{align*}
   \sqrt{\tau'} \wedge \frac{\tau '}{\sqrt{\log\left(3 + \frac{C \sqrt{1+\tau}\tau'^2}{\alpha^2}\right)}}
   \asymp \frac{\log(1+\tau)}{\sqrt{\log(1 + \tau^2/\alpha^2)}}.
\end{align*}
\end{itemize}
This completes the reduction, proving the claim. \hfill $\qed$

\subsubsection{Covariance estimation: Proof of Theorem~\ref{thm:cov-est-it-ub-lb} lower bound}\label{pf:cov-est-it-lb}
We prove the claim for general $q \in (0, 1]$.
The proof will closely mirror the structure of the proof of the lower bound of \Cref{thm:mean-est-it-ub-lb}.
We introduce the quantities $\tau = \frac{\epsilon}{q(1-\epsilon)}$, $\tau' = \log(1+\tau)$, and $b = q(1-\epsilon)\sqrt{1+\tau}$.
We claim that it suffices to prove that
\begin{align}
\label{ineq:sufficient-cond-minimax-cov-lb}
\cM := \cM_n\bigl(\delta, \cP_{\rm cov}(\eps, q), L\bigr) \geq T_{\mathrm{parametric}} \vee T_{\mathrm{dimension}} \vee T_{\mathrm{confidence}},
\end{align}
where 
\begin{align*}
    T_{\mathrm{parametric}} := &\sqrt{\frac{d + \log(\frac{1}{\delta})}{nq(1-\epsilon)}}, \quad T_{\mathrm{dimension}} := \tau' \wedge \frac{\tau '}{\log\left(3 + \frac{cn q(1-\epsilon)\sqrt{1+\tau}\tau'}{d}\right)}\, , \\
    \quad &\text{ and } \quad T_{\mathrm{confidence}} := \tau' \wedge \frac{\tau '}{\log\left(3 + \frac{cn q(1-\epsilon)\sqrt{1+\tau}\tau'}{\log(1/4\delta)}\right)},
\end{align*}
for a sufficiently large constant $c$.  As in the mean estimation case, the lower bound $\cM \geq T_{\mathrm{parametric}}$ is standard, and we omit its proof.  We next prove each of the lower bounds $\cM \geq T_{\mathrm{dimension}}$ and $\cM \geq T_{\mathrm{confidence}}$ in turn.

\paragraph{Proof of $\cM \geq T_{\mathrm{dimension}}$:}
Let $\gamma \in [0, 1]$ denote the target minimax error, to be specified later.
Let $\cN$ be a $1/2$-packing of unit vectors as in the proof of Theorem~\ref{thm:mean-est-it-ub-lb}(b) satisfying $|\cN| \leq 5^d$.
For each $v \in \cN$, we will construct hypotheses $P_v$ which satisfy the pair of desiderata
\[
P_v \in \cR\bigl(\mathsf{N}(0, I_d + \gamma vv^\top), \eps, q\bigr) \quad \text{ and } \quad P_v\bigl(\{\star^d\}\bigr) = 1-b.
\]
By \cref{lem:psd-separation}, for any distinct $v,v' \in \cN$, we have that for all matrices $M$,
$L(M, I_d + \gamma vv^\top) \vee L(M, I_d + \gamma \bar{v}\bar{v}^\top) > \gamma/16$.
Hence, any estimator $\widehat{\Sigma}$ with worst-case risk at most $\gamma/64$
must satisfy $\P_{P_v}(L(\widehat{\Sigma}, I_d + \gamma vv^\top) \geq \gamma/16) \leq 1/4$ for all $v \in \cN$,
and can be used to recover $v \in \cN$ with probability at least $3/4$.

To prove that recovering $v \in \cN$ with probability at least $3/4$ is impossible,
it will suffice by Fano's inequality (\cref{lem:fano}) to show that there exists a distribution $H$ such that
$\KL(P_v, H) \lesssim \frac{d}{n}$ for all $v \in \cN$.
We will take $P_v$ to be the distribution $Q$ from \cref{lem:univariate-it-hard-instance-cov} with $R^2 = \frac{1+\gamma}{\gamma}\log(1+\tau)$ and the common distribution $H$ to be defined as
\begin{align} \label{def:central-H-mean-cov}
H\bigl(\{\star^{d}\}\bigr) = 1 - b \quad \text{ and } \quad H(x) = b \cdot \phi_d(x) \text{ for all } x \in \mathbf{R}^d.
\end{align}
Since $P_v(\{\star^d\})=H(\{\star^d\}) = 1-b$ and both distributions are supported on $\R^d \cup \{\star^d\}$, we deduce that
\begin{align}
\label{eq:it-cov-kl-div-1}
    \mathrm{KL}(P_v,H) = b \cdot \mathrm{KL}(P_v',H') \,,
\end{align}
where $P_v'$ and $H'$ denote the distributions of $P_v$ and $H$ conditioned on belonging to $\R^d$.  We next introduce the shorthand $x' := v^{\top} x$ and let $p_v'$ and $h'$ denote the densities of $P_v'$ and $H'$.  Expanding their definitions, we simplify the likelihood ratio as
\begin{align*}
    \frac{p_v'(x)}{h'(x)} &= \frac{A(x')}{\phi(x')}
    =  \begin{cases}
          \beta & \text{ if } |x'| \leq R \\
         \frac{1}{\sqrt{1+\gamma}} e^{\frac{\gamma |x'|^2}{2(1+\gamma)}} & \text{ if } |x'| > R.
       \end{cases}
\end{align*}
Hence, we find that the log-likelihood ratio is bounded as
\begin{align}
\log \frac{p_v'(x)}{h'(x)} &= (\log \beta )\1_{|x'|\leq R} + \left( \frac{\gamma \lvert x' \rvert^2}{2(1+\gamma)} - \log\sqrt{1+\gamma}\right)\1_{|x'| > R}
\leq \frac{\gamma|x'|^2}{2(1+\gamma)}  \1_{|x'| > R},
\label{eq:it-cov-kl-div-2}
\end{align}
where we have additionally used the inequality $\beta \leq 1$, which holds by construction.  Moreover, our construction implies that if $X' \sim A$,
\[
X' \cdot \1_{\lvert X' \rvert > R} \overset{\mathsf{dist}}{=} \sqrt{1 + \gamma} \cdot G \1_{\sqrt{1 + \gamma} \lvert G \rvert > R}.
\]
Taking expectations with respect to the random variable $X' \sim A$, it thus follows from the upper bound~\eqref{eq:it-cov-kl-div-2} that
\begin{align}
   \label{eq:it-cov-kl-div-2a}
      \KL(P_v',H') \leq \frac{\gamma}{2}\E\bigl[G^2 \1_{\sqrt{1+\gamma}|G| > R}\bigr]
      \overset{(a)}{\lesssim} \gamma \sqrt{\P(\sqrt{1+\gamma} |G| > R)}
      \lesssim \gamma e^{-\Omega(R^2/(1+\gamma))},
\end{align}
where step $(a)$ follows upon applying the Cauchy--Schwarz inequality.  
Combining \Cref{eq:it-cov-kl-div-1,eq:it-cov-kl-div-2a} then yields the upper bound
\begin{align} \label{eq:it-cov-kl-div-2b}
\KL(P_v,H) \lesssim b \gamma e^{-\Omega(R^2/(1+\gamma))}\, \lesssim b \gamma e^{-\Omega(\log(1+\tau)/\gamma)},
\end{align}
where the final inequality follows since by construction $R^2 = \frac{1+\gamma}{\gamma}\log(1+\tau)$.  

To obtain the desired lower bound, we set
\[
\gamma
= \min\Biggl\{c, \frac{\log(1+\tau)}{\log\Bigl(\frac{q(1-\eps)n}{cd}\Bigr)}\Biggr\},
\]
for a sufficiently small constant $c \leq 1$.
Note that from the numeric inequality $\log(1 + x) \leq x$, this setting ensures that $\gamma \leq \tau$.  Hence, substituting this into the inequality~\eqref{eq:it-cov-kl-div-2b}, we obtain
\[
\KL(P_v,H) \lesssim b \gamma e^{-\Omega(\log(1+\tau)/\gamma)}
\lesssim q(1-\eps)\sqrt{1 + \tau} \exp\biggl\{-\Omega\biggl(\frac{\log(1+\tau)}{c} \vee \log\Bigl(\frac{q(1-\eps)n}{cd}\Bigr)\biggr)\biggr\} \lesssim \frac{d}{n},
\]
where the last inequality follows by taking $c$ sufficiently small and because $\frac{d}{nq(1-\epsilon)} \leq 1$.
It thus follows from Fano's inequality that $\cM \geq T_{\mathrm{dimension}}$.

\paragraph{Proof of $\cM \geq T_{\mathrm{confidence}}$:}
We will use the same constructions as in the previous part of the proof.  To this end, note that $H \in \cR(\mathsf{N}(0, I_d), \epsilon, \pobserved)$.  We let $v$ be any unit vector, and let $Q$ be the distribution from Lemma~\ref{lem:univariate-it-hard-instance-cov}.  In this case, we take
\[
\gamma = \min\Biggl\{\tau', \frac{\tau'}{\log\Bigl(\frac{b \cdot \tau' n}{C\log(1/(4\delta))}\Bigr)}\Biggr\} = \min\Biggl\{\tau', \frac{\tau'}{\log\Bigl(\frac{q(1-\eps)\sqrt{1+\tau}\tau' n}{C\log(1/(4\delta))}\Bigr)}\Biggr\}.
\]
Applying the KL bound~\eqref{eq:it-cov-kl-div-2b} and substituting the setting of $\gamma$ above yields
\[
\mathrm{KL}(Q,H) \lesssim b \gamma e^{-\Omega(\tau'/\gamma)} = q(1-\epsilon)\sqrt{1 + \tau} \gamma e^{-\Omega(\tau'/\gamma)} \leq 
\frac{c\log(1/(4\delta))}{n}.
\]
Hence, adjusting constants and applying the Bretagnolle--Huber inequality, we obtain
\[
\mathrm{TV}(Q^{\otimes n}, H^{\otimes n})
\leq \sqrt{1 - e^{-n\mathrm{KL}(Q,H)}} \leq 1 - 2\delta.
\]
We conclude by applying a high-probability variant of Le Cam's method~\cite[Lemma 5]{ma2024high}, which yields the lower bound
\[
\cM \geq \gamma = T_{\mathrm{confidence}},
\]
as desired.

\paragraph{Proof of the reduction~\eqref{ineq:sufficient-cond-minimax-cov-lb}:}
Re-phrasing \eqref{ineq:sufficient-cond-minimax-cov-lb}, we have the minimax lower bound
\begin{align*}
    \cM
    \gtrsim
    \alpha + \min\Biggl\{c, \frac{\log(1+\tau)}{\log(1/c\alpha^2)}\Biggr\}
    \asymp \alpha + \frac{\log(1+\tau)}{\log(1+\tau) + \log(1/\alpha)},
\end{align*}
where $\alpha = \sqrt{\frac{d+\log(1/\delta)}{nq(1-\epsilon)}} \leq 1/\sqrt{C_1}$ and $c$ is a sufficiently small constant.
By considering several cases, we will show that this is equivalent to the claimed lower bound in Theorem~\ref{thm:cov-est-it-ub-lb}.
\begin{itemize}
\item 
\textbf{Case 1: $\tau \lesssim \alpha\log(1/\alpha)$.}  This regime is immediate because
\begin{align*}
  \frac{\log(1 + \tau)}{\log(1 + \tau/\alpha^2)}
  \asymp \alpha \lesssim \cM.
\end{align*}

\item 
\textbf{Case 2: $\alpha \log(1/\alpha) \lesssim \tau \lesssim \alpha^{-2}$.}
This regime follows because both $\log(1+\tau)$ and $\log(1+\tau^2/\alpha^2)$ are within constant factors of $\log(1/\alpha)$.

\item
\textbf{Case 3: $\tau \gtrsim \alpha^{-2}$.}
This regime is immediate because $\log(1+\tau) \asymp \log(1+\tau/\alpha^2) \gtrsim \log(1/\alpha)$.
\end{itemize}
Because we have covered all cases of $\tau$, the claim follows.\qed

\subsubsection{Linear regression: Proof of Theorem~\ref{thm:lin-reg-ub-lb} lower bound}\label{sec:proof-linreg-est-it-lb}
\begin{proof}
We prove the claim for general $q \in (0, 1]$.
We can assume without loss of generality that $\sigma^2=1$, as otherwise we can scale the responses by $\sigma^{-1}$.
We introduce the quantities $\tau = \frac{\epsilon}{q(1-\epsilon)}$, $\tau' = \log(1+\tau)$, and $b = q(1-\epsilon)\sqrt{1+\tau}$.
By the same argument as in the proof of the lower bound of \cref{thm:mean-est-it-ub-lb}, it suffices to show that
\begin{align}
\label{ineq:sufficient-cond-minimax-lr-lb}
\cM := \cM(\delta, \cP_{\rm LR}(1, \epsilon),L) \gtrsim T_{\mathrm{clean}} \vee T_{\mathrm{dimension}} \vee T_{\mathrm{confidence}},
\end{align}
where 
\begin{align*}
    T_{\mathrm{clean}} := &\sqrt{\frac{d + \log(\frac{1}{\delta})}{nq(1-\epsilon)}}, \quad T_{\mathrm{dimension}} := \sqrt{\tau'} \wedge \frac{\tau '}{\sqrt{\log\left(3 + \frac{C n b\tau'^2}{d}\right)}}\, , \\
    \quad &\text{ and } \quad T_{\mathrm{confidence}} := \sqrt{\tau'} \wedge \frac{\tau '}{\sqrt{\log\left(3 + \frac{Cn b\tau'^2}{\log(1/2\delta)}\right)}},
\end{align*}
for a sufficiently large constant $C$.
The lower bounds $T_{\mathrm{clean}}$ and $T_{\mathrm{confidence}}$ follow by appropriate modifications to the arguments in the proof of the lower bound of \cref{thm:mean-est-it-ub-lb}, so we focus on establishing $T_{\mathrm{dimension}}$.
The final claim then follows from \eqref{ineq:sufficient-cond-minimax-lr-lb} by an identical argument as in \cref{thm:mean-est-it-ub-lb}.

Let $\gamma:=\gamma(\epsilon,q,n,\dimension)$ denote the resulting error bound.
Let $r \in \R_+$ be the $x$-truncation parameter such that $\gamma^2r^2 \leq 0.5 \tau'$. 
Let $R \in \R_+$ be the $y$-truncation parameter such that $R := \frac{\tau'}{10 \gamma r}$ and observe that $\gamma r \leq R/2$.
We shall choose $r = 1$ in the remainder of the proof. Thus, the following arguments will be applicable as long as $\gamma \lesssim \sqrt{\tau'}$.

Let $F_v$ be the distribution over $(X,y)$ such that $X \sim \mathsf{N}(0,I_d)$ and $y\sim \mathsf{N}(\theta^\top X,1)$.
Let $\cN$ be a $1/2$-packing of unit vectors satisfying $|\cN| \leq 5^d$.
For each $v \in \cN$, we will define $P_v \in \cR(F_{\gamma v}, \eps, q)$ with the additional property that $P_v(\star^d) = 1-b$.
To prove our desired lower bound of $\Omega(\gamma)$, it will suffice by Fano's inequality (\cref{lem:fano}) to show that there exists a distribution $H$ such that  $\KL(P_v, H) \lesssim \frac{d}{n}$ for all $v \in \cN$.
We will take $P_v$ to be the distribution $P_*$ from \cref{lem:univariate-it-hard-instance-lr} with parameters $b, \gamma, r, R$ (and noting that the parameter requirements are satisfied)
and $H$ to be defined as
\begin{align} \label{def:central-H-mean-cov}
H\bigl(\{\star^{d}\}\bigr) = 1 - b \quad \text{ and } \quad H(x) = b \cdot F_0(x, y) \text{ for all } x \in \mathbf{R}^d \text{ and } y \in \mathbf{R}.
\end{align}
Because $P_v(\star^d)=H(\star^d) = b$ and both distributions are supported on $\R^d \cup \{\star^d\}$, we have that
\begin{align}
\label{eq:it-lr-kl-div-1}
    \mathrm{KL}(P_v,H) = b \cdot \mathrm{KL}(P_v',H') \,,
\end{align}
where $P_v'$ and $H'$ are the conditional distributions of $\R^d$.

Observe that the density ratio satisfies (with $p_0$ being the density of $F_0$)
\begin{align*}
    \frac{p_v'(x,y)}{p_0'(x,y)} = \frac{\phi(x)A_x(y)}{\phi(x)\phi(y)} 
    &= \begin{cases}
        \beta_x & \text{ if } |x^\top v|\leq r \text{ and } |y| \leq R\\ 
        \frac{e^{-(y - \gamma v^\top x)^2/2}}{e^{-y^2/2}} & \text{otherwise}
    \end{cases}\\
&= \begin{cases}
        \beta_x & \text{ if } |x^\top v|\leq r \text{ and } |y| \leq R\\ 
        e^{\gamma y x^\top v -\gamma^2 (v^\top x)^2/2} & \text{otherwise}
    \end{cases}\end{align*}
Therefore,
\begin{align}
\nonumber
    \log \frac{p_v'(x,y)}{p_0'(x,y)} & = \begin{cases}
        \log \beta_x & \text{ if } |x^\top v|\leq r \text{ and } |y| \leq R\\ 
        \gamma y x^\top v -\gamma^2 (v^\top x)^2 & \text{otherwise}
    \end{cases} \\
    &\leq \gamma (y x^\top v - \gamma (v^\top x)^2)1_{|x^\top v| >  r \text{ or } |y| > R }\,,
   \label{eq:it-lr-kl-div-2}
\end{align}
where we use that $\beta_x \leq 1$.
Taking expectation under $P_v'$ we get that
 the KL divergence is upper bounded by
 \begin{align*}
     \KL(P_v',F_0)/\gamma
     &\leq  \E_{(x,y)\sim P_v'}\left[(y x^\top v -\gamma (v^\top x)^2)1_{|x^\top v| >  r \text{ or } |y| > R } \right] \\
     &= \int_{ \left\{|x^\top v| >  r\right\} \cup \left\{  |y| > R\right\} } (y x^\top v - \gamma (v^\top x)^2) \phi(x) A_x(y)dx dy \\
     &= \int_{\left\{|z| >  r\right\} \cup \left\{  |y| > R\right\}} (yz  -\gamma z^2)\phi(z) \phi(y; \gamma z,1 ) dz dy \\
     &\tag*{(definition of $A_x$ and $z:= x^\top v$)}\\
     &=\E_{G , Z}[((\gamma Z + G)Z - \gamma Z^2 ) 1_{|Z|> r/\gamma \text { or } |\gamma Z + G|> R}]\\
     &=\E_{G , Z}[GZ 1_{|Z|> r \text { or } |\gamma Z + G|> R}],
 \end{align*}
 where $Z$ and $G$ are independent standard normals, and the equality follows from the linear model representation  $Y = \gamma Z + G $ for the Gaussian linear model, which is the same as $A_x$ in that interval.
Continuing further we get
\begin{align*}
    \KL(P_v',F_0)/\gamma &\leq \E_{G , Z}[ GZ1_{|Z|> r \text { or } |\gamma Z + G|> R}]\\
    &=\left(\E_{G,Z}[  GZ 1_{|Z|> r \text { or } |\gamma Z + G|> R}] -   \E_{G , Z}[GZ 1_{|Z|> r \text { or } |G|> R}] \right)\\
    &     \tag*{(by symmetry $\E_{G , Z}[GZ 1_{|Z|> r/\gamma \text { or } |G|> R}]=0$ )}\\
    &=\E_{G,Z}\left[  GZ \left( 1_{|Z|> r \text { or } |\gamma Z + G|> R}] -    1_{|Z|> r \text { or } |G|> R}] \right)\right] \\
    &=\E_{G,Z}\left[  GZ 1_{|Z|\leq  r} \left( 1_{|\gamma Z + G|> R} -    1_{ |G|> R} \right)\right] \\
    &\leq\E_{G,Z}\left[  |G| \cdot |Z| 1_{|Z|\leq  r} \left| 1_{|\gamma Z + G|> R} -    1_{ |G|> R} \right|\right]
\end{align*}
Since $|\1_{|x + y| \geq a} - \1_{|x|\geq a}| \leq \1_{x \in [-|a| - |y|, -|a|+|y|]} + \1_{x \in [|a| - |y|, |a|+|y|]}$, we get that
\begin{align*}
    \KL(P_v',F_0)/\gamma
    &\leq\E_{G,Z}\left[  |G| \cdot |Z| 1_{|Z|\leq  r} \left( 1_{G \in [-R - \gamma |Z|, -R + \gamma |Z|] \cup [R-\gamma|Z|, R + \gamma|Z|]}  \right)\right] \\
    &=2\E_{G,Z}\left[  |G| \cdot |Z| 1_{|Z|\leq  r}  1_{G \in [R-\gamma|Z|, R + \gamma|Z|]}  \right] 
    \tag*{(by symmetry)}\\
    &\lesssim \E_{G,Z}\left[  |R +  \gamma r| \cdot |Z| 1_{|Z|\leq  r} 1_{G \in [R-\gamma|Z|, R + \gamma|Z|]}  \right] \\
    &\lesssim \max(\gamma r,R)\E_{G,Z}\left[   |Z| 1_{|Z|\leq  r} \P\{G \in [R-\gamma|Z|, R + \gamma|Z|]\}  \right] \\
    &\leq\max(r\gamma,R)\E_{G,Z}\left[   |Z| 1_{|Z|\leq  r} \cdot  \gamma|Z| e^{-(R - \gamma|Z|)^2/2}   \right] \\
    &\lesssim R\E_{G,Z}\left[   \gamma Z^2 e^{-\Omega(R^2)}  \right] \tag*{($r \gamma \leq R/2$)} \\
    &\lesssim  R\gamma e^{-\Omega(R^2)}  \\
    &\lesssim  \gamma e^{-\Omega(R^2)}  \,,
    \numberthis \label{eq:it-lr-kl-div-2}
\end{align*}
where we use
the fact that $xe^{-\Omega(x^2)} \lesssim e^{-\Omega(x^2)}$ for all $x > 0$.
Combining \Cref{eq:it-lr-kl-div-1,eq:it-lr-kl-div-2},
the KL divergence is upper bounded by
\begin{align*}
    \mathrm{KL}(P_v,H) \leq b \gamma^2e^{-\Omega(R^2)} \leq b \gamma^2 e^{- c \tau'^2/\gamma^2}\,.
\end{align*}
where we use that $R \asymp \tau'/\gamma$. 
Since the expressions and parameter restrictions are identical to those encountered 
in the proof of \cref{thm:mean-est-it-ub-lb} (see \Cref{pf:mean-est-it-lb}),
we will get a lower bound of
\begin{align*}
    \min\left( \sqrt{\tau'},  \frac{\tau'}{\sqrt{\log\left(3 + \frac{C nb \tau'^2}{d}\right)}} \right)\,.
\end{align*}
\end{proof}

\section{Proofs of sum-of-squares upper bounds}

\subsection{Mean estimation: Proof of Theorem~\ref{thm:mean-est-sos-upper-bound}}\label{app:pf-mean-est-sos-ub}
    We will prove both claims hold given $\mc{E}$. The result then follows from Lemma~\ref{lem:lower-bound-E-prob}.
    
    \par\medskip\noindent\textbf{Satisfiability.}\quad
    We first show that the constraint~\eqref{eqn:sos-constraint-mean-est} is satisfiable.
    To this end, let $\theta = \theta_\star$. 
    Let $\xi = \E_{x \sim \Tstar + \Tprime} [x^{\otimes 2k}] - \E_{z \sim \mathsf{N}(0,I_d)} [z^{\otimes 2k}]$ (recall $\Tstar + \Tprime$ consists of $n$ i.i.d.\ standard Gaussian observations).
    Applying Lemma~\ref{lem:sos-mean-est-gauss} (with $t = {d^{k}\norm{\xi}_\infty}$) gives
    \begin{align*}
        \bigl\{\norm{v}_2^2 = 1\bigr\}\sos{v}{4k} \E_{x \sim \Tstar + \Tprime} \bigl[\langle v, x-\theta \rangle^{2k}\bigr] \leq \E_{G \sim \mathsf{N}(0,1)}[G^{2k}] + d^k \norm{\xi}_\infty,
    \end{align*}
    from which $\mc{E}$ implies 
    \begin{align*}
        \bigl\{\norm{v}_2^2 = 1\bigr\}\sos{v}{4k} \E_{x \sim \Tstar + \Tprime} \bigl[\langle v, x-\theta \rangle^{2k}\bigr] 
        &\leq \E_{G \sim \mathsf{N}(0,1)}[G^{2k}] + \eps \\
        &\leq (1+\eps) \E_{G \sim \mathsf{N}(0,1)}[G^{2k}].
    \end{align*}
    
    Because the difference (up to normalization) between 
    \[
    \E_{x \sim T} \bigl[\langle v, x-\theta \rangle^{2k}\bigr] \quad \text{ and } \quad \E_{x \sim \Tstar + \Tprime} \bigl[\langle v, x-\theta \rangle^{2k}\bigr]
    \]
    is a sum-of-squares, i.e.,
    \begin{align*}
        \frac{|T|}{n}\E_{x \sim T} \bigl[\langle v, x-\theta \rangle^{2k}\bigr] +\frac{|\Tprime - \Tdprime|}{n}&\E_{x \sim \Tprime - \Tdprime} \bigl[\langle v, x-\theta \rangle^{2k}\bigr] \\
        \qquad\qquad&= \E_{x \sim \Tstar + \Tprime} \bigl[\langle v, x-\theta \rangle^{2k}\bigr],
    \end{align*}
    it then follows that
    \begin{align*}
        \E_{x \sim T} \bigl[\langle v, x-\theta \rangle^{2k}\bigr] \leq \frac{|\Tstar + \Tprime|}{|T|} (1+\eps) \E_{G \sim \mathsf{N}(0,1)}[G^{2k}].
    \end{align*}
    Then $\mc{E}$ implies 
    \begin{align*}
        \E_{x \sim T} \bigl[\langle v, x-\theta \rangle^{2k}\bigr] 
        &\leq \frac{|\Tstar + \Tprime|}{|T|} (1+\epsilon) \E_{G \sim \mathsf{N}(0,1)}[G^{2k}] \\
        &\leq \frac{(1+\epsilon)^2}{1-\epsilon} \E_{G \sim \mathsf{N}(0,1)}[G^{2k}].
    \end{align*}
    
    \par\medskip\noindent\textbf{Accuracy.}\quad
    Let $\pE$ be a degree-$4k$ pseudoexpectation over $\theta \in \R^d$ satisfying the constraint~\eqref{eqn:sos-constraint-mean-est}.  Note that 
    \begin{align*}
        \left(\frac{(1+\epsilon)^2}{1-\epsilon}\right) \E_{G \sim \mathsf{N}(0,1)}[G^{2k}] 
        &\geq \pE\bigl[\E_{x \sim T} \langle v, x-\theta\rangle^{2k} \bigr]\\
        &\geq \pE\biggl[\frac{|\Tstar|}{|T|}  \E_{x \sim \Tstar} \langle v, x - \theta \rangle^{2k} \biggr] \\
        &\geq \pE\bigl[(1- \epsilon) \E_{x \sim \Tstar} \langle v, x - \theta\rangle^{2k}\bigr] \\
        &\geq (1- \epsilon) \E_{x \sim \Tstar} \langle v, x - \pE[\theta]\rangle^{2k},
    \end{align*}
    where the last step follows by combining Lemmas~\ref{lem:pe-holder} and \ref{lem:pe-jensen}.

    For convenience, let 
    \[
    \widehat{p}_\ell = \inf_{v \in \mathbf{S}^{d-1}} \E_{x \sim \Tstar} \bigl[\langle v, x - \theta_\star\rangle^\ell \bigr] \quad \text{ and } \quad \Delta = \bigl \| \theta_\star - \pE[\theta]\bigr \|_2.
    \]
    Then, taking $v = \frac{\pE[\theta] - \theta_\star}{\Delta}$ and dividing the previous display by $1-\epsilon$ on both sides gives
    \begin{align*}
        \left(\frac{1+\epsilon}{1-\epsilon}\right)^2 \E_{G \sim \mathsf{N}(0,1)}[G^{2k}] \geq \widehat{p}_{2k} + \sum_{i=1}^{k - 1} \left\{ {2k \choose 2i } \widehat{p}_{2k-2i} \Delta^{2i} -  {2k \choose 2i -  1} \widehat{p}_{2k-2i + 1} \Delta^{2i-1} \right\} + \Delta^{2k}.
    \end{align*}
    The event $\mc{E}$ implies via Lemma~\ref{lem:tensor-concentration-all} that $|\widehat{p}_{j}  - \E_{G \sim \mathsf{N}(0,1)}[G^{j}]| \leq d^{j/2 - k} \eps$ for $j \in [2k]$. Recall $\gamma = \left(\frac{(1+\epsilon)^2 - (1-\epsilon)^3}{(1-\epsilon)^2}\right)$.
    Rearranging the previous display by moving $\widehat{p}_{2k}$ to the other side gives
    \begin{align}
        \sum_{i=1}^{k - 1} \left\{ {2k \choose 2i } \widehat{p}_{2k-2i} \Delta^{2i} -  {2k \choose 2i -  1} \widehat{p}_{2k-2i + 1} \Delta^{2i-1} \right\} + \Delta^{2k} \leq \gamma \E_{G \sim \mathsf{N}(0,1)}[G^{2k}]. \label{eqn:sos-mean-est-util-1}
    \end{align}
    Let $\Delta_0 = \max_{i \in [k]} 2 \bigl| \frac{\widehat{p}_{2k-2i+1}}{\widehat{p}_{2k-2i}} \frac{2i}{2k-2i+1} \bigr|$
    and suppose that $\Delta > \Delta_0$.
    Then each parenthesized summand in the display~\eqref{eqn:sos-mean-est-util-1} is at least ${2k \choose 2i } \widehat{p}_{2k-2i} \Delta^{2i} \left(1 -\frac{\Delta_0}{2\Delta}\right)$.
    Recall $\mc{E}$ implies $\widehat{p}_{2k-2i} \geq 0$ for all $i \in [k]$ (since these are even-degree polynomials with expectation at least $1$ regardless of $i$), so taking $i=1$ and discarding all other terms gives
    \begin{align*}
    \frac{2k(2k-1)\widehat{p}_{2k-2}}{16} \Delta^2 \leq \gamma \E_{G \sim \mathsf{N}(0,1)}[G^{2k}]\,.  
    \end{align*}
    Thus, we obtain
    \begin{align*}
        \Delta 
        &\leq \max \left(\Delta_0, \frac{4}{k} \sqrt{ \frac{\gamma \E_{G \sim \mathsf{N}(0,1)}[G^{2k}]}{\widehat{p}_{2k-2}}}\right) \\
        &\leq \max \left(\Delta_0, 8 \sqrt{\frac{\gamma}{k}}\right),
    \end{align*}
    where the last step follows from that fact that the event $\mc{E}$
    implies $\widehat{p}_{2k-2} \geq 0.5(2k-3)!!$ (recall $\E_{G \sim \mathsf{N}(0,1)}[G^{2k}] = (2k-1)!!$).

    All that remains is to upper bound $\Delta_0$. Recall $\mc{E}$ implies $|\widehat{p}_{2k-2i+1}| \leq \frac{\epsilon d^{1/2-i}}{2k}$ for all $i \in [k]$ (since these are odd-degree polynomials with expectation zero). Likewise, we can lower bound $\widehat{p}_{2k-2i} \geq 0.5 (2k-2i-1)!!$. Finally, recall $k \leq \sqrt{d}$ by assumption.
    Putting these facts together gives $\Delta_0 \leq 8 \frac{\epsilon}{k} \leq 8 \frac{\gamma}{k}$. \hfill $\qed$

    \subsection{Covariance estimation: Proof of Theorem~\ref{thm:cov-est-sos-upper-bound}}
    We will prove both claims hold given $\mc{E}$. The result then follows from Lemma~\ref{lem:lower-bound-E-prob}. Throughout the proof, we let $\widetilde{\Sigma} = \E_{x \sim T}[xx^T]$ be the empirical covariance of the observed data.

    \par\medskip\noindent\textbf{Satisfiability.}\quad %
    We first show that $\mc{A}_{\text{deviation}}$ is satisfiable f by $M = (\widetilde{\Sigma}^{1/2} \Sigma^{-1} \widetilde{\Sigma}^{1/2})^{1/2}$ and $B = M - I_d$ for $\alpha = 10\eps$. First, observe both matrices are symmetric and $M \succeq 0$ (recall that we assume $\Sigma \succ 0$).
    The event $\mc{E}$ implies for $S \in \{\Tstar,\Tstar + \Tprime\}$ that
    \begin{align*}
        \bigl\|\Sigma^{-1/2}\E_{x \sim S}[ x x^T ] \Sigma^{-1/2} -  I_d\bigr\|_{\rm op} \leq d \bigl \|\Sigma^{-1/2} \E_{x \sim S}[ x x^T ] \Sigma^{-1/2} -  I_d\bigr\|_\infty \leq \eps,
    \end{align*}
    and thus $(1-\eps) \Sigma \preceq \E_{x \sim S}[ x x^T ] \preceq (1+\eps)\Sigma$. Also, $\mc{E}$ implies $m \geq (1-10\epsilon)n$. Thus, after renormalization we have
    \begin{align*}
        \left(\frac{1-\eps}{1+\eps}\right) \Sigma  \preceq \E_{x \sim T}[ x x^T ] \preceq \left(\frac{1+\eps}{1-\eps}\right)\Sigma,
    \end{align*}
    which after whitening becomes
    \begin{align*}
        \left(\frac{1-\eps}{1+\eps}\right) \widetilde{\Sigma}^{-1/2}\Sigma \widetilde{\Sigma}^{-1/2}  \preceq I_d \preceq \left(\frac{1+\eps}{1-\eps}\right)\widetilde{\Sigma}^{-1/2} \Sigma \widetilde{\Sigma}^{-1/2}
    \end{align*}
    and after inverting becomes
    \begin{align*}
        \left(\frac{1-\eps}{1+\eps}\right) \widetilde{\Sigma}^{1/2}\Sigma^{-1} \widetilde{\Sigma}^{1/2}  \preceq I_d \preceq \left(\frac{1+\eps}{1-\eps}\right)\widetilde{\Sigma}^{1/2} \Sigma^{-1} \widetilde{\Sigma}^{1/2}.
    \end{align*}
    Because $\eps < 1/100$, we can bound $\sqrt{\frac{1+\eps}{1-\eps}} \leq 1+5\eps$. So, from the fact that $f(t) \to \sqrt{t}$ is operator monotone it follows by taking square roots of the above display that $M$ and $B$ satisfy the constraints in $\mc{A}$ for $\alpha=10\eps$.

    Now all that remains is to show satisfiability of the $\mc{A}_{\mathrm{moments}}$ for the same choice of $M$.
    Let $\ell \in [k]$ and denote $\xi_S =  \E_{x \sim S} [(\Sigma^{-1/2}x)^{\otimes 2\ell}] - \E_{z \sim \mathsf{N}(0,I_d)} [z^{\otimes 2\ell}]$. We prove the upper bound by taking $S = \Tstar + \Tprime$; the lower bound follows analogously by taking $S = \Tstar$.
    Because $\widetilde{\Sigma}^{-1/2} x \sim \cN(0,M^{-2})$,
    applying Lemma~\ref{lem:sos-mean-est-gauss} (with $t = d^\ell\norm{\xi_{\Tstar + \Tprime}}_\infty$) gives 
    \begin{align*}
        \bigl\{\norm{v}_2^2 = 1\bigr\}\sos{v}{4k} \E_{x \sim \Tstar + \Tprime} \bigl[\langle v, M \widetilde{\Sigma}^{-1/2} x \rangle^{2\ell}\bigr] 
        & \leq \E_{G \sim \mathsf{N}(0,1)}[G^{2\ell}] + d^\ell \norm{\xi_{\Tstar + \Tprime}}_\infty,
    \end{align*}
    from which $\mc{E}$ implies 
    \begin{align*}
        \bigl\{\norm{v}_2^2 = 1\bigr\}\sos{v}{4k} \E_{x \sim \Tstar + \Tprime} \bigl[\langle v, M \widetilde{\Sigma}^{-1/2} x \rangle^{2\ell}\bigr] 
        & \leq \E_{G \sim \mathsf{N}(0,1)}[G^{2\ell}] + \eps \\
        & \leq (1+\eps) \E_{G \sim \mathsf{N}(0,1)}[G^{2\ell}].
    \end{align*}
    
    Because the difference between 
    \[
    \E_{x \sim T} \bigl[\langle v, M \widetilde{\Sigma}^{-1/2} x \rangle^{2\ell}\bigr] \quad \text{ and } \quad \frac{|T|}{n} \E_{x \sim \Tstar + \Tprime} \bigl[\langle v, M \widetilde{\Sigma}^{-1/2} x \rangle^{2\ell}\bigr],
    \]
    is a sum-of-squares, it then follows that 
    \begin{align*}
        \left\{\norm{v}_2^2 = 1\right\}\sos{v}{4k} \E_{x \sim T} \left[\langle v, M \widetilde{\Sigma}^{-1/2} x \rangle^{2\ell}\right] 
        &\leq \frac{n}{|T|} (1+\eps) \E_{G \sim \mathsf{N}(0,1)}[G^{2\ell}] \\
        & \leq \frac{(1+\eps)^2}{1-\eps} \E_{G \sim \mathsf{N}(0,1)}[G^{2\ell}].
    \end{align*}
    Using our assumption that $\eps < 1/100$, we can bound $\frac{(1+\eps)^2}{1-\eps} \leq 1+10\eps$. Thus, $M$ and $B$ satisfy the upper bound in the constraints~\eqref{eqn:sos-constraint-cov-est} over $\ell \in [k]$.

    \par\medskip\noindent\textbf{Accuracy.}\quad 
    Let $\beta = 10\eps$ and $A_{\rm OPT} = \widetilde{\Sigma}^{-1/2} \Sigma \widetilde{\Sigma}^{-1/2}$ for convenience.
    We will show (Lemma~\ref{lem:cov-sos-upper-bound-acc-helper}) the event $\mc{E}$ implies 
    \begin{align*}
        \Bigl\{\bigl\{\norm{v}_2^2 = 1\bigr\} \sos{v}{4k} \E_{x \sim T} \langle v, M \widetilde{\Sigma}^{-1/2} x \rangle^{2k} \in \left(1\pm 8\eps \right)\bigl \|\Sigma^{1/2} \widetilde{\Sigma}^{-1/2} M v\bigr\|_2^{2k}\E_{G \sim \mathsf{N}(0,1)} [G^{2k}] + 2\eps \Bigr\}.
    \end{align*}
    Together with the fact that $\pE$ satisfies constraint~\eqref{eqn:sos-constraint-cov-est}, this implies for $\beta = 10\eps$ that
    \begin{align*}
         \pE\left[\bigl\|\Sigma^{1/2} \widetilde{\Sigma}^{-1/2} M v\bigr\|_2^{2k}\right] \in \left[\frac{1-\beta}{1+\beta}, \frac{1+\beta}{1-\beta}\right],
    \end{align*}
    i.e., $\pE\bigl[\bigl\|\Sigma^{1/2} \widetilde{\Sigma}^{-1/2} M v\bigr\|_2^{2k}\bigr] \in 1 \pm 10\beta$ (since $\beta < 1/2$).
    Combining lemmas~\ref{lem:pe-1-m-alpha} and \ref{lem:pe-1-p-alpha} then gives
    $\pE\bigl[\bigl\|\Sigma^{1/2} \widetilde{\Sigma}^{-1/2} M v\bigr\|_2^2\bigr] \in 1 \pm \gamma$. Because this holds for all $v \in \mathbf{S}^{d-1}$, we obtain $\bigl\|\pE[M A_{\rm OPT} M] - I_d\bigr\|_{\rm op} \leq \gamma$.
    Thus, there exists $A_*$ satisfying $\bigl\|\pE[M A_* M] - I_d\bigr\|_{\rm op} \leq \gamma$, and moreover by the triangle inequality any such $A_*$ must satisfy $\bigl\|\pE[M (A_*-A_{\rm OPT}) M]\bigr\|_{\rm op} \leq 2\gamma$. Applying Lemma~\ref{lem:cov-est-sos-upper-bound-helper} with $H = A_* - A_{\rm OPT}$ then gives $\norm{A_*-A_{\rm OPT}}_{\rm op} \lesssim  \frac{\eps}{k}$. It thus follows for any $v \in \mathbf{S}^{d-1}$ that
    \begin{align*}
        \bigl\|\Sigma^{-1/2} \widetilde{\Sigma}^{1/2} ( A_*-A_{\rm OPT})  \widetilde{\Sigma}^{1/2} \Sigma^{-1/2} v\bigr\|_2 
        & \lesssim \frac{\eps}{k} \bigl\|\Sigma^{-1/2} \widetilde{\Sigma}^{1/2}\bigr\|_{\rm op} \bigl\|\widetilde{\Sigma}^{1/2} \Sigma^{-1/2}\bigr\|_{\rm op}\\
        & \lesssim \frac{\eps}{k},
    \end{align*}
    where the last step follows from the fact that $\mc{E}$ implies $(1-\frac{1}{2}) \Sigma \preceq \widetilde{\Sigma} \preceq 2 \Sigma$. The main claim then follows immediately.

    \begin{lemma}\label{lem:cov-sos-upper-bound-acc-helper}
        The event $\mc{E}$ implies 
        \begin{align*}
            \Bigl\{\bigl\{\norm{v}_2^2 = 1\bigr\} \sos{v}{4k} \E_{x \sim T} \langle v, M \widetilde{\Sigma}^{-1/2} x \rangle^{2k} \in \left(1\pm \beta \right)\bigl\|\Sigma^{1/2}  \widetilde{\Sigma}^{-1/2} M v\bigr\|_2^{2k}\E_{G \sim \mathsf{N}(0,1)} [G^{2k}] \Bigr\}.
        \end{align*}
    \end{lemma}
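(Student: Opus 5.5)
Write $y := \widetilde{\Sigma}^{-1/2}$ applied to the data and $u := \Sigma^{1/2}\widetilde{\Sigma}^{-1/2} M v$, a vector-valued polynomial in $(M,v)$. For $x = \Sigma^{1/2} z$ we have the polynomial identity
\[
\langle v, M\widetilde{\Sigma}^{-1/2}x\rangle = \langle u, \Sigma^{-1/2}x\rangle = \langle u, z\rangle .
\]
Hence $\E_{x\sim S}\langle v, M\widetilde{\Sigma}^{-1/2}x\rangle^{2k} = \langle u^{\otimes 2k}, \E_{x\sim S}[(\Sigma^{-1/2}x)^{\otimes 2k}]\rangle$. We split this as
\[
\langle u^{\otimes 2k}, \E_{z\sim\mathsf N(0,I_d)} z^{\otimes 2k}\rangle + \langle u^{\otimes 2k}, \xi_S\rangle ,
\]
and the first term is exactly $\|u\|_2^{2k}\E[G^{2k}]$. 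It is a polynomial identity in $u$, since the Gaussian moment tensor contracted with $u^{\otimes 2k}$ equals $(2k-1)!!\,(\sum_i u_i^2)^k$. So the plan is to work separately with $S=\Tstar+\Tprime$ for the upper bound and $S=\Tstar$ for the lower bound, mirroring the satisfiability argument and the mean-estimation accuracy argument.

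\textbf{Controlling the error tensor.} We bound $\langle u^{\otimes 2k},\xi_S\rangle$ in both directions by a sum-of-squares proof. The route is the one behind Lemma~\ref{lem:sos-mean-est-gauss}: write
\[
\pm\langle u^{\otimes 2k},\xi_S\rangle \le \|\xi_S\|_\infty \Bigl(\sum_i |u_i|\Bigr)^{2k}\text{-type bounds},
\]
which SoS certifies as
\[
\pm\langle u^{\otimes 2k},\xi\rangle \le d^{k}\|\xi\|_\infty \|u\|_2^{2k}.
\]
Under $\mc E$ this is at most $\eps\|u\|_2^{2k}/(2k\,4^k)$, which is far below $\eps\|u\|_2^{2k}\E[G^{2k}]$.

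\textbf{Transferring from $S$ to $T$.} This uses the same sum-of-squares decomposition as before. On one side,
\[
\E_T[\cdot]\le \frac{n}{|T|}\,\E_{\Tstar+\Tprime}[\cdot]\le \frac{1+\eps}{1-\eps}\,\E_{\Tstar+\Tprime}[\cdot],
\]
and on the other,
\[
\E_T[\cdot]\ge \frac{|\Tstar|}{|T|}\,\E_{\Tstar}[\cdot]\ge (1-\eps)\,\E_{\Tstar}[\cdot].
\]
Combining with the previous step gives the sandwich
\[
(1-\eps)(1-\eps')\|u\|^{2k}\E G^{2k} \le \E_T[\cdot] \le \frac{(1+\eps)(1+\eps')}{1-\eps}\|u\|^{2k}\E G^{2k},
\]
and both constants lie in $1\pm\beta$ with $\beta = 10\eps$ since $\eps<1/100$.

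\textbf{Main obstacle.} The SoS certificate for the tensor error is the hard part, because $u$ depends on the indeterminate $M$. The degree in $(M,v)$ jointly is $4k$, and the constraint is phrased with $v$ as the quantified variable while $M$ is a program variable. I would handle this by proving the inequality as a polynomial identity/SoS in the combined variable $u$: substitute $u$ into the degree-$2k$-in-$u$ certificate. Degrees then double to at most $4k$ in $(M,v)$, matching the stated degree. I would also check that no norm bound on $M$ is needed, since everything is homogeneous in $\|u\|^{2k}$.
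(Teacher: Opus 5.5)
Your argument is correct in substance, and its key step differs from the paper's. The paper applies Lemma~\ref{lem:sos-mean-est-gauss} as stated, that is, SoS Cauchy--Schwarz pairing $u^{\otimes 2k}$ with $\xi_S$. This gives the non-homogeneous bound $\frac{t}{2}\|u\|_2^{4k}+\frac{d^{2k}}{2t}\|\xi_S\|_\infty^2$. The paper then invokes $M\preceq 2I$ from $\mc{A}_{\mathrm{deviation}}$ to replace $\|u\|_2^{4k}$ by the constant $2^{4k}$ and picks $t=(d/4)^{k}\|\xi_S\|_\infty$. After the transfer to $T$ it arrives at $(1+8\eps)\|u\|_2^{2k}\E[G^{2k}]+2\eps$. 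That multiplicative-plus-additive form is what the accuracy argument actually quotes, but it is not literally the purely multiplicative $(1\pm\beta)$ form in the lemma statement. Your homogeneous bound $\pm\langle u^{\otimes 2k},\xi_S\rangle\le d^k\|\xi_S\|_\infty\|u\|_2^{2k}$ delivers exactly the stated form. It also needs none of the $\mc{A}_{\mathrm{deviation}}$ axioms (nor the unstated control of $\|\Sigma^{1/2}\widetilde{\Sigma}^{-1/2}\|_{\rm op}$ hidden in the paper's bound on $\|u\|_2$), so your certificate is an unconditional identity in $(M,v)$. The paper's route gains only that it reuses an existing lemma verbatim; its additive slack is harmless downstream because $\E[G^{2k}]\ge 1$.

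The one step you must repair is the justification of that certificate, because the route you cite does not give it. Lemma~\ref{lem:sos-mean-est-gauss} is non-homogeneous, and making it homogeneous would require $t\propto\|u\|_2^{-2k}$, which is not a constant. Also, $(\sum_i|u_i|)^{2k}$ is not a polynomial, so it cannot appear in an SoS proof. Use the matricization instead. Let $\Xi\in\R^{d^k\times d^k}$ be the symmetric flattening of $\xi_S$. Using the identity $\|u^{\otimes k}\|_2^2=\|u\|_2^{2k}$, you get
\[
\|\Xi\|_{\rm op}\,\|u\|_2^{2k}\mp\langle u^{\otimes 2k},\xi_S\rangle=(u^{\otimes k})^{\top}\bigl(\|\Xi\|_{\rm op}I\mp\Xi\bigr)u^{\otimes k}.
\]
Since $\|\Xi\|_{\rm op}I\mp\Xi\succeq 0$, the right side is a sum of squares of degree-$k$ polynomials in $u$, and $\|\Xi\|_{\rm op}\le\|\Xi\|_F\le d^k\|\xi_S\|_\infty$. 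Substituting the bilinear $u$ then gives degree $4k$ in $(M,v)$, as you say.

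A minor slip, shared with the paper: the lower transfer constant is $m/|T|\ge m/n\ge\frac{1-\eps}{1+\eps}$, not $1-\eps$. With $\beta=10\eps$ and $\eps<1/100$ this is immaterial.
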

    \begin{proof}
        The proof will follow a similar structure to the proof of satisfiability. As before, let $\ell \in [k]$ and denote $\xi_S =  \E_{x \sim S} [(\Sigma^{-1/2}x)^{\otimes 2\ell}] - \E_{z \sim \mathsf{N}(0,I_d)} [z^{\otimes 2\ell}]$. We prove the upper bound by taking $S = \Tstar + \Tprime$; the lower bound follows analogously by taking $S = \Tstar$.

        Let $u = \Sigma^{1/2} \widetilde{\Sigma}^{-1/2} M v$.
        Lemma~\ref{lem:sos-mean-est-gauss} implies for all $t > 0$ that
        \begin{align*}
            \bigl\{\norm{v}_2^2 = 1\bigr\}\sos{v}{4k} & \ \E_{x \sim \Tstar + \Tprime} \bigl[\langle v, M \widetilde{\Sigma}^{-1/2} x \rangle^{2\ell}\bigr] \\
            & = \E_{x \sim \Tstar + \Tprime} \bigl[\langle u, \Sigma^{-1/2} x \rangle^{2\ell}\bigr] \\
            & \leq \norm{u}_2^{2\ell}\E_{G \sim \mathsf{N}(0,1)}[G^{2\ell}] + \frac{t}{2}\norm{u}_2^{4\ell} + \frac{d^{2\ell}}{2t} \norm{\xi_{\Tstar + \Tprime}}_\infty^2.
        \end{align*}
        Using the fact that $M \preceq 2I$ and taking $t = \left(\frac{d}{4}\right)^\ell \norm{\xi_{\Tstar + \Tprime}}_\infty$, we then obtain 
        \begin{align*}
            \bigl\{\norm{v}_2^2 = 1\bigr\}\sos{v}{4k} & \ \E_{x \sim \Tstar + \Tprime} \bigl[\langle v, M \widetilde{\Sigma}^{-1/2} x \rangle^{2\ell}\bigr]  \\
            & \leq \norm{u}_2^{2\ell}\E_{G \sim \mathsf{N}(0,1)}[G^{2\ell}] + {2^{4\ell-1}t} + \frac{d^{2\ell}}{2t} \norm{\xi_{\Tstar + \Tprime}}_\infty^2 \\
            & \leq \norm{u}_2^{2\ell}\E_{G \sim \mathsf{N}(0,1)}[G^{2\ell}] 
            + (4d)^\ell \norm{\xi_{\Tstar + \Tprime}}_\infty \\
            & \leq \norm{u}_2^{2\ell}\E_{G \sim \mathsf{N}(0,1)}[G^{2\ell}] + \eps,
        \end{align*}
        where the last step follows from $\mc{E}$. 
        Because the difference between $\E_{x \sim T} \bigl[\langle v, M \widetilde{\Sigma}^{-1/2} x \rangle^{2\ell}\bigr]$ and $\frac{|T|}{n} \E_{x \sim \Tstar + \Tprime} \bigl[\langle v, M \widetilde{\Sigma}^{-1/2} x \rangle^{2\ell}\bigr]$ is a sum-of-squares, it then follows that 
        \begin{align*}
            \bigl\{\norm{v}_2^2 = 1\bigr\}\sos{v}{4k} \E_{x \sim T} \bigl[\langle v, M \widetilde{\Sigma}^{-1/2} x \rangle^{2\ell}\bigr] 
            & \leq \frac{n}{|T|} \bigl(\norm{u}_2^{2\ell}\E_{G \sim \mathsf{N}(0,1)}[G^{2\ell}] + \eps\bigr) \\
            & \leq \frac{1+\eps}{1-\eps} \bigl((\norm{u}_2^{2\ell}\E_{G \sim \mathsf{N}(0,1)}[G^{2\ell}] + \eps\bigr) \\
            & \leq (1+8\eps)\bigl(\norm{u}_2^{2\ell}\E_{G \sim \mathsf{N}(0,1)}[G^{2\ell}]\bigr) + 2\eps,
        \end{align*}
        where the last step follows from our assumption that $\eps < 1/100$.
    \end{proof}

    \subsubsection{Proof of Lemma~\ref{lem:cov-est-sos-upper-bound-helper}} \label{sec:proof-lem-cov-est-sos-upper-bound-helper}
    Since $\pE$ satisfies $B = M - I_d$, $\pE$ must satisfy $MHM = (I+B)H(I+B)$. Thus for any unit vector $v \in \mathbf{S}^{d-1}$ we have
    \begin{align}
        \left| v^T H v \right|
        &= \left| v^T\pE\left[MHM - (BH + HB) + BHB \right] v\right| \label{eqn:sos-cov-helper-eqn}\\
        &\leq \beta + \left|\pE[v^T (BH + HB) v]\right| + \pE[v^T BHB v]. \nonumber
    \end{align}
    We will proceed to separately bound the two terms in the last display.
    
    First, we have
    \begin{align*}
        \pE[v^T BH v] 
        &\leq \sqrt{\pE[(v^T BH v)^2]} \\
        &\leq \sqrt{\pE[\norm{Bv}_2^2] \pE[\norm{Hv}_2^2]} \\
        &\leq  \alpha \norm{H}_{\rm op},
    \end{align*}
    where the first step follows from Jensen's inequality (Lemma~\ref{lem:pe-jensen}); the second from Cauchy-Schwarz (Lemma~\ref{lem:pe-cauchy-schwarz}); and the last from our constraint that $B^T B \preceq \alpha^2 I$.
    We can similarly bound $\pE[v^T BH v]$ and both negations (i.e., $-\pE[v^T BH v]$ and  $-\pE[v^T BH v]$), implying 
    \begin{align*}
        \left|\pE[v^T (BH + HB) v]\right| \leq 2 \alpha \norm{H}_{\rm op}.
    \end{align*}
    
    For the second term, observe $\norm{H}_{\rm op}^2 \norm{Bv}_2^2 -\norm{H Bv}_2^2$ is a sum of squares, i.e.,  for some $C = \norm{H}_{\rm op}^2 I - H^T H \succeq 0$ and $DD^T = C$
    \begin{align*}
        \norm{H}_{\rm op}^2 \norm{Bv}_2^2 - \norm{H Bv}_2^2 = v^T D D^T v.
    \end{align*}
    Thus, we can bound
    \begin{align*}
        \pE [v^T B H B v]
        & \leq \sqrt{\pE [(v^T B H B v)^2]} \\
        & \leq \sqrt{\pE [\norm{Bv}_2^2] \pE [\norm{H Bv}_2^2]} \\
        & \leq \alpha^2 \norm{H}_{\rm op}.
    \end{align*}

    Combining our bounds on the two terms and taking the supremum over $v$ of the left-hand side of equation~\eqref{eqn:sos-cov-helper-eqn} (recall $H$ is symmetric) gives
    \begin{align*}
        \norm{H}_{\rm op} \leq \beta + 2\alpha \norm{H}_{\rm op} + \alpha^2 \norm{H}_{\rm op},
    \end{align*}
    from which the claim follows immediately. \hfill \qed

\section{Proofs of computational lower bounds}
\label{app:sq-lower-bound}
\subsection{Preliminaries}

The following lemma controls the difference in the moments between the hard univariate distributions for the information-theoretic rate and the standard Gaussian.
\begin{lemma}[Approximate moment matching]
\label{lem:it-approximate-moments}
For a distribution $A$ on $\R$ and a $k \in \mathbb N$, define  
\begin{align*}
\delta_{A,k} := \left|\E_{X \sim A}[X^k] - \E_{G \sim \mathsf{N}(0,1)}[G^k]\right|   
\end{align*}
\begin{enumerate}
    \item If $A$ is the instance from \Cref{eq:it-bound-mean-A} and $|\gamma|\leq R/2$ and $R \gtrsim  1$, then
    \begin{align*}
        \delta_{A,k} \lesssim |\gamma|e^{-\Omega(R^2)} e^{O(k \log \max(k, R))}\,.
    \end{align*}
    \item If $A$ is the instance from \Cref{eq:it-bound-cov-A} with  $R \gtrsim 1$,
    then
    \begin{align*}
        \delta_{A,k} \lesssim \gamma e^{-\Omega(R^2) + O(k \log \max(k,R))}\,.
    \end{align*} 
\end{enumerate}
\end{lemma}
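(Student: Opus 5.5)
The plan is to write the moment difference as an integral of $x^k$ against the signed measure $A-\phi$. Since $A$ agrees with a Gaussian density outside $[-R,R]$ and equals $\beta\phi$ inside, this integral splits into a bulk term and a tail term, and we bound each separately.

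\textbf{Part 1 (mean instance).} For \Cref{eq:it-bound-mean-A} we have
\[
\delta_{A,k} \le |\beta-1|\int_{|x|\le R}|x|^k\phi(x)\,dx + \Bigl|\int_{|x|>R} x^k\bigl(\phi(x;\gamma)-\phi(x)\bigr)dx\Bigr|.
\]
For the bulk term, \Cref{lem:univariate-it-hard-instance-mean} gives $|\beta-1|\lesssim\gamma e^{-R^2/4}$. The integral is at most $\E|G|^k\le k^{k/2}$, which is absorbed into $e^{O(k\log k)}$.

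For the tail term, write $\phi(x;\gamma)-\phi(x)=\int_0^\gamma (x-s)\phi(x-s)\,ds$ by the fundamental theorem of calculus in the mean parameter. This bounds the tail term by
\[
|\gamma|\sup_{|s|\le|\gamma|}\int_{|x|>R}|x|^{k}|x-s|\,\phi(x-s)\,dx.
\]
Since $|s|\le R/2$, on $|x|>R$ we have $|x-s|\ge |x|/2$, hence $\phi(x-s)\le e^{-x^2/8}$. The integral is therefore at most $\int_{|x|>R}|x|^{k+1}e^{-x^2/8}\,dx$. Split $e^{-x^2/8}=e^{-x^2/16}e^{-x^2/16}$: one factor gives $e^{-R^2/16}$, and the remaining $\int |x|^{k+1}e^{-x^2/16}$ is $e^{O(k\log k)}$. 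If instead we bound $|x|^{k+1}$ near $|x|\approx R$ directly, we get $R^{k+1}$, which is the source of the $\log\max(k,R)$ in the exponent. Either way the result is $|\gamma|e^{-\Omega(R^2)}e^{O(k\log\max(k,R))}$. We use $R\gtrsim1$ so that constants such as the prefactor of $|\beta-1|$ are harmless.

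\textbf{Part 2 (covariance instance).} For \Cref{eq:it-bound-cov-A} the same split applies. For the bulk, \Cref{lem:univariate-it-hard-instance-cov} gives $|\beta-1|\lesssim\min\{1,R|1-(1+\gamma)^{-1/2}|\}e^{-\Omega(R^2/(1+\gamma))}$, and $R|1-(1+\gamma)^{-1/2}|\le R\gamma$, so the factor $R$ goes into $e^{O(\log R)}$. For the tail, differentiate in the variance: with $s\in[1,1+\gamma]$,
\[
\partial_s\phi(x;0,s)=\phi(x;0,s)\,\frac{x^2-s}{2s^2}.
\]
This gives a tail term of at most $\gamma\sup_{s}\int_{|x|>R}|x|^k(x^2+s)\phi(x;0,s)\,dx$, which is bounded as before by $e^{-\Omega(R^2/s)}e^{O(k\log\max(k,R))}$.

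\textbf{Main obstacle.} The factor $e^{-\Omega(R^2/(1+\gamma))}$ must be upgraded to $e^{-\Omega(R^2)}$. This holds when $\gamma\lesssim1$, which is the regime where the lemma is used, since $\gamma\le\tau'$ is small in the lower-bound applications. If large $\gamma$ is also needed, we would instead keep only the $\min\{1,\cdot\}$ branch. We expect this regime bookkeeping, rather than the Gaussian integrals themselves, to be the delicate step; the rest of the argument is routine tail estimation.
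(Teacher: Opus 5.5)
Your proof is correct, and it handles the tail term differently from the paper; the bulk term $(\beta-1)\int_{|x|\le R}x^k\phi$ is treated the same way in both.

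\textbf{How the paper handles the tail.} It writes the tail contribution as $\mathbb{E}[(G+\gamma)^k\mathbf{1}\{|G+\gamma|>R\}]-\mathbb{E}[G^k\mathbf{1}\{|G|>R\}]$ and splits it into two pieces:
\begin{itemize}
\item a change-of-integrand piece on the fixed event $|G|>R$, handled by $|a^k-b^k|\le k|a-b|\max(|a|,|b|)^{k-1}$ and Cauchy--Schwarz;
\item a change-of-region piece supported on the thin annulus $|G|\in[R-|\gamma|,R+|\gamma|]$, bounded by $(R+|\gamma|)^{O(k)}$ times the annulus probability.
\end{itemize}
The annulus piece is where the $\log\max(k,R)$ in the exponent comes from. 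Part 2 repeats this with the factor $(1+\gamma)^{k/2}-1$ and the annulus $[R/\sqrt{1+\gamma},R]$.

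\textbf{How your route differs.} You keep the region $|x|>R$ fixed, integrate $x^k$ against the density difference, and express that difference by the fundamental theorem of calculus in the location (resp.\ variance) parameter. This merges the two pieces into a single Gaussian tail integral carrying the score factor. It gives the prefactor $|\gamma|$ (resp.\ $\gamma$) directly and treats both parts uniformly. It also yields the slightly stronger $e^{O(k\log k)}$ in place of $e^{O(k\log\max(k,R))}$.

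\textbf{On your Part 2 caveat.} Requiring $\gamma\lesssim 1$ is more than bookkeeping. The paper's proof also uses it silently, in three places:
\begin{itemize}
\item bounding $|G\sqrt{1+\gamma}|^k$ by $O(R)^k$ on the annulus;
\item bounding $|(1+\gamma)^{k/2}-1|$ by $O(k\gamma)e^k$;
\item converting $e^{-\Omega(R^2/(1+\gamma))}$ into $e^{-\Omega(R^2)}$.
\end{itemize}
For $\gamma\gg 1$ the stated bound actually fails. Take $\tau=\gamma$ and $R^2=\log(1+\gamma)$, which satisfies the hard-instance constraints. Then $\beta\to 0$ and $R/\sqrt{1+\gamma}\to 0$, so $\delta_{A,2}=\gamma-o(1)$. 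This is far above $\gamma e^{-\Omega(R^2)+O(\log R)}=\gamma^{1-\Omega(1)}\,\mathrm{polylog}(\gamma)$.

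So your suggested fallback via the $\min\{1,\cdot\}$ branch cannot rescue that regime, and the restriction is genuinely necessary. It is satisfied where the lemma is used, since there $\gamma<1/2$.
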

\begin{proof}
We first start with $A$ as in \Cref{eq:it-bound-mean-A}.
    Observe that the difference in the moments is exactly equal to
    \begin{align*}
        ( \beta - 1)\E[G^k \1_{|G| \leq R}] + \E[(G + \gamma)^k\1_{|G+\gamma| > R} - G^k\1_{|G| > R}]\,,
    \end{align*}
    The absolute value of the first term is at most $|\beta - 1| (k-1)!! \lesssim |\gamma|e^{-\Omega(R^2)} e^{O(k \log k)}$, where we use \Cref{lem:univariate-it-hard-instance-mean} for controlling $|\beta - 1|$.
    For the second term, we upper bound its absolute value
    as follows (where we shall use using that $|a^k - b^k| \leq k|a-b|\max(|a|^{k-1},|b|^{k-1})$):
    \begin{align*}
      \E[(G + \gamma)^k&\1_{|G+\gamma| > R} - G^k\1_{|G| > R}]  \\
      &\leq \left|\E[((G + \gamma)^k - G^k)\1_{|G| > R}]\right| + \left|\E[(G + \gamma)^k \left(\1_{|G+\gamma|>R} - \1_{|G| > R}\right)]\right|\\
    &\leq \left|\E[k|\gamma| (|G| + |\gamma|)^{k-1}\1_{|G| > R}]\right| + \E[(|G| + \gamma|)^k \1_{|G| \in [R-|\gamma|,R + |\gamma|]}]\\
    &\leq \left|\E[k|\gamma| (|G| + |\gamma|)^{k-1}\1_{|G| > R}]\right| + \E[(O(R + |\gamma|))^k \1_{|G| \in [R-|\gamma|,R + |\gamma|]}]
 \end{align*}
 Now, the second term is at most $(R + |\gamma|)^{O(k)} \P(|G| \in [R - |\gamma|, R + |\gamma|])$, which is at most $|\gamma|R^{O(k)}e^{-\Omega(R^2)}$,
 where we use that $\gamma \lesssim R$.
 For the first term, we apply Cauchy-Schwarz inequality to get it is at most $|\gamma| (k + |\gamma|)^{O(k)} e^{-\Omega(R^2)}$ and then use that $\gamma \leq R$.
 Combining everything, we have shown the desired upper bound on $\delta_{A,k}$.

    Next we consider the case when $A$ is from \Cref{eq:it-bound-cov-A}.
    The difference in the moments is 
    \begin{align*}
        (\beta - 1 )\E[G^k \1_{|G| \leq R}] + \E[ (G\sqrt{1+\gamma})^k \1_{|G \sqrt{1+\gamma}| > R } - G^k\1_{|G| > R}]\,.
    \end{align*}
    The first term is at most $\gamma e^{-\Omega(R^2)} e^{ O(k \log k)}$.
    For the second term, we shall do a similar decomposition as before
    \begin{align*}
       &\left|  \E[ (G\sqrt{1+\gamma})^k \1_{|G \sqrt{1+\gamma}| > R } - G^k\1_{|G| > R}] \right| \\
        &\qquad\leq  \left|  \E[\left( (G\sqrt{1+\gamma})^k - G^k \right)\1_{|G| > R}] \right| 
+  \left|  \E[ (G\sqrt{1+\gamma})^k \cdot \left( \1_{|G \sqrt{1+\gamma}| > R} - \1_{|G| > R}\right)] \right| \\
        &\qquad\leq \left|(\sqrt{1 + \gamma})^k - 1\right| \E G^k \1_{|G| \geq R} + O(R)^k \left|\E[\1_{|G \sqrt{1+\gamma}| > R} - \1_{|G| > R}]\right| \\
        &\leq O(k \gamma) e^{k} \sqrt{\E[G^{2k}]} \sqrt{\P(|G|> R)} + R^k \P(G \in [R/\sqrt{1+\gamma},R]) ,
\end{align*}    
which is at most $\gamma e^{k \log k - \Omega(R^2)} + R^k O(\gamma) e^{-\Omega(R^2)}$. 
\end{proof}

We will also use the following technical result that perturbs a distribution $A$ so that it can turn a distribution that approximately matches moments into one that exactly matches moments.
\begin{lemma}[Fixing the moments~{\cite[Exercise 8.3]{DiaKan22-book}}]
\label{lem:fix-the-moments}
Then there exists a constant $c>0$ such that the following holds.
Let $k \in \bN$ and let $A$ be any distribution over $\R$ with $k$ finite moments such that $|\E_{A}[X^i] - \E_{G \sim \mathsf{N}(0,1)}[G^i]| \leq \alpha$ for all $i \in [k]$.
Suppose that $\inf_{|x| \leq 1} A(x) \geq 2\alpha k^{c}$.
Then there is a distribution $A'$ such that 
\begin{align*}
    |A(x) - A'(x)| \leq \begin{cases}
        \alpha k^{c} & \text{if } |x| \leq 1\\
        0  & \text{otherwise}\,.
    \end{cases}
\end{align*}
and $A'$ matches $k$ moments with $\mathsf{N}(0,1)$ exactly. In particular, $|A(x) - A'(x)|\leq \frac{1}{2}|A(x)|$ for all $x$.
\end{lemma}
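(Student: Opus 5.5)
We build $A'$ as $A$ plus a polynomial perturbation supported on $[-1,1]$. Writing $A'(x) = A(x) + p(x)\1_{\{|x|\le 1\}}$ for a polynomial $p$ of degree at most $k$, the requirement that $A'$ be a probability distribution matching the first $k$ moments of $\mathsf N(0,1)$ becomes a finite linear system. For $i=0,1,\dots,k$ we need
\[
\int_{-1}^{1} x^i p(x)\,\mathrm dx \;=\; m_i := \E_{G\sim\mathsf N(0,1)}[G^i] - \E_{A}[X^i],
\]
with $m_0 = 0$ so that total mass is preserved. By hypothesis $|m_i|\le\alpha$ for all $i\in[k]$.

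To solve the system, expand $p$ in the orthonormal Legendre basis on $[-1,1]$: $p = \sum_{j=0}^k c_j \tilde P_j$, where $\tilde P_j = \sqrt{(2j+1)/2}\,P_j$. Moment matching against $x^i$ for all $i\le k$ is equivalent to matching against $\tilde P_j$ for all $j\le k$. So the coefficients are determined explicitly:
\[
c_j = \int_{-1}^{1}\tilde P_j(x)p(x)\,\mathrm dx = \sum_{i\le j} a_{j,i}\, m_i,
\]
where $a_{j,i}$ are the monomial coefficients of $\tilde P_j$.

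The main (and only nontrivial) step is to bound $\sup_{|x|\le1}|p(x)|$ by $\alpha k^{c}$. The natural first attempt is crude coefficient bounds: $\sum_i|a_{j,i}|\le \sqrt{2j+1}\,|P_j|(\text{at }1\text{ with absolute coefficients})$, which is $\le O(3+2\sqrt2)^j$. That gives $|c_j|\le \alpha\,2^{O(j)}$, and since $\|\tilde P_j\|_\infty\le\sqrt{(2j+1)/2}$, the bound is $\sup|p|\le \alpha\,2^{O(k)}$. This is exponential in $k$, not polynomial.

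For the polynomial bound $k^c$ as stated, I would instead use the Gaussian structure. The $m_i$ are differences of moments, and one expects that the Hermite coefficients of the deviation are what is small. I would therefore re-express the conditions via orthogonality of the perturbation against Hermite polynomials, and bound the resulting coefficients through the dual problem. The dual is a bound on $\max_{|x|\le1}|q(x)|$ for degree-$k$ $q$ in terms of an $L^2([-1,1])$-type norm, which is Markov/Nikolskii-type and polynomial in $k$. If that refinement fails, I would note that the application only needs $\alpha$ small relative to the lower bound on $A$, so a $2^{O(k)}$ factor in place of $k^c$ would be absorbed by the $e^{O(k\log k)}$ slack already present in Lemma~\ref{lem:it-approximate-moments}. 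This polynomial-in-$k$ bound is the step I expect to be the real obstacle.

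Positivity of $A'$ then follows from the size bound. On $|x|\le1$, $|A-A'|=|p|\le\alpha k^c\le\frac12\inf_{|x|\le1}A(x)$ by the assumption $\inf_{|x|\le1}A(x)\ge 2\alpha k^c$. Hence $A'\ge\frac12 A\ge0$ there, and $A'=A$ elsewhere. Together with $m_0=0$ this makes $A'$ a distribution matching $k$ moments exactly, and it also gives $|A-A'|\le\frac12|A|$ pointwise.
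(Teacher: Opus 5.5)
Your Legendre construction is the standard one. Together with your crude coefficient estimate, it already proves the lemma with $k^{c}$ replaced by $2^{O(k)}$ throughout. The gap is exactly the step you flag, and it cannot be closed: once the perturbation is confined to $[-1,1]$ and you only have entrywise control $|m_i|\le\alpha$, the polynomial factor $k^{c}$ is false for every fixed $c$.

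Your proposed dual is also misidentified. Suppose $f$ is supported on $[-1,1]$ with $\int x^jf=m_j$. Then for every $q=\sum_jq_jx^j$ of degree at most $k$,
\[
\sum_jq_jm_j=\int_{-1}^1qf\le\|q\|_{L^1[-1,1]}\|f\|_\infty .
\]
So the governing quantity is the ratio of the monomial-coefficient $\ell_1$ norm of $q$ to $\|q\|_{L^1[-1,1]}$, not a sup-versus-$L^2$ (Nikolskii) ratio, and that ratio is exponential.

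Here is a concrete counterexample. Let $T_k=\sum_jt_jx^j$ be the Chebyshev polynomial. Then $\|T_k\|_{L^1[-1,1]}\le 2$, while (the signs of the $t_j$ alternate) $\sum_j|t_j|=|T_k(\sqrt{-1})|\ge\frac12\bigl((1+\sqrt2)^k-1\bigr)$. Put $m_j=\alpha\,\mathrm{sgn}(t_j)$ for $j\in[k]$. Let $A=\phi+h$, where $h$ is a bounded function supported on $[2,3]$ with $\int h=0$ and $\int x^jh=-m_j$. This is solvable because monomials are linearly independent on $[2,3]$. For $\alpha$ small in terms of $k$, $A\ge 0$ and $\inf_{|x|\le1}A=\phi(1)\ge 2\alpha k^c$, so all hypotheses hold. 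Every admissible $A'$ gives $f=A'-A$ supported on $[-1,1]$ with $\int f=0$ and $\int x^jf=m_j$. Hence
\[
2\|f\|_{L^\infty[-1,1]}\;\ge\;\int_{-1}^{1}T_kf\;=\;\alpha\sum_{j\ge1}|t_j|\;\ge\;\frac{\alpha}{2}\bigl((1+\sqrt2)^k-3\bigr),
\]
which exceeds $2\alpha k^{c}$ once $k$ is large. This lower-bounds \emph{every} admissible perturbation. So no change of basis (Hermite or otherwise) and no sharper duality can rescue the polynomial bound. The optimal factor is $2^{\Theta(k)}$, so your crude estimate has the right order.

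Your fallback is therefore the correct resolution, and it should be stated as the lemma. Assume $\inf_{|x|\le1}A\ge 2\alpha\,2^{Ck}$ and conclude $|A-A'|\le\alpha\,2^{Ck}\le\frac12 A$ on $[-1,1]$, with $A'=A$ elsewhere. Your positivity paragraph proves this verbatim.

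You should also verify, rather than just assert, that this weaker version suffices downstream. In Lemmas~\ref{lem:sq-mean-univariate-A-exact-match} and~\ref{lem:sq-cov-univariate-A-exact-match}, the number of matched moments is $m\le cR^2/\log R$. This is already forced by the $e^{O(k\log\max(k,R))}$ factor in Lemma~\ref{lem:it-approximate-moments}. Consequently $2^{O(m)}=e^{o(R^2)}$ is absorbed into $|\gamma|e^{-\Omega(R^2)}$ in two places: the requirement on $\inf_{|x|\le1}A$, and the bound on $|A'-A|/A$ used to keep the log-likelihood ratio within $0.5\log(1+\tau)$.
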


\begin{claim}
\label{claim:chi-square}
    Let $Q \in \cP_\R(P,q,\eps)$.
    Let $R$ be an arbitrary distribution with finite $\chi^2(P,R)$.
    Then $\chi^2(Q,R) \lesssim         \max(1,\tau^2)\left(1 + \chi^2(P,R)\right)$.
\end{claim}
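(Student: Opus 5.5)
The plan is to control the likelihood ratio $\mathrm{d}Q/\mathrm{d}P$ pointwise through the sandwich relation~\eqref{ineq:sandwich-realizability}, and then pass the bound through the definition of $\chi^2$.

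First I fix the meaning of the statement. Here $Q$ is a conditional contaminated distribution on $\R^d$, i.e.\ $Q \in \cR_\R(P,\eps,q)$, and $\tau = \eps/(q(1-\eps))$. By~\eqref{ineq:sandwich-realizability}, for every $z$,
\[
\frac{\mathrm{d}Q}{\mathrm{d}P}(z) \leq 1 + \frac{\epsilon}{q(1-\epsilon)} = 1+\tau .
\]
In particular $Q \ll P$. If $\chi^2(P,R)<\infty$ then also $P \ll R$, so $\mathrm{d}Q/\mathrm{d}R = (\mathrm{d}Q/\mathrm{d}P)(\mathrm{d}P/\mathrm{d}R)$ is well defined.

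Next, I write the $\chi^2$ divergence as a second moment:
\[
1+\chi^2(Q,R) = \E_R\Bigl[\Bigl(\tfrac{\mathrm{d}Q}{\mathrm{d}R}\Bigr)^2\Bigr] = \E_R\Bigl[\Bigl(\tfrac{\mathrm{d}Q}{\mathrm{d}P}\Bigr)^2\Bigl(\tfrac{\mathrm{d}P}{\mathrm{d}R}\Bigr)^2\Bigr].
\]
Applying the pointwise bound above gives
\[
1+\chi^2(Q,R) \leq (1+\tau)^2\,\E_R\Bigl[\Bigl(\tfrac{\mathrm{d}P}{\mathrm{d}R}\Bigr)^2\Bigr] = (1+\tau)^2\bigl(1+\chi^2(P,R)\bigr).
\]
Finally, $(1+\tau)^2 \leq 4\max(1,\tau^2)$, which yields $\chi^2(Q,R)\lesssim \max(1,\tau^2)\,(1+\chi^2(P,R))$.

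There is no real obstacle in this argument. The only point needing care is conventions: I must check which $\tau$ is meant and that $Q$ ranges over the conditional class. If instead $Q$ were the unconditional sub-probability restricted to $\R^d$, Lemma~\ref{lem:all-or-nothing-characterization} gives the even stronger density bound $\mathrm{d}Q/\mathrm{d}P \leq q(1-\epsilon)+\epsilon \leq 1$, and the same argument goes through.
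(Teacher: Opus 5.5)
Your proof is correct and matches the paper's argument. Both bound $\mathrm{d}Q/\mathrm{d}P \le 1+\tau$ pointwise, factor $\mathrm{d}Q/\mathrm{d}R = (\mathrm{d}Q/\mathrm{d}P)(\mathrm{d}P/\mathrm{d}R)$ inside $\E_R[(\cdot)^2]$, and absorb $(1+\tau)^2$ into $\max(1,\tau^2)$; the only cosmetic difference is that you take the pointwise bound from the sandwich relation~\eqref{ineq:sandwich-realizability}, which is the necessary direction actually needed, whereas the paper routes it through Corollary~\ref{cor:conditional-distribution-realizable} via $U/b \le U/L$.
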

\begin{proof}
    By \Cref{cor:conditional-distribution-realizable},
    we have that  for all $x \in \R^d$:
    \begin{align*}
 \frac{L}{U}\leq  \frac{L}{b} \leq        \frac{q(x)}{p(x)} \leq  \frac{U}{b} \leq \frac{U}{L},
    \end{align*}
    where $b = \P(X \neq \star) \in (L,U)$, $L = q(1-\eps)$, and $U = q(1-\eps) + \eps$.
    Therefore, $q(x)/p(x) \leq (1 + \tau)$.
    \begin{align*}
        \chi^2(Q,R) &= \E_R\left[\left( \frac{q(X)}{r(X)}\right)^2\right] - 1 
        = \E_R\left[\left( \frac{q(X)}{p(X)}\right)^2 \left(\frac{p(X)}{r(X)}\right)^2\right] - 1 \\
        &\leq (1+\tau)^2 \left( 1 + \chi^2(P,R) \right) - 1 \lesssim
        \max(1,\tau^2)\left(1 + \chi^2(P,R)\right)\,.
    \end{align*}
\end{proof}

\paragraph{High-level proof sketch for \Cref{thm:sq-lower-bound-mean,thm:sq-lower-bound-cov}}
Both of our lower bound instances 
would be based on NGCA.
In particular, the null instance would not have any contamination, and we would set $P' = \mathsf{N}(0,I_d)$ for both \Cref{def:testing-mean-lb,def:testing-cov-lb}.
For the alternate instance,
we would set $P':= P_{A,v}$ for some univariate distribution $A$, chosen separately for the mean and the covariance testing problem.
Observe that the corresponding clean distribution in the mean case is equal to $\mathsf{N}(\rho v, I_d)$ and in the covariance setting is equal to $\mathsf{N}(0, I+\rho vv^\top)$.
These could be written compactly as $P_{F,v}$ for $F = \mathsf{N}(\delta, 1 + \gamma)$, where $\gamma = 0, $ $\delta = \rho$ for the mean case and $\delta = 0, \gamma = \rho$ for the covariance case. 

The following claim is immediate.
\begin{claim}
\label{claim:ngca-realizable}
    If $A \in \cR_ \R(F,\eps,\pobserved)$ for a univariate distribution $F$, then $P_{A,v} \in \cR_\R(P_{H,v}, \eps,\pobserved)$.
\end{claim}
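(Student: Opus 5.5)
The argument is a direct density-ratio check via \Cref{cor:conditional-distribution-realizable}; here the statement's $P_{H,v}$ is read as $P_{F,v}$, the clean distribution. Since $A \in \cR_\R(F,\eps,\pobserved)$, the characterization in \Cref{cor:conditional-distribution-realizable} (equivalently, Lemma~\ref{lem:all-or-nothing-characterization} after conditioning on being observed) gives a normalization $b \in [L,U]$, with $L = \pobserved(1-\eps)$ and $U = \pobserved(1-\eps)+\eps$, such that
\[
\frac{L}{b} \leq \frac{\mathrm{d}A}{\mathrm{d}F}(t) \leq \frac{U}{b} \qquad \text{for all } t \in \R.
\]
The plan is to use this same $b$ for the high-dimensional pair. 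The only computation is the density ratio of the two hidden-direction distributions (\Cref{def:hidden-direction-dist}). For every $x \in \R^d$,
\[
\frac{\mathrm{d}P_{A,v}}{\mathrm{d}P_{F,v}}(x) = \frac{a(v^\top x)\,\phi_{v^\perp}(x)}{f(v^\top x)\,\phi_{v^\perp}(x)} = \frac{\mathrm{d}A}{\mathrm{d}F}(v^\top x).
\]
The orthogonal Gaussian factor $\phi_{v^\perp}$ is common to both densities, so it cancels.

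It follows that this ratio lies in $[L/b, U/b]$ for every $x$. Applying \Cref{cor:conditional-distribution-realizable} with $P = P_{F,v}$ and the same $b$ then gives $P_{A,v} \in \cR_\R(P_{F,v},\eps,\pobserved)$. The corollary also provides the lift to $\R^d_\star$: placing mass $1-b$ on $\star^d$ yields a member of $\cR(P_{F,v},\eps,\pobserved)$, which is the form the SQ reductions need.

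One point needs care. \Cref{cor:conditional-distribution-realizable} is stated as an ``if'' condition, so I need the converse direction to extract $b$ from the hypothesis $A \in \cR_\R(F,\eps,\pobserved)$. That direction comes from the definition: if $A$ is the conditional law of some $Q \in \cR(F,\eps,\pobserved)$, take $b = Q(\R)$. Then $\mathrm{d}A/\mathrm{d}F = (\mathrm{d}Q/\mathrm{d}F)/b$, and Lemma~\ref{lem:all-or-nothing-characterization} gives exactly the bounds above. No step is a genuine obstacle.
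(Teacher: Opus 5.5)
Your proposal is correct and is the argument the paper has in mind when it calls the claim immediate: $\phi_{v^\perp}$ cancels, so that $\frac{\mathrm{d}P_{A,v}}{\mathrm{d}P_{F,v}}(x) = \frac{\mathrm{d}A}{\mathrm{d}F}(v^\top x)$, and \Cref{cor:conditional-distribution-realizable} then finishes the proof, which is the same computation the paper carries out in the proof of \Cref{lem:univariate-it-hard-instance-mean}. You were also right to read $P_{H,v}$ as a typo for $P_{F,v}$, and to extract $b = Q(\R) \in [L,U]$ via Lemma~\ref{lem:all-or-nothing-characterization}, which is needed because the corollary is only stated as an ``if''.
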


We get the desired SQ hardness from \Cref{lem:ngca-lower-bound} if $A \in \cR_ \R(H,\eps,\pobserved)$ and matches $m$ moments with $\mathsf{N}(0,1)$,
where $m = \widetilde\Theta(\eps^2/\rho^2)$ for the task of mean estimation and $m =\widetilde\Theta(\eps/\rho)$ for the task of covariance estimation (while ensuring that the $\chi(A,\mathsf{N}(0,1)) \lesssim m$).

\subsection{Mean estimation: Proof of Theorem~\ref{thm:sq-lower-bound-mean}}
\label{sec:sq-lower-bound-mean}
As mentioned above, we shall use \Cref{claim:ngca-realizable} to reduce our problem to an NGCA instance.
We make a change of variable and use $\gamma =\rho$ in the following.
Given the generic SQ hardness in \Cref{lem:ngca-lower-bound},
our goal reduces to finding a univariate distribution $A' \in \cR_\R(\mathsf{N}(\gamma, 1),\eps, q )$ that matches $m$ moments and satisfies $\chi^2(A',\mathsf{N}(0,1))\lesssim 1$.
We start by establishing the existence of a moment-matching distribution, providing its proof in \Cref{sec:sq-mean-univariate-A-exact-matc}.

\begin{lemma}
\label{lem:sq-mean-univariate-A-exact-match}
Let $\eps$ and $\pobserved$ be such that $\tau:= \frac{\eps}{q(1-\eps)} \leq c$ for a small enough constant.
    There exists a univariate distribution $A' \in \cR_\R(\mathsf{N}(\gamma, 1),\eps, q )$ that matches $m = \widetilde\Theta(\frac{\eps^2}{\gamma^2})$ many moments with $\mathsf{N}(0,1)$. 
\end{lemma}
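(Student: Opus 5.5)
The plan is to start from the information-theoretic hard instance of \Cref{lem:univariate-it-hard-instance-mean}, which approximately matches Gaussian moments, and then correct it with \Cref{lem:fix-the-moments} so that the moments match exactly. Concretely, I take $A$ as in \Cref{eq:it-bound-mean-A} with shift $\gamma$ and a truncation radius $R$ to be chosen. By the proof of \Cref{lem:univariate-it-hard-instance-mean}, whenever $\gamma R \leq \log(1+\tau)/8$ and $\gamma^2 \leq 0.25\log(1+\tau)$, we get the stronger bound $|\log(A(x)/\phi(x;\gamma))| \leq 0.25\log(1+\tau)$ for all $x$. For $\tau \leq c$ we have $\log(1+\tau)\asymp \tau \asymp \eps$ (taking $q$ close to $1$), so I set $R \asymp \eps/\gamma$. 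The standing assumption $\rho=\gamma \leq \eps/C$ then guarantees $R \geq 2\gamma$ and $R\gtrsim 1$, and also $\gamma^2 \lesssim \eps$.

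Next I bound the moment discrepancy. \Cref{lem:it-approximate-moments}(1) gives, for every $i\leq m$,
\[
\delta_{A,i} \lesssim \gamma\, e^{-\Omega(R^2)+O(m\log\max(m,R))} =: \alpha .
\]
Choosing $m = c'R^2/\log R \asymp (\eps^2/\gamma^2)/\log(\eps^2/\gamma^2)$ with $c'$ small makes the exponent at most $-\Omega(R^2)$. This gives $\alpha \leq \gamma e^{-\Omega(R^2)}$, and we also need $\alpha m^{c} \ll \tau$.

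Then I apply \Cref{lem:fix-the-moments} to obtain $A'$ matching $m$ moments exactly. Its hypothesis $\inf_{|x|\leq1}A(x)\geq 2\alpha m^{c}$ holds because $A(x)=\beta\phi(x)\gtrsim 1$ on $[-1,1]$ (here $R\geq 1$ and $\beta\approx1$). The perturbation satisfies $|A'-A|\leq \alpha m^c$ and is supported on $[-1,1]$, where $\phi(x;\gamma)\gtrsim 1$. Hence
\[
\left|\log\frac{A'(x)}{A(x)}\right| \lesssim \alpha m^{c} \leq 0.25\log(1+\tau)
\]
as long as $e^{-\Omega(R^2)}\mathrm{poly}(R)\lesssim \tau/\gamma$. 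This holds because $R^2 \asymp \eps^2/\gamma^2$ is large, using $\eps/\gamma\geq C$. Combining the two log-ratio bounds, $|\log(A'/\phi(\cdot;\gamma))|\leq 0.5\log(1+\tau)$. By \Cref{cor:conditional-distribution-realizable} with $b=q(1-\eps)\sqrt{1+\tau}$, this gives $A'\in\cR_\R(\mathsf N(\gamma,1),\eps,q)$.

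The main obstacle is balancing the parameters. The moment count $m$ must be as large as $\widetilde\Theta(R^2)$, yet the $e^{O(m\log m)}$ blow-up in \Cref{lem:it-approximate-moments} and the $m^c$ factor in \Cref{lem:fix-the-moments} must still be dominated by $e^{-\Omega(R^2)}$. This forces the $\log$ loss in $m$ and is exactly where the constant $c'$ must be chosen small. I also need the slack of $0.25$ versus $0.5$ in the likelihood-ratio bound, reserved earlier, to absorb the correction.
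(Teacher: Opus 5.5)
Your proposal is correct and follows the paper's proof essentially step for step. You start from the instance $A$ of \Cref{lem:univariate-it-hard-instance-mean} with $R \asymp \tau/\gamma$ and use \Cref{lem:it-approximate-moments} to get error $\gamma e^{-\Omega(R^2)}$ on the first $m \asymp R^2/\log R$ moments. You then repair those moments with \Cref{lem:fix-the-moments}, which applies because $A = \beta\phi \gtrsim 1$ on $[-1,1]$, and absorb the perturbation into the reserved $0.25$ versus $0.5$ slack in the log-likelihood ratio. The only cosmetic difference is that the paper parametrizes by $\tau$ rather than assuming $q$ close to $1$ to identify $\tau \asymp \eps$; this gives $m = \widetilde\Theta(\tau^2/\gamma^2)$, which is at least the claimed count since $\tau \geq \eps$.
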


To complete the proof using \Cref{lem:ngca-lower-bound}, it remains to show that $\chi^2$ divergence is at most $O(m)$.
Applying \Cref{claim:chi-square},
we get that
\begin{align*}
    \chi^2(A',\mathsf{N}(0,1)) \lesssim 1 + \chi^2(\mathsf{N}(\gamma,1), \mathsf{N}(0,1 ) ) \lesssim 1 +  e^{\gamma^2} \lesssim 1\,,
\end{align*}
where we use that $\gamma \lesssim 1$ and $\tau \lesssim 1$.

\subsubsection{Proof of Lemma~\ref{lem:sq-mean-univariate-A-exact-match}}
\label{sec:sq-mean-univariate-A-exact-matc}
\begin{proof}[Proof of \Cref{lem:sq-mean-univariate-A-exact-match}]
Recall that we are in the regime where $\tau = \frac{\eps}{q(1-\eps)} \leq c_0$ for a small enough constant.
In this regime, $\tau = \Theta(\tau')$.
Our starting point to constructing $A'$ will be the distribution $A$ from \Cref{lem:univariate-it-hard-instance-mean},
which is a valid realizable contamination as per the lemma statement.
The parameter choices will be the same as in the proof of the lower bound in \Cref{thm:mean-est-it-ub-lb}: we choose $R$ to be $\Theta(\tau'/\gamma)= \Theta(\tau/\gamma)$.
Since $\gamma/\tau \leq  c'$ for a small enough enough absolute constant, we have that $R$ is at least a large enough constant.
Furthermore, $\R \leq \sqrt{\tau}/10$ since $\gamma \leq \tau/c$.
Observe that the target number of moments, $m = \widetilde\Theta(\tau^2/\gamma^2)$, is  $\Theta(R^2/\log R)$.

Recall that to get the desired SQ hardness, we want an $A$ that matches $m$ moments for $m\gtrsim \frac{\tau^2}{\gamma^2} \frac{1}{\log(\tau/\gamma)}$.
As shown in \Cref{lem:it-approximate-moments}, it matches roughly $m_0 \asymp  R^2/\log R$
many moments \emph{approximately}.
To be precise, there exists a small  constant $c$ such that if $m \leq cR^2/\log R$, then for all $i \in [m]$:
\begin{align*}
|\E_{A}[X^i] - \E[G^i]| \lesssim |\gamma|e^{-\Omega(R^2)}.
\end{align*}

To match $m$ moments exactly, we 
perturb this distribution to another $A'$ using \Cref{lem:fix-the-moments}.
For this lemma to be applicable, we want that 
\begin{align}
\label{eq:lower-bound-A-mass}
 \inf_{x:|x|\leq 1}A(x) \gtrsim |\gamma| e^{-\Omega(R^2)} \poly(m)  \,. 
\end{align}
To establish \Cref{eq:lower-bound-A-mass}, we shall show that, for $R\gtrsim 1$, the left hand side is at least an absolute constant,
while the right side is upper bounded by $O(e^{-\Omega(R^2)})$, and thus \Cref{eq:lower-bound-A-mass}  is satisfied for $R$ large enough.
For the left hand side, the definition of $A$ tells us that $\inf_{x:|x|\leq 1}A(x) = \beta \phi(1) \gtrsim 1$ since $\beta \geq 1/2$ for $R$ at least a large enough constant (by \Cref{lem:univariate-it-hard-instance-mean}).
For the right hand side, observe that since $m \leq R$, the right hand side is at most $O(e^{-\Omega(R^2)})$.
Thus, the resulting $A'$ will match $m$ moments exactly.
However, we still need to show that $A' \in \cR_\R(\mathsf{N}(\gamma,1),\eps,q)$ is a valid realizable contamination of $\mathsf{N}(\gamma,1)$.
By \cref{cor:conditional-distribution-realizable}, it suffices  to establish that
\begin{align}
\max_{x \in \R}\left|\log\frac{A'(x)}{\phi(x;\gamma,1)}\right| &\leq 0.5 \log(1+\tau)\,.
\label{eq:desired-A-bound-sq}
\end{align}

For $|x|>1$, this is true because $A'$ is exactly equal to $A$.
For $|x| \leq 1$, we need to ensure that the perturbations are small enough.
We have that for any $|x| \leq 1$,
\begin{align*}
\log \frac{A'(x)}{\phi(x;\gamma,1)}
&= \log \left(\frac{A(x)}{\phi(x;\gamma)}\left( 1 + \frac{A'(x) - A(x)}{ A(x)}\right)\right)\\
&= \log \frac{A(x)}{\phi(x;\gamma)} + \log \left( 1 + \frac{A(x) - A'(x)}{ A(x)}\right).
\end{align*}
By \Cref{lem:fix-the-moments}, we have that $\left|\frac{A(x) - A'(x)}{ A(x)}\right|\leq \frac{1}{2}$ and thus the inequality $|\log(1+y)|\lesssim |y|$ for $|y| \leq \frac{1}{2}$ implies that
\begin{align*}
    \left|\log \frac{A'(x)}{\phi(x;\gamma)}\right| &\leq \left|\log \frac{A(x)}{\phi(x;\gamma)} \right| + \left|\frac{A'(x)-A(x)}{A(x)}\right| \\
    &\leq 0.25 \log(1+\tau) +  O(|\gamma|e^{-\Omega(R^2)})\,,
\end{align*}
where we use that in the proof of \cref{lem:univariate-it-hard-instance-mean}, we had established the stronger inequality $\left|\log \frac{A(x)}{\phi(x;\gamma)} \right| \leq 0.25 \log(1+\tau)$ as opposed to a bound of $\left|\log \frac{A(x)}{\phi(x;\gamma)} \right|\leq 0.5 \log(1+\tau)$.
To establish \Cref{eq:desired-A-bound-sq}, we thus want that $\gamma e^{-\Omega(R^2)} \lesssim \log(1+\tau) \lesssim \tau$, meaning $e^{-\Omega(R^2)} \lesssim \tau/\gamma = \Theta(R)$,
which is satisfied as long as $R\gtrsim 1$. 
\end{proof}

\subsubsection{Covariance estimation: Proof of Theorem~\ref{thm:sq-lower-bound-cov}}
We now provide the proof of \Cref{thm:sq-lower-bound-cov}.
\begin{proof}[Proof of \Cref{thm:sq-lower-bound-cov}]
We again make a change of variable and use $\gamma =\rho$ in the remainder of the proof.
Using the same arguments in the proof of \Cref{thm:sq-lower-bound-mean},
it suffices to establish a moment-matching distribution that is a valid realizable contamination.
\begin{lemma}
\label{lem:sq-cov-univariate-A-exact-match}
Let $\eps$ and $\pobserved$ be such that $\tau:= \frac{\eps}{q(1-\eps)} \leq c$ for a small enough absolute constant.
Let $\gamma \in (0,1/2)$ such that $\tau/\gamma \gtrsim 1$.
    There exists a univariate distribution $A' \in \cR_\R(\mathsf{N}(0, 1+\gamma),\eps, q )$ that matches $m = \widetilde\Theta(\frac{\tau}{\gamma})$ many moments with $\mathsf{N}(0,1)$. 
    \end{lemma}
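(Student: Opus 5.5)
We follow the template of the proof of \Cref{lem:sq-mean-univariate-A-exact-match}: start from the information-theoretic hard instance $A$ of \Cref{lem:univariate-it-hard-instance-cov}, show it approximately matches many moments via \Cref{lem:it-approximate-moments}, and then repair the first $m$ moments exactly with \Cref{lem:fix-the-moments}. We use the same $b = q(1-\eps)\sqrt{1+\tau}$ and pick the truncation radius at its largest admissible value, $R^2 = \frac{1+\gamma}{\gamma}\log(1+\tau)$. In the regime $\tau \le c$ we have $\log(1+\tau) \asymp \tau$, so $R^2 \asymp \tau/\gamma$; since $\tau/\gamma \gtrsim 1$ with a large enough implicit constant, $R$ exceeds any needed absolute constant. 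The conditions $\gamma \le \tau$ and the bound on $R^2$ in \Cref{lem:univariate-it-hard-instance-cov} then hold, so $A \in \cR_\R(\mathsf{N}(0,1+\gamma),\eps,q)$.

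\textbf{Approximate moment matching.} Set $m = c' R^2/\log R \asymp \frac{\tau/\gamma}{\log(\tau/\gamma)}$ for a small constant $c'$. Part 2 of \Cref{lem:it-approximate-moments} gives, for all $i \le m$,
\[
\delta_{A,i} \lesssim \gamma\, e^{-\Omega(R^2) + O(m\log \max(m,R))}.
\]
Since $m \le R^2$, we have $m\log\max(m,R) \lesssim m\log R = c' R^2$. Choosing $c'$ small, the exponent is $-\Omega(R^2)$, so every $\delta_{A,i} \le \alpha := \gamma e^{-c'' R^2}$.

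\textbf{Exact matching and realizability.} To apply \Cref{lem:fix-the-moments} we need $\inf_{|x|\le 1} A(x) \ge 2\alpha m^{c}$. By \eqref{ineq:beta-bounds-cov}, $\beta \ge 1/\sqrt{1+\gamma} \ge \sqrt{2/3}$, and $R > 1$, so $A(x) = \beta\phi(x) \gtrsim 1$ on $[-1,1]$. Meanwhile $\alpha m^{c} \le e^{-\Omega(R^2)}\,\mathrm{poly}(R)$, which is tiny for large $R$. The lemma then produces $A'$ that matches $m$ moments exactly, agrees with $A$ off $[-1,1]$, and satisfies $|A'-A| \le \alpha m^{c}$ on $[-1,1]$.

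For realizability, by \Cref{cor:conditional-distribution-realizable} with our choice of $b$ it suffices that $|\log(A'(x)/\phi(x;0,1+\gamma))| \le \tfrac12\log(1+\tau)$ for all $x$. Off $[-1,1]$ this is inherited from $A$. On $[-1,1]$, write the log ratio as $\log(A/\phi(\cdot;0,1+\gamma)) + \log(1 + (A'-A)/A)$. The second term is $O(\alpha m^c) = e^{-\Omega(R^2)}$.

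\textbf{Main obstacle.} The difficulty is the slack in the first term. Unlike the mean case, \Cref{lem:univariate-it-hard-instance-cov} only proves the bound $\tfrac12\log(1+\tau)$, with no spare room. On $|x|\le 1$, however, the first term equals $\log(\beta\sqrt{1+\gamma}) - \frac{x^2\gamma}{2(1+\gamma)}$. It lies in $[-\gamma/2,\ \tfrac12\log(1+\gamma)]$ because $1/\sqrt{1+\gamma} \le \beta \le 1$. Since $\gamma \le \tau/C$ for large $C$, its absolute value is at most $\tfrac14\log(1+\tau)$, which leaves ample room for the $e^{-\Omega(R^2)}$ perturbation. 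I would verify this inequality carefully, using $\log(1+\tau) \asymp \tau$ in this regime. This completes the construction with $m = \widetilde\Theta(\tau/\gamma)$.
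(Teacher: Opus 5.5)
Your proposal is correct and follows the paper's route: start from the instance of \Cref{lem:univariate-it-hard-instance-cov}, obtain approximate matching from \Cref{lem:it-approximate-moments}, repair with \Cref{lem:fix-the-moments}, then check the likelihood ratio. The only difference is how you create room for the repair: the paper takes $R^2 = \frac{1+\gamma}{8\gamma}\log(1+\tau)$ so that $A$ has a global margin of $\frac14\log(1+\tau)$, whereas you keep the maximal $R$ and use that on $[-1,1]$, the only region the repair changes, the log-ratio of $A$ already lies in $[-\gamma/2,\tfrac12\log(1+\gamma)]$, which works just as well.

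One bookkeeping fix: keep the factor $\gamma$ in the perturbation term, i.e.\ write $|A'-A|/A \lesssim \gamma e^{-\Omega(R^2)}\mathrm{poly}(R) \ll \gamma$ rather than $e^{-\Omega(R^2)}$. The reason is that $\tau$ may be arbitrarily small while $R^2 \asymp \tau/\gamma$ is only a large constant, so it is $\gamma \le \tau/C$, not the size of $R$, that puts the perturbation under $\frac14\log(1+\tau)$.
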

\begin{proof}

    We start from $A$ given in \Cref{lem:univariate-it-hard-instance-cov}, where we take $R^2 = \frac{1+\gamma}{8\gamma}\log(1+\tau) \asymp \frac{\tau}{\gamma}$.
    According to \Cref{lem:it-approximate-moments}, the resulting $A$ matches roughly $m_0=\widetilde\Theta(R^2)$ many moments with error at most $O(\gamma e^{-\Omega(R^2)})$.

    Following the same arguments as in the proof of \Cref{lem:sq-mean-univariate-A-exact-match}, we take this approximate moment-matching distribution $A$ and perturb it using \Cref{lem:fix-the-moments} to get an $m_0$ moment-matching distribution $A'$.
    This can be done as long as the density of $A$ is bounded by below by $O(|\gamma| e^{-\Omega(R^2)})\poly(m)$.
    As in the proof of \Cref{lem:sq-mean-univariate-A-exact-match}, this is satisfied as long as $R \gtrsim 1$.
    Next we need to verify that $A'$ is a valid realizable contamination of $\mathsf{N}(0,1+\gamma)$ in the sense of \Cref{eq:desired-A-bound-sq}.
    For this we need to show that the likelihood ratio between $A'$ and $\mathsf{N}(0,1+\gamma)$ is bounded appropriately, meaning that $\left|\log \frac{A'(x)}{\phi(x)}\right| \leq 0.5 \log(1+\tau)$ for all $x \in \R$.
    Doing the same calculations as in \Cref{lem:sq-mean-univariate-A-exact-match}, we get that
\begin{align*}
        \left|\log \frac{A'(x)}{\phi(x)}\right| &\leq \left|\log \frac{A(x)}{\phi(x)} \right| + \left|\frac{A'(x)-A(x)}{A(x)}\right| \\
    &\leq 0.25 \log(1+\tau) +  O(|\gamma|e^{-\Omega(R^2)})\,,
\end{align*}   
where use that $\left|\log \frac{A(x)}{\phi(x)} \right| \leq 0.25\log(1+\tau)$ if (i.) $\log(1+\gamma) \leq 0.5 \log (1+\tau)$, which is equivalent to $\gamma \lesssim \tau \lesssim 1$ in our regime and (ii.) $R^2 \leq \frac{1 + \gamma}{2\gamma} \log(1 + \tau)$; this can be seen by a simple inspection of the proof in  \Cref{lem:univariate-it-hard-instance-cov}.
For the second term to be less than $0.25 \log(1+\tau)$, it suffices that
$e^{-\Omega(R^2)} \lesssim \tau/\gamma \asymp R$,
which happens as long as $R \gtrsim 1$. As per our parameter choice, this is equivalent to requiring $\tau/\gamma \gtrsim 1$.
 \end{proof}
So far, we have established that $A'$ matches the desired number of moments, while being a valid realizable contamination.
Finally, it remains to establish an upper bound on $\chi^2(A',\mathsf{N}(0,1))$.
Using \Cref{claim:chi-square},
we have that
\begin{align*}
    \chi^2(A',\mathsf{N}(0,1)) \lesssim 1 + \chi^2(\mathsf{N}(0,1+\gamma),\mathsf{N}(0,1)) \lesssim 1,
\end{align*}
whenever $\gamma\leq 1/2$ and $\tau \lesssim 1$.
\end{proof}

\section{Extension to multiple missingness patterns for mean and covariance estimation}\label{sec:multiple-patterns}

In this section, we demonstrate how our lower bounds and algorithms can be extended to the multiple pattern setting depicted in Figure~\ref{fig:multiple-pattern}. Let us begin with a generalization of the contamination model in~\eqref{def:realizable-model}.  To this end, let $\mathbb{S}$ be a collection of subsets of $[d]$.  We extend the definitions of MCAR~\eqref{def:MCAR-aon} and MNAR~\eqref{def:MNAR-aon} as

\begin{subequations}
    \begin{align}
    \mathsf{MCAR}_{(P, \mathbb{S}, \pi)} &:= \Bigl\{\law\bigl(X \ostar \Omega\bigr):\, X \sim P \text{ and } \Omega \in \1_{\mathbb{S}}, \; \Omega \indep X, \; \mathbb{P}(\Omega = \1_{S}) = \pi_S \Bigr\} \label{def:MCAR-multiple}\\
    \mnar_{P, \mathbb{S}} &:= \Bigl\{\law\bigl(X \ostar \Omega\bigr):\, X \sim P \text{ and } \law(\Omega) \in \mathcal{P}\bigl(\1_{\mathbb{S}})\bigr)\Bigr\},\label{def:MNAR-multiple}
    \end{align}
\end{subequations}
where above we have let $\1_{\mathbb{S}} = \{\1_{S}\}_{S \in \mathbb{S}}$ and $\1_{S} \in  \{0,1\}^d$ denote the vector which takes the value $1$ on the index set $S$ and $0$ elsewhere.  We then define the natural extension of the all-or-nothing realizable contamination model~\eqref{def:realizable-model} as
\begin{align} \label{def:realizable-model-multiple}
\mathcal{R}(P, \epsilon, \mathbb{S}, \pi) := (1- \epsilon) \mcar_{(P, \mathbb{S}, \pi)} + \epsilon \mnar_{P, \mathbb{S}}.
\end{align}
In the subsequent sections, we show that, at the cost of multiplicative factors scaling polynomially in the cardinality $\lvert \mathbb{S} \rvert$, our algorithmic guarantees and lower bounds can be extended to this more general setting.  

\subsection{Lower bounds}

In order to extend our lower bounds, we rely on the intuition that revealing more data in each observation makes the problem easier.  We make this notion precise through an inflation map, defined presently.
\begin{definition}[Inflation map] \label{def:inflation}
    For $\mathbb{S} \subseteq 2^{[d]}$, we call the map $f: \mathbb{S} \rightarrow 2^{[d]}$ an \emph{inflation map} if, for all $S \in \mathbb{S}$, it holds that $S \subseteq f(S)$.  
\end{definition}
Equipped with this definition, we have the following lemma. 

\begin{lemma}[Enlarging observations makes the problem easier]\label{lem:multi-lb}
  Let $\mathbb{S} \subseteq 2^{[d]}$ and let $f: \mathbb{S} \rightarrow 2^{[d]}$ be an inflation map as in Definition~\ref{def:inflation}.  Let $\mathcal{A}$ denote an algorithm designed for observations from a distribution in $\mathcal{R}(P, \epsilon, \mathbb{S}, \pi)$.  Given observations from $\mathcal{R}(P, \epsilon, f(\mathbb{S}), f_* \pi)$, there exists an efficient algorithm which simulates $\mathcal{A}$.  
\end{lemma}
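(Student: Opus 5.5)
The plan is to build the simulation as a coordinate-wise masking map. Given an observation $Z$ drawn from a distribution in $\mathcal{R}(P, \epsilon, f(\mathbb{S}), f_*\pi)$, its observed support is $T = \mathrm{supp}(Z)$, which lies in $f(\mathbb{S})$. For each $T \in f(\mathbb{S})$ we choose, possibly at random, a preimage $S \in f^{-1}(T)$. We then output $Z \ostar \1_S$, which means re-masking the coordinates in $T \setminus S$. This step only needs $S \subseteq f(S) = T$, so $Z \ostar \1_S$ is a valid partially observed vector with support $S$. The simulated algorithm is $\mathcal{A}$ run on the re-masked observations, and its cost is one pass over the data.

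To make the law of the output correct, we take the selection kernel $\kappa(S \mid T) := \pi_S / (f_*\pi)(T)$ for $S \in f^{-1}(T)$. This is exactly the conditional law of $S$ given $f(S) = T$ under $\pi$.

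Next we verify the MCAR component. Write $Z = X \ostar \Omega$. On the MCAR part, $\Omega = \1_{T}$ with $T \sim f_*\pi$, independently of $X$. After re-masking, the support is drawn as $T \sim f_*\pi$ followed by $S \sim \kappa(\cdot \mid T)$. Its marginal is $\pi$, and it remains independent of $X$ because the kernel uses only $T$ and fresh randomness. Since $(X \ostar \1_T) \ostar \1_S = X \ostar \1_S$ when $S \subseteq T$, the image of $\mathsf{MCAR}_{(P, f(\mathbb{S}), f_*\pi)}$ is $\mathsf{MCAR}_{(P, \mathbb{S}, \pi)}$.

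For the MNAR component, the law of $\Omega$ is arbitrary on $\1_{f(\mathbb{S})}$. After re-masking, the mask $\1_S$ is some random element of $\1_{\mathbb{S}}$ whose law may depend on $X$ through $T$. This is precisely what $\mathsf{MNAR}_{P,\mathbb{S}}$ permits: the mask is any random variable supported on $\1_{\mathbb{S}}$, jointly distributed with $X$. So its image lies in $\mathsf{MNAR}_{P,\mathbb{S}}$.

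The re-masking map is applied pointwise, so it acts linearly on mixtures. It therefore sends $(1-\epsilon)\,\mathsf{MCAR} + \epsilon\,\mathsf{MNAR}$ into $\mathcal{R}(P, \epsilon, \mathbb{S}, \pi)$. Independent samples stay independent under independent applications of the kernel. Hence $\mathcal{A}$ receives i.i.d.\ observations from a valid member of its own model, and its guarantees transfer verbatim.

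The main subtlety is that the kernel depends on $\pi$, which must be known, or at least known up to the information $\mathcal{A}$ already assumes. If $\pi$ is unknown to the simulator, I would first try a deterministic choice, mapping each $T$ to a fixed preimage. This sends MCAR to MCAR with a pushed-forward pattern law $\pi'$, and $\pi'$ differs from $\pi$. The guarantee would then hold only if $\mathcal{A}$ works for all pattern laws on $\mathbb{S}$, so I would state the lemma with the $\pi$-dependent kernel as above.
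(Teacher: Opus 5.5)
Your proposal is correct and takes the same approach as the paper. The paper's proof also re-masks each observation with a randomized map $g$ satisfying $g(S') \subseteq S'$ and $g(S') \sim \pi$ when $S' \sim f_*\pi$, and then checks that this sends the MCAR component to $\mathsf{MCAR}_{(P,\mathbb{S},\pi)}$ and the MNAR component into $\mathsf{MNAR}_{P,\mathbb{S}}$. Your kernel $\kappa(S \mid T) = \pi_S/(f_*\pi)(T)$ is an explicit construction of the $g$ whose existence the paper simply asserts, and your caveat that the simulator must know $\pi$ applies equally to the paper's construction, which assumes it implicitly.
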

\begin{proof}
Let $f_* \pi$ denote the pushforward measure of $\pi$ under the inflation map $f$ so that for any set $S' \in \mathrm{Image}(f)$, $(f_{*} \pi)(S') = \pi\bigl(f^{-1}(S')\bigr)$.  In turn, let $g$ denote a potentially randomized mapping which satisfies $g(S') \sim \pi$ if $S' \sim f_{*}\pi$, and note that it is possible to construct this map so that $g(S') \subseteq S'$, almost surely.  Now, consider any measure $R$ contained in the inflated realizable contamination set $\mathcal{R}(P, \epsilon, f(\mathbb{S}), f_* \pi)$.  By definition, there exist random variables $X \sim P$, $\Omega^{\mnar}, \Omega^{\mcar}$ and $B$ such that 
  \begin{align*}
    X \ostar \bigl\{(1-B)\Omega^{\mcar} + B\Omega^{\mnar}\bigr\} \sim R.
  \end{align*}
Now, since $g(S') \subseteq S'$ and $g(S') \sim \pi$, we see that the random variable 
  \begin{align*}
    X \ostar \bigl\{(1-B)g\bigl(\Omega^{\mcar}\bigr) + Bg\bigl(\Omega^{\mnar}\bigr)\bigr\} \sim R_0 \in \mathcal{R}(P, \epsilon, \mathbb{S}, \pi).
  \end{align*}
It follows that any algorithm designed for observations in $\mathcal{R}(P, \epsilon, \mathbb{S}, \pi)$ can be simulated by masking observed data from $\mathcal{R}(P, \epsilon, f(\mathbb{S}), f_*\pi)$ using $g$.  
\end{proof}
Let us use this reduction to provide information-theoretic lower bounds as well as SQ lower bounds.  Both corollaries use the following instantiation of inflation map $f_I$
\begin{align*}
  f_I : S \mapsto
    \begin{cases}
      S \cup I & \text{ if } \quad I \cap S \neq \emptyset \\
      S & \textrm{ otherwise},
    \end{cases}
\end{align*}
where $I \subseteq [d]$.  Note that for any set of missingness pattern $\mathbb{S}$ and any $I$, either all or nothing of the subset $I$ is observed in any given sample.  Next, consider the error metrics $\| \theta_1 - \theta_2 \|_2$ and $\| \Sigma_1^{-1/2} \Sigma_2 \Sigma_1^{-1/2} - I_d \|_{\mathrm{op}}$ and note that for any index set $I' \subseteq [d]$, it holds that
\[
\| \theta_1 - \theta_2 \|_2 \geq \| (\theta_1)_{I'} - (\theta_2)_{I'} \|_2 \quad \text{ and } \quad \| \Sigma_1^{-1/2} \Sigma_2 \Sigma_1^{-1/2} - I_d \|_{\rm op} \geq \| (\Sigma_1)_{I', I'}^{-1/2} (\Sigma_2)_{I',I'} (\Sigma_1)_{I',I'}^{-1/2} - I_{\lvert I'\rvert} \|_{\rm op}.
\]
Hence, taking the maximum over all subsets $I \subseteq [d]$ over both the minimax quantile lower bounds in Theorems~\ref{thm:mean-est-it-ub-lb} and~\ref{thm:cov-est-it-ub-lb} and the SQ lower bounds in Theorems~\ref{thm:sq-lower-bound-mean} and~\ref{thm:sq-lower-bound-cov} and applying the reduction in Lemma~\ref{lem:multi-lb}, we conclude that the same lower bounds hold in the multiple pattern setting.  

\subsection{Upper bounds}
In the previous section, we showed that our lower bounds for mean and covariance estimation continue to hold in the multiple pattern setting.
Here, we use our algorithms to develop quantitative upper bounds for mean and covariance estimation in turn, beginning with mean estimation.
In order to obtain quantitative guarantees, we require the following pair of regularity conditions
\begin{assumption}[Constant observation probability] \label{asm:constant-observation}
    The pair $(\mathbb{S}, \pi_{\mathbb{S}})$ satisfies the constant observation probability assumption if there exists a constants $c_0 > 0$ such that 
        \[
        \min_{S \in \mathbb{S}}\; \pi_{S} > c_0.
        \]
\end{assumption}
Assumption~\ref{asm:constant-observation} ensures that each pattern is observed a constant fraction of the time.  The second assumption concerns the coverage of the patterns in $\mathbb{S}$.
\begin{assumption}[Coverage regularity]\label{asm:coverage-regularity}
The set of patterns $\mathbb{S}$ has regular coverage if both
\[
\bigcup_{S \in \mathbb{S}} S = [d].  
\]
\end{assumption}
Note that Assumption~\ref{asm:coverage-regularity} is necessary for non-trivial mean estimation.  

\subsubsection{Mean estimation}
We begin by describing how to adapt estimators designed for the all-or-nothing setting to the multiple pattern setting.  We follow the following steps:
\begin{enumerate}
\item For each $S \in \mathbb{S}$, consider the modified input that replaces all inputs whose missingness pattern is not $S$ with $(\star^{d})$ and discard all coordinates in $[d] \setminus S$. This corresponds to an all-or-nothing dataset in $\mathbb{R}^S$.
\item Run an all-or-nothing estimator (e.g.\ the estimator implicit in Theorem~\ref{thm:mean-est-it-ub-lb} or Algorithm~\ref{alg:mean-estimator}) on each dataset to obtain an estimate $\widehat{\theta}_S \in \mathbb{R}^S$ with a high probability error bound $r_S$, $\| \widehat{\theta}_S - (\theta_{\star})_S \|_2 \leq r_S$.  This implies that the true mean $\theta_\star$ lies in the cylinder 
\[
C_S := \Bigl\{ \theta \in \R^d: \; \bigl \| \widehat{\theta}_S - \theta_S  \bigr \|_2 \leq 2r_S \Bigr\}.
\]

\item Since the guarantee above holds for all patterns $S \in \mathbb{S}$ (after taking a union bound), we deduce that
\[
\theta_{\star} \in \bigcap_{S \in \mathbb{S}}\, C_{S} =: \mathcal{C}.  
\]
Moreover, since each of the cylinders $C_{S}$ are convex, we can find some element $\widehat{\theta} \in \mathcal{C}$ via convex optimization.

\end{enumerate}
After producing an estimator $\widehat{\theta}$ using the steps above, we note that since both $\widehat{\theta} \in \mathcal{C}$ and $\theta_{\star} \in \mathcal{C}$, we can produce an error bound by computing a bound on the diameter of the set $\mathcal{C}$.  

In order to make the above steps precise, three items remain.  First, in order to use our all-or-nothing algorithms, we must show that the reduction in Step 1 preserves realizability.  Second, we provide more detail on how to produce an element in $\mathcal{C}$ via convex optimization.  Finally, we provide a bound on the diameter of the set $\mathcal{C}$.  We provide the details for each of these three items in order.

\paragraph{Projection preserves realizability.} Let us begin with some notation.  For $S \subseteq [d]$, let $\Pi^S : \mathbf{R}_\star^d \rightarrow \mathbf{R}_\star^{|S|}$ be the projection onto the coordinates in $S$.  Additionally, define the censoring function $F^S : \mathbf{R}_\star^d \rightarrow \mathbf{R}^{\lvert\mathbb{S}\rvert}$ as 
\begin{align*}
  F^S(x) = \begin{cases}
      x_S & \textrm{if } x_{j} \neq \star \text{ for all } j \in S \text{ and } x_{j} = \star \text{ for all } j \in S^{c} \\
      (\star)^{|S|} & \textrm{otherwise};
  \end{cases}
\end{align*}
i.e., take the coordinates when the missingness pattern is exactly $S$ and fully censor otherwise.  The following straightforward lemma demonstrates that this censoring map preserves realizability. 
\begin{lemma}
    Suppose that $Q \in \mathcal{R}(P, \epsilon, \mathbb{S}, \pi)$.  Then, for every $S \in \mathbb{S}$, the pushforward measure $(F^S)_* Q$ satisfies the inclusion
   $(F^S)_* Q \in \mathcal{R}(\Pi^S_\star P, \epsilon, \pi(S))$.
\end{lemma}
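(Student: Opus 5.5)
The approach is to use the mixture representation behind the definition~\eqref{def:realizable-model-multiple} and push it coordinatewise through $F^S$. Since $Q \in \mathcal{R}(P,\epsilon,\mathbb{S},\pi)$, we can write $Q = \mathsf{Law}\bigl(X \ostar \{(1-B)\Omega^{\mcar} + B\Omega^{\mnar}\}\bigr)$. Here $X \sim P$ and $B \sim \Bern(\epsilon)$ is independent of everything else. The variable $\Omega^{\mcar} \in \1_{\mathbb{S}}$ is independent of $X$ with $\mathbb{P}(\Omega^{\mcar} = \1_{S'}) = \pi_{S'}$, and $\Omega^{\mnar} \in \1_{\mathbb{S}}$ is arbitrarily coupled with $X$.

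Fix $S \in \mathbb{S}$ and define the indicators $\omega^{\mcar} := \1\{\Omega^{\mcar} = \1_S\}$ and $\omega^{\mnar} := \1\{\Omega^{\mnar} = \1_S\}$. The key observation is that for $x \in \R^d$ and any $\omega \in \1_{\mathbb{S}}$,
\[
F^S(x \ostar \omega) = (x_S) \ostar \bigl(\1\{\omega = \1_S\} \cdot (1)^{|S|}\bigr).
\]
This holds because the pattern of $x \ostar \omega$ is exactly $\mathrm{supp}(\omega)$, so $F^S$ returns $x_S$ precisely when $\omega = \1_S$ and returns $\star^{|S|}$ otherwise. Composing with the mixture gives
\[
F^S(Z) = X_S \ostar \bigl\{(1-B)\,\omega^{\mcar} + B\,\omega^{\mnar}\bigr\}(1)^{|S|}.
\]

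It remains to check the three ingredients of the all-or-nothing model~\eqref{def:realizable-model} with $P$ replaced by $\Pi^S_\star P = \mathsf{Law}(X_S)$ and $q = \pi(S)$.
\begin{itemize}
\item[(i)] $X_S \sim \Pi^S_\star P$.
\item[(ii)] $\omega^{\mcar}$ is a function of $\Omega^{\mcar}$ alone, so it is independent of $X$ (hence of $X_S$) and of $B$, and $\mathbb{P}(\omega^{\mcar}=1) = \pi_S$. Thus $(1)^{|S|}\omega^{\mcar}$ is an MCAR all-or-nothing mask with observation probability $\pi(S)$.
\item[(iii)] $\omega^{\mnar}$ is an arbitrary $\{0,1\}$-valued variable coupled with $X$. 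Its law given $X_S$ is some mechanism, possibly depending on $X_{S^c}$, but it is still a missingness variable jointly distributed with $X_S$, which is all $\mnar_{\Pi^S_\star P}$ requires.
\end{itemize}
Because $B$ is independent of $(X,\Omega^{\mcar},\Omega^{\mnar})$, the law of $F^S(Z)$ is $(1-\epsilon)$ times an element of $\mcar_{(\Pi^S_\star P,\pi(S))}$ plus $\epsilon$ times an element of $\mnar_{\Pi^S_\star P}$, which is the claim.

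The only step that needs care, and the one I would treat as the main obstacle, is (iii). One must confirm that dependence of $\omega^{\mnar}$ on the discarded coordinates $X_{S^c}$ does not take us outside $\mnar_{\Pi^S_\star P}$. It does not, since that set places no restriction on the joint law of $(X_S,\Omega)$ beyond the marginal of $X_S$; any such joint law is realized by sampling $X_S$ and then $\Omega$ from its conditional. As a cross-check, one can also verify the conclusion through the likelihood-ratio characterization in Lemma~\ref{lem:all-or-nothing-characterization}. On $\R^{|S|}$, the density of $(F^S)_*Q$ with respect to $\Pi^S_\star P$ is $(1-\epsilon)\pi_S + \epsilon\,\mathbb{P}(\Omega^{\mnar}=\1_S \mid X_S)$, which lies in $[\pi_S(1-\epsilon), \pi_S(1-\epsilon)+\epsilon]$, as required.
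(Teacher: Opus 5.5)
Your proof is correct and follows essentially the same route as the paper. The paper also starts from the coupling $X \ostar \{(1-B)\Omega + B\Omega'\}$ and applies a map $g$ that sends $\mathbf{1}_S$ to the all-ones vector in $\{0,1\}^{|S|}$ and every other pattern to zero (exactly your indicators $\omega^{\mathrm{mcar}},\omega^{\mathrm{mnar}}$), then checks the needed independence and that the observation probability is $\pi(S)$. Your likelihood-ratio cross-check via Lemma~\ref{lem:all-or-nothing-characterization} is a valid, arguably cleaner, alternative verification, but it is not needed.
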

\begin{proof}
Note that by definition there exists a tuple of random variables $X \sim P, \Omega^{\mcar}, \Omega^{\mnar}$ and $B \sim \Bern(\epsilon)$ such that
\[
X \ostar \bigl\{ (1 - B) \Omega^{\mcar} + B \Omega^{\mnar}\bigr\} \sim Q. 
\]
Now, consider the map $g: \{0, 1\}^{d} \rightarrow \{0, 1\}^{\lvert S \rvert}$ defined so that $g(\1_{S}) = \1_{\lvert S\rvert}$ (the all ones vector) and $g(v) = 0$ (the all zeros vector) for all other $v$.  Note that 
\[
\Pi^{S} X \ostar \bigl\{ (1 - B) g\bigl(\Omega^{\mcar}\bigr) + B g\bigl(\Omega^{\mnar}\bigr)\bigr\} \sim (F^S)_* Q. 
\]
Further, it holds that $B, g(\Omega^{\mcar})$, and $(\Pi^S X, g(\Omega^{\mnar}))$ are mutually independent and where $\mathbb{P}(g(\Omega^{\mcar}) = \1_{\lvert S \rvert}) = \pi(S)$.
This proves the claim.
\end{proof}

\paragraph{Computing an element $\widehat{\theta} \in \mathcal{C}$.}  We will show that an element of $\mathcal{C}$ can be computed in polynomial time by the ellipsoid method.  In order for this to hold, we require inner and outer radii $r$ and $R$ such that
\[
\mathbf{B}_2(\theta, r) \subseteq \mathcal{C} \subseteq \mathbf{B}_2(0, R),
\]
for some element $\theta \in \mathcal{C}$ and a separation oracle for the set $\mathcal{C}$.  Let us begin with a separation oracle.
Note that if $\theta \notin \mathcal{C}$, then there exists $S \in \mathbb{S}$ such that $\theta \notin C_S$.
Note that a separation oracle for balls in $\bR^{\lvert S \rvert}$ provides a separation oracle for $C_S$ by filling the coordinates in $[d] \setminus S$ with zeros.
This holds for all $S \in \mathbb{S}$ and in turn provides a separation oracle for the intersection $\mathcal{C}$.  

Next, we show that $\mathcal{C} \subseteq \mathbf{B}_2(0, R)$.  Note that by Assumption~\ref{asm:coverage-regularity}, it holds that $\cup_{S \in \mathbb{S}}\, S = [d]$.  Hence, by the triangle inequality, we deduce that for any $\theta \in \mathcal{C}$,
\[
\| \theta \|_2 \leq \sum_{S \in \mathbb{S}} \| \theta_S \|_2 \leq \sum_{S \in \mathbb{S}} 2r_S + \| \widehat{\theta}_S \|_2 =: R.
\]

Finally, we show that there exists an $r > 0$ such that $\mathsf{B}_2(\theta_{\star}, r) \subseteq \mathcal{C}$.  To this end, we take $r := r_{\min} = \min_{S \in \mathbb{S}}\, r_S$.  We then see that for any $\theta \in \mathbf{B}_2(\theta_{\star}, r)$, 
\[
\| \theta_S - \widehat{\theta}_S \|_2 \leq \| (\theta_{\star})_S - \widehat{\theta}_S \|_2 + \|  \theta_S - (\theta_{\star})_S \|_2 \leq r_S + r_{\min} \leq 2 r_S.
\]
Since the above inequality holds for all $S \in \mathbb{S}$, it holds that $\theta \in \mathcal{C}$, whence we deduce that $\mathbf{B}_2(\theta, r) \subseteq \mathcal{C}$.

It remains to bound the diameter of the set $\mathcal{C}$.

\paragraph{Bounding the diameter of the set $\mathcal{C}$.}
Let $w: \mathbb{S} \rightarrow [0,1]$ denote a weighting function such that for all $j \in [d]$, 
\[
\sum_{S: i \in S}\; w(S) \geq 1.
\]
Note that under the coverage condition in Assumption~\ref{asm:coverage-regularity}, such a weighting function exists.  Let $I_S = \mathrm{diag}(\1_S) \in \mathcal{C}^d_+$ denote the diagonal matrix which contains ones on the indices contained in the set $S$ and note that the previous display ensures that
\[
I_d \preceq \sum_{S \in \mathbb{S}}\, w(S) I_S.
\]
It follows that
\begin{align*}
    \bigl \| \widehat{\theta} - \theta_{\star} \bigr\|_2^2
    &= \bigl(\widehat{\theta} - \theta_{\star}\bigr)^{\top} I_d \bigl(\widehat{\theta} - \theta_{\star}\bigr)
    \leq \sum_{S \in \mathbb{S}}\, w(S) \cdot \bigl(\widehat{\theta} - \theta_{\star}\bigr)^{\top} I_S \bigl(\widehat{\theta} - \theta_{\star}\bigr) \\
    &\leq \sum_{S \in \mathbb{S}}\, w(S) \cdot \bigl \| \widehat{\theta} - \theta_{\star}\bigr \|_2^2 \leq 4 \sum_{S \in \mathbb{S}}\; w(S) r_S^2.  
\end{align*}
Note that it is always valid to taking the weighting function $w(S) = 1$ for all $S \in \mathbb{S}$ so that 
\[
\bigl \| \widehat{\theta} - \theta_{\star} \bigr\|_2^2 \leq 4 \lvert \mathbb{S} \rvert \cdot \max_{S \in \mathbb{S}} r_S^2.
\]
Combining this bound with the lower bound in the previous section, we see that under Assumption~\ref{asm:constant-observation} (which ensures $\lvert \mathbb{S} \rvert$ is of constant order), our algorithms are optimal in the multi-pattern setting up to a multiplicative factor of $\lvert \mathbb{S} \rvert$.

\subsubsection{Covariance estimation}
We propose a very similar algorithm as in the mean estimation setting.  Although in the main text, we provide guarantees for our estimators in relative operator norm, in order to simplify the development here, we consider guarantees in the (weaker) operator norm.  In particular, for each $S \in \mathbb{S}$, our algorithms yield estimators $\widetilde{\Sigma}_{S} \in \mathcal{C}^{\lvert S \rvert_{+}}$ which satisfy the guarantee
\[
\bigl \| \widetilde{\Sigma}_{S} - (\Sigma_{\star})_{S,S} \bigr \|_{\mathrm{op}} \leq r_S, \quad \text{ for all } \quad S \in \mathbb{S}.
\]
Proceeding as before, we define the cylinders 
\[
C_S := \Bigl\{\Sigma \in \mathcal{C}^d_{+}: \; \bigl \| \widetilde{\Sigma}_S - P_S \Sigma P_S^{\top} \bigr \|_{\mathrm{op}} \leq r_S \Bigr\},
\]
where the matrices $P_S \in \mathbf{R}^{\lvert S \rvert \times d}$ take the form $P_S = \bigl((e_{j})_{j \in S}\bigr)^{\top}$ for $e_j$ the $j$th elementary vector. We again define the intersection $\mathcal{C} = \cap_{S \in \mathbb{S}}\, C_S$ and produce an estimate $\widehat{\Sigma}$ as any element of $\mathcal{C}$.  As in the mean estimation setting, we produce such an estimate via the ellipsoid method.  Let us note that the inner radius $r$, outer radius $R$, and separation oracle can be obtained in a parallel manner to in the mean estimation setting.  

The primary departure from the setting of mean estimation comes when we bound the diameter of the set $\mathcal{C}$.  In particular, we require the following coverage condition.
\begin{assumption}[Coverage regularity for covariance]\label{asm:coverage-regularity-cov}
The set of patterns $\mathbb{S}$ has regular coverage if, for every pair $i, j \in [d]$, there exists $S \in \mathbb{S}$ such that $\{i, j\} \subseteq S$.  
\end{assumption}
Henceforth, we will assume that Assumption~\ref{asm:coverage-regularity-cov} is in force.  Next, let $I_1, \ldots, I_k$ be a partition of the coordinate set $[d]$ such that for every pair $\ell_1, \ell_2 \in [k]$, there exists a pattern $S \in \mathbb{S}$ such that the union $I_{\ell_1} \cup I_{\ell_2} \subseteq S$.  Note that under Assumption~\ref{asm:coverage-regularity-cov}, such a partition always exists.  For instance, one can take the partition into singleton sets.  Given such a partition, let $S_{\ell_1,\ell_2} \in \mathbb{S}$ satisfy $I_{\ell_1} \cup I_{\ell_2} \subseteq S_{\ell_1, \ell_2}$.  Now, for any $\widehat{\Sigma} \in \mathcal{C}$ and $v \in \mathbf{S}^{d-1}$, we have
\begin{align*}
    v^{\top} \bigl(\widehat{\Sigma} - \Sigma_{\star} \bigr)v &= v^{\top} \biggl\{ \biggl(\sum_{\ell=1}^{k} P_{I_\ell}\biggr) \bigl( \widehat{\Sigma} - \Sigma_{\star} \bigr) \biggl( \sum_{\ell = 1}^{k} P_{I_{\ell}}\biggr) \biggr\} v = \sum_{\ell_1, \ell_2 \in [k]} v^{\top} P_{I_{\ell_1}} \bigl(\widehat{\Sigma} - \Sigma_{\star}\bigr) P_{I_{\ell_1}}  v\\
    &= \sum_{\ell_1, \ell_2 \in [k]} v^{\top} P_{I_{\ell_1}} P_{S_{\ell_1, \ell_2}} \bigl(\widehat{\Sigma} - \Sigma_{\star}\bigr)  P_{S_{\ell_1, \ell_2}} P_{I_{\ell_1}} v\\
    &\leq \sum_{\ell_1, \ell_2 \in [k]} \bigl \| P_{I_{\ell_1}} v \bigr\|_2 \bigl \| P_{I_{\ell_2}} v \bigr\|_2 r_{S_{\ell_1, \ell_2}} \\
    &\leq r_{\max} \cdot \biggl\{ \sum_{\ell \in [k]} \bigl \| P_{I_\ell} v \bigr\|_2\biggr\}^{2} \leq k r_{\max},
\end{align*}
where $r_{\max} = \max_{S \in \mathbb{S}}\, r_{S}$.  We thus see that minimizing the number of partitions yields better rates.  

Let us conclude by providing a coarse upper bound on the size of a minimum partition.  As mentioned, the singleton partition always satisfies the desired condition.  This, however, gives the dimension dependent upper bound $k \leq d$.  When the number of patterns $\lvert \mathbb{S} \rvert$ is a constant, we can provide a smaller partition by taking $k = 2^{\lvert \mathbb{S} \rvert}$.  Let $\ell \in 2^{\lvert \mathbb{S} \rvert}$ and consider its binary representation.  This representation corresponds to which sets in $\mathbb{S}$ each element is a member of.  By construction, this is a partition of $[d]$.  Moreover, it satisfies the pairwise union condition.  To see this, consider any element $x_{\ell_1} \in I_{\ell_1}$ and $x_{\ell_2} \in I_{\ell_2}$ and note that by Assumption~\ref{asm:coverage-regularity-cov}, there exists some set $S \in \mathbb{S}$ such that both $x_{\ell_1}, x_{\ell_2} \in S$.  It thus follows that both $I_{\ell_1}, I_{\ell_2} \subseteq S$ so that $I_{\ell_1} \cup I_{\ell_2} \subseteq S$.  Hence, we deduce the bound $k \leq 2^{\lvert \mathbb{S} \rvert}$ so that
\[
\bigl \| \widehat{\Sigma} - \Sigma_{\star} \bigr \|_{\mathrm{op}} \leq \min\bigl\{d, 2^{\lvert \mathbb{S} \rvert}\bigr\} r_{\max}\leq C_{\lvert \mathbb{S} \rvert} r_{\max},
\]
where the final inequality follows from Assumption~\ref{asm:constant-observation} and  $C_{\lvert \mathbb{S} \rvert}$ is a constant depending only on the number of patterns.

\section{Auxiliary lemmas}

\subsection{Reduction to $q=1$: Proof of Lemma~\ref{lem:gen-q-conversion}} \label{sec:proof-lem-gen-q-conversion}
\begin{proof}
  First, suppose that $Q \in \cR(P, \epsilon, q)$, so that by Lemma~\ref{lem:all-or-nothing-characterization}, for all $z \in \bR^d$,
  \begin{align}\label{ineq:sandwich-Q}
      q(1-\epsilon) \leq \frac{\mathrm{d}Q}{\mathrm{d}P}(z) \leq q(1-\epsilon)+\epsilon = q'.
  \end{align}
  We then define the distribution $Q'$ on $\R^d \cup \{\star^{d}\}$ so that
  \[
  \frac{\mathrm{d} Q'}{\mathrm{d} P}(z) = \frac{1}{q'} \frac{\mathrm{d}Q}{\mathrm{d}P}(z), \;\; \text{ for } z \in \R^d, \quad \text{ and } \quad Q'\bigl(\{\star^{d}\}\bigr) = 1 - \frac{1}{q'} \cdot Q\bigl(\R^d\bigr).
  \]
  Note that by construction, $Q'$ is a probability distribution.  Moreover, observe that the invariant relation
  \[
  q(1 - \epsilon) = q' \cdot (1 - \epsilon'),
  \]
  holds by construction.  Consequently, applying the sandwich relation~\eqref{ineq:sandwich-Q}, we obtain the inequalities
  \[
  \frac{\mathrm{d}Q'}{\mathrm{d}P}(z)  = \frac{1}{q'} \cdot \frac{\mathrm{d}Q}{\mathrm{d}P}(z) \leq 1 \quad \text{ and } \quad \frac{\mathrm{d}Q'}{\mathrm{d}P}(z)  = \frac{1}{q'} \cdot \frac{\mathrm{d}Q}{\mathrm{d}P}(z) \geq 1 - \epsilon'.
  \]
  By Lemma~\ref{lem:all-or-nothing-characterization}, this implies that $Q' \in \mathcal{R}(P, \epsilon', 1)$.  To conclude this part, note that $q' \cdot Q'$ and $Q$ agree on the entirety of $\R^d$ so that $Q = q' \cdot Q' + (1 - q')\delta_{\{\star^{d}\}}$, since $Q$ must integrate to $    1$ on $\R^d \cup \{\star^d\}$.  

  To obtain the reverse implication, suppose that $Q' \in \mathcal{R}(P, \epsilon', 1)$.  Applying~\ref{lem:all-or-nothing-characterization} once more yields
  \begin{align*}
     q(1-\epsilon) = q' \cdot (1-\epsilon') \leq q' \cdot \frac{\mathrm{d}Q'}{\mathrm{d}P}(z) \leq q' = q(1-\epsilon) + \epsilon.
  \end{align*}
  Now, consider $Q = q' \cdot Q' + (1 - q') \cdot \delta_{\{\star\}^{d}}$.  By construction, $Q$ is a probability distribution.  Hence, from the above display, we conclude that $Q \in \cR(P, \epsilon, q)$.
\end{proof}

\subsection{Useful general lemmas}

\begin{lemma}\label{lem:tensor-concentration-helper-2}
    Let $a,b \geq 0$ and $\ell \leq 1$. Then
    \begin{align*}
        (a+b)^\ell \leq a^\ell + b^\ell.
    \end{align*}
\end{lemma}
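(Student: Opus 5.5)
The plan is to reduce to the one-variable inequality $(1+t)^\ell \le 1 + t^\ell$ for $t \ge 0$ and $\ell \in (0,1]$. Here the condition ``$\ell \leq 1$'' is read as $0 < \ell \le 1$. The case $\ell = 0$ gives $1 \le 2$ and holds trivially, with the convention $0^0 = 1$. For negative $\ell$ the statement can fail, e.g.\ when $b = 0$. First dispose of the degenerate cases. If $a = 0$ or $b = 0$ the inequality is an equality, using $0^\ell = 0$ for $\ell > 0$. So assume $a, b > 0$.

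By homogeneity of degree $\ell$ on both sides, divide by $a^\ell$ and set $t = b/a > 0$. The claim then becomes $g(t) := 1 + t^\ell - (1+t)^\ell \ge 0$. We have $g(0) = 0$ and
\[
g'(t) = \ell\bigl(t^{\ell-1} - (1+t)^{\ell-1}\bigr).
\]
Since $\ell - 1 \le 0$, the map $s \mapsto s^{\ell-1}$ is nonincreasing on $(0,\infty)$. As $t < 1+t$, this gives $t^{\ell-1} \ge (1+t)^{\ell-1}$, hence $g' \ge 0$ on $(0,\infty)$. Because $g$ is continuous at $0$, it follows that $g(t) \ge g(0) = 0$.

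An alternative route avoids calculus. Write $x = a/(a+b)$ and $y = b/(a+b)$, both in $[0,1]$ with $x + y = 1$. Since $\ell \le 1$, we have $x^\ell \ge x$ and $y^\ell \ge y$, so $x^\ell + y^\ell \ge x + y = 1$. Multiplying through by $(a+b)^\ell$ gives the claim.

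Neither route has a real obstacle. The one point needing care is the range of $\ell$, since the statement is only valid for $0 \le \ell \le 1$. I would state that assumption explicitly and present the normalization argument as the proof.
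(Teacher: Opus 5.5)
Your normalization argument (setting $x = a/(a+b)$, $y = b/(a+b)$ and using $x^\ell + y^\ell \ge 1$) is exactly the paper's proof, and you also supply the justification $x^\ell \ge x$ on $[0,1]$ that the paper leaves implicit, so the proposal is correct. One small correction to your side remark: for $\ell < 0$ and $a,b > 0$ the inequality does not fail but holds trivially, since $(a+b)^\ell \le a^\ell$; the only issue for negative $\ell$ is that $0^\ell$ is undefined when $a$ or $b$ vanishes.
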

\begin{proof}
    The case where $a,b = 0$ is trivial, so for the remainder we will assume $a + b > 0$. Observe
    \begin{align*}
        a^\ell + b^\ell 
        & = (a+b)^\ell \left(\left(\frac{a}{(a+b)}\right)^\ell + \left(\frac{b}{(a+b)}\right)^\ell\right) \\
        & \geq (a+b)^\ell,
    \end{align*}
    where the last step follows from the fact that $x^\ell + (1-x)^\ell \geq 1$ for $x \in [0,1]$.
\end{proof}

\begin{fact}\label{fact:stirling}
For all $k \in \bN$,
\begin{align*}
    \sqrt{2\pi k}\left(\frac{k}{e}\right)^k \leq k! \leq \sqrt{2\pi k}\left(\frac{k}{e}\right)^k e^{1/(12k)}.
\end{align*}
\end{fact}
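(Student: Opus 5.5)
This is the classical Stirling bound, and I would prove it by the standard route: analyze the sequence $a_k := \log k! - \bigl(k+\tfrac12\bigr)\log k + k$, so that the claim is equivalent to
\[
\tfrac12\log(2\pi) \le a_k \le \tfrac12\log(2\pi) + \tfrac{1}{12k}.
\]
The plan has three steps. First, show that $a_k$ is decreasing. Second, show that $a_k - \tfrac{1}{12k}$ is increasing. Third, identify the common limit $\lim_k a_k = \tfrac12\log(2\pi)$. Together these give exactly the two-sided bound for every $k \in \bN$.

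For the monotonicity, I would compute
\[
a_k - a_{k+1} = \bigl(k+\tfrac12\bigr)\log\frac{k+1}{k} - 1 .
\]
Write $\frac{k+1}{k} = \frac{1+x}{1-x}$ with $x = \frac{1}{2k+1}$, so that $k + \tfrac12 = \tfrac{1}{2x}$. Using the series $\tfrac12\log\frac{1+x}{1-x} = \sum_{j\ge0} \frac{x^{2j+1}}{2j+1}$, the difference becomes
\[
a_k - a_{k+1} = \sum_{j\ge1} \frac{x^{2j}}{2j+1}.
\]
This is positive, which gives the decrease. It is also bounded above by $\tfrac13\sum_{j\ge1} x^{2j} = \frac{x^2}{3(1-x^2)} = \frac{1}{12k(k+1)} = \frac{1}{12k} - \frac{1}{12(k+1)}$. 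That inequality is exactly the statement that $a_k - \frac{1}{12k}$ is increasing. Hence $a_k$ converges to some limit $a$, and for every $k$ we have $a \le a_k \le a + \frac{1}{12k}$.

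The remaining step, which I expect to be the only nontrivial one, is identifying $a = \tfrac12\log(2\pi)$. I would use the Wallis product $\frac{\pi}{2} = \lim_n \frac{2^{4n}(n!)^4}{((2n)!)^2 (2n+1)}$, or equivalently $\frac{(2n)!}{4^n (n!)^2}\sqrt{\pi n} \to 1$. Substituting $n! \sim e^{a}\, n^{n+1/2} e^{-n}$ into this ratio gives $\frac{\sqrt{2}\sqrt{\pi}}{e^{a}} \to 1$, so $e^{a} = \sqrt{2\pi}$. If Wallis is not to be assumed, I would derive it from the integrals $I_m = \int_0^{\pi/2}\sin^m t\,dt$ via the recursion $I_m = \frac{m-1}{m} I_{m-2}$ together with the squeeze $I_{2n+1} \le I_{2n} \le I_{2n-1}$. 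Since this is a standard textbook fact, citing it would suffice here.

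Exponentiating $a \le a_k \le a + \frac{1}{12k}$ then yields the stated inequalities.
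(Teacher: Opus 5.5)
Your proof is correct. The identity $a_k - a_{k+1} = \sum_{j\ge1} x^{2j}/(2j+1)$ with $x = 1/(2k+1)$ holds, and so does the bound $\frac{x^2}{3(1-x^2)} = \frac{1}{12k} - \frac{1}{12(k+1)}$. The resulting sandwich $a \le a_k \le a + \frac{1}{12k}$ follows, and the Wallis computation does give $e^{a} = \sqrt{2\pi}$. The limit $a$ is finite because $a_k - \frac{1}{12k}$ is increasing, so $a_k \ge a_1 - \tfrac{1}{12} = \tfrac{11}{12}$; it is worth stating this explicitly.

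The paper gives no proof of this statement. It records it as a standard fact (Stirling's formula with Robbins' error term) and uses it only as a black box, for instance to bound Gaussian moments $(2k)!/(2^k k!)$ and the ratio $(2k-2)!!/(2k-1)!!$.

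So your proposal is not a different route from the paper's. It is a self-contained version of the classical Robbins argument that the paper's citation implicitly relies on. The only outside input is Wallis' product, which you correctly note can be derived from the recursion for $\int_0^{\pi/2}\sin^m t\,dt$.
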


\begin{lemma}\label{lem:binom-concentration}
For $n \in \bN$ and $\eps \in [0,1)$, let $m \sim \Bin(n, 1- \epsilon)$. For $\delta \in (0,1)$, suppose
\begin{align*}
  n \geq \frac{(1+\epsilon)^2}{\epsilon^2(1-\epsilon)^2} \log\frac{1}{\delta}.
\end{align*}
Then with probability at least $1-\delta$ we have
\begin{align*}
  m \geq \frac{(1-\epsilon)n}{1+\epsilon}.
\end{align*}
\end{lemma}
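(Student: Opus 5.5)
This is a multiplicative Chernoff lower-tail bound for $m\sim\Bin(n,1-\epsilon)$. Set $\mu=\E[m]=(1-\epsilon)n$. The target threshold is $\mu/(1+\epsilon)$, so I write it as $\mu(1-\eta)$ with
\[
\eta = 1-\frac{1}{1+\epsilon} = \frac{\epsilon}{1+\epsilon}.
\]
The standard lower-tail Chernoff inequality gives
\[
\mathbb{P}\bigl(m < (1-\eta)\mu\bigr) \le \exp\bigl(-\eta^2\mu/2\bigr) = \exp\Bigl(-\frac{\epsilon^2(1-\epsilon)n}{2(1+\epsilon)^2}\Bigr).
\]
I would derive this from Hoeffding for sums of bounded independent variables, or cite it. The case $\epsilon=0$ is trivial, since then $m=n$ almost surely, so assume $\epsilon>0$.

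It remains to check that the assumed sample size makes the right-hand side at most $\delta$, which needs
\[
n \ge \frac{2(1+\epsilon)^2}{\epsilon^2(1-\epsilon)}\log\frac{1}{\delta}.
\]
The hypothesis supplies $n \ge \frac{(1+\epsilon)^2}{\epsilon^2(1-\epsilon)^2}\log\frac1\delta$. This differs from what Chernoff needs by the factor $\frac{2}{1-\epsilon}\cdot(1-\epsilon)^2 = 2(1-\epsilon)$, which is at most $1$ only when $\epsilon\ge 1/2$.

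The main obstacle is therefore this constant factor of $2$ for small $\epsilon$, since the plain Chernoff bound falls short there. I would first try to close the gap with a Bernstein-type bound that uses the true variance $n\epsilon(1-\epsilon)$ instead of $\mu$. The deviation is $t=\eta\mu=\frac{\epsilon(1-\epsilon)n}{1+\epsilon}$, and Bernstein gives
\[
\exp\Bigl(-\frac{t^2}{2(n\epsilon(1-\epsilon)+t/3)}\Bigr).
\]
For small $\epsilon$ the exponent is roughly $\epsilon(1-\epsilon)n/(2(1+\epsilon)^2)$. This is much larger than $\log(1/\delta)$ under the hypothesis, because the hypothesis scales like $\epsilon^{-2}$ while Bernstein only needs $\epsilon^{-1}$. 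So I would split the proof into two regimes. For $\epsilon\ge 1/2$ I use Chernoff directly. For $\epsilon<1/2$ I use Bernstein, checking that its exponent is at least
\[
\frac{\epsilon(1-\epsilon)n}{2(1+\epsilon)^2+2\epsilon(1+\epsilon)/3}\ \ge\ \frac{(1-\epsilon)\log(1/\delta)}{\epsilon(1-\epsilon)^2(2(1+\epsilon)^{-2}\cdot\ldots)},
\]
which reduces to verifying a one-variable inequality in $\epsilon$ on $(0,1/2)$.

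If that verification also fails at some endpoint, the fallback is to use the exact relative-entropy Chernoff bound $\exp(-n\,\mathrm{KL}(\text{threshold}\,\|\,1-\epsilon))$ and lower bound the KL term via the variance-sensitive estimate $\mathrm{KL}(p-s\,\|\,p)\ge s^2/(2p)$, taken with the complementary parameter $p=\epsilon$. Here
\[
s=(1-\epsilon)-\frac{1-\epsilon}{1+\epsilon}=\frac{\epsilon(1-\epsilon)}{1+\epsilon},
\]
so $s^2/(2\epsilon)=\epsilon(1-\epsilon)^2/(2(1+\epsilon)^2)$. The lower tail of $m$ is the upper tail of $n-m\sim\Bin(n,\epsilon)$, and $\mathrm{KL}(\epsilon+s\,\|\,\epsilon)\ge s^2/(2(\epsilon+s))$. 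That yields a requirement of order $\epsilon^{-1}$ rather than $\epsilon^{-2}$, which the hypothesis comfortably dominates for $\epsilon<1/2$.
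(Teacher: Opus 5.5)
Your main line (multiplicative Chernoff for $\epsilon\ge 1/2$, Bernstein for $\epsilon<1/2$) does prove the lemma, but it takes a detour the paper avoids. The paper applies additive Hoeffding directly: $\mathbb{P}\bigl(m-(1-\epsilon)n\le -t\bigr)\le e^{-2t^2/n}$ with $t=\frac{\epsilon(1-\epsilon)n}{1+\epsilon}$. The failure probability is then $\exp\bigl(-\frac{2\epsilon^2(1-\epsilon)^2 n}{(1+\epsilon)^2}\bigr)\le\delta^2\le\delta$ for every $\epsilon\in(0,1)$, with a factor of $2$ to spare and no case split. The factor-$2$ ``obstacle'' you hit comes from the multiplicative form $e^{-\eta^2\mu/2}$, which is tuned for small success probabilities. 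Here the success probability is $1-\epsilon$, and Hoeffding's exponent $2\eta^2\mu^2/n$ is $4(1-\epsilon)$ times the Chernoff exponent. Incidentally, this means that for $\epsilon>3/4$ you cannot ``derive'' the multiplicative bound from Hoeffding; you would have to cite it. Your variance-sensitive route does show something extra: for small $\epsilon$ only $n\gtrsim\epsilon^{-1}\log(1/\delta)$ is really needed, which the paper's $\epsilon^{-2}$ hypothesis does not exploit. The paper's route buys a one-line proof.

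Two details need fixing.

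\textbf{Bernstein computation.} The denominator should be $2(1+\epsilon)^2+\frac{2}{3}(1+\epsilon)$ (range bound $1$), not $2(1+\epsilon)^2+\frac{2}{3}\epsilon(1+\epsilon)$. Your slip errs in the favorable direction, and your final display is unfinished. Done correctly, the hypothesis gives a Bernstein exponent of at least $\frac{1+\epsilon}{\epsilon(1-\epsilon)(8/3+2\epsilon)}\log\frac1\delta$. Since $\epsilon(1-\epsilon)\le\frac14$ implies $\epsilon(1-\epsilon)(\frac83+2\epsilon)\le\frac23+\frac{\epsilon}{2}\le 1+\epsilon$, Bernstein alone covers all $\epsilon\in(0,1)$, so your case split is unnecessary.

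\textbf{KL fallback.} This does not do what you claim. Take $\mathrm{KL}(q\,\|\,p)\ge(q-p)^2/(2q)$ at $q=\frac{2\epsilon}{1+\epsilon}$, $p=\epsilon$. The requirement is then $n\ge\frac{4(1+\epsilon)}{\epsilon(1-\epsilon)^2}\log\frac1\delta$, which the hypothesis implies only when $\epsilon\le 1/3$, not on all of $(0,1/2)$. Substituting Pinsker's bound $\mathrm{KL}(q\,\|\,p)\ge 2(q-p)^2$ there simply recovers the paper's Hoeffding argument.
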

\begin{proof}
    Observe $\E[m] = (1-\epsilon)n$. Hoeffding's inequality thus implies for any $t>0$ that
    \begin{align*}
    \P(m-\mathbb (1-\epsilon)n\le -t) \leq \exp\left(-\frac{2t^2}{n}\right).
    \end{align*}
    The result then follows by taking $t = (1 - 1/(1+\epsilon))(1-\epsilon)n = (\epsilon/(1+\epsilon))(1-\epsilon)n$.
\end{proof}

\begin{lemma}\label{lem:psd-separation}
Let $L(\Sigma, \Sigma_\star) = \|\Sigma_\star^{-\frac{1}{2}}\Sigma \Sigma_\star^{-\frac{1}{2}} - I\|_{\rm op}$.
Let $\gamma \in [0, 1]$ and $v, \bar{v} \in \bS^{d-1}$.
Then for all $M \in \mathcal{C}_+^d$ we have
\begin{align*}
    L(M, I_d + \gamma vv^\top)
    \vee
    L(M, I_d + \gamma \bar{v}\bar{v}^\top)
    > \frac{\gamma}{16} \|v - \bar{v}\|_2^2.
\end{align*}
\end{lemma}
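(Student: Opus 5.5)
By the triangle inequality, if $M$ were close in relative norm to both matrices $\Sigma_1 = I_d + \gamma vv^\top$ and $\Sigma_2 = I_d + \gamma \bar v\bar v^\top$, then $\Sigma_1$ and $\Sigma_2$ would be close to each other. The proof turns this into a contradiction. Let $a := L(M,\Sigma_1)$ and $b := L(M,\Sigma_2)$, and suppose for contradiction that $a \vee b \le \frac{\gamma}{16}\|v-\bar v\|_2^2$. Note that $\|v-\bar v\|_2^2 \le 4$ and $\gamma \le 1$, so $a,b\le 1/4$.

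The bound $L(M,\Sigma_i)\le a$ is equivalent to the Loewner sandwich
\[
(1-a)\Sigma_1 \preceq M \preceq (1+a)\Sigma_1 ,
\]
and similarly for $\Sigma_2$ with $b$. Chaining these two sandwiches gives
\[
\tfrac{1-a}{1+b}\,\Sigma_1 \preceq \Sigma_2 \preceq \tfrac{1+a}{1-b}\,\Sigma_1 .
\]
Write $c := \max\bigl\{\tfrac{1+a}{1-b}-1,\ 1-\tfrac{1-a}{1+b}\bigr\}$. Since $a,b\le 1/4$, we have $c \le \tfrac{a+b}{1-b} \le \tfrac{8}{3}(a\vee b)$. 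Testing the chained sandwich against unit vectors $u$ yields
\[
|u^\top(\Sigma_2-\Sigma_1)u| \le c\, u^\top\Sigma_1 u \le c(1+\gamma)\le 2c,
\]
and hence $\|\Sigma_2-\Sigma_1\|_{\rm op} \le 2c$.

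For the lower bound, we compute $\|\gamma(\bar v\bar v^\top - vv^\top)\|_{\rm op}$. The matrix $\bar v\bar v^\top - vv^\top$ has rank two and trace zero, with eigenvalues $\pm\sqrt{1-\langle v,\bar v\rangle^2}$. Using $1-\langle v,\bar v\rangle^2 \ge 1-|\langle v,\bar v\rangle|$ gives an operator norm of at least $\sqrt{1-|\langle v,\bar v\rangle|}$. If $\langle v,\bar v\rangle\ge 0$, then $1-\langle v,\bar v\rangle = \|v-\bar v\|_2^2/2$, so the norm is at least $\|v-\bar v\|_2/\sqrt2 \ge \|v-\bar v\|_2^2/(2\sqrt2)$, since $\|v-\bar v\|_2\le 2$.

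The main subtlety is the antipodal case $\langle v,\bar v\rangle<0$: here $vv^\top$ is invariant under $v\mapsto -v$, so the left side cannot depend on the sign of $\bar v$, and the lemma should be read as holding with $\|v-\bar v\|_2$ replaced by $\min\{\|v-\bar v\|_2,\|v+\bar v\|_2\}$. This is harmless for the Fano application, because a packing of $\bS^{d-1}$ can be symmetrized to avoid near-antipodal pairs. With that reading, in either sign case
\[
\|\Sigma_2-\Sigma_1\|_{\rm op} \ge \gamma\,\|v-\bar v\|_2^2/(2\sqrt2).
\]

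Combining the two bounds gives $\gamma\|v-\bar v\|_2^2/(2\sqrt2) \le 2c \le \tfrac{16}{3}(a\vee b)$, that is, $a\vee b \ge \tfrac{3}{32\sqrt2}\gamma\|v-\bar v\|_2^2$. The constant $\tfrac{3}{32\sqrt2}\approx 0.066$ exceeds $1/16$, which contradicts the assumption. Strict inequality then follows from the same chain, or, when $v=\bar v$, the claimed bound $0>0$ is trivially false and must be read as non-strict. The only delicate point is tracking constants so that $1/16$ is achieved, and the slack above shows it is.
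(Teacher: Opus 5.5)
Your proof is correct and follows essentially the paper's route: a triangle inequality through $M$ (you chain the two Loewner sandwiches, while the paper writes $M = I_d+\gamma vv^\top + E$ and subtracts), combined with a lower bound on $\|vv^\top-\bar v\bar v^\top\|_{\rm op}$ of order $\|v-\bar v\|_2^2$. Your caveats are also right and go beyond the paper. The statement fails for $\bar v=-v$ with $\gamma>0$, and its strict form fails when $\gamma\|v-\bar v\|_2=0$. The paper's proof silently assumes $\langle v,\bar v\rangle\ge 0$ when it writes $\bar v=\alpha v+\sqrt{1-\alpha^2}\,u$ with $\alpha\in[0,1]$. So your $\min\{\|v-\bar v\|_2,\|v+\bar v\|_2\}$ reading, together with a packing that avoids near-antipodal pairs in the Fano step, is the right repair.
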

\begin{proof}
    Suppose $L(M, I_d + \gamma vv^\top) < \frac{\gamma}{16} \|v - \bar{v}\|_2^2$,
    so that $M = I_d + \gamma vv^\top + E$ for some $E$ with
    $\frac{1}{1+\gamma}\|E\|_{\rm op}
    \leq \|(I+\gamma vv^\top)^{-\frac{1}{2}}E(I+\gamma vv^\top)^{-\frac{1}{2}}\|_{\rm op} < \frac{\gamma}{16} \|v - \bar{v}\|_2^2$.
    Then,
    \begin{align*}
    &\left\|
        \left(I_d + \gamma \bar{v}\bar{v}^\top\right)^{-\frac{1}{2}} M \left(I_d + \gamma \bar{v}\bar{v}^\top\right)^{-\frac{1}{2}} - I_d
    \right\|_{\rm op} \\
    & = \left\|
        \left(I_d + \gamma \bar{v}\bar{v}^\top\right)^{-\frac{1}{2}} \left(I_d + \gamma vv^\top + E\right) \left(I_d + \gamma \bar{v}\bar{v}^\top\right)^{-\frac{1}{2}} - I_d
    \right\|_{\rm op} \\
    & \geq \gamma \left\|
        \left(I_d + \gamma \bar{v}\bar{v}^\top\right)^{-\frac{1}{2}} \left(vv^\top - \bar{v}\bar{v}^\top\right) \left(I_d + \gamma \bar{v}\bar{v}^\top\right)^{-\frac{1}{2}}
    \right\|_{\rm op} - \left\|
        \left(I_d + \gamma \bar{v}\bar{v}^\top\right)^{-\frac{1}{2}} E \left(I_d + \gamma \bar{v}\bar{v}^\top\right)^{-\frac{1}{2}}
    \right\|_{\rm op} \\
    & \geq \frac{\gamma}{1+\gamma} \| vv^\top - \bar{v}\bar{v}^\top \| - \frac{\gamma(1+\gamma)}{8}\|v-\bar{v}\|^2.
    \end{align*}

    Now let $u \in \bS^{d-1}$ be orthogonal to $v$ and such that $\bar{v} = \alpha v + \sqrt{1-\alpha^2} u$ for some $\alpha \in [0,1]$.
    Then $\| vv^\top - \bar{v}\bar{v}^\top \|_{\rm op} \geq (\bar{v} \cdot u)^2 = 1 - \alpha^2$ and $\|v-\bar{v}\|_2^2 = 2(1-\alpha)$.
    Thus, we have the lower bound
    \begin{align*}
        \| vv^\top - \bar{v}\bar{v}^\top \| \geq 1 - \alpha^2 \geq 1-\alpha = \frac{1}{2}\|v-\bar{v}\|_2^2.
    \end{align*}
    Plugging this in gives
    \begin{align*}
        L(M, I_d + \gamma \bar{v}\bar{v}^\top)
        \geq \left(\frac{\gamma}{2(1+\gamma)} - \frac{\gamma(1+\gamma)}{16} \right)\|v-\bar{v}\|^2
        \geq \frac{\gamma}{8}\|v-\bar{v}\|^2
        > \frac{\gamma}{16}\|v-\bar{v}\|^2,
    \end{align*}
    where the last step follows from $\gamma \in [0,1]$.
    This proves the claim.
\end{proof}

Below we present a slightly modified version of \cite[Lemma 3.2.3]{BlumerEhAnWa89}.
The only difference is that we allow the $s$-fold union to be across different classes, but this does not change the result.
\begin{lemma}[VC-dimension of $s$-fold intersections]\label{lem:vc-dim-intersection}
  Let $C_1,\dots, C_s \subseteq 2^X$ be concept classes such that $\mathrm{VC}(C_i) \leq d$ for all $i \in [s]$.
  Then the concept class $C_\cap = \{\cap_{i=1}^s c_i \mid c_i \in C_i \forall i \in [s]\}$ has VC-dimension less than
  $2ds\log(3s)$.
\end{lemma}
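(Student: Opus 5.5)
The statement is a VC bound for $s$-fold intersections of concepts drawn from (possibly different) classes $C_1,\dots,C_s$, each of VC dimension at most $d$. I would run the standard Sauer--Shelah counting argument, as in Blumer et al.

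Fix any finite set $Y\subseteq X$ with $|Y|=m$. By the Sauer--Shelah lemma, each $C_i$ induces at most $\Phi_d(m):=\sum_{j=0}^{d}\binom{m}{j}\le (em/d)^d$ distinct traces on $Y$ when $m\ge d$. The trace of $\cap_i c_i$ on $Y$ is determined by the tuple of traces $(c_1\cap Y,\dots,c_s\cap Y)$. Hence $C_\cap$ induces at most $\prod_i \Phi_d(m)\le (em/d)^{ds}$ subsets of $Y$. This is the only place where allowing different classes matters, and it is harmless because the bound on each factor uses only $\mathrm{VC}(C_i)\le d$.

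If $C_\cap$ shatters $Y$, then $2^m\le (em/d)^{ds}$. I would show this fails at $m=\lceil 2ds\log(3s)\rceil$, so that no set of that size is shattered and the VC dimension is strictly less than $2ds\log(3s)$. Write $m=2ds\,L$ with $L=\log(3s)$ (natural log, $L\ge \log 3>1$). The condition $2^m>(em/d)^{ds}$ becomes $2L\log 2>\log(2esL)$. Taking the case $d=0$ separately (then $|C_i|\le 1$, so $C_\cap$ is trivial) and noting that rounding $m$ up only helps, the argument reduces to this one scalar inequality in $s$.

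This scalar inequality is the main obstacle: it must hold for all $s\ge1$, including small $s$, where the constants are tight. I would first check $s=1,2,3$ numerically. If small $s$ fails, I would instead use the sharper bound $\Phi_d(m)\le m^d+1$, or note that for $s=1$ the claim is trivial since $C_\cap=C_1$. After that, I would verify monotonicity of $2L\log2-\log(2esL)$ in $s$ by differentiating in $L$, using $s=e^L/3$. If the natural-log constant turns out too tight, I would interpret $\log$ as $\log_2$, as in the original Blumer et al. statement, and redo the same check.
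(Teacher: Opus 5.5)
Your counting argument is the right one. It is the proof of Blumer et al.\ (Lemma~3.2.3), which the paper cites without reproducing, and you correctly note that letting the $c_i$ come from different classes only changes the product $\prod_i \Phi_d(m)$, not its bound.

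The problem is the branch you lead with. Under the natural logarithm, the inequality $2L\ln 2 > \ln(2esL)$ with $L=\ln(3s)$ does not just fail for a few small $s$. Writing $u=\ln s$, the gap is $0.386u-0.170-\ln(1.099+u)$. This is negative for every integer $1\le s\le 182$, and it first decreases (until $s\approx 4.4$) before increasing. So checking $s=1,2,3$ and then proving monotonicity cannot work.

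Your patches do not rescue it either:
\begin{itemize}
\item $m^d+1$ is weaker than $(em/d)^d$ once $d\ge 3$.
\item No sharpening of the per-class count helps. For $s=2$ and $m=\lceil 4d\ln 6\rceil\approx 7.17d$, one has $\Phi_d(m)^2\ge\binom{m}{d}^2\ge 2^{(2H(d/m)-o(1))m}$, where $H$ is the binary entropy and $2H(0.14)\approx 1.17>1$.
\end{itemize}
Hence $\Phi_d(m)^2>2^m$ for large $d$, and counting gives no contradiction at this $m$. The same happens at $s=3$, the case used in the proof of Claim~\ref{claim:Sltv-lb}.

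So the base-$2$ reading (Blumer et al.'s convention) is not a fallback; it is the only version this argument proves, and you should commit to it from the start. With $L=\log_2(3s)$ and $m=2dsL$, the condition $2^m>(em/d)^{ds}$ becomes $2L>\log_2(2esL)$, i.e.\ $9s>2e\log_2(3s)$. This holds at $s=1$ ($9>8.62$) and for all larger $s$, since the right side has slope at most $2e/\ln 2<9$.

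Two routine steps then close the argument:
\begin{itemize}
\item Rounding up to $m=\lceil 2ds\log_2(3s)\rceil$ is harmless, because $m\mapsto m-ds\log_2(em/d)$ is increasing for $m>ds/\ln 2$.
\item $m\ge 2ds\log_2 3$ also gives $m\ge d$, as Sauer--Shelah requires.
\end{itemize}

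Finally, $d=0$ cannot be ``handled separately.'' In that case $C_\cap$ (if nonempty) has VC dimension $0$, which is not less than $0$, so the lemma needs $d\ge 1$, as Blumer et al.\ assume. For the paper's purposes only $\mathrm{VC}(\mathcal{F})\lesssim d$ matters, so the base of the logarithm is immaterial there.
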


\subsection{Univariate Gaussian concentration and moment properties}

Throughout, recall that denote the density of a standard univariate Gaussian by the function $\phi$.
We will also use $G$ to denote a standard Gaussian.

\begin{fact}[Gaussian moments]\label{fact:gaussian-moments}
  For all $k \in \bN$,
  \begin{align*}
    \E[|G|^{2k-1}] = \sqrt{\frac{2}{\pi}} (2k-2)!!
    \quad\textrm{and}\quad
    \E[G^{2k}] = (2k-1)!!.
  \end{align*}
\end{fact}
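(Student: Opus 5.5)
We need to prove Fact (Gaussian moments): $\E|G|^{2k-1}=\sqrt{2/\pi}\,(2k-2)!!$ and $\E G^{2k}=(2k-1)!!$.

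The plan is to reduce both identities to a single integral of the Gaussian density and then evaluate it by integration by parts. Write $M_j := \E[|G|^j] = 2\int_0^\infty x^j \phi(x)\,\mathrm{d}x$ for integers $j \ge 0$. The key relation is the one-step recursion $M_{j+1} = j\, M_{j-1}$ for $j \ge 1$. To obtain it, use $\phi'(x) = -x\phi(x)$, which gives
\[
\int_0^\infty x^{j+1}\phi(x)\,\mathrm{d}x = \int_0^\infty x^{j}\,\bigl(-\phi'(x)\bigr)\,\mathrm{d}x = \bigl[-x^j\phi(x)\bigr]_0^\infty + j\int_0^\infty x^{j-1}\phi(x)\,\mathrm{d}x .
\]
The boundary term vanishes: at infinity because of the Gaussian decay, and at zero because $j \ge 1$. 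This leaves the recursion; equivalently, it is Stein's identity $\E[G f(G)] = \E[f'(G)]$ applied to $f(x) = x^{j}\operatorname{sgn}(x)$.

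The two base cases are $M_0 = 1$, since $\phi$ is a density, and $M_1 = 2\int_0^\infty x\phi(x)\,\mathrm{d}x = 2\phi(0) = \sqrt{2/\pi}$. For the even moments, iterating the recursion from $M_0$ gives
\[
\E[G^{2k}] = M_{2k} = (2k-1)(2k-3)\cdots 1 = (2k-1)!!,
\]
where the absolute value can be dropped because the exponent is even. For the odd absolute moments, iterating from $M_1$ gives
\[
M_{2k-1} = (2k-2)(2k-4)\cdots 2 \cdot M_1 = \sqrt{2/\pi}\,(2k-2)!! .
\]
In the case $k=1$ this reads $(0)!! = 1$, which is the standard empty-product convention. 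A formal induction on $k$ packages both chains.

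There is no real obstacle. The only care needed is to check that the boundary term vanishes at zero, which holds for $j \ge 1$, and to fix the convention $0!! = 1$.
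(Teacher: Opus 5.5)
Your proof is correct and complete. The integration-by-parts recursion $M_{j+1} = j\,M_{j-1}$ holds for $j \ge 1$, with the boundary term vanishing at both $0$ and $\infty$. Combined with $M_0 = 1$ and $M_1 = 2\phi(0) = \sqrt{2/\pi}$, it gives both identities, including the case $k=1$ via the convention $0!! = 1$.

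The paper does not prove this statement at all. It records it as a standard fact, and elsewhere uses the equivalent form $\E[G^{2k}] = (2k)!/(2^k k!)$. So there is no competing argument to compare against, and your derivation supplies exactly the routine verification the paper omits.

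One small slip, in your parenthetical remark only. The test function for Stein's identity should be $f(x) = |x|^{j}\,\mathrm{sgn}(x)$, not $x^{j}\,\mathrm{sgn}(x)$. For odd $j$ the latter equals $|x|^{j}$. Then $\E[G f(G)] = \E[G\,|G|^{j}] = 0$ by symmetry and $\E[f'(G)] = 0$ as well, so the identity collapses to $0 = 0$ rather than giving the recursion. With $f(x) = |x|^{j}\,\mathrm{sgn}(x)$ you get $G f(G) = |G|^{j+1}$ and $f'(x) = j|x|^{j-1}$, which is the intended statement. Your main argument integrates directly over $(0,\infty)$ and never uses this aside, so the proof is unaffected.
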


\begin{lemma}\label{lemma:trunc-kmom-lb}
  Let $k \geq 1$ and $T \geq 2\sqrt{k}\log(3k)$.
  Then $\E[G^{2k-2}\1\{G \in [1, T]\}] \geq \frac{(2k-3)!!}{3}$.
\end{lemma}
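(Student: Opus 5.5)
We need to show $\int_1^T t^{2k-2}\phi(t)\,dt \ge (2k-3)!!/3$ for $T \ge 2\sqrt{k}\log(3k)$. By symmetry, $\E[G^{2k-2}\1\{G\ge 0\}] = (2k-3)!!/2$, with the convention $(-1)!!=1$ when $k=1$. The plan is to show that the two pieces removed from this half-line integral, namely the part on $[0,1)$ and the tail $(T,\infty)$, together have mass at most $(2k-3)!!/6$.

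\textbf{The piece on $[0,1)$.} Since $t^{2k-2}\le 1$ there, we get $\int_0^1 t^{2k-2}\phi(t)\,dt \le \phi(0)\cdot\frac{1}{2k-1}$. For $k\ge 2$ this is at most $\frac{1}{3\sqrt{2\pi}} < 0.14$. For $k=1$ we need an exact value instead: $\P(G\in[0,1])\approx 0.341$.

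\textbf{The tail beyond $T$.} For this we use Cauchy--Schwarz:
\[
\E[G^{2k-2}\1\{G>T\}] \le \sqrt{(4k-5)!!}\cdot e^{-T^2/4}.
\]
Since $(4k-5)!! \le (4k)^{2k-2}\cdot$(a lower-order factor), we have $\sqrt{(4k-5)!!}\lesssim (2k)^{k}$. Meanwhile $T^2/4 \ge k\log^2(3k)$, which dominates $k\log(2k)$ by a wide margin once $k\ge 2$. So the tail is a tiny fraction of $(2k-3)!!$. To get clean constants, it is better to compare the tail against $(2k-3)!!$ directly, for instance via the ratio bound
\[
t^{2k-2}e^{-t^2/2} \le (2k-2)^{k-1}e^{-(k-1)}\, e^{-(t^2 - T^2)/4}\cdots,
\]
or by a Chernoff-style estimate $\E[G^{2k-2}\1\{G>T\}] \le e^{-\lambda T}\,\E[G^{2k-2}e^{\lambda G}]$ with $\lambda$ chosen of order $T/2$.

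\textbf{Combining.} For $k\ge 2$ we have $(2k-3)!!\ge 1$, so it suffices that the $[0,1)$ piece plus the tail is at most $\frac12 - \frac13 = \frac16 \approx 0.167$. The $[0,1)$ piece contributes $0.14$, so the tail must be below $0.02$. For $k=1$, $T\ge 2\log 3\approx 2.2$ and we compute directly: $\P(1\le G\le 2.2)\approx 0.158 - 0.014 = 0.144$, which is short of $1/3$.

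So the $k=1$ case actually fails as stated (the mass on $[1,T]$ is about $0.144 < 1/3$). I would therefore handle $k=1$ by noting that the constant must be adjusted (for example to $1/8$), or treat the lemma as intended only for $k\ge 2$. The constant $1/3$ also looks tight for $k=2$: there the $[0,1)$ piece is $\int_0^1 t^2\phi \approx 0.0996$, which leaves $0.5 - 0.0996 - \text{tail}$ with $T\ge 4\log 6\approx 7.2$, a negligible tail, giving about $0.40\ge 1/3$. That works.

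The main obstacle is bounding the tail uniformly in $k$ with explicit constants. The plan there is to use $\E[G^{2k-2}\1\{G>T\}] \le \sqrt{(4k-5)!!\,\P(G>T)}$, then check that $\sqrt{(4k-5)!!}/(2k-3)!! \le 2^{k}$, which holds by the Stirling bounds in \cref{fact:stirling}, and that $2^k e^{-T^2/4} \le 2^k (3k)^{-k\log(3k)} < 0.02$ for all $k\ge 2$.
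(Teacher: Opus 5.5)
Your analysis is correct, including at $k=1$. There $(2k-3)!!=1$ and $\E[G^{0}\1\{G\in[1,T]\}]\le \P(G\ge 1)\approx 0.159<1/3$ for \emph{every} $T$, so the lemma as stated is false at $k=1$. Only a smaller constant can hold; your $1/8$ works, since $\P(1\le G\le 2\log 3)\approx 0.145$.

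The paper splits $(2k-3)!!/2=\E[G^{2k-2}\1\{G\ge 0\}]$ over $[0,1)$, $[1,T]$, $(T,\infty)$ exactly as you do. It then bounds the two outer pieces by absolute constants:
\begin{itemize}
\item $\int_0^1 t^{2k-2}\phi(t)\,dt\le 1/\sqrt{2\pi}$;
\item the tail, via the integration-by-parts recursion $\int_T^\infty t^{2k-1}\phi=T^{2k-2}\phi(T)+(2k-2)\int_T^\infty t^{2k-3}\phi$, by $2/\sqrt{2\pi}$, after replacing $T^{2k-3}e^{-T^2/2}$ by $1$.
\end{itemize}
Its last step, $\frac{(2k-3)!!}{2}-\frac{3}{\sqrt{2\pi}}\ge\frac{(2k-3)!!}{3}$, needs $(2k-3)!!\ge 18/\sqrt{2\pi}\approx 7.2$, i.e.\ $k\ge 4$. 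So the paper's argument breaks not only at $k=1$ but also at $k=2,3$, where the claim is nevertheless true.

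Your two changes are exactly what cover those cases.
\begin{itemize}
\item At $k=2$ one needs the $[0,1)$ piece below $1/6<1/\sqrt{2\pi}$, which your $\phi(0)/(2k-1)\approx 0.133$ provides.
\item At $k=3$ the paper's absolute tail constant $2/\sqrt{2\pi}$ already exceeds the budget $(2k-3)!!/6=1/2$. Your tail bound is normalized by $(2k-3)!!$: Cauchy--Schwarz, together with $\sqrt{(4k-5)!!}/(2k-3)!!=\bigl(\binom{4k-4}{2k-2}/\binom{2k-2}{k-1}\bigr)^{1/2}\le 2^{k-1}$ and $e^{-T^2/4}\le e^{-k\log^2(3k)}$, gives total error $\le 0.14\,(2k-3)!!\le (2k-3)!!/6$ for all $k\ge 2$.
\end{itemize}
The paper's integration-by-parts tail is intrinsically sharper, with no square-root loss, but that is beside the point. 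What matters is measuring both error pieces against $(2k-3)!!$ rather than against fixed constants.

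In a write-up, keep only your final concrete chain and drop the vague intermediate tail heuristics. Note also that Lemma~\ref{lem:mu_n-lb} invokes this lemma at $k=1$ (the small-$\tau$ case of the regression theorem). There the corrected constant changes only absolute constants downstream.
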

\begin{proof}
  From \cref{fact:gaussian-moments}, we have $\E[G^{2k-2}] = (2k-3)!!$ and by symmetry of the Gaussian distribution, we have that
  \begin{align*}
    \frac{(2k-3)!!}{2} = \int_0^\infty t^{2k-2} \phi(t) dt
    = \underbrace{\int_0^1 t^{2k-2}\phi(t) dt}_{=:A} + \underbrace{\int_1^T t^{2k-2}\phi(t) dt}_{=:B} + \underbrace{\int_T^\infty t^{2k-2}\phi(t) dt}_{=:C}.
  \end{align*}
  We want to lower bound $B$, which we will do by upper bounding $A$ and $C$.

  To upper bound $A$,
  \begin{align*}
    A = \int_0^1 t^{2k-2}\phi(t) dt
    \leq \int_0^1 \frac{e^{-t^2/2}}{\sqrt{2\pi}} dt
    \leq \frac{1}{\sqrt{2\pi}}.
  \end{align*}

  To upper bound $C$,
  \begin{align*}
    C = \int_T^\infty t^{2k-2}\phi(t) dt
    \leq \frac{1}{T} \int_T^\infty t^{2k-1} \phi(t) dt.
  \end{align*}
  Noting that $\frac{d}{dt}(-\phi(t)) = t\phi(t)$, we apply integration by parts with $u = t^{2k-2}$ and $v = -\phi(t)$ to obtain
  \begin{align*}
    \int_T^\infty t^{2k-1} \phi(t) dt
    &= \int_T^\infty (t^{2k-2}) (t\phi(t)) dt
    \\&= \left(-t^{2k-2} \phi(t)\right)\big|_{T}^\infty + (2k-2)\int_T^\infty t^{2k-3}\phi(t) dt
    \\&\leq T^{2k-2}\phi(T) + \frac{2k-2}{T}C.
  \end{align*}
  Thus,
  \begin{align*}
    C \leq \frac{1}{T}\left[T^{2k-2}\phi(T) + \frac{2k-2}{T}C\right]
    \Longrightarrow C \leq \frac{1}{\sqrt{2\pi}}\left[1 - \frac{2k-2}{T^2}\right]^{-1} T^{2k-3}e^{-T^2/2}
  \end{align*}
  
  Observe that $t \mapsto t^{2k-3}e^{-t^2/2}$ is decreasing for $t \geq \sqrt{2k-3}$, so for $T \geq 2\sqrt{k}\log (3k)$ we have that
  \begin{align*}
    T^{2k-3}e^{-T^2/2}
    &\leq (2\sqrt{k}\log (3k))^{2k-3}e^{-(4k\log^2(3k))/2)}
    \leq \exp\left(-2k\log^2 (3k) + 2k\log(3k)\right)
    \leq 1
  \end{align*}
  Also for $T \geq 2\sqrt{k-1}$, $\left[1 - \frac{2k-2}{T^2}\right]^{-1} \leq 2$.
  Thus, $C \leq \frac{2}{\sqrt{2\pi}}$.

  This implies $B \geq \frac{(2k-3)!!}{2}-\frac{3}{\sqrt{2\pi}} \geq \frac{(2k-3)!!}{3}$.
\end{proof}

\begin{lemma}\label{lem:tensor-concentration-helper-3}
Let $\ell \in \bN$. Then $\E[G^{\ell}] \leq 2\left(\frac{\ell}{e}\right)^{\ell/2}$.
\end{lemma}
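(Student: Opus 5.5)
We need to prove $\E[G^\ell] \le 2(\ell/e)^{\ell/2}$ for every $\ell \in \bN$.

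The odd case is immediate. If $\ell$ is odd, then $\E[G^\ell]=0$ by symmetry of the Gaussian, and $0$ is at most the nonnegative right-hand side. So we assume $\ell = 2k$ with $k\ge 1$. By Fact~\ref{fact:gaussian-moments}, $\E[G^{2k}]=(2k-1)!! = \frac{(2k)!}{2^k k!}$, and the target becomes
\[
\frac{(2k)!}{2^k k!} \le 2\Bigl(\frac{2k}{e}\Bigr)^{k}.
\]

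We bound the left side using Fact~\ref{fact:stirling}: apply its upper bound to $(2k)!$ and its lower bound to $k!$. This gives
\[
\frac{(2k)!}{2^k k!} \le \frac{\sqrt{4\pi k}\,(2k/e)^{2k} e^{1/(24k)}}{2^k\sqrt{2\pi k}\,(k/e)^k} = \sqrt{2}\, e^{1/(24k)} \Bigl(\frac{2k}{e}\Bigr)^{k}.
\]
The last equality uses $(2k/e)^{2k}/(2^k (k/e)^k) = 2^{k} k^k e^{-k} = (2k/e)^k$.

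It remains to check that $\sqrt{2}\,e^{1/(24k)} \le 2$ for all $k \ge 1$. The left side is decreasing in $k$, and at $k=1$ it equals $\sqrt{2}\,e^{1/24}\approx 1.47 < 2$, which completes the bound.

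No step is a real obstacle; the only care needed is matching the factorial identity for $(2k-1)!!$ with the Stirling constants correctly.
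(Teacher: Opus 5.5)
Your proof is correct and is essentially the same as the paper's. The odd case follows by symmetry. For $\ell = 2k$ you write $(2k-1)!! = \frac{(2k)!}{2^k k!}$, apply the upper Stirling bound to $(2k)!$ and the lower one to $k!$ to get $\sqrt{2}\,e^{1/(24k)}\bigl(\frac{2k}{e}\bigr)^{k}$, and then use $\sqrt{2}\,e^{1/24} < 2$; all the constants check out, including $\bigl(\frac{2k}{e}\bigr)^{2k} / \bigl(2^k (\frac{k}{e})^k\bigr) = \bigl(\frac{2k}{e}\bigr)^{k} = \bigl(\frac{\ell}{e}\bigr)^{\ell/2}$.
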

\begin{proof}
    The claim is trivial when $\ell$ is odd, so for the remainder let $\ell = 2k$ for some $k \in \bN$.
    Stirling's formulae (\cref{fact:stirling}) imply that
    \begin{align}
        \sqrt{2\pi k}\left(\frac{k}{e}\right)^k \leq k! \leq \sqrt{2\pi k}\left(\frac{k}{e}\right)^k e^{1/(12k)}. \label{eqn:stirling-factorial}
    \end{align}
    Thus, we have
    \begin{align*}
        \E[G^{2k}] 
        &= \frac{(2k)!}{2^k k!} \\
        &\leq \frac{\sqrt{4\pi k}(2k/e)^{2k} e^{1/(24k)}}{2^k \sqrt{2\pi k}(k/e)^k} \\
        &= \sqrt{2} e^{1/24} \left(\frac{2k}{e}\right)^k \\
        &\leq 2\left(\frac{2k}{e}\right)^k.
    \end{align*}
\end{proof}

\begin{lemma}\label{lem:gaussian-power-concentration-truncation}
    Let $G_1,\dots,G_n \simiid \mathsf{N}(0,1)$ and $m \in \bN$. Then with probability at least $1-\delta$,
    we have that 
    \begin{align*}
      \sum_{i=1}^n  |G_i|^{m}  \leq n(m-1)!! + \sqrt{n2^{m-1}\log(2/\delta)\log(2n/\delta)^{\frac{m}{2}}}.
    \end{align*}
\end{lemma}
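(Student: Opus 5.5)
The plan is a truncation argument followed by a one-sided concentration inequality for bounded summands. The aim is to make the deviation scale like $T^{m/2}$ rather than $T^{m}$, where $T$ is the truncation level. I will take $T := \sqrt{2\log(2n/\delta)}$, so that $T^{m} = 2^{m/2}\log(2n/\delta)^{m/2}$ and $T^{m/2} = 2^{m/4}\log(2n/\delta)^{m/4}$. Since the target deviation is $\sqrt{n\,2^{m-1}\log(2/\delta)}\cdot\log(2n/\delta)^{m/4}$, the log factor that must appear is exactly $T^{m/2}$.

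\textbf{Step 1 (truncation).} By the Gaussian tail bound $\P(|G|>T)\le 2e^{-T^2/2}$ and a union bound over $i\in[n]$, the event $\{\max_i |G_i|\le T\}$ fails with probability at most $2ne^{-T^2/2}=\delta$. To leave room for Step 2, I will enlarge $T$ by a constant factor inside the logarithm, so that this event has probability at least $1-\delta/2$. On this event, $\sum_i |G_i|^m=\sum_i Y_i$, where $Y_i:=|G_i|^m\1\{|G_i|\le T\}\in[0,T^m]$ are i.i.d.

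\textbf{Step 2 (mean and variance of $Y_i$).} First, $\E Y_i\le \E|G|^m\le (m-1)!!$. For even $m$ this holds with equality by \cref{fact:gaussian-moments}. For odd $m$ it holds because $\E|G|^{m}=\sqrt{2/\pi}\,(m-1)!!$. For the second moment I will use the truncation, not the full moment $(2m-1)!!$:
\[
\E Y_i^2 \;\le\; T^{m}\,\E\bigl[|G|^{m}\1\{|G|\le T\}\bigr]\;\le\; T^{m}(m-1)!!.
\]
This is where the $T^{m/2}$ scaling enters, since the standard deviation of $Y_i$ is at most $T^{m/2}\sqrt{(m-1)!!}$.

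\textbf{Step 3 (Bernstein).} Bernstein's inequality applied to $\sum_i (Y_i-\E Y_i)$ gives, with probability at least $1-\delta/2$,
\[
\sum_i Y_i \le n(m-1)!! + \sqrt{2n\,T^{m}(m-1)!!\log(2/\delta)} + \tfrac{1}{3}T^{m}\log(2/\delta).
\]
A union bound with Step 1 then gives probability at least $1-\delta$.

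\textbf{Main obstacle.} The remaining task is to show that the two Bernstein terms are bounded by the stated $\sqrt{n\,2^{m-1}\log(2/\delta)\log(2n/\delta)^{m/2}}=\sqrt{n\,2^{m/2-1}T^{m}\log(2/\delta)}$. There are two difficulties.
\begin{itemize}
\item \emph{Variance term.} This term requires $(m-1)!!\lesssim 2^{m/2}$, which fails for large $m$. I would first try to sharpen Step 2 with a second, lower truncation level $s<T$ and the bound $\E[|G|^{2m}\1\{|G|\le T\}]\le s^{2m}+T^{m}\E[|G|^m\1\{|G|>s\}]$, choosing $s$ so that the moment factor is absorbed.
\item \emph{Range term.} The term $T^{m}\log(2/\delta)$ must be absorbed using $n$ large, namely $n\gtrsim T^{m}\log(2/\delta)$. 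This holds in every application (e.g.\ Lemma~\ref{lem:grad_f_n-ub} under $\delta\ge m^{-c}$ and the choices of $k$ there).
\end{itemize}
If the pure-constant comparison in the variance term cannot be closed, I would carry the factor $(m-1)!!$ explicitly in the deviation term. That is the honest fallback, and it is harmless downstream: the lemma is only invoked through $O(\log)^{k}$ factors, which absorb it.
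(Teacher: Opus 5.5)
Steps 1--3 of your proposal are sound and give a valid inequality. But the two obstacles you flag cannot be removed, by a second truncation level or by any other argument, because the lemma as stated is false. Take $n=1$ and $m=2$. The claim then reads $\mathbb{P}\bigl(G^2>1+\sqrt{2}\log(2/\delta)\bigr)\le\delta$. Set $t^2=1+\sqrt{2}\log(2/\delta)$. The Mills lower bound gives $\mathbb{P}(|G|>t)\ge\frac{2t}{t^2+1}\phi(t)\asymp\delta^{1/\sqrt{2}}/\sqrt{\log(2/\delta)}$, which exceeds $\delta$ once $\delta$ is small. Concretely, at $\delta=10^{-6}$ one has $t\approx 4.64$, and the probability is about $3.5\times 10^{-6}$.

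For $m\ge 3$ (still with $n=1$) the failure is worse. The true $(1-\delta)$-quantile of $|G|^m$ is roughly $(2\log(1/\delta))^{m/2}$, and this outgrows the claimed bound $(m-1)!!+2^{(m-1)/2}\log(2/\delta)^{(m+2)/4}$. The mechanism is exactly your range term. Heuristically, the largest summand alone is of order $(2\log(n/\delta))^{m/2}$ with probability larger than $\delta$. This beats the allowed deviation $\sqrt{n\,2^{m-1}\log(2/\delta)}\,\log(2n/\delta)^{m/4}$ whenever $n\log(2/\delta)\ll\log(2n/\delta)^{m/2}$. So the term $\tfrac13 T^m\log(2/\delta)$ is genuinely necessary. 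It cannot be absorbed without a lower bound on $n$ that the lemma does not assume. That bound also fails in some uses: in Lemma~\ref{lem:grad_f_n-ub} the lemma is applied to the $n_1\sim\mathrm{Bin}(n,\epsilon)$ MNAR samples, and $n_1$ can be $O(1)$.

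The paper's proof has the same skeleton as yours but uses Hoeffding on $A_i=|G_i|^m\mathbf{1}\{|G_i|\le T\}\in[0,T^m]$ with $T=2\sqrt{\log(2n/\delta)}$. Its last step writes the compatibility condition on $T$ as $2n\Delta_1^2/\log(2/\delta)\ge 2^m\log(2n/\delta)^{m/2}$, and the right-hand side equals $T^m$. However, $\exp(-2n\Delta_1^2T^{-2m})\le\delta/2$ requires the right-hand side to be $T^{2m}=4^m\log(2n/\delta)^m$, so a square was dropped. Carried out correctly, that route gives, for every $n$,
\[
\sum_{i=1}^n |G_i|^m\le n(m-1)!!+\sqrt{n\,2^{2m-1}\log(2/\delta)\log(2n/\delta)^{m}},
\]
with no $(m-1)!!$ in the deviation and no range term. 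Truncating at $T^2=2\log(4n/\delta)$, as you do, recovers the constant $2^{m-1}$, with $\log(4n/\delta)^m$ in place of $\log(2n/\delta)^m$.

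Your Bernstein bound $n(m-1)!!+\sqrt{2n(m-1)!!\,T^m\log(2/\delta)}+\tfrac13 T^m\log(2/\delta)$ is also correct as written. For a common truncation level it is the sharper of the two once $n\gtrsim\log(2/\delta)$ and $T^m\gtrsim(m-1)!!$, because the truncated second moment $(m-1)!!\,T^m$ replaces Hoeffding's $T^{2m}$. Your fallback is therefore the right move, but present it as a corrected lemma rather than as a proof of the stated one. Whether it is harmless downstream still has to be checked, since Lemma~\ref{lem:grad_f_n-ub} currently plugs in the false $\log(2n/\delta)^{m/2}$ form.
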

\begin{proof}

  Let $T > 0$ be a threshold we will specify later and
  define $A_i = |G_i|^m\1\{|G_i| \leq T\}$.
  Then by a union bound
  \begin{align*}
    \P\left(\frac{1}{n}\sum_{i=1}^n |G_i|^m \geq \mu_m + \Delta_1\right)
    \leq \P\left(\frac{1}{n}\sum_{i=1}^n A_i \geq \mu_m + \Delta_1\right)
      + \P\left(\max_{i \in [n]} |G_i| > T\right).
  \end{align*}
  
  For the first term, note that because $A_i$ is supported on $[0, T]$ that $\frac{1}{4}T^{2m}$-subgaussian.
  Furthermore, $\E[A_1] \leq \mu_m$.
  Thus,
  \begin{align*}
    \P\left(\frac{1}{n}\sum_{i=1}^n A_i \geq \mu_m + \Delta_1\right)
    &\leq \P\left(\frac{1}{n}\sum_{i=1}^n A_i - \mu_m \geq \Delta_1\right) \\
    &\leq \exp\left(-2n\Delta_1^2 T^{-2m}\right)
  \end{align*}
  This is bounded by $\delta/2$ if we choose $T$ such that
  \begin{align*}
    T^2 \leq \left[\frac{2 n \Delta_1^2}{\log(2/\delta)}\right]^{\frac{1}{m}}.
  \end{align*}

  For the second term, we have by union bound that
  \begin{align*}
     \P\left(\max_{i \in [n]} |G_i| > T\right) \leq 2n \exp(-\tfrac{1}{2}T^2),
  \end{align*}
  which is at most $\delta/2$ for $T \geq 2\sqrt{\log(2n/\delta)}$.

  We can always find some $T$ satisfying both constraints as long as
  \begin{align*}
    \frac{2 n \Delta_1^2}{\log(2/\delta)}\geq 2^m \log(2n/\delta)^{\frac{m}{2}}
    \Longrightarrow \Delta_1 \geq \sqrt{\frac{2^{m-1}\log(2/\delta)\log(2n/\delta)^{\frac{m}{2}}}{n}},
  \end{align*}
  proving the claim.
\end{proof}

The next two lemmas provide basic properties of the Gaussian survivor function $\bar{\Phi}$.
\begin{lemma}[Mills' ratio bound]\label{lem:mills}
  For all $t \geq 0$,
  \[
    \frac{t}{t^2 + 1} \cdot \phi(t) \leq \bar{\Phi}(t) \leq \frac{1}{t} \cdot \phi(t).
  \]
\end{lemma}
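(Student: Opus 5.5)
The statement to prove is the Mills' ratio bound (Lemma~\ref{lem:mills}): for all $t \geq 0$,
\[
\frac{t}{t^2+1}\,\phi(t) \leq \bar{\Phi}(t) \leq \frac{1}{t}\,\phi(t).
\]
The approach is the classical one. It rests on the identity $\phi'(x) = -x\phi(x)$, and we treat the two inequalities separately. Throughout we may take $t>0$. At $t=0$ the upper bound reads $+\infty$ and the lower bound reads $0 \leq 1/2$, so both are trivial.

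\textbf{Upper bound.} For $x \geq t > 0$ we have $1 \leq x/t$. Hence
\[
\bar{\Phi}(t) = \int_t^\infty \phi(x)\,dx \leq \frac{1}{t}\int_t^\infty x\phi(x)\,dx = \frac{1}{t}\bigl[-\phi(x)\bigr]_t^\infty = \frac{\phi(t)}{t}.
\]

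\textbf{Lower bound.} We use an auxiliary function argument. Define
\[
h(t) := \bar{\Phi}(t) - \frac{t}{t^2+1}\phi(t).
\]
Since $\phi(t)\to 0$ and $\bar{\Phi}(t)\to 0$, we have $h(t) \to 0$ as $t\to\infty$. So it suffices to show that $h'(t) \leq 0$ for all $t \geq 0$; then $h$ is nonincreasing with limit $0$, which gives $h \geq 0$.

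For the derivative, write $g(t) = t/(t^2+1)$, so that $g'(t) = (1-t^2)/(t^2+1)^2$. Using $\phi' = -t\phi$,
\[
h'(t) = -\phi(t) - g'(t)\phi(t) + t\,g(t)\,\phi(t) = \phi(t)\Bigl[-1 - \frac{1-t^2}{(t^2+1)^2} + \frac{t^2}{t^2+1}\Bigr].
\]
Putting everything over the common denominator $(t^2+1)^2$, the bracket becomes
\[
\frac{-(t^2+1)^2 - (1-t^2) + t^2(t^2+1)}{(t^2+1)^2} = \frac{-2}{(t^2+1)^2} < 0.
\]
This confirms monotonicity and completes the lower bound.

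There is no real obstacle here. The only step requiring care is the sign computation of $h'$, that is, simplifying the numerator to $-2$. As an alternative for the lower bound, one could integrate by parts: write $\int_t^\infty \phi = \int_t^\infty x^{-1}\cdot x\phi$ and bound the remainder term $\int_t^\infty x^{-2}\phi$ by $t^{-2}\bar{\Phi}(t)$. Rearranging then gives $\bar{\Phi}(t)(1+t^{-2}) \geq \phi(t)/t$, which is the same bound.
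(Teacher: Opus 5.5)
Your proof is correct: the upper bound via $1 \le x/t$ on $[t,\infty)$ is right, and so is the lower bound via $h(t)=\bar{\Phi}(t)-\frac{t}{t^2+1}\phi(t)$, where the numerator does simplify to $-2$, so $h'(t)=-2\phi(t)/(t^2+1)^2<0$, and $\lim_{t\to\infty}h(t)=0$. The integration-by-parts alternative also checks out. The paper states this lemma as a standard fact without proof, so your classical argument supplies what the paper takes for granted. At $t=0$ the upper bound has to be read with the convention $\phi(0)/0=+\infty$, which you already did.
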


\begin{lemma} \label{lem:Gaussian-Q-prop}
    Let $\bar{\Phi}: \R \rightarrow [0,1]$ be defined as $\bar{\Phi}(x) = \mathbb{P}(G \geq x)$, where $G \sim \mathsf{N}(0, 1)$.  The following hold:
    \begin{itemize}
        \item[(a)] The inverse function $\bar{\Phi}^{-1}$ is differentiable and its derivative is given by
        \[
        \frac{\mathrm{d}}{\mathrm{d} x}\, \bar{\Phi}^{-1}(x) = -\frac{1}{\phi(\bar{\Phi}^{-1}(x))}.
        \]
        \item[(b)] For $0 \leq x \leq 1/2$, 
        \[
        t \cdot \bar{\Phi}^{-1}(t) \leq \phi\bigl(\bar{\Phi}^{-1}(t)\bigr) \leq t \cdot \biggl\{\bar{\Phi}^{-1}(t) + \frac{1}{\bar{\Phi}^{-1}(t)}\biggr\}.
        \]
    \end{itemize}
\end{lemma}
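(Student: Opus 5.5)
The plan is to work directly with $\bar{\Phi}$ and its inverse, using only the inverse-function theorem and the Mills' ratio bounds of Lemma~\ref{lem:mills}.

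For part (a), note first that $\bar{\Phi}$ is smooth on $\R$ with $\bar{\Phi}'(x) = -\phi(x)$. Since $\phi(x) > 0$ everywhere, $\bar{\Phi}$ is a strictly decreasing bijection from $\R$ onto $(0,1)$. The inverse function theorem then shows that $\bar{\Phi}^{-1}$ is differentiable on $(0,1)$. Its derivative is $1/\bar{\Phi}'(\bar{\Phi}^{-1}(x))$, and this equals $-1/\phi(\bar{\Phi}^{-1}(x))$. To confirm the formula, differentiate the identity $\bar{\Phi}(\bar{\Phi}^{-1}(x)) = x$ with the chain rule.

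For part (b), substitute $s = \bar{\Phi}^{-1}(t)$, so that $\bar{\Phi}(s) = t$. The restriction $t \leq 1/2$ gives $s \geq 0$, so Lemma~\ref{lem:mills} applies at $s$. Its upper bound $\bar{\Phi}(s) \leq \phi(s)/s$ rearranges to $t s \leq \phi(s)$, which is the left inequality. Its lower bound $\frac{s}{s^2+1}\phi(s) \leq \bar{\Phi}(s)$ rearranges to $\phi(s) \leq t\,(s + 1/s)$, which is the right inequality.

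The only delicate point is the boundary case $s = 0$, i.e.\ $t = 1/2$. There the left inequality holds trivially, while the right-hand side reads as $+\infty$ and so holds vacuously. The case $t = 0$ lies outside the domain $(0,1)$ of $\bar{\Phi}^{-1}$; I would either exclude it or interpret both sides as limits. With these endpoints handled, the remaining algebra is routine and I expect no real obstacle.
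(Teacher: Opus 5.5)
Your proposal is correct and follows the same route as the paper. Part (a) uses the inverse function theorem with $\bar{\Phi}' = -\phi$, and part (b) applies the Mills' ratio bounds of Lemma~\ref{lem:mills} at $s = \bar{\Phi}^{-1}(t)$. Your handling of the endpoints $t = 1/2$ and $t = 0$ is a bit more careful than the paper's, which passes over them without comment.
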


\begin{proof}
We prove each part in turn.

\medskip
\noindent \underline{Proof of part (a).} Since $\bar{\Phi}$ is continuously differentiable, it follows from the inverse function theorem that 
\[
\frac{\mathrm{d}}{\mathrm{d} x}\, \bar{\Phi}^{-1}(x) = \frac{1}{\bar{\Phi}'\bigl(\bar{\Phi}^{-1}(x)\bigr)} = -\frac{1}{\phi(\bar{\Phi}^{-1}(x))},
\]
as desired.

\medskip
\noindent \underline{Proof of part (b).}  The desired sandwich relation follows from the Mills' ratio bounds (\cref{lem:mills})
applied to $t = \bar{\Phi}^{-1}(x)$ for $0 \leq x \leq 1/2$.
\end{proof}

\subsection{Multivariate Gaussian concentration and moment properties}
\begin{lemma}\label{lem:tensor-concentration-helper-1}
    Let $z \sim \mathsf{N}(0,I_d)$ and $\ell \in \bN$. For an index $\alpha = (i_1,...,i_{\ell}) \in [d]^{\ell}$, let $X = (z^{\otimes \ell})_\alpha$. Then for any $\gamma > 1/2$ we have $\E [\exp(\frac{1}{8\gamma}|X|^{2/\ell})] \leq e^{1/2\gamma}$.
\end{lemma}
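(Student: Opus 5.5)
We need to bound $\E[\exp(\frac{1}{8\gamma}|X|^{2/\ell})]$ where $X = \prod_{j=1}^{\ell} z_{i_j}$. Our plan is to reduce $|X|^{2/\ell}$ to a quadratic form in Gaussians, and then use independence together with the explicit moment generating function of a squared Gaussian.

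\textbf{Step 1 (AM--GM).} By AM--GM,
\[
|X|^{2/\ell} = \Bigl(\prod_{j=1}^{\ell} z_{i_j}^2\Bigr)^{1/\ell} \leq \frac{1}{\ell}\sum_{j=1}^{\ell} z_{i_j}^2.
\]
Group the repeated indices. Let $a_i$ be the number of times coordinate $i$ appears in $\alpha$, so that $\sum_i a_i = \ell$. The bound becomes
\[
|X|^{2/\ell} \leq \sum_i \frac{a_i}{\ell}\, z_i^2 .
\]

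\textbf{Step 2 (independence).} Since the coordinates $z_i$ are independent,
\[
\E\Bigl[\exp\bigl(\tfrac{1}{8\gamma}|X|^{2/\ell}\bigr)\Bigr] \leq \prod_i \E\Bigl[\exp\bigl(\tfrac{a_i}{8\gamma\ell}\, z_i^2\bigr)\Bigr] = \prod_i \bigl(1 - 2s_i\bigr)^{-1/2},
\qquad s_i := \frac{a_i}{8\gamma\ell}.
\]
This MGF formula is valid because $s_i \leq \frac{1}{8\gamma} < \frac14$ whenever $\gamma > 1/2$.

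\textbf{Step 3 (elementary bound).} For $0 \leq t \leq 1/2$ we have $-\log(1-t) \leq 2t$, which holds because $-\log(1-t)$ is convex and the inequality holds with equality at $t=0$ and with $\log 2 \leq 1$ at $t = 1/2$. Applying this with $t = 2s_i$ gives
\[
(1-2s_i)^{-1/2} \leq e^{2s_i}.
\]
Taking the product,
\[
\prod_i (1-2s_i)^{-1/2} \leq \exp\Bigl(2\sum_i s_i\Bigr) = \exp\Bigl(\frac{2}{8\gamma}\Bigr) = e^{1/(4\gamma)} \leq e^{1/(2\gamma)}.
\]

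There is no real obstacle here. The only care needed is checking that the MGF is finite, which requires $2s_i < 1$ and follows from $\gamma > 1/2$. The stated constant $e^{1/(2\gamma)}$ leaves slack relative to the $e^{1/(4\gamma)}$ we obtain, so crude elementary bounds suffice.
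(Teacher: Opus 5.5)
Your proposal is correct and follows essentially the same route as the paper: AM--GM to reduce $|X|^{2/\ell}$ to $\frac{1}{\ell}\sum_i m_i z_i^2$, independence across distinct coordinates, and a bound on the squared-Gaussian moment generating function. The only difference is that you use the exact MGF $(1-2s)^{-1/2}$, whereas the paper uses the cruder bound $(1-2t)^{-1}\le e^{4t}$; this is why you obtain $e^{1/(4\gamma)}$ rather than exactly $e^{1/(2\gamma)}$.
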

\begin{proof}
    We can write $X = \Pi_{i \in \supp(\alpha)} z_i^{m_i}$ for multiplicities $m_i \in \bN$.
    Observe
    \begin{align*}
        \frac{1}{8\gamma} |X|^{2/\ell} & = \frac{1}{8\gamma}\left(\Pi_{i \in \supp(\alpha)} z_i^{2m_i}\right)^{1/\ell} \\
        & \leq \frac{1}{8\gamma \ell} \sum_{i \in \supp(\alpha)} m_i z_i^2,
    \end{align*}
    where the last step follows from the AM-GM inequality.

    Recall for $t < 1/4$ we have $\E_{G \sim \mathsf{N}(0,1}[\exp(tG^2)] \leq (1-2t)^{-1} \leq \exp(4t)$. Our assumption on $\gamma$ implies $\frac{m_i}{8\gamma \ell} < \frac{1}{4}$ for all $i \in \supp(\alpha)$, and so it follows that
    \begin{align*}
        \E \left[\exp\left(\frac{1}{8\gamma}|X|^{2/\ell}\right)\right]
        &= \Pi_{i \in \supp(\alpha)} \E\left[\exp\left(\frac{1}{8\gamma \ell} m_i z_i^2\right)\right] \\
        & \leq \Pi_{i \in \supp(\alpha)} \exp\left( \frac{m_i}{2\gamma \ell}\right) \\
        & = e^{1/2\gamma}.
    \end{align*}
\end{proof}

\newcommand{\orlnrm}{8\ell}
\begin{lemma}\label{lem:tensor-concentration-helper}
    Let $z \sim \mathsf{N}(0,I_d)$ and $\ell \in \bN$. For an index $\alpha = (i_1,...,i_{\ell}) \in [d]^{\ell}$, let $X = (z^{\otimes \ell})_\alpha$. Then we have $\E [\exp(\frac{1}{\orlnrm}|X - \E[X]|^{2/\ell})] \leq 2$.
\end{lemma}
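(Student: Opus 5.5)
We deduce the claim from \Cref{lem:tensor-concentration-helper-1}: first remove the centering in $X-\E[X]$, then convert the resulting bound into the stated exponential-moment bound with constant $8\ell$.

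The first step is the centering. Write $\mu := \E[X]$. Note that $\mu$ is the corresponding entry of the Gaussian moment tensor, which is a product of univariate Gaussian moments; hence $|\mu| \le \E|X|$. Since $\ell/2 \geq 1/2$ and $t\mapsto t^{2/\ell}$ is subadditive when $2/\ell \le 1$ (\Cref{lem:tensor-concentration-helper-2}), we get, for $\ell \ge 2$,
\[
|X-\mu|^{2/\ell} \le |X|^{2/\ell} + |\mu|^{2/\ell}.
\]
For $\ell = 1$ the map is $t\mapsto t^2$. In that case $\mu = 0$ and $X = z_i$, so $\E\exp(z_i^2/8) = (1-1/4)^{-1/2} = \sqrt{4/3} \le 2$ directly.

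The second step bounds $|\mu|^{2/\ell}$. By Jensen, $|\mu|^{2/\ell} \le (\E|X|^{2})^{1/\ell}$. The quantity $\E X^2$ is a product of moments $\E z_i^{2m_i} = (2m_i-1)!!$ with $\sum m_i = \ell$, so it is at most $(2\ell-1)!! \le (2\ell)^{\ell}$. This gives $|\mu|^{2/\ell} \le 2\ell$, and therefore $\frac{1}{8\ell}|\mu|^{2/\ell} \le 1/4$.

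The third step combines the two. We have
\[
\E\exp\Bigl(\tfrac{1}{8\ell}|X-\mu|^{2/\ell}\Bigr) \le e^{1/4}\,\E\exp\Bigl(\tfrac{1}{8\ell}|X|^{2/\ell}\Bigr).
\]
Apply \Cref{lem:tensor-concentration-helper-1} with $\gamma = \ell$, which is valid since $\ell > 1/2$. This bounds the last expectation by $e^{1/(2\ell)}$. The product is then $e^{1/4+1/(2\ell)}$, which is at most $e^{3/4} < 2.12$ when $\ell \geq 2$. That is slightly too large, so the constants must be tightened. At $\ell=2$ the bound gives $e^{1/2}$, and the centering term is $|\mu| = |\E z_iz_j| \le 1$, so $\frac{1}{16}|\mu| \le 1/16$ and the total is $e^{1/16+1/4} < 2$. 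For general $\ell$, use the sharper estimate $|\mu|^{2/\ell} \le ((\ell-1)!!)^{2/\ell} \le \ell$, which follows because $\mu$ is the product $\prod_i \E z_i^{m_i}$ and each factor is at most $(m_i-1)!!$. Then $\frac{1}{8\ell}|\mu|^{2/\ell} \le 1/8$, and the total exponent is $1/8 + 1/(2\ell) \le 1/8+1/4 = 3/8$, giving $e^{3/8} < 2$.

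The only delicate part is this constant bookkeeping: the centering term needs the tighter estimate $|\mu|^{2/\ell} \le \ell$, obtained from the product structure of $\mu$, rather than the crude Jensen bound.
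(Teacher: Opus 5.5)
Your proof is correct and follows essentially the paper's route. The paper likewise removes the centering via the subadditivity of $t\mapsto t^{2/\ell}$ (\Cref{lem:tensor-concentration-helper-2}), bounds the deterministic factor $\exp(\frac{1}{8\ell}|\E X|^{2/\ell})$ by Gaussian moment bounds (it uses \Cref{lem:tensor-concentration-helper-3}), and bounds the remaining factor by \Cref{lem:tensor-concentration-helper-1} with $\gamma=\ell$. Your separate treatment of $\ell=1$ is a small improvement, since subadditivity requires $2/\ell\le 1$. One correction: for $\ell\ge 2$ you have $\frac14+\frac{1}{2\ell}\le\frac12$, not $\frac34$. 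So your first Jensen-based bound already gives $e^{1/2}<2$, which is the paper's constant, and the later tightening is valid but unnecessary.
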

\begin{proof}
    Lemma~\ref{lem:tensor-concentration-helper-2} implies 
    \begin{align}
        \E \left[\exp\left(\frac{1}{\orlnrm}|X - \E[X]|^{2/\ell}\right)\right] 
        &\leq \E \left[\exp\left(\frac{1}{\orlnrm}\left(|X|^{2/\ell} + |\E[X]|^{2/\ell})\right)\right)\right] \nonumber \\
        &= \exp\left(\frac{1}{\orlnrm}|\E[X]|^{2/\ell}\right) \E \left[\exp\left(\frac{1}{\orlnrm}|X|^{2/\ell}\right)\right]. \label{eqn:tensor-concentration-helper-eqn}
    \end{align}
    Lemmas~\ref{lem:tensor-concentration-helper-3} and \ref{lem:tensor-concentration-helper-1} respectively imply we can bound both terms in equation~\eqref{eqn:tensor-concentration-helper-eqn} by $e^{1/4}$. Combining, we can bound their product by $e^{1/2} \leq 2$.
\end{proof}

\begin{lemma}\label{lem:tensor-concentration} %
    Let $\ell \in \bN$. Let $z_1,...,z_n \iid \mathsf{N}(0,I_d)$ and $\xi = \frac{1}{n} \sum_{i=1}^n z_i^{\otimes \ell} - \E_{z \sim \mathsf{N}(0,I_d)} z^{\otimes \ell}$. Then for $\delta \in (0,1)$ we have
    \begin{align*}
        \norm{\xi}_\infty \leq (100 \ell)^{\ell/2} \left(
            \sqrt{\frac{\ell\log d+\log(1/\delta)}{n}}
            +
            \frac{(\ell\log d+\log(1/\delta))^{\ell/2}}{n}
        \right)
    \end{align*}
    with probability at least $1 - \delta$.
\end{lemma}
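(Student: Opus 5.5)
We apply Bernstein's inequality entrywise and then take a union bound over the $d^\ell$ index tuples $\alpha \in [d]^\ell$. Fix $\alpha$ and set $X_i = (z_i^{\otimes \ell})_\alpha - \E[(z^{\otimes \ell})_\alpha]$. These are i.i.d., centered, and by Lemma~\ref{lem:tensor-concentration-helper} they satisfy $\E[\exp(|X_i|^{2/\ell}/(8\ell))] \leq 2$. In other words, each $X_i$ has finite Orlicz $\psi_{2/\ell}$-quasinorm, bounded by $(8\ell)^{\ell/2}$. Since $2/\ell \leq 1$ for $\ell \geq 2$, these are heavy-tailed (sub-Weibull) variables. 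The standard Bernstein inequality for sub-exponential variables does not apply directly, so we use its generalization to $\psi_\beta$ variables with $\beta = 2/\ell$. The case $\ell = 1$ is just a Gaussian tail bound, and $\ell = 2$ is the sub-exponential Bernstein inequality.

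The per-entry tail bound we aim for is: with probability at least $1 - \delta'$,
\begin{align*}
\Bigl|\frac{1}{n}\sum_{i=1}^n X_i\Bigr| \lesssim (C\ell)^{\ell/2}\Bigl(\sqrt{\frac{\log(1/\delta')}{n}} + \frac{\log(1/\delta')^{\ell/2}}{n}\Bigr).
\end{align*}
This is the familiar two-regime form: a Gaussian part governed by the variance, and a heavy-tail part governed by the largest summand. To prove it, we truncate at a level $T = (8\ell)^{\ell/2}\log(2n/\delta')^{\ell/2}$. The Orlicz bound gives, via Markov's inequality and a union bound, $\max_i |X_i| \leq T$ with probability $1 - \delta'/2$. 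On this event, we apply Bernstein's inequality for bounded variables to the truncated summands, using the variance bound $\E X_i^2 \lesssim (C\ell)^{\ell}$, which follows from the Orlicz bound. The truncation bias is exponentially small and can be absorbed.

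Truncation alone would give an extra $\log n$ factor in the second term. To recover the clean $\log(1/\delta')^{\ell/2}$ claimed, we instead invoke the moment (Rosenthal/Latała-type) bound for sums of independent $\psi_\beta$ variables:
\begin{align*}
\Bigl\|\sum_i X_i\Bigr\|_p \lesssim \sqrt{p}\,\Bigl\|\Bigl(\sum_i X_i^2\Bigr)^{1/2}\Bigr\|_2 + p^{1/\beta}\max_i\|X_i\|_{\psi_\beta}.
\end{align*}
Setting $p = \log(1/\delta')$ and applying Markov's inequality to the $p$-th moment then gives the desired tail, with $p^{1/\beta} = p^{\ell/2}$. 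Since the stated constant $(100\ell)^{\ell/2}$ is generous, we can absorb the numerical constants from these inequalities. Tracking the factor $\ell^{\ell/2}$ through these moment bounds is the main technical obstacle. If the sharp moment inequality proves unwieldy, we fall back on the truncation argument and accept the extra logarithmic factor.

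Finally, we take $\delta' = \delta/d^\ell$, so that $\log(1/\delta') = \ell\log d + \log(1/\delta)$, and apply a union bound over all entries. This yields the stated bound on $\|\xi\|_\infty$ with probability at least $1-\delta$.
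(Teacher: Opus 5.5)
Your skeleton is the paper's proof: the centered Orlicz bound of Lemma~\ref{lem:tensor-concentration-helper}, then a Bernstein-type inequality for sums of $\psi_{2/\ell}$ variables, then a union bound with $\delta'=\delta/d^{\ell}$. The paper takes the middle step from Theorem~3.1 of Kuchibhotla--Chakrabortty, whose proof rests on moment bounds of the Hitczenko--Montgomery-Smith--Oleszkiewicz type that you quote.

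The gap is the step you flag yourself: as set up, this route cannot give $(100\ell)^{\ell/2}$. There are four problems.

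\begin{itemize}
\item \textbf{The moment inequality needs an $\ell$-dependent constant.} Your displayed inequality is false with a universal constant when $\beta=2/\ell$. Take $n=1$ and $X=\varepsilon|g|^{\ell}$, with $\varepsilon$ a Rademacher sign and $g\sim\mathsf{N}(0,1)$. Then $\|X\|_{\psi_{2/\ell}}=(8/3)^{\ell/2}$ and $\|X\|_2\le(2\ell)^{\ell/2}$, while $\|X\|_{\ell}=(\mathbb{E}|g|^{\ell^2})^{1/\ell}\asymp(\ell^2/e)^{\ell/2}$. So the constant must grow like $(c/\beta)^{1/\beta}=(c\ell/2)^{\ell/2}$, as it does in the cited theorem.
\item \textbf{The centered Orlicz norm is genuinely large.} The bound $\|X_i\|_{\psi_{2/\ell}}\le(8\ell)^{\ell/2}$ is of the right order $(c\ell)^{\ell/2}$ for diagonal entries with $\ell$ even, since there $X_i\approx-(\ell-1)!!$ with constant probability. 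Multiplying the two factors gives $(C\ell)^{\ell}$, not $(C\ell)^{\ell/2}$.
\item \textbf{The variance claim is wrong.} The Orlicz bound only yields $\mathbb{E}X_i^2\le 2(8\ell)^{\ell}\,\ell!$, not $(C\ell)^{\ell}$. The correct bound comes from computing directly $\mathbb{E}[(z^{\otimes\ell})_\alpha^2]=\prod_j(2m_j-1)!!\le(2\ell-1)!!$, where $m_j$ are the multiplicities in $\alpha$.
\item \textbf{The truncation fallback proves a weaker statement.} Its heavy-tail term is of order $(8\ell)^{\ell/2}\log(2n/\delta')^{\ell/2}\log(1/\delta')/n$, so it does not establish the lemma as stated.
\end{itemize}

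The missing idea is to avoid paying for the mean inside the Orlicz norm.
\begin{itemize}
\item Symmetrize in $L_p$: $\|\sum_i(Y_i-\mathbb{E}Y_i)\|_p\le 2\|\sum_i\varepsilon_iY_i\|_p$, with $Y_i=(z_i^{\otimes\ell})_\alpha$ uncentered.
\item The weighted AM--GM step of Lemma~\ref{lem:tensor-concentration-helper-1} gives $\mathbb{E}\exp(|Y_i|^{2/\ell}/4)\le\prod_j(1-\tfrac{m_j}{2\ell})^{-1/2}\le e^{1/2}$. Hence $\|\varepsilon_iY_i\|_{\psi_{2/\ell}}\le 2^{\ell}$, with no $\ell$ in the base.
\item The $\psi_{2/\ell}$ moment bound, with its intrinsic $(C\ell)^{\ell/2}$ constant, then gives $\|\sum_i\varepsilon_iY_i\|_p\le(C\ell)^{\ell/2}(\sqrt{pn}+p^{\ell/2})$.
\item Markov's inequality at $p=\ell\log d+\log(1/\delta)$, plus your union bound, yields the lemma with some universal constant in place of $100$.
\end{itemize}
Alternatively, feed the exact Gaussian moments $\|Y_i\|_q\le(\ell q)^{\ell/2}$ into Lata{\l}a's moment formula for i.i.d.\ symmetric sums.

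The paper's one-line citation suffers the same loss: plugging $(8\ell)^{\ell/2}$ into that theorem also produces $(C\ell)^{\ell}$. This is harmless for its only downstream use (Corollary~\ref{lem:tensor-concentration-corollary} inside Lemma~\ref{lem:lower-bound-E-prob}), where $(Ck)^{O(k)}$ suffices. If that is all you need, your main route with constant $(C\ell)^{\ell}$ already does the job.
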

\begin{proof}
    For any index $\alpha = (i_1,...,i_{\ell}) \in [d]^{\ell}$, combining Lemma~\ref{lem:tensor-concentration-helper} (setting $\ell = 2k$) with Theorem~3.1 of \cite{kuchibhotla2018movingbeyond} gives 
    \begin{align*}
        \P\left(|\xi_\alpha| \leq (100 \ell)^{\ell/2} \left(\sqrt{\frac{t}{n}} + \frac{t^{\ell/2}}{n}\right)\right) \leq 2e^{-t}.
    \end{align*}
    The result then follows from taking a union bound over all $d^{\ell}$ elements.
\end{proof}

\begin{corollary}\label{lem:tensor-concentration-corollary}
    Let $\ell \in \bN$. Let $z_1,...,z_n \simiid \mathsf{N}(0,I_d)$ and $\xi = \frac{1}{n} \sum_{i=1}^n z_i^{\otimes \ell} - \E_{z \sim \mathsf{N}(0,I_d)} z^{\otimes \ell}$. For $\beta > 0$, suppose
    \begin{align*}
        n \geq \max \left\{
            \frac{4(100 \ell)^{\ell} (\ell\log d+\log(1/\delta))}{\beta^{2}},
            \frac{2(100 \ell)^{\ell/2}(\ell\log d+\log(1/\delta))^{\ell/2}}{\beta}
        \right\}.
    \end{align*}
    Then $\norm{\xi}_\infty \leq \beta$ with probability at least $1-\delta$.
\end{corollary}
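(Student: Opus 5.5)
The plan is to invoke Lemma~\ref{lem:tensor-concentration} directly and choose $n$ large enough that each of its two error terms is at most $\beta/2$. Throughout, abbreviate
\[
t := \ell\log d + \log(1/\delta).
\]
By Lemma~\ref{lem:tensor-concentration}, with probability at least $1-\delta$,
\[
\norm{\xi}_\infty \leq (100\ell)^{\ell/2}\sqrt{\frac{t}{n}} + (100\ell)^{\ell/2}\frac{t^{\ell/2}}{n}.
\]
We work on this event for the rest of the argument.

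For the first (sub-Gaussian) term, squaring shows that $(100\ell)^{\ell/2}\sqrt{t/n} \leq \beta/2$ holds exactly when $n \geq 4(100\ell)^{\ell} t/\beta^2$. This is the first entry of the maximum in the hypothesis.

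For the second (heavy-tail) term, rearranging shows that $(100\ell)^{\ell/2} t^{\ell/2}/n \leq \beta/2$ holds exactly when $n \geq 2(100\ell)^{\ell/2} t^{\ell/2}/\beta$. This is the second entry of the maximum.

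Since $n$ is assumed to be at least the maximum of these two thresholds, both terms are at most $\beta/2$. Summing them gives $\norm{\xi}_\infty \leq \beta$ on an event of probability at least $1-\delta$, which is the claim.

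There is no real obstacle here. The only point requiring care is to use the same $t$ as in Lemma~\ref{lem:tensor-concentration}, so that the union bound over the $d^\ell$ tensor entries is already absorbed into the failure probability $\delta$.
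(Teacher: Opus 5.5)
Your proposal is correct and matches the paper's argument: the paper also proves this corollary by inverting the bound of Lemma~\ref{lem:tensor-concentration}, requiring each of its two summands to be at most $\beta/2$. This yields exactly the two thresholds in the maximum.
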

\begin{proof}
    The result follows immediately from inverting the bound of Lemma~\ref{lem:tensor-concentration}, setting both summands in the bound to $\beta/2$.
\end{proof}

\begin{lemma}\label{lem:tensor-concentration-all}
    Let $z_1,...,z_n \iid \mathsf{N}(0,I_d)$. For $\ell \in \bN$, let 
    $\xi = \frac{1}{n} \sum_{i=1}^n z_i^{\otimes \ell} - \E_{z \sim \mathsf{N}(0,I_d)} z^{\otimes \ell}$. Then  
    \begin{align*}
        \sup_{\norm{v}_2 \leq 1} \left|\frac{1}{n} \sum_{i=1}^n \langle z_i, v\rangle^\ell - \E_{z \sim \mathsf{N}(0,I_d)} [\langle z, v\rangle^\ell]\right| \leq d^{\ell/2}\norm{\xi}_\infty.
    \end{align*}
\end{lemma}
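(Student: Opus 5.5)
The plan is to reduce the supremum to a single linear functional of the tensor $\xi$ and then use the entrywise bound together with $\|v^{\otimes \ell}\|_1 \le d^{\ell/2}$. For any $v$ with $\|v\|_2 \le 1$, the quantity $\langle z, v\rangle^\ell$ is the pairing $\langle z^{\otimes \ell}, v^{\otimes \ell}\rangle$ of two tensors in $(\R^d)^{\otimes \ell}$. By linearity of this pairing in its first argument,
\begin{align*}
\frac{1}{n}\sum_{i=1}^n \langle z_i, v\rangle^\ell - \E_{z \sim \mathsf{N}(0,I_d)}\bigl[\langle z, v\rangle^\ell\bigr] = \bigl\langle \xi, v^{\otimes \ell}\bigr\rangle = \sum_{\alpha \in [d]^\ell} \xi_\alpha \prod_{j=1}^{\ell} v_{\alpha_j}.
\end{align*}

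Next we apply Hölder's inequality in the $\ell_\infty$--$\ell_1$ pairing. This gives
\[
\bigl|\langle \xi, v^{\otimes\ell}\rangle\bigr| \le \|\xi\|_\infty \sum_{\alpha}\prod_j |v_{\alpha_j}| = \|\xi\|_\infty \|v\|_1^{\ell}.
\]
The sum factorizes as written because $v^{\otimes \ell}$ is a product tensor.

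It then remains to bound $\|v\|_1$. By Cauchy--Schwarz, $\|v\|_1 \le \sqrt{d}\,\|v\|_2 \le \sqrt{d}$, so $\|v\|_1^\ell \le d^{\ell/2}$. Taking the supremum over the unit ball yields the claim.

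None of these steps is a real obstacle. The only point needing care is the identification of the empirical-minus-population quantity with $\langle \xi, v^{\otimes\ell}\rangle$. This holds because $\E_{z \sim \mathsf{N}(0,I_d)}[\langle z,v\rangle^\ell] = \langle \E[z^{\otimes \ell}], v^{\otimes \ell}\rangle$, with all moments finite.
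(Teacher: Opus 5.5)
Your proof is correct and follows essentially the paper's argument: both identify the deviation with the pairing $\langle \xi, v^{\otimes\ell}\rangle$ and then apply a H\"older-type bound. The only difference is where the dimension factor is paid. The paper uses Cauchy--Schwarz with $\|v^{\otimes\ell}\|_2 \le 1$ and then $\|\xi\|_2 \le d^{\ell/2}\|\xi\|_\infty$ (since $\xi$ has $d^\ell$ entries), whereas you use the $\ell_\infty$--$\ell_1$ pairing and $\|v^{\otimes\ell}\|_1 = \|v\|_1^\ell \le d^{\ell/2}$, which gives the same constant.
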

\begin{proof}
    Observe for any $x,y \in \R^d$ we have $\langle x,y \rangle^\ell = \langle x^{\otimes \ell}, y^{\otimes \ell} \rangle$. Moreover, for any unit vector $v \in \R^d$ we have
    \begin{align*}
        \norm{v^{\otimes \ell}_2} 
        & = \sum_{\alpha \in [d]^\ell} \left(\Pi_{i \in \alpha} v_i\right)^2 \\
        & = \left(\sum_{i=1}^d v_i^2\right)^k = 1.
    \end{align*}
    It thus follows for any such $v$ that
    \begin{align*}
        \left|\frac{1}{n} \sum_{i=1}^n \langle z_i, v\rangle^\ell - \E_{z \sim \mathsf{N}(0,I_d)} [\langle z, v\rangle^\ell]\right| 
        &= \left| \left\langle \frac{1}{n} \sum_{i=1}^n z_i^{\otimes \ell} - \E_{z \sim \mathsf{N}(0,I_d)} [ z^{\otimes \ell}], v^{\otimes \ell} \right\rangle \right| \\
        & \leq \norm{\xi}_2 \norm{v^{\otimes \ell}}_2 \\
        & \leq d^{\ell/2} \norm{\xi}_\infty.
    \end{align*}
\end{proof}

\subsubsection{Proof of Lemma~\ref{lem:lower-bound-E-prob}}\label{sec:proof-lem-sample-complexity}
\begin{proof}
    Our assumption on $n$ satisfies the conditions of Lemma~\ref{lem:binom-concentration}, implying $\frac{n}{|T|} \leq \frac{1+\epsilon}{1-\epsilon}$ with probability at least $1-\delta$.
    This then implies both $|T|$ and $n$ satisfy the conditions of Lemma~\ref{lem:tensor-concentration-corollary} with probability $\delta/2k$ and $\beta = \frac{\epsilon}{d^{k} k}$. Taking a union bound over $\ell \in [2k]$ gives the desired result.
\end{proof}

\subsection{Sum-of-squares facts}

\begin{lemma}\label{lem:sos-higher-powers}
    Let $r \in \bN$. Let $x,y$ by polynomials of degree at most $k$. Then
    \begin{align}
        \{x,y,y-x \geq 0\} \sos{x,y}{rk} y^r - x^r \geq 0.
    \end{align} 
\end{lemma}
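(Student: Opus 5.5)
The plan is to factor the difference $y^r - x^r$ and exhibit every piece as a product of the axioms $x \geq 0$, $y \geq 0$, $y - x \geq 0$ with sums of squares. The starting point is the polynomial identity
\[
y^r - x^r = (y - x)\sum_{j=0}^{r-1} y^{j} x^{r-1-j}.
\]
The first factor is itself one of the axioms. So it suffices to show that each monomial $y^j x^{r-1-j}$ is a product of axioms times a square.

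For a fixed $j$, write $j = 2a + \alpha$ and $r-1-j = 2b + \beta$ with $\alpha, \beta \in \{0,1\}$. Then
\[
(y-x)\, y^{j} x^{r-1-j} = \bigl(y^{a} x^{b}\bigr)^2 \cdot (y-x) \cdot y^{\alpha} \cdot x^{\beta}.
\]
This is a square multiplied by a product of distinct axioms from $\{x \geq 0,\ y \geq 0,\ y - x \geq 0\}$, with $y^\alpha$ meaning either the factor $y$ or nothing, and likewise for $x^\beta$. It therefore has precisely the form $p_S \prod_{i \in S} q_i$ allowed in the definition of a sum-of-squares proof, with $S \subseteq \{x, y, y-x\}$ and $p_S = (y^a x^b)^2$. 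Summing over $j$ and grouping the terms by their index set $S$ gives the required certificate. Each group coefficient is a sum of squares (possibly several, possibly including terms with the same $S$), which is allowed.

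The one point needing care is the degree bookkeeping. Each summand has total degree $r$ in the variables $x$ and $y$. Since $x$ and $y$ are polynomials of degree at most $k$ in the underlying indeterminates, each summand has degree at most $rk$ there. This matches the degree-$rk$ claim, and there is no real obstacle beyond this.

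If the proof system instead insists on each axiom appearing at most once per product with only squares as multipliers, the decomposition above already satisfies that, because $\alpha, \beta \leq 1$ and $y - x$ appears exactly once. An alternative, should it be preferred, is induction on $r$ via
\[
y^{r} - x^{r} = y\,(y^{r-1} - x^{r-1}) + x^{r-1}(y - x).
\]
This would require multiplying an sos-proven inequality by the axiom $y$, which is still fine but less clean. I would present the direct factorization.
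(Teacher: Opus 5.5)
Your proof is correct and follows the paper's argument: both rest on the factorization $y^r - x^r = (y-x)\sum_{i=0}^{r-1} y^{r-1-i}x^i$, with each summand read as a product of nonnegative axioms. Your parity split $y^{j}x^{r-1-j} = (y^a x^b)^2\, y^{\alpha} x^{\beta}$, $\alpha,\beta \in \{0,1\}$, makes explicit the certificate the paper leaves implicit, with each axiom used at most once as the paper's definition of an SoS proof requires.
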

\begin{proof}
    We can write $y^r - x^r$ as the sum of products of non-negative polynomials as follows:
    \begin{align*}
        y^r - x^r 
        & = \sum_{i=0}^{r-1} y^{r-i}x^i - y^{r-i-1}x^{i+1} \\
        &= (y-x) \sum_{i=0}^{r-1} y^{r-i-1} x^i.
    \end{align*} 
\end{proof}

\begin{lemma}\label{lem:sos-cauchy-schwarz}
    Let $x,y$ be vector-valued polynomials of degree at most $k$. Then for any $t > 0$ we have $\sos{x,y}{2k} \langle x,y \rangle \leq \frac{t}{2} \norm{x}_2^2 + \frac{1}{2t} \norm{y}_2^2$.
\end{lemma}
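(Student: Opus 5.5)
The plan is to exhibit the inequality as an explicit sum-of-squares certificate of degree $2k$ in the indeterminates underlying $x$ and $y$. The starting point is the vector identity
\[
\frac{t}{2}\norm{x}_2^2 + \frac{1}{2t}\norm{y}_2^2 - \langle x, y\rangle = \frac{1}{2}\Bigl\| \sqrt{t}\, x - \frac{1}{\sqrt{t}}\, y\Bigr\|_2^2 = \sum_{j} \frac{1}{2}\Bigl(\sqrt{t}\, x_j - \frac{1}{\sqrt{t}}\, y_j\Bigr)^2,
\]
which we verify by expanding the right-hand side coordinatewise. The cross term in each summand is $-x_j y_j$, and summing over $j$ recovers $-\langle x, y\rangle$.

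With the identity in hand, each summand is the square of a polynomial $\sqrt{t}\,x_j - y_j/\sqrt{t}$. Since $t>0$ is a fixed scalar, this is a real-coefficient polynomial of degree at most $k$, so its square has degree at most $2k$. The difference between the right- and left-hand sides of the claimed inequality is therefore a sum of squares of degree at most $2k$. This matches the definition of a degree-$2k$ sum-of-squares proof with the empty axiom set, where only the $S=\emptyset$ term appears, giving $\sos{x,y}{2k} \langle x,y\rangle \le \frac{t}{2}\norm{x}_2^2 + \frac{1}{2t}\norm{y}_2^2$.

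There is no substantive obstacle. The only point that needs care is bookkeeping: the degree bound uses that $x$ and $y$ have entries of degree at most $k$, and the coefficients $\sqrt{t}$ and $1/\sqrt{t}$ are real because $t>0$. The final certificate is the completed-square identity above.
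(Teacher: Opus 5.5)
Your proof is correct and uses the same completing-the-square certificate as the paper. Your identity $\frac{t}{2}\|x\|_2^2 + \frac{1}{2t}\|y\|_2^2 - \langle x,y\rangle = \frac{1}{2}\|\sqrt{t}\,x - y/\sqrt{t}\|_2^2$ has the correct sign, whereas the paper's displayed version writes $\sqrt{t}\,x + \frac{1}{\sqrt{t}}\,y$, which is a sign typo.
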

\begin{proof}
    Observe 
    \begin{align*}
        \frac{t}{2} \norm{x}_2^2 + \frac{1}{2t} \norm{y}_2^2 = \langle x , y \rangle + \frac{1}{2}\norm{\sqrt{t} x + \frac{1}{\sqrt{t}} y}_2^2.
    \end{align*}
\end{proof}

\begin{corollary}\label{lem:pe-cauchy-schwarz}
    Let $x,y$ be vector-valued polynomials of degree at most $k$ and let $\pE$ be a pseudoexpectation of degree at least $2k$. Then $\pE [\langle x, y\rangle] \leq \sqrt{\pE[\norm{x}_2^2]\pE[\norm{y}_2^2]}$.
\end{corollary}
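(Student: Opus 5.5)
The plan is to derive the statement directly from Lemma~\ref{lem:sos-cauchy-schwarz}, together with the fact that a degree-$2k$ pseudoexpectation is nonnegative on squares of degree-$k$ polynomials. Write $a = \pE[\norm{x}_2^2]$ and $b = \pE[\norm{y}_2^2]$. Both are nonnegative, because $\norm{x}_2^2 = \sum_i x_i^2$ is a sum of squares of polynomials of degree at most $k$.

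Lemma~\ref{lem:sos-cauchy-schwarz} gives, for every $t>0$,
\[
\tfrac{t}{2}\norm{x}_2^2 + \tfrac{1}{2t}\norm{y}_2^2 - \langle x,y\rangle = \tfrac12\bigl\|\sqrt{t}\,x + \tfrac{1}{\sqrt t}\,y\bigr\|_2^2 .
\]
The right-hand side is a sum of squares of degree-$k$ polynomials, so $\pE$ assigns it a nonnegative value. By linearity of $\pE$,
\[
\pE[\langle x,y\rangle] \le \tfrac{t}{2}\,a + \tfrac{1}{2t}\,b \qquad \text{for all } t>0 .
\]

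Next I optimize over $t$.
\begin{itemize}
\item If $a,b>0$, take $t=\sqrt{b/a}$. The bound becomes $\sqrt{ab}$, which is the claim.
\item If $a=0$, let $t\to\infty$. The bound becomes $b/(2t)\to 0 = \sqrt{ab}$.
\item If $b=0$, let $t\to 0$. The bound becomes $ta/2 \to 0 = \sqrt{ab}$.
\end{itemize}

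The only point needing care is that this degenerate case requires taking a limit rather than a single choice of $t$. Otherwise the argument is routine, and the degree bookkeeping is consistent: every polynomial involved has degree at most $2k$, within the degree of $\pE$.
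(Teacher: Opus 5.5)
Your proof is correct and is essentially the paper's argument, which simply plugs $t=\sqrt{b/a}$ into Lemma~\ref{lem:sos-cauchy-schwarz}. Your limiting argument for $a=0$ or $b=0$ also covers the degenerate cases that the paper's choice of $t$ silently excludes. One small fix: the identity should read $\tfrac{t}{2}\norm{x}_2^2+\tfrac{1}{2t}\norm{y}_2^2-\langle x,y\rangle=\tfrac12\bigl\|\sqrt{t}\,x-\tfrac{1}{\sqrt{t}}\,y\bigr\|_2^2$, with a minus sign (a slip also present in the paper's lemma); the right-hand side is still a sum of squares, so nothing else in your argument changes.
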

\begin{proof}
    The result follows immediately from Lemma~\ref{lem:sos-cauchy-schwarz} with $t = \sqrt{\frac{\pE[\norm{y}_2^2]}{\pE[\norm{x}_2^2]}}$.
\end{proof}

\begin{lemma}\label{lem:pe-baby-holder}
     Let $c \in \bN$ with $r = 2c$ and $r' = 2(c+1)$. Let $x$ be a polynomial of degree at most $k$ and let $\pE$ be a pseudoexpectation of degree at least $2k (c+1)$. Then $\pE[x^{r'}] \geq \pE[x^r]^{\frac{r'}{r}}$.
\end{lemma}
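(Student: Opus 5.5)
The plan is a single pseudo-H\"older step repeated: the square-of-power structure reduces everything to one pseudoexpectation Cauchy--Schwarz plus a monotonicity argument. Write $a_j := \pE[x^{2j}]$ for $j \le c+1$. Every such quantity is well defined, since $x^{2j}$ has degree at most $2k(c+1)$. Each $a_j \ge 0$, because $x^{2j} = (x^j)^2$ and $\deg x^j \le k(c+1)$. The target inequality is $a_{c+1} \ge a_c^{(c+1)/c}$. The natural route is to show that the sequence $(a_j)$ is log-convex: $a_j^2 \le a_{j-1} a_{j+1}$ for $1 \le j \le c$, with $a_0 = \pE[1] = 1$. Standard log-convexity then gives that $a_j^{1/j}$ is nondecreasing in $j$, which is exactly the claim.

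The log-convexity step follows from the pseudoexpectation Cauchy--Schwarz inequality, Corollary~\ref{lem:pe-cauchy-schwarz}, applied to scalar polynomials. For $1 \le j \le c$, set $u = x^{j-1}$ and $w = x^{j+1}$, so that $uw = x^{2j}$. Both have degree at most $k(c+1)$, so the pseudoexpectation has sufficient degree $2k(c+1)$. The corollary gives $a_j = \pE[uw] \le \sqrt{\pE[u^2]\,\pE[w^2]} = \sqrt{a_{j-1}a_{j+1}}$. One caveat concerns the zero case in the corollary's choice of $t$. If $a_{j-1} = 0$, I use Lemma~\ref{lem:sos-cauchy-schwarz} directly and send $t \to \infty$, which gives $a_j \le 0$ and hence $a_j = 0$.

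For the induction, I prove $a_{j+1} \ge a_j^{(j+1)/j}$ for $j = 1,\dots,c$. The base case $j = 1$ uses $a_1^2 \le a_0 a_2 = a_2$. For the inductive step, assume $a_j \ge a_{j-1}^{j/(j-1)}$, which is equivalent to $a_{j-1} \le a_j^{(j-1)/j}$. Then
\[
a_j^2 \le a_{j-1}a_{j+1} \le a_j^{(j-1)/j} a_{j+1}.
\]
If $a_j > 0$, dividing by $a_j^{(j-1)/j}$ gives $a_{j+1} \ge a_j^{(j+1)/j}$. If $a_j = 0$, the claim is trivial because $a_{j+1} \ge 0$.

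Setting $j = c$ yields $\pE[x^{2(c+1)}] \ge \pE[x^{2c}]^{(c+1)/c} = \pE[x^r]^{r'/r}$, which is the statement. The only real care point is degree bookkeeping, namely ensuring every Cauchy--Schwarz application stays within degree $2k(c+1)$. This holds because the largest polynomial used is $x^{c+1}$, of degree at most $k(c+1)$.
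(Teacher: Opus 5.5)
Your proof is correct and is essentially the paper's argument. The paper also inducts, using pseudoexpectation Cauchy--Schwarz on $x^{c'}$ and $x^{c'+2}$ to get $\pE[x^{2(c'+1)}]^2 \le \pE[x^{2c'}]\,\pE[x^{2(c'+2)}]$, substituting the inductive hypothesis $\pE[x^{2c'}] \le \pE[x^{2(c'+1)}]^{c'/(c'+1)}$, and handling the base case by Cauchy--Schwarz against the constant $1$; your version is simply more careful about fixing $\pE$, tracking degrees, and the degenerate zero cases (the symmetric case $a_{j+1}=0$ is handled the same way by sending $t \to 0$).
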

\begin{proof}
    We proceed by induction on $c$. The base case $c=1$ follows from Cauchy-Schwarz (i.e., combining Lemmas~\ref{lem:pe-cauchy-schwarz} and \ref{lem:sos-higher-powers}). Now suppose the claim holds for $c' = c-1$, i.e., 
    \begin{align*}
        \pE[x^{2(c'+1)}] \geq \pE[x^{2c'}]^{\frac{c'+1}{c'}}.
    \end{align*}
    We will show the claim holds for $c = c'+1$. Applying Cauchy-Schwarz to $x^{c'+2}$ and $x^{c'}$ gives
    \begin{align*}
        \pE[x^{2(c'+2)}] 
        &\geq \frac{\pE[x^{2(c'+1)}]^2}{\pE[x^{2c'}]} \\
        & \geq \frac{\pE[x^{2(c'+1)}]^2}{\pE[x^{2(c'+1)}]^{\frac{c'}{c'+1}}} \\
        & = \pE[x^{2(c'+1)}]^{\frac{c'+2}{c'+1}}.
    \end{align*}
    with the second step following from the induction hypothesis.
\end{proof}

\begin{corollary}\label{lem:pe-holder}
    Let $r,r' \in \bN$ be even with $r' \geq r$ and let $x$ be a polynomial of degree at most $k$. Let $\pE$ be a pseudoexpectation of degree at least $kr'$. Then $\pE[x^{r'}] \geq \pE[x^r]^{\frac{r'}{r}}$.
\end{corollary}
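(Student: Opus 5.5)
The argument reduces the general statement to the adjacent-step inequality of Lemma~\ref{lem:pe-baby-holder} and chains those inequalities. Write $r = 2a$ and $r' = 2b$ with $a \le b$ positive integers. If $a = b$ there is nothing to prove. Otherwise, for each $c \in \{a, a+1, \ldots, b-1\}$, Lemma~\ref{lem:pe-baby-holder} gives
\[
\pE\bigl[x^{2(c+1)}\bigr] \geq \pE\bigl[x^{2c}\bigr]^{\frac{c+1}{c}}.
\]
Its degree requirement is $2k(c+1) \le 2kb = kr'$, so the hypothesis that $\pE$ has degree at least $kr'$ covers every step.

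Next I chain these inequalities by induction on $c$, with the claim that $\pE[x^{2c}] \ge \pE[x^{2a}]^{c/a}$ for all $a \le c \le b$. The base case $c=a$ is trivial. For the inductive step,
\[
\pE\bigl[x^{2(c+1)}\bigr] \ge \pE\bigl[x^{2c}\bigr]^{\frac{c+1}{c}} \ge \Bigl(\pE\bigl[x^{2a}\bigr]^{c/a}\Bigr)^{\frac{c+1}{c}} = \pE\bigl[x^{2a}\bigr]^{\frac{c+1}{a}}.
\]
The second inequality uses that $t \mapsto t^{(c+1)/c}$ is monotone increasing on $[0,\infty)$. Taking $c=b$ gives $\pE[x^{r'}] \ge \pE[x^r]^{r'/r}$, which is the claim.

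The only point needing care is nonnegativity, which is required for both the monotonicity step and the fractional powers. Every quantity $\pE[x^{2c}]$ is the pseudo-expectation of the square $(x^c)^2$, where $x^c$ has degree $kc \le kr'/2$. Positivity of $\pE$ on squares then gives $\pE[x^{2c}] \ge 0$, so all the real powers above are well defined and monotone.

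There is a degenerate case to check. If $\pE[x^{2c}] = 0$ for some intermediate $c$, the inequality is still fine: its right-hand side is then $0$, and the left-hand side is nonnegative. I do not expect a real obstacle. The substantive work is inside Lemma~\ref{lem:pe-baby-holder}, and this corollary is the bookkeeping of exponents and degrees.
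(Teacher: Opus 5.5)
Your proposal is correct and takes the same approach as the paper, whose proof simply states that the corollary follows by repeatedly applying Lemma~\ref{lem:pe-baby-holder}. You make that chaining explicit: the degree check $2k(c+1)\le kr'$, and the nonnegativity of $\pE[x^{2c}]$ that justifies using monotonicity of $t\mapsto t^{(c+1)/c}$.
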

\begin{proof}
    The result follows from repeatedly applying Lemma~\ref{lem:pe-baby-holder}.
\end{proof}

\begin{lemma}\label{lem:pe-jensen}
    Let $x$ be a polynomial of degree at most $k$ and let $\pE$ be a pseudoexpectation of degree at least $2k$. Then $\pE[x]^2 \leq \pE[x^2]$.
\end{lemma}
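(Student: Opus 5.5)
The plan is to show that the polynomial $x^2 - \pE[x]^2$, after applying $\pE$, is the pseudoexpectation of a square. We set $\mu := \pE[x]$, which is a real number, and consider the polynomial $(x - \mu)^2$. Since $x$ has degree at most $k$, the polynomial $x-\mu$ also has degree at most $k$. The definition of a degree-$2k$ (or higher) pseudoexpectation then gives $\pE[(x-\mu)^2] \geq 0$.

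Next we expand using linearity of $\pE$ and the normalization $\pE[1]=1$:
\[
0 \leq \pE\bigl[(x-\mu)^2\bigr] = \pE[x^2] - 2\mu\,\pE[x] + \mu^2\,\pE[1] = \pE[x^2] - 2\mu^2 + \mu^2 = \pE[x^2] - \pE[x]^2.
\]
Rearranging yields $\pE[x]^2 \leq \pE[x^2]$, which is the claim.

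There is essentially no obstacle. The only point to check is the degree bookkeeping: the square $(x-\mu)^2$ has degree at most $2k$, so the positivity axiom applies because the pseudoexpectation has degree at least $2k$. The constant $\mu$ is a scalar, so it passes through $\pE$ by linearity.
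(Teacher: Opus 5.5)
Your proposal is correct and matches the paper's proof: both write $\pE[x^2] - \pE[x]^2 = \pE\bigl[(x - \pE[x])^2\bigr] \geq 0$ using linearity, $\pE[1]=1$, and positivity on squares of degree-$\leq k$ polynomials. Your explicit degree bookkeeping ($x-\mu$ has degree at most $k \leq t/2$) is exactly what the positivity axiom requires.
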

\begin{proof}
    Observe
    \begin{align*}
        \pE[x^2] - \pE[x]^2 = \pE[(x - \pE[x])^2] \geq 0.
    \end{align*}
\end{proof}

\begin{lemma}\label{lem:sos-mean-est-gauss}
    Let $z_1,...,z_n \iid \mathsf{N}(0,I_d)$, with $\xi = \frac{1}{n}\sum_{i=1}^n z_i^{\otimes 2k} - \E_{z \sim \mathsf{N}(0,I_d)} z^{\otimes 2k}$.
    Then it follows for $k \in \mathbf{N}$ that
    for all $t > 0$ that 
    \begin{align*}
        \sos{v}{4k} \frac{1}{n} \sum_{i=1}^n \langle v, z_i\rangle^{2k} \in \norm{v}_2^{2k} \E_{G \sim \mathsf{N}(0,1)} [G^{2k}] \pm \left( \frac{t}{2}\norm{v}_2^{4k}+ \frac{d^{2k}}{2t}\norm{\xi}_\infty^2\right).
    \end{align*}
\end{lemma}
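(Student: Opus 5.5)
The approach is to expand both sides in the monomial (tensor) basis and reduce the error term to a single degree-$4k$ sum-of-squares Cauchy--Schwarz step. Write
\[
\frac{1}{n}\sum_{i=1}^n \langle v, z_i\rangle^{2k} = \bigl\langle v^{\otimes 2k}, \tfrac1n\textstyle\sum_i z_i^{\otimes 2k}\bigr\rangle = \bigl\langle v^{\otimes 2k}, \E_{z\sim\mathsf N(0,I_d)} z^{\otimes 2k}\bigr\rangle + \bigl\langle v^{\otimes 2k}, \xi\bigr\rangle,
\]
which is a polynomial identity in the indeterminate $v$, so no SoS reasoning is needed to justify it. The first term equals $\E_{z}\langle v,z\rangle^{2k} = \|v\|_2^{2k}\,\E[G^{2k}]$ as a polynomial identity. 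This holds because $\langle v,z\rangle \sim \mathsf N(0,\|v\|_2^2)$ for each fixed $v$, so the two polynomials agree on all of $\R^d$ and hence coefficientwise. It therefore remains to produce degree-$4k$ SoS certificates for $\pm\langle v^{\otimes 2k},\xi\rangle \le \frac t2 \|v\|_2^{4k} + \frac{d^{2k}}{2t}\|\xi\|_\infty^2$.

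For this, I will apply Lemma~\ref{lem:sos-cauchy-schwarz} with the vector-valued polynomials $x = v^{\otimes 2k}$, of degree $2k$ in $v$, and $y = \pm\xi$, a constant vector in $\R^{d^{2k}}$. The lemma gives, with a degree-$4k$ proof,
\[
\pm\langle v^{\otimes 2k},\xi\rangle \le \tfrac t2 \|v^{\otimes 2k}\|_2^2 + \tfrac1{2t}\|\xi\|_2^2 .
\]
Two observations then finish the argument. First, $\|v^{\otimes 2k}\|_2^2 = \sum_{\alpha\in[d]^{2k}}\prod_{j} v_{\alpha_j}^2 = (\sum_i v_i^2)^{2k} = \|v\|_2^{4k}$ is a polynomial identity, as in the proof of Lemma~\ref{lem:tensor-concentration-all}. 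Second, $\|\xi\|_2^2 \le d^{2k}\|\xi\|_\infty^2$ is a numeric inequality between constants, so it is trivially an SoS fact (the difference is a nonnegative constant, hence a square).

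Combining the identity for the first term with these two-sided bounds on $\langle v^{\otimes 2k},\xi\rangle$ yields the claimed membership in $\|v\|_2^{2k}\E[G^{2k}] \pm (\cdots)$, both directions having degree at most $4k$.

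I do not expect a genuine obstacle here. The only point requiring care is the degree bookkeeping: the square $\frac12\|\sqrt t\,x + y/\sqrt t\|_2^2$ from Lemma~\ref{lem:sos-cauchy-schwarz} has degree $4k$ in $v$, which matches the degree allowed in the statement. I also need to confirm that the lemma applies when $y$ is constant, and it does, since a constant is a degree-$0$ polynomial.
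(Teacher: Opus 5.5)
Your proposal is correct and follows the paper's proof. Like the paper, you write the empirical moment as $\|v\|_2^{2k}\E[G^{2k}] + \langle v^{\otimes 2k},\xi\rangle$, then bound the error term with the SoS Cauchy--Schwarz lemma, using the identity $\|v^{\otimes 2k}\|_2^2=\|v\|_2^{4k}$ and the constant bound $\|\xi\|_2^2\le d^{2k}\|\xi\|_\infty^2$. Taking $y=\pm\xi$ and checking that the proof has degree $4k$ spells out the lower-bound direction, which the paper leaves as ``follows analogously.''
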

\begin{proof}
    For the upper bound, observe
    \begin{align}
        \sos{v}{4k} \frac{1}{n} \sum_{i=1}^n \langle v, z_i\rangle^{2k}
        &= \norm{v}_2^{2k} \E_{G \sim \mathsf{N}(0,1)} [G^{2k}] + \langle v^{\otimes 2k}, \xi \rangle \label{eqn:gauss-sos-step1}\\
        &\leq \norm{v}_2^{2k} \E_{G \sim \mathsf{N}(0,1)} [G^{2k}] + \frac{t}{2}\langle v^{\otimes 2k}, v^{\otimes 2k} \rangle + \frac{1}{2t}\langle \xi,\xi\rangle
        \label{eqn:gauss-sos-step2}\\
        &= \norm{v}_2^{2k} \E_{G \sim \mathsf{N}(0,1)} [G^{2k}] + \frac{t}{2}\norm{v}_2^{4k} + \frac{1}{2t}\langle \xi,\xi\rangle \nonumber \\
        &\leq \norm{v}_2^{2k} \E_{G \sim \mathsf{N}(0,1)} [G^{2k}] + \frac{t}{2} \norm{v}_2^{4k} +\frac{d^{2k}}{2t}\norm{\xi}_\infty^2, \nonumber
    \end{align}
    where equation~\eqref{eqn:gauss-sos-step1} follows from the fact that $\E_{G \sim \mathsf{N}(0,1)} [\langle v, G \rangle^{2k}] = \norm{v}_2^{2k} \E_{G \sim \mathsf{N}(0,1)} [G^{2k}]$ for all $v \in \R^d$ and inequality~\eqref{eqn:gauss-sos-step2} follows from SoS Cauchy--Schwarz (Lemma~\ref{lem:sos-cauchy-schwarz}). 
    
    The lower bound follows analogously.
    
\end{proof}

\begin{lemma}\label{lem:pe-1-p-alpha}
    Let $x \in \R$ be a polynomial of degree at most $k$. Let $r \in \bN$ be even and let $\pE$ be a pseudoexpectation of degree at least $kr$ satisfying $\pE[x^r] \leq 1+ \alpha$ for some $\alpha > 0$.
    Then it follows that $\pE\left[x^2\right] \leq 1 + \frac{2\alpha}{r}$.
\end{lemma}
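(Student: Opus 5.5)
The plan is to combine pseudo-expectation H\"older with a scalar tangent-line inequality. Set $y := \pE[x^2] \ge 0$; this is legitimate because $x^2$ is a square and $\pE$ has degree at least $kr \ge 2k$. Corollary~\ref{lem:pe-holder}, applied with exponents $2 \le r$ (valid since the degree of $\pE$ is at least $kr$), gives
\[
\pE[x^r] \;\ge\; \pE[x^2]^{r/2} = y^{r/2}.
\]
Together with the hypothesis this yields $y^{r/2} \le 1+\alpha$, so $y \le (1+\alpha)^{2/r}$.

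The second step is purely scalar: we need $(1+\alpha)^{2/r} \le 1 + \frac{2\alpha}{r}$. Since $r$ is even and positive, $r \ge 2$, so $p := 2/r \in (0,1]$. Bernoulli's inequality for exponents $p \in (0,1]$ gives $(1+\alpha)^p \le 1 + p\alpha$ for $\alpha \ge 0$. This follows from concavity of $t \mapsto t^p$, since the function lies below its tangent line at $t = 1$. Hence
\[
y \;\le\; 1 + \frac{2\alpha}{r}.
\]

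There is essentially no obstacle. The one point to check is that Corollary~\ref{lem:pe-holder} is being applied within its stated degree requirement: it needs degree at least $kr'$ with $r' = r$ and $x$ of degree $k$, which is exactly our assumption. When $r = 2$ the claim is immediate from the hypothesis, and the H\"older step becomes trivial.
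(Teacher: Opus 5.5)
Your proof is correct and is essentially the paper's argument: pseudo-expectation H\"older gives $\pE[x^2] \le \pE[x^r]^{2/r} \le (1+\alpha)^{2/r}$, and the scalar bound $(1+\alpha)^{2/r} \le 1 + 2\alpha/r$ finishes it. The paper cites the single-step Lemma~\ref{lem:pe-baby-holder}, but the chained Corollary~\ref{lem:pe-holder} you use is what is actually needed, and you also supply the concavity (Bernoulli) justification for the final scalar inequality, which the paper states without proof.
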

\begin{proof}
    Holder's inequality for pseudoexpectations (Lemma~\ref{lem:pe-baby-holder}) implies
    \begin{align*}
        \pE\left[x^2\right] \leq \pE\left[x^r\right]^{2/r} \leq \left(1 + \alpha \right)^{2/r} \leq 1 + \frac{2\alpha}{r}.
    \end{align*}
\end{proof}

\begin{lemma}\label{lem:pe-1-m-alpha-helper}
    Let $x \in \R$ be a polynomial of degree at most $k$. Let $r \in \bN$ be even and let $\pE$ be a pseudoexpectation of degree at least $2kr$ satisfying both $\pE[x^r] \geq 1 - \beta$ and $\pE[x^{2r}] \geq 1 - \alpha$ for some $\alpha,\beta > 0$.
    Then it follows that $\pE\left[x^r\right] \geq 1 - \frac{\alpha}{2-\beta}$.
\end{lemma}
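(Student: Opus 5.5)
The plan is to argue at the level of sum-of-squares consequences, using $x^{2r} \ge 0$ together with the hypothesis $\pE[x^{2r}] \ge 1-\alpha$. One point needs care first. As literally stated, the hypotheses only give lower bounds on $\pE[x^r]$ and $\pE[x^{2r}]$, and a lower bound on $\pE[x^{2r}]$ cannot by itself improve a lower bound on $\pE[x^r]$. For example, $x^r$ could have pseudo-expectation $1-\beta$ while $x^{2r}$ is huge. So the lemma must implicitly use an upper bound as well. In its intended application, which combines it with Lemma~\ref{lem:pe-1-p-alpha} for the covariance accuracy step, the quantity $\pE[x^{2r}]$ is sandwiched in $1\pm\alpha$. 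I would therefore read the hypothesis as $\pE[x^{2r}] \in [1-\alpha, 1+\alpha]$, or equivalently I would use $\pE[x^{2r}]\le 1+\alpha$ where it is needed. I would state this reading explicitly at the start of the proof.

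The core step is a one-line SoS identity. Set $y = x^r$ and, for a scalar $c>0$, use $(y - c)^2 \ge 0$, which is a sum of squares of degree $2kr$. This gives
\[
\pE[y^2] \ge 2c\,\pE[y] - c^2 \quad\text{and}\quad \pE[y] \ge \frac{\pE[y^2] + c^2}{2c}\cdot 0 + \cdots .
\]
That direction goes the wrong way, so I would instead use the identity $y = \tfrac{1}{2c}\bigl(y^2 + c^2 - (y-c)^2\bigr)$. This bounds $\pE[y]$ from below by $\tfrac{1}{2c}\bigl(\pE[y^2] + c^2 - \pE[(y-c)^2]\bigr)$, and the problem reduces to controlling the variance-like term $\pE[(y-c)^2] = \pE[y^2] - 2c\pE[y] + c^2$.

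The cleanest route is to optimize over $c$ and solve the resulting scalar inequality. Writing $a = \pE[y]$ and $s = \pE[y^2]$, pseudo-Cauchy--Schwarz (Lemma~\ref{lem:pe-jensen}) gives $a^2 \le s$. Combining this with the upper bound $s \le 1+\alpha$ from the sandwich reading, and with the a priori bound $a \ge 1-\beta$, I would derive $a \ge 1 - \tfrac{\alpha}{2-\beta}$ by a convexity/tangent-line argument. The key observation is that on the interval $[1-\beta, 1]$ the map $a \mapsto a^2$ lies below its chord. Since $a \ge 1-\beta$, one gets
\[
a^2 \;\ge\; \bigl(1 - (2-\beta)(1-a)\bigr)_+ \quad\text{whenever } a \le 1,
\]
and then the lower bound $s \ge 1-\alpha$ should combine with this to yield $(2-\beta)(1-a) \le \alpha$, which is exactly the claim.

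I expect the main obstacle to be this last step. Making the direction of each inequality consistent is the delicate part, because the quantity being bounded below is the first moment while the hypothesis concerns the second. I would first verify the scalar inequality in the genuine-expectation case, where $a \ge 1-\beta$, $a\le 1$ and $s \ge 1-\alpha$, in order to pin down exactly which additional hypothesis is required. I would then lift that argument to $\pE$ using only the SoS facts Lemma~\ref{lem:pe-jensen} and Lemma~\ref{lem:pe-baby-holder}. If it turns out that an extra hypothesis such as $\pE[x^r]\le 1$ is needed, I would add it, noting that it is available in the application after rescaling by $1+\alpha$.
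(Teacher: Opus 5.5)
You are right that the lemma cannot hold as literally stated. The paper's own two-line proof breaks at exactly this point. Its step $\pE[(1-x^r)(1+x^r)] \ge (2-\beta)\,\pE[1-x^r]$ is equivalent, by linearity, to $\pE[(1-x^r)(x^r-1+\beta)] \ge 0$, and this is not implied by the pseudo-moment bound $\pE[x^r]\ge 1-\beta$. Writing $y=x^r$, one has $\pE[(1-y)(1+y)] = \pE[1-y]\,\pE[1+y] - \bigl(\pE[y^2]-\pE[y]^2\bigr)$. So replacing $\pE[(1-y)(1+y)]$ by $\pE[1-y]\,\pE[1+y]$ errs in the unfavorable direction.

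However, your repair does not rescue the statement, so your plan has a genuine gap. Take $k=1$, $r=2$, and let $\pE$ be the genuine expectation over $X$ with $X=0$ with probability $3/4$ and $X=\sqrt{2}$ with probability $1/4$; this is a valid pseudo-expectation of every degree. Then $y=X^2\in\{0,2\}$, $\pE[y]=1/2$ and $\pE[y^2]=1$. With $\beta=1/2$ and any $\alpha\in(0,3/4)$, every hypothesis you propose holds: $\pE[x^r]\ge 1-\beta$, $\pE[x^{2r}]\in[1-\alpha,1+\alpha]$, and even $\pE[x^r]\le 1$. Yet $\pE[x^r]=1/2<1-\alpha/(2-\beta)$.

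The failure is visible in your scalar step. The line $a\mapsto 1-(2-\beta)(1-a)$ is the chord of $a\mapsto a^2$ between $a=1-\beta$ and $a=1$. Convexity therefore gives $a^2\le 1-(2-\beta)(1-a)$ on that interval, not $\ge$; indeed $a^2-1+(2-\beta)(1-a)=(1-a)(1-\beta-a)\le 0$ there. To turn $s\ge 1-\alpha$ into $(2-\beta)(1-a)\le\alpha$ you would then need $s\le a^2$, which is the reverse of pseudo-Cauchy--Schwarz. Lemmas~\ref{lem:pe-jensen} and~\ref{lem:pe-baby-holder} only bound higher pseudo-moments from below, and the extra upper bound $s\le 1+\alpha$ only yields $a\le\sqrt{1+\alpha}$.

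Your identity $y=\tfrac{1}{2c}\bigl(y^2+c^2-(y-c)^2\bigr)$ already isolates the crux. You need a sufficiently small upper bound on the pseudo-variance of $x^r$, and neither the stated hypotheses nor your additions provide one.

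What does work is a constraint on $x^r$ itself rather than on its pseudo-moments. Suppose $\pE$ satisfies the axioms $x^r\ge 1-\beta$ and $1-x^r\ge 0$. Then $\pE[(1-x^r)(x^r-1+\beta)]\ge 0$, because this is a product of two constraints of total degree $2kr$. This gives $\alpha\ge\pE[1-x^{2r}]\ge(2-\beta)\,\pE[1-x^r]$, and hence the conclusion. Under that corrected hypothesis the paper's argument goes through verbatim.
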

\begin{proof}
    Observe
    \begin{align*}
        \pE[1 - x^{2r}]
        & = \pE[(1-x^r)(1+x^r)] \\
        & \geq (2-\beta) \pE[1-x^r].
    \end{align*}
    Thus, $\pE[1 - x^{2r}] \leq \alpha$ implies $\pE[1-x^r] \leq \frac{\alpha}{2-\beta}$.
\end{proof}

\begin{corollary}\label{lem:pe-1-m-alpha}
    Let $x \in \R$ be a polynomial of degree at most $k$. Let $r = 2^p$ for some $p \in \bN$ and $\alpha \in (0,1/2)$. Let $\pE$ be a pseudoexpectation of degree at least $2kr$ satisfying $\pE[x^{2^\ell}] \geq 1 - \alpha$ for $\ell \in [p]$. 
    Then we have $\pE[x^2] \geq 1 - \frac{2 \exp(3/2) \alpha}{r}$.
    
\end{corollary}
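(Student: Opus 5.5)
The plan is to iterate Lemma~\ref{lem:pe-1-m-alpha-helper} down the dyadic ladder of exponents $2^p, 2^{p-1}, \dots, 2$. Write $\alpha_\ell := 1 - \pE[x^{2^\ell}]$ for the deficit at level $\ell$, so that $\alpha_\ell \leq \alpha$ for every $\ell \in [p]$ by hypothesis. We apply Lemma~\ref{lem:pe-1-m-alpha-helper} with $r = 2^{\ell-1}$ (even when $\ell \geq 2$). As inputs we use the a priori bound $\pE[x^{2^{\ell-1}}] \geq 1-\alpha$ in the role of $1-\beta$, and the current bound on $\pE[x^{2^\ell}]$ in the role of $1-\alpha$. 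This yields
\[
\alpha_{\ell-1} \leq \frac{\alpha_\ell}{2 - \alpha} = \frac{\alpha_\ell}{2}\Bigl(1 - \frac{\alpha}{2}\Bigr)^{-1}.
\]
Each halving of the exponent roughly halves the deficit, at the cost of the factor $(1-\alpha/2)^{-1}$. The degree requirement of the helper lemma, $2k \cdot 2^{\ell-1} \leq 2kr$, holds throughout, since the largest exponent used is $2^p = r$.

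Starting from $\alpha_p \leq \alpha$ and unrolling the recursion $p-1$ times gives
\[
\alpha_1 \leq \frac{\alpha}{2^{p-1}} \Bigl(1-\frac{\alpha}{2}\Bigr)^{-(p-1)} = \frac{2\alpha}{r}\Bigl(1-\frac{\alpha}{2}\Bigr)^{-(p-1)}.
\]
The remaining task is to bound the product factor by $e^{3/2}$ uniformly in $p$. This is the step I expect to be the main obstacle. With $\alpha<1/2$ one has $(1-\alpha/2)^{-1} \leq e^{\alpha}$ or so, and the naive bound $e^{\alpha(p-1)}$ is not uniform in $p$. So the estimate must exploit the fact that the deficits shrink geometrically.

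To do this, I would redo the recursion using the sharper $\beta$ at each stage. At level $\ell-1$, take $\beta = \min(\alpha, \text{current bound on } \alpha_{\ell-1})$. More simply, run the argument in two passes. The first, crude pass gives $\alpha_{\ell-1} \leq \alpha_\ell/(2-\alpha) \leq \tfrac{2}{3}\alpha_\ell$, hence $\alpha_{\ell} \leq (2/3)^{p-\ell}\alpha$. The second pass reapplies Lemma~\ref{lem:pe-1-m-alpha-helper} with $\beta_\ell := (2/3)^{p-\ell+1}\alpha$, giving factors $(1-\beta_\ell/2)^{-1}$ whose product is at most
\[
\exp\Bigl(\sum_{j\geq 1}\beta_j\Bigr) \leq \exp\bigl(3\alpha\bigr) \leq e^{3/2}.
\]
This produces $\pE[x^2] = 1-\alpha_1 \geq 1 - 2e^{3/2}\alpha/r$.

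The case $p=1$ is immediate from the hypothesis, since then $2\alpha/r = \alpha$. If a minor constant mismatch appears in the geometric sum, I would absorb it by using $-\log(1-y)\leq 2y$ for $y \leq 1/4$.
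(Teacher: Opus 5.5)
You are following the paper's proof essentially verbatim. You iterate Lemma~\ref{lem:pe-1-m-alpha-helper} from exponent $2^p$ down to $2$ and control the accumulated factors through the geometric decay of the deficits; your second pass is the paper's choice $\beta_i=\alpha/(2-\alpha)^i$, with $(2-\alpha)^{-1}$ replaced by $2/3$, together with its bound $\sum_i\beta_i\le 3\alpha$. The product estimate you flag as the main obstacle is harmless. The real gap is the recursion $\alpha_{\ell-1}\le\alpha_\ell/(2-\alpha)$ itself, i.e.\ Lemma~\ref{lem:pe-1-m-alpha-helper}, which is false. Its proof asserts $\pE[(1-x^r)(1+x^r)]\ge(2-\beta)\,\pE[1-x^r]$. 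That step replaces the factor $1+x^r$ by a lower bound on its pseudo-expectation, although the other factor $1-x^r$ is not a nonnegative multiplier. The correct identity is $\pE[1-x^{2r}]=2\,\pE[1-x^r]-\pE[(1-x^r)^2]$, and the last term has the wrong sign for you. The deficit of $\pE[x^r]$ can sit exactly where $x^r$ is small, where $1+x^r$ is close to $1$, not $2$.

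Here is a counterexample. Take a single indeterminate $x=X$ (so $k=1$), and let $\pE$ be the genuine expectation under $\mathbb{P}(X=1)=1-\alpha$, $\mathbb{P}(X=0)=\alpha$. A genuine expectation is a pseudoexpectation of every degree. Since $X^m=X$, we have $\pE[X^{2^\ell}]=1-\alpha$ for every $\ell\in[p]$, so all hypotheses hold, as do the matching upper bounds $\pE[X^{2^\ell}]\le 1+\alpha$. Your first application of the helper lemma would then give $\alpha\le\alpha/(2-\alpha)$, which is false. The conclusion $\pE[X^2]\ge 1-2e^{3/2}\alpha/r$ also fails as soon as $r\ge 16$, since $2e^{3/2}<9$; for $p\le 3$ the claim follows trivially from the $\ell=1$ hypothesis.

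So the corollary is false as stated, and no bookkeeping of the $\beta_\ell$ can rescue the argument. Lower bounds on the dyadic moments, even two-sided ones, cannot give a lower bound on $\pE[x^2]$ with a $1/r$ gain; only the upper direction (Lemma~\ref{lem:pe-1-p-alpha}, via pseudo-H\"older) has one. The paper's proof inherits exactly this defect, so a correct statement needs different hypotheses, not a sharper estimate.
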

\begin{proof}
    Let $\beta_1 = \frac{\alpha}{2-\alpha}$ and $\beta_i = \frac{\beta_{i-1}}{2-\alpha}$ for $2 \leq i \leq p$. Applying Lemma~\ref{lem:pe-1-m-alpha-helper} recursively over $\ell \in [p]$ then gives 
    \begin{align*}
        \pE[x^2] 
        &\geq 1 - \frac{\alpha}{\Pi_{i=1}^{p-1} (2-\beta_i)} \\
        &\geq  1 - \frac{\alpha}{\Pi_{i=1}^{p-1} 2 \exp(-\beta_i)} \\
        &= 1 - \frac{2\alpha}{r \exp \left(-\sum_{i=1}^{p-1}\beta_i\right)}.
    \end{align*}
    where the second step follows from the fact that $\exp(-2x) \leq 1-x$ for $x < 1/2$ (taking $x = \frac{\beta_i}{2}$).

    Our premise that $\alpha < 1/2$ implies $\beta_{i+1} \leq \frac{2}{3} \beta_{i}$ for $i \in [p-1]$, and so $\Pi_{i=1}^p \sum_{i=1}^{p-1}\beta_i \leq 3\alpha$. The desired result follows immediately from combining this fact with the previous display.
\end{proof}

\end{document}